\documentclass{amsart}[11pt]

\usepackage[hidelinks]{hyperref}
\usepackage{amssymb, amsmath, amsthm, amsfonts}
\usepackage{verbatim}
\usepackage{bbm}
\usepackage{enumerate}
\usepackage{dsfont}
\usepackage{upgreek}
\usepackage[mathscr]{eucal}
\usepackage[dvipsnames]{xcolor}
\usepackage{tikz}
\usetikzlibrary{arrows.meta,calc}
\usepackage{kotex}
\usepackage{mathrsfs}
\usepackage{enumitem}

\usepackage{relsize}

\def\wt#1{\widetilde{#1}}

\newcommand{\N}{\mathbb{N}}
\newcommand{\R}{\mathbb{R}}
\newcommand{\Z}{\mathbb{Z}}

\newcommand{\la}{\lambda}

\newcommand{\supp}{\operatorname{supp}}
\newcommand{\dist}{\operatorname{dist}}

\newtheorem{thm}{Theorem}[section]
\newtheorem{defn}[thm]{Definition}
\newtheorem{prop}[thm]{Proposition}
\newtheorem{cor}[thm]{Corollary}
\newtheorem{lem}[thm]{Lemma}
\numberwithin{equation}{section}
\newtheorem{rem}{Remark}

\newcommand{\cA}{\mathcal A}

\newcommand\fa{{\mathbf n}}
\newcommand{\fb}{{\mathbf n^{\prime}}}
\newcommand\fr{{\mathfrak r}}
\newcommand{\fn}{\mathfrak n}
\newcommand{\fR}{\mathfrak R}
\newcommand{\fT}{\mathfrak T}
\newcommand{\fs}{\mathfrak s}
\newcommand{\fS}{\mathfrak D}

\newcommand{\ep}{{\varepsilon}}
\newcommand{\inp}[2]{\langle #1, #2\rangle}

\newcommand{\mmt}{\sqrt{\mu\mu'}}

\newcommand{\smu}{\sqrt\mu}
\newcommand{\smp}{\sqrt{\mu'}}
\newcommand{\mup}{{\mu'}}

\newcommand{\fP}{\mathcal O}

\newcommand{\cP}{\mathcal P}
\newcommand{\cR}{\mathcal R}

\newcommand{\Be}{\begin{equation}}
\newcommand{\Ee}{\end{equation}}

\newcommand{\cD}{\mathcal D}
\newcommand{\cL}{\mathcal L}

\newcommand{\cB}{\mathcal B}

\newcommand{\epz}{\ep_{\mspace{-1.5mu}\mathsmaller{
0
}}}
\newcommand{\cz}{c_{\mspace{-1.5mu}\mathsmaller{
0
}}}

\newcommand{\zc}{{{\mspace{-1.5mu}\mathsmaller{
0
}}}}

\newcommand{\sprp}{{\!\perp}}

\newcommand{\dels}{{\delta_{\mspace{-1.5mu} \ast}}}
\newcommand{\delt}{{\delta_{\mspace{-2.5mu}\mathsmaller\vartriangle}}}

\begin{document}

\author[Eunhee Jeong]{Eunhee Jeong}
\address[Jeong]{Department of Mathematics Education, and  Institute of Pure and Applied Mathematics, Jeonbuk National University, Jeonju 54896, Republic of Korea}
\email{eunhee@jbnu.ac.kr}

\author[Sanghyuk Lee]{Sanghyuk Lee}
\address[Lee]{Department of Mathematical Sciences and RIM, Seoul National University, Seoul 08826, Republic of Korea}
\email{shklee@snu.ac.kr}

\author[Jaehyeon Ryu]{Jaehyeon Ryu}
\address[Ryu]{Department of Mathematics, Ewha Womans University, Seoul
03760, Republic of Korea}
\email{jhryu67@ewha.ac.kr}

\keywords{Hermite functions, Spectral projection}
\makeatletter
\@namedef{subjclassname@2020}{\textup{2020} Mathematics Subject Classification}
\makeatother
\subjclass[2020]{42B99  (primary);  42C10 (secondary)}

\title[Endpoint Eigenfunction Bound]{The Endpoint Eigenfunction Bound for the Hermite Operator in Two Dimensions}

\begin{abstract}
In this paper, we establish the optimal \(L^2(\mathbb{R}^2)\to L^{10/3}(\mathbb{R}^2)\) endpoint estimate for the spectral projection operator associated with the Hermite operator on \(\mathbb{R}^2\). This completes a long-standing line of inquiry into sharp eigenfunction bounds for the Hermite operator, developed through the works of Thangavelu, Karadzhov, Koch--Tataru, and others. In higher dimensions \(d\ge 3\), the corresponding endpoint estimates
\[
    L^2(\mathbb{R}^d)\to L^{\frac{2(d+3)}{d+1}}(\mathbb{R}^d)
\]
were recently established by the present authors. Together with these earlier results, the present work fully resolves the problem of optimal \(L^2\to L^q\) eigenfunction bounds for Hermite spectral projections in all dimensions. Although our approach builds on our previous method, we overcome its limitations through a multiscale decomposition in space and time relative to the degeneracy set, combined with an asymmetric refinement on the input side and almost orthogonality.
\end{abstract}

\maketitle

\section{Introduction}
Let \(H=-\Delta+|x|^2\) be the Hermite operator on \(\mathbb{R}^d\), whose spectrum is \(2\mathbb{N}_0+d\). Here \(\mathbb{N}_0$ denotes $\mathbb{N}\cup\{0\}\). A standard orthonormal eigenbasis is given by tensor products of one-dimensional Hermite functions, each of which is a Hermite polynomial multiplied by a Gaussian factor, up to normalization. Throughout this article, \(\lambda\) denotes an eigenvalue of \(H\), i.e., \(\lambda\in 2\mathbb{N}_0+d\), and \(\Pi_\lambda\) denotes the spectral projection onto the corresponding eigenspace.  

\subsection{Background and the main result} Various authors have studied the problem of determining optimal bounds for the operator norm
\[
    \|\Pi_\lambda\|_{2\to q}, \quad 2\le q\le \infty
\]
in terms of \(\lambda\), where \(\|T\|_{s\to r}\) denotes the operator norm of \(T\) from \(L^s(\mathbb{R}^d)\) to \(L^r(\mathbb{R}^d)\). Such bounds yield estimates for the \(L^q\) norms of \(L^2\)-normalized eigenfunctions. They also have applications to \(L^p\) convergence of Hermite Bochner--Riesz means \cite{AW65, K94, Th98a, CLWY22, LR22}; see also \cite{CLSY20, KR07, So98, SZ98} for related results. Moreover, bounds on \(\|\Pi_\lambda\|_{2\to q}\) have been shown to have applications to the strong unique continuation property for parabolic differential inequalities. We refer interested readers to \cite{E00, EV01, JLR24a, KT09} and references therein.
  

When $d=1$, the optimal $L^2(\R^d)$--$L^q(\R^d)$ bounds on $\Pi_\lambda$ are relatively straightforward. These bounds follow from the known optimal $L^q(\R)$ bounds on one-dimensional Hermite functions. In fact, 
$\|\Pi_\lambda\|_{2\to q}\sim \lambda^{\max\{-1/12-1/(6q),-1/4+1/(2q)\}}$ for $q\neq 4$. At the critical case $q=4$, it is known that  $\|\Pi_\lambda\|_{2\to 4}\sim \lambda^{-1/8}(\log \lambda)^{1/4}$. 
An early source for the matching upper and lower bounds at the
critical exponent is Markett \cite[Lemma~1]{M84};
see also \cite{GS94}.

In higher dimensions \(d\ge 2\), the works of Thangavelu \cite{Th93}, Karadzhov \cite{K94}, and Koch--Tataru \cite{KT05} established the optimal bounds for \(\|\Pi_\lambda\|_{2\to q}\), \(2\le q\le \infty\), except at the endpoint
\[
    q_e (d)=\frac{2(d+3)}{d+1}.   
\]
More precisely, they proved that
\[
    \|\Pi_\lambda\|_{2\to q}\sim \lambda^{\beta(q)}
\]
for \(q\in [2,\infty]\setminus\{q_e (d)\}\), where
\[
    \beta(q)
    =
    \max\left\{
        -\frac{1}{2}+\frac{d}{2}\delta(q),
        -\frac{1}{6}+\frac{d}{6}\delta(q),
        -\frac{1}{2}\delta(q)
    \right\},
    \qquad
    \delta(q):=\frac{1}{2}-\frac{1}{q}.
\]
The lower bounds   $\|\Pi_\lambda\|_{2\to q}\gtrsim  \lambda^{\beta(q)}$ are comparatively easier and can be obtained by constructing suitable example functions (see \cite{KT05}). For the upper bounds, from the viewpoint of interpolation, the critical cases are \(q=\infty\), \(q=2d/(d-2)\), and \(q=q_e (d)\), since $\|\Pi_\lambda\|_{2\to 2}=1$; when \(d=2\), the first two cases coincide at \(q=\infty\). The case \(q=\infty\) follows from the kernel estimate recorded by Thangavelu \cite[Lemma 3.2.2]{Th93}, while the case \(q=2d/(d-2)\) for \(d\ge3\) is due to Karadzhov \cite{K94}. The bounds in the remaining ranges \(q\in(q_e (d),2d/(d-2))\) and \(q\in(2,q_e (d))\) are due to Koch--Tataru \cite{KT05}, where the upper endpoint of the first interval is understood as \(\infty\) when \(d=2\). Their result gives 
the bound  with logarithmic loss  at the remaining endpoint \(q=q_e (d)\), 
\[
\|\Pi_\lambda\|_{2\to q_e(d)}
\lesssim
\lambda^{-\frac{1}{2(d+3)}}(\log\lambda)^{1/q_e(d)},
\]
which follows rather directly from the local estimate \eqref{e:KTlocal} below; see also \cite{KTZ07}. 

Although this estimate appears analogous to the one-dimensional endpoint result, it is natural to expect that the logarithmic factor can be removed in dimensions \(d\ge2\), in contrast to the case \(d=1\), where \(q_e (1)=4\). This expectation is supported by the fact that there is no eigenfunction of \(H\) which concentrates in an equidistributed manner in a neighborhood of the sphere (see \cite[pp.~375--377]{KT05})
\[ \sqrt{\lambda}\mathbb S^{d-1}:=\{x\in\mathbb{R}^d: |x|=\sqrt{\lambda}\}.\]

The optimal upper bound at the missing endpoint \(q=q_e (d)\) was recently obtained by the authors \cite{JLR24b} for \(d\ge3\). The key idea there is to exploit a weak interaction gain arising from asymmetric localization on the input and output sides near the sphere $\sqrt\lambda \mathbb S^{d-1}$. However, when \(d=2\), this approach does not provide the necessary gain due to the weaker dispersive effect. Consequently, the optimal upper bound for \(\|\Pi_\lambda\|_{2\to q_e (2)}\) in two dimensions remained open.

The main objective of the present paper is to prove the missing two-dimensional endpoint estimate, thereby completing the optimal \(L^2\)--\(L^q\) bounds for Hermite spectral projections in all dimensions \(d\ge2\).  Note that $q_e (2)=10/3.$

\begin{thm}\label{thm:main}   Let $\lambda\in 2\N_0+2$ and $q_e=10/3$. 
   There exists a constant $C>0$ such that
  \Be
  \label{2-d}
        \|\Pi_\lambda\|_{L^2(\R^2)\to L^{q_e }(\R^2)}\le C\lambda^{-\frac{1}{10}}
    \Ee
\end{thm}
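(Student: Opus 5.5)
We follow the general scheme of \cite{JLR24b} and of Koch--Tataru \cite{KT05}. By the usual $TT^*$ reduction, it suffices to bound $\chi_\lambda(H)$ from $L^{q_e'}$ to $L^{q_e}$ with norm $\lesssim\lambda^{-1/5}$, where $\chi_\lambda(H)$ is a smooth spectral cutoff to $[\lambda-1,\lambda+1]$. We express this cutoff through the Schrödinger propagator,
\[
\chi_\lambda(H)=\int \hat\chi(t)\,e^{it(\lambda-H)}\,dt,
\]
with $t$ restricted to a period of length $\pi$. The Mehler kernel of $e^{-itH}$ is explicit, so we can read off its dispersive behaviour directly.

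Away from the sphere $\sqrt\lambda\,\mathbb S^{1}$, the local estimate \eqref{e:KTlocal} of Koch--Tataru already gives the sharp bound with no logarithm. In the interior region $|x|\le \sqrt\lambda(1-\lambda^{-2/3})$ it gives the bound on each dyadic annulus $\{1-|x|/\sqrt\lambda\sim 2^{-j}\}$ with a gain of $2^{-cj}$. The logarithmic loss in \cite{KT05} comes from summing the $\log\lambda$ many annuli $A_j$, $\lambda^{-2/3}\lesssim 2^{-j}\lesssim 1$, each of which contributes essentially the full bound $\lambda^{-1/10}$. The task is therefore to show
\[
\Big\|\sum_j \chi_{A_j}\Pi_\lambda f\Big\|_{q_e}\lesssim \lambda^{-1/10}\|f\|_2,
\]
which amounts to square-function-type almost orthogonality among the pieces $\chi_{A_j}\Pi_\lambda\chi_{A_k}$. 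Since $q_e>2$, Minkowski's inequality does not suffice. We decompose $\Pi_\lambda=\sum_{j,k}\chi_{A_j}\Pi_\lambda\chi_{A_k}$ asymmetrically. For the diagonal terms $|j-k|\le C$, we use the local estimate together with the $\ell^2$ orthogonality $\sum_k\|\chi_{A_k}f\|_2^2=\|f\|_2^2$ on the input side. On the output side we use $\ell^{q_e}\subset\ell^{2}$-type summation, since the $A_j$ are disjoint, $\|\sum_j g_j\|_{q_e}^{q_e}=\sum_j\|g_j\|_{q_e}^{q_e}$. This handles the diagonal terms without loss.

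The main obstacle is the off-diagonal interaction $\chi_{A_j}\Pi_\lambda\chi_{A_k}$ with $|j-k|$ large, which must decay like $2^{-\epsilon|j-k|}$. In $d\ge3$ this decay came from dispersion in the Mehler kernel. In $d=2$ the dispersive factor $|\sin t|^{-1}$ is too weak, so we add a second, temporal multiscale decomposition. Geodesics (classical Hermite orbits, which are ellipses) starting in $A_k$ reach $A_j$ only at times $t$ in specific windows. We split the time integral dyadically in the distance of $t$ from the degenerate times at which the phase of the kernel becomes stationary to high order, i.e. at which orbits become tangent to the caustic. These windows have width $\sim 2^{-\max(j,k)/2}$. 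On each space-time piece we estimate the oscillatory kernel in two ways. We obtain an $L^1\to L^\infty$ bound by stationary phase, using the phase's curvature in the angular variable, and an $L^2\to L^2$ bound from the restricted support in $t$ (Plancherel in the spectral variable). Interpolating these gives the $L^{q_e'}\to L^{q_e}$ bound with a factor $2^{-\epsilon|j-k|}$.

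When the uniform gain fails, namely near the worst tangential configurations, we exploit almost orthogonality in the angular frequency instead. We decompose by angular momentum, i.e. the joint eigenfunctions of $H$ and the rotation generator $\partial_\theta$, into sectors. Since the rotation generator commutes with $H$, these sectors are orthogonal on the input side. The remaining task is to show that each sector interacts with only $O(1)$ pairs $(j,k)$ at full strength. Then Cauchy--Schwarz over sectors and the $\ell^{q_e}$ disjointness on the output side close the estimate.
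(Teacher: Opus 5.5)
Your outer reduction matches the paper's. By $TT^*$ and the disjointness of the annuli (summing along diagonals $k=j+m$, which uses $q_e>2$), Theorem~\ref{thm:main} follows from an off-diagonal bound $\|\chi_{A_{\lambda,\mu}}\Pi_\lambda\chi_{A_{\lambda,\mu'}}\|_{q_e'\to q_e}\lesssim\lambda^{-1/5}(\mu'/\mu)^{\epsilon}$, which is Theorem~\ref{thm:decay}. One slip: at $q=q_e$ the $\mu$-exponent in \eqref{e:KTlocal} is $\frac14-\frac54\cdot\frac15=0$, so there is no gain $2^{-cj}$ on single annuli; that is exactly where the logarithm comes from.

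\textbf{The gap is the off-diagonal decay itself.} It is the entire content of the paper, you only assert it, and the mechanism you sketch does not produce it. Two problems with the time-window argument:
\begin{itemize}
\item \emph{No $L^2$ bound from Plancherel.} Your windows are centred at the degenerate time $S(x,y)=\arccos\langle x,y\rangle$ (after rescaling). This time varies across the spatial supports by far more than the window widths. So the localized operators are not functions of $H$, and Plancherel in the spectral variable gives nothing.
\item \emph{The time-only decomposition fails near $\{\mathcal D=0\}$.} Windows of width $(\mu')^{1/2}$ about $S(x,y)$ are just the decomposition of \cite{JLR24b}. They dispose of the part away from $t=S(x,y)$ (Proposition~\ref{prop:estfE}). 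The remaining window, however, contains the cubic degeneracy on $\{\mathcal D=0\}$, where the kernel is only $O(\lambda^{-1/3}\mu^{-1/2})$. Interpolate this with the $L^2$ bound $\lambda^{-1}(\mu\mu')^{1/4}$ that the near-time piece satisfies at the top scale. You get $\lambda^{-11/15}\mu^{-1/20}(\mu')^{3/20}$, which exceeds the target $\lambda^{-4/5}(\mu'/\mu)^{1/20}$ by $\lambda^{1/15}(\mu')^{1/10}$. That factor is unbounded once $\mu'\gg\lambda^{-2/3}$.
\end{itemize}
Finer windows in $|t-S(x,y)|$ alone do not help. For $\mathcal D>0$ the phase has two nondegenerate critical times at distance $\sim(\mathcal D/\mu)^{1/2}$ from $S$, and these move with $(x,y)$.

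What the paper actually uses:
\begin{itemize}
\item a joint stratification $|\mathcal D(x,y)|\sim\delta$, $\lambda^{-2/3}\mu\lesssim\delta\lesssim\mu\mu'$, with time windows of width $\delta^{1/2}\mu^{-1/2}$;
\item coverings by rectangles tangent to the level curves of $\mathcal D$, with dimensions \eqref{scale};
\item the tangential input refinement by the factor $(\mu'/\mu)^{1/2}$;
\item the diagonal bound of Proposition~\ref{prop:L2est}: van der Corput for the nonstationary pieces, and for $\mathcal D>0$ stationary phase, the rank-one mixed Hessian and H\"ormander's $L^2$ theorem;
\item Cotlar--Stein summation via Propositions~\ref{prop:alortho-1} and~\ref{prop:alortho-2}.
\end{itemize}
Only with all of this are the sup bound $\lambda^{-1/2}\delta^{-1/4}\mu^{-1/4}$ and an $L^2$ bound that shrinks with $\delta$ balanced scale by scale.

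\textbf{The angular-momentum fallback does not close the argument either.}
\begin{itemize}
\item \emph{Output recombination fails.} Inside the $\lambda$-eigenspace each angular momentum $m$ has multiplicity one, and distinct sectors are orthogonal only in $L^2$. On the output side they occupy the same annuli, so the $\ell^{q_e}$-disjointness of the $A_j$ cannot recombine them, and Cauchy--Schwarz over sectors discards all output cancellation.
\item \emph{A heuristic WKB count shows the cost.} About $\lambda(\mu')^{1/2}$ modes, those with $|m|\lesssim\lambda(\mu')^{1/2}$, reach $A_{\lambda,\mu'}$. Each has $L^2$ norm $\sim(\mu')^{1/4}$ there and $L^{q_e}$ norm $\sim\lambda^{-1/5}\mu^{1/20}$ on $A_{\lambda,\mu}$. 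Cauchy--Schwarz then yields $\lambda^{3/10}\mu^{1/20}(\mu')^{1/2}\gtrsim\lambda^{-1/15}$, a power of $\lambda$ worse than $\lambda^{-1/10}$.
\item \emph{The key claim is false.} Grouping into dyadic angular blocks with an angular Littlewood--Paley inequality would legitimately reduce matters to a uniform bound $\lambda^{-1/10}$ per block. But your claim that a block meets only $O(1)$ pairs $(j,k)$ at full strength does not hold. On $\{\mathcal D=0\}$ the connecting orbit has rescaled angular momentum $\tilde m^2=(1-|x|^2)(1-|y|^2)/(2-|x|^2-|y|^2)\approx2\mu'$. So for every $\mu\gg\mu'$, the degenerate interactions between $A_{\lambda,\mu}$ and $A_{\lambda,\mu'}$ are carried by $|m|\approx\lambda(2\mu')^{1/2}$, with outer turning point at depth $\approx\mu'$.
\end{itemize}
A single dyadic block therefore meets all $\sim\log(1/\mu')$ pairs, and the decay you need inside it is the same problem that Theorem~\ref{thm:decay} solves.
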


Combined with the preceding results, this completes the study of
$L^q$ eigenfunction bounds for the Hermite operator in all dimensions.

\subsection{Asymmetric localization}  
The \(L^2\)--\(L^q\) bounds for \(\Pi_\lambda\) have also been studied in local settings. Thangavelu \cite{Th98a} obtained optimal \(L^2(\mathbb{R}^d)\)--\(L^q(B)\) bounds for \(\Pi_\lambda\), where \(B\) is a fixed compact subset of \(\mathbb{R}^d\) (see also \cite{WZ25} for more refined recent results and \cite{OR24} for related work on localization along smooth curves). In his work, it was observed that the behavior of \(\Pi_\lambda\) near the sphere
\(\sqrt{\lambda}\mathbb{S}^{d-1}\)   
plays a crucial role in determining the global bound for \(\Pi_\lambda\). Later, Koch--Tataru \cite{KT05} proved the sharp estimates  for  \(\Pi_\lambda\)  restricted to  annular regions near the sphere \(\sqrt{\lambda}\mathbb{S}^{d-1}\). More precisely, set
\[
    A_{\lambda,\mu} =
    \begin{cases}
    \big\{
        x\in \mathbb{R}^d:
        \mu < 1-|\lambda^{-\frac{1}{2}}x| \le 2\mu
    \big\},
    \quad   &\mu> \lambda^{-\frac{2}{3}},
    \\[5pt]
    \big\{
        x\in \mathbb{R}^d:
         |1-|\lambda^{-\frac{1}{2}}x| |\le 2\mu
    \big\},
    \quad   & \lambda^{-\frac{2}{3}} \ge \mu 
    \end{cases} 
\]
for  $ \mu\in 2^{-\mathbb{N}} \cap [2^{-1}\lambda^{-2/3} ,2^{-1}]$.  
They  obtained
\begin{align}\label{e:KTlocal}
    \|\chi_{A_{\lambda,\mu}}\Pi_\lambda\|_{L^2(\mathbb{R}^d)\to L^q(\mathbb{R}^d)}
  \lesssim 
    \begin{cases}
        \lambda^{-\frac{1}{2}\delta(q)}
        \mu^{\frac{1}{4}-\frac{d+3}{4}\delta(q)},
        & 2\le q\le \frac{2(d+1)}{d-1}, \\[3pt]
        (\lambda\mu)^{\frac{d}{2}\delta(q)-\frac{1}{2}},
        & \frac{2(d+1)}{d-1}\le q\le \infty.  
    \end{cases}
\end{align}
The estimate \eqref{e:KTlocal} was a key ingredient in their result on the Hermite spectral projection operator in \cite{KT05}.

In order to prove the estimate \eqref{2-d}, we primarily follow the approach of \cite{JLR24b}, where an improvement over the previously known bounds on $\Pi_\lambda$ was exploited under asymmetric localization near $\sqrt{\lambda}\mathbb S^{d-1}$.  The following is our main estimate, which proves the endpoint eigenfunction bounds in two dimensions.

\begin{thm}\label{thm:decay}
    Let $\lambda\in 2\N_0+2$ and  $\mu,\mu'\in 2^{-\N}$.   
        Suppose $C_\ast\lambda^{-2/3}< \mu'\le\mu\le \cz$, where $C_\ast>0$ is sufficiently large and $\cz>0$ is sufficiently small. Then, there exists a constant $C>0$, independent of $\lambda,\mu,\mu'$, such that
    \begin{align}
    \label{cap}
         \|\chi_{A_{\lambda,\mu}}\Pi_\lambda\chi_{A_{\lambda,\mu'}}\|_{q'_e \to q_e }\le C\lambda^{-\frac{1}{5}}\biggl(\frac{\mu'}{\mu}\biggr)^{\frac{1}{20}}.
    \end{align}
\end{thm}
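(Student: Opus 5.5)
The plan is to represent $\Pi_\lambda$ through the Schrödinger propagator $e^{-itH}$, which is $\pi$-periodic (up to sign) since the spectrum of $H$ lies in $2\N_0+2$. This gives
\[
\Pi_\lambda=\frac1{\pi}\int_{-\pi/2}^{\pi/2} e^{it(\lambda-H)}\,dt .
\]
The Mehler formula supplies an explicit oscillatory kernel $e^{it(\lambda-H)}(x,y)=c(\sin 2t)^{-1}e^{i\phi_\lambda(t,x,y)}$ with
\[
\phi_\lambda=\lambda t+\frac{(|x|^2+|y|^2)\cos 2t-2x\cdot y}{2\sin 2t}.
\]
After rescaling $x\to\sqrt\lambda x$, $y\to \sqrt\lambda y$, this becomes an oscillatory integral operator with large parameter $\lambda$. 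The input is localized to $1-|y|\sim\mu'$ and the output to $1-|x|\sim\mu$.

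First I decompose in $t$ dyadically relative to the degeneracy set. The critical points in $t$ satisfy $\partial_t\phi=0$, and when $|x|,|y|$ are near $1$ the stationary points cluster near $t\sim\sqrt\mu$ (the time the classical Hamiltonian flow spends crossing the annulus at depth $\mu$). I split the $t$-integral with a partition $\sum_j\psi(2^{-j}t/\sqrt{\mu})$. The piece $t\ll\sqrt{\mu'}$ is handled by the Koch--Tataru local estimate \eqref{e:KTlocal}: by $TT^*$ it bounds $\chi_{A_{\lambda,\mu}}\Pi_\lambda\chi_{A_{\lambda,\mu}}$ by $\lambda^{-1/5}\mu^{0}$ at $q=q_e$. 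The role of the asymmetry is that when $\mu'\ll\mu$, trajectories starting at depth $\mu'$ (nearly tangent to the caustic) need time $\gtrsim\sqrt{\mu}$ to reach depth $\mu$. Thus for $t\lesssim\sqrt\mu$ the phase has no critical point in $t$, and integration by parts in $t$ gives rapid decay. For the remaining times, where stationary phase is possible, I use that the relevant set of directions $x\cdot y/|x||y|$ is confined to an angular sector. Concretely, I decompose the annuli into angular sectors of width $\sim\sqrt\mu$ (the output) and $\sqrt{\mu'}$ (the input side is refined more, which is the asymmetric refinement). Each sector-to-sector operator then satisfies an $L^{2}$ bound from stationary phase and an $L^1\to L^\infty$ kernel bound of size $\lambda^{-1/2}t^{-1}\times$(Jacobian). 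I interpolate these via Stein--Tomas/Bourgain-type interpolation on the dyadic pieces and sum using almost orthogonality among sectors: the angular pieces interact only with $O(1)$ partners, up to rapidly decaying tails.

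The gain $(\mu'/\mu)^{1/20}$ should arise as follows. The $L^2$ bound for a piece with input depth $\mu'$ and output depth $\mu$ carries a factor $(\mu'/\mu)^{1/4}$ from the smaller volume of the input annulus, which is transverse to the flow. Interpolating that against the $L^1\to L^\infty$ bound, with weight $\theta$ chosen so that the exponent of $\lambda$ equals $-1/5$, leaves a power $(\mu'/\mu)^{c}$ with $c>0$. I expect the bookkeeping to produce exactly $c=1/20$ at $q_e=10/3$, but I have not checked this.

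The main obstacle is the two-dimensional weak dispersion. In $d=2$ the kernel bound near the fold is only $t^{-1/2}$ in the tangential variable, so the interpolated dyadic sum over $t$ is borderline. This borderline sum is exactly the source of the logarithm in Koch--Tataru, and it is why the gain used in \cite{JLR24b} is not enough here. The first thing I would try is to secure a small power gain in each dyadic time block by using the multiscale space–time decomposition relative to the degeneracy set. Specifically, I would further split the region where $\partial_t^2\phi$ is small (the fold) at scale $\lambda^{-1/3}$-type neighborhoods, and apply almost orthogonality in $t$ between blocks. Summing the geometric series in $j$ together with the $(\mu'/\mu)$ factor then gives \eqref{cap}.
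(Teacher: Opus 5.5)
There is a genuine gap. The proposal never establishes the estimate that produces $(\mu'/\mu)^{1/20}$: you leave the exponent bookkeeping explicitly unchecked, and the mechanism you offer is not the right one. A factor $(\mu'/\mu)^{1/4}$ from the thinner input annulus does not survive at the fold $\mathcal D=0$, where stationary phase degenerates.

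Several concrete claims are also off:
\begin{itemize}
\item The Koch--Tataru estimate \eqref{e:KTlocal} is a bound for the spatially localized projection. It gives $\lambda^{-1/5}$ for $\chi_{A_{\lambda,\mu}}\Pi_\lambda\chi_{A_{\lambda,\mu'}}$ with no gain. In the paper it only disposes of the range $\mu'\gtrsim\mu$, and it cannot be applied to a time-truncated piece.
\item At $t\sim\sqrt\mu$ the phase does have critical points; this is precisely where all the difficulty lies.
\item The interacting pairs are governed by the exact identity $\mathcal D(x,y)=(1-|x|^2)(1-|y|^2)-|x|^2|y|^2\sin^2\angle(x,y)$. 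Hence the relevant input and output sectors both have angular width $\sim\sqrt{\mu\mu'}$, not $\sqrt\mu$ and $\sqrt{\mu'}$.
\item The paper uses no orthogonality in $t$. Time pieces are summed by the triangle inequality, and all almost orthogonality is in the spatial variables.
\end{itemize}

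The missing core is the analysis organized around $\mathcal D$. One has $\partial_t\mathcal P=(\mathcal D-(\cos t-\cos S)^2)/(2\sin^2t)$ with $\cos S=\langle x,y\rangle$. So for $\mathcal D\sim\delta>0$ there are two nondegenerate critical times at distance $\sim\delta^{1/2}\mu^{-1/2}$ from $S$, there are none for $\mathcal D<0$, and there is a cubic degeneracy at $\mathcal D=0$.

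The paper proceeds as follows:
\begin{itemize}
\item It decomposes dyadically in $|\mathcal D|\sim\delta$, from $\delta\sim\mu\mu'$ down to $\delta\sim\lambda^{-2/3}\mu$ (time width $\lambda^{-1/3}$), and localizes $|t-S(x,y)|\lesssim\delta^{1/2}\mu^{-1/2}$.
\item It covers each level region by pairs of rectangles tangent to the level curves of $\mathcal D$. These have dimensions $\delta(\mu\mu')^{-1/2}\times\delta^{1/2}(\mu/\mu')^{1/2}$ on the output side and $\delta\mu^{-1}\times\delta^{1/2}$ on the input side. The input rectangle is further cut tangentially by the factor $(\mu'/\mu)^{1/2}$.
\item Each refined piece gets the diagonal bound $\lambda^{-1}\delta^{1/2}(\mu\mu')^{-1/4}$. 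For nonstationary pieces this comes from van der Corput and Schur. For $\mathcal D>0$ it comes from stationary phase, the rank-one mixed Hessian, and a one-dimensional H\"ormander estimate.
\item Off-diagonal decay comes from separation of the normals $\partial_x\mathcal D/|\partial_x\mathcal D|$ and $\partial_y\mathcal D/|\partial_y\mathcal D|$, and from separation of the input subrectangles.
\item Cotlar--Stein then gives $\lambda^{-1}\delta^{1/4}(\mu')^{-1/2}\min(\delta^{1/4},(\mu')^{1/2})$ at level $\delta$.
\item Interpolating against the kernel bound $\lambda^{-1/2}(\delta\mu)^{-1/4}$ gives a dyadic sum dominated by $\delta\sim\mu\mu'$. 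The result is $\lambda^{-4/5}(\mu\mu')^{1/20}\mu^{-1/10}=\lambda^{-4/5}(\mu'/\mu)^{1/20}$ in the rescaled normalization.
\end{itemize}

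So the gain comes from the degeneracy parameter reaching only $\mu\mu'$ on asymmetric pairs, exploited through this refined spatial orthogonality. The part away from $t=S(x,y)$ also needs its own Cotlar--Stein argument. Your last paragraph names a multiscale decomposition near the fold but supplies none of these scales, estimates, or the summation.
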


Theorem \ref{thm:main} follows from Theorem \ref{thm:decay} by the same
$TT^\ast$ argument and dyadic annular summation as in
\cite[pp.~1315--1316 and Section~2.7]{JLR24b}.

 This improvement was first observed in \cite{JLR24b} for $d\ge 3$. In the equivalent $T^\ast T$ formulation, comparison with \eqref{e:KTlocal} shows that \eqref{cap} gains precisely the additional factor $(\mu'/\mu)^{1/20}$; this geometric decay in the relative annular scale makes the dyadic summation converge. This demonstrates that the contribution from $\chi_{A_{\lambda,\mu}}\Pi_\lambda\chi_{A_{\lambda,\mu'}}$ becomes weaker as $\mu'/\mu$ gets smaller.

\medskip

We conclude the introduction with an outline of the proof. 

\subsection{Strategy of the proof}

As in \cite{JLR24b}, we use the Hermite--Schr\"odinger propagator to
reduce Theorem~\ref{thm:decay} to an estimate for a localized
oscillatory integral operator. After rescaling and applying a sectorial
Whitney-type decomposition, it is enough to consider a pair of
asymmetrically localized rectangles $\cB$ and $\cB'$ and to prove
Theorem~\ref{thm:core}. The main point is to obtain an additional gain
expressed as a positive power of the asymmetric ratio $\mu'/\mu$.

The geometry of the resulting oscillatory integral is governed by
\[
    \cD(x,y)=1+\langle x,y\rangle^2-|x|^2-|y|^2
    \quad\text{and}\quad
    S(x,y)=\cos^{-1}\langle x,y\rangle .
\]
Identity~\eqref{i:ptcP-Q} shows that $\cD$ plays the role of a
discriminant for the stationary-point equation in the time variable.
When $\cD>0$, there are two nondegenerate stationary points near
$S(x,y)$; when $\cD<0$, there is no stationary point; and when
$\cD=0$, the first two time derivatives of the phase vanish at
$t=S(x,y)$, producing a cubic degeneracy. The time decomposition used
in \cite{JLR24b} alone does not provide the gain required at the
two-dimensional endpoint. The additional ingredient is a multiscale
decomposition involving both the spatial and time variables, together
with an asymmetric refinement on the input side.
Together with the diagonal and almost-orthogonality estimates,
this scheme yields the factor \((\mu'/\mu)^{1/20}\) after interpolation.

\medskip
\noindent
\emph{Separation from the symmetric time.}
At the initial scale $\delt\sim\mu\mu'$, we first separate the part
supported away from $t=S(x,y)$. As described in
Section~\ref{sec:prune}, this contribution is decomposed dyadically
according to $|t-S(x,y)|$ and further localized on the input side. The
kernel and $L^2$ estimates in \eqref{l0} and \eqref{l2}, together with
the Cotlar--Stein argument developed in
Sections~\ref{subsec:away-contribution}--\ref{subsec:away-orthogonality},
yield Proposition~\ref{prop:estfE}. Thus, the main problem is reduced
to the operator supported near the symmetric time; see the
decomposition \eqref{d:top}--\eqref{e:decompm-1st}.

\medskip
\noindent
\emph{Recursive space--time decomposition.}
The bottom scale is $\dels\sim\lambda^{-2/3}\mu$, as defined in
\eqref{d:top&bottom}. Starting from $\delt$ and descending through the
dyadic scales $\dels\leq\delta\leq\delt$, we divide the spatial support
into the regions $\fS_\delta^+$, $\fS_\delta^-$, and
$\fS_\delta^\circ$ introduced in \eqref{fsd}--\eqref{fsdc}. In the
region where $|\cD(x,y)|\lesssim\delta$, the time support is divided
once more: the part at distance comparable to
$\delta^{1/2}\mu^{-1/2}$ from $S(x,y)$ is extracted, while the part
closer to $S(x,y)$ is passed to the next dyadic scale. The resulting
recursive decomposition is given in
\eqref{i:decomp-main}--\eqref{iterat}.

At each scale, the spatial regions are covered by pairs of rectangles
essentially tangent to the level curves of $\cD$. Their normal and
tangential dimensions are specified in \eqref{scale}, and their
directions are determined by the normalized gradients of $\cD$
introduced in \eqref{normal}--\eqref{normal'}.
The stability of these directions is established in
Lemmas~\ref{lem:locD-strong}--\ref{lem:locfab} and is used in Section~\ref{sec:decomp} to construct the required covering and prove the incidence bounds.
This gives the localized
operator decompositions \eqref{pdk} and \eqref{pds}.

\medskip
\noindent
\emph{Asymmetric refinement and the gain in $\mu'/\mu$.}
The gain in the asymmetric ratio is already visible at the level of
the fully refined localized pieces. In
Section~\ref{subsec:input-refinement}, each input rectangle $\fr'$ is
subdivided in its tangential direction into rectangles $\fs$ whose
tangential length is smaller by the factor $(\mu'/\mu)^{1/2}$; see
\eqref{s'}--\eqref{pdts'}. We denote the corresponding operators by
$\fP_{\delta,\fr,\fs}^{\kappa}$,
$\kappa\in\{+,-,\circ\}$.

For each such piece, Proposition~\ref{prop:L2est} gives the diagonal
estimate
\[
    \|\fP_{\delta,\fr,\fs}^{\kappa}\|_{2\to2}
    \lesssim
    \lambda^{-1}\delta^{\frac{1}{2}}(\mu\mu')^{-\frac{1}{4}};
\]
see \eqref{e:fP-dkap}. On the other hand, the corresponding
$L^1\to L^\infty$ estimate is recorded in \eqref{e:bds-1infty}, as a
consequence of the kernel estimates \eqref{e:ptbd-fPcn},
\eqref{p+circ}, and \eqref{i:exp-fP+}. Interpolation at $q_e=10/3$
gives
\[
    \|\fP_{\delta,\fr,\fs}^{\kappa}\|_{q_e'\to q_e}
    \lesssim
    \lambda^{-\frac{4}{5}}
    \delta^{\frac{1}{5}}\mu^{-\frac{1}{4}}(\mu')^{-\frac{3}{20}}.
\]
Since $\delta\leq\delt\sim\mu\mu'$, this already contains the required
improvement in $\mu'/\mu$ for a single fully refined piece. Thus, the
asymmetric gain originates in the comparison between the
$L^2\to L^2$ and $L^1\to L^\infty$ estimates after the additional
input-side refinement. The bottom-scale piece is treated in the same
way using \eqref{e:fP-ast}, \eqref{e:ptbd-fP*}, and
\eqref{e:bds-10}, together with the lower bound on $\mu'$ in
\eqref{mm}.

\medskip
\noindent
\emph{Estimates for the localized pieces.}
Section~\ref{sec:individual} treats the fully refined pieces according
to the behavior of the phase in the time variable. The pieces
corresponding to $\cD<0$, as well as the time-away pieces extracted
from $|\cD|\lesssim\delta$, are nonstationary and are estimated by the
van der Corput lemma using the lower bound \eqref{e:1stlb-cP}; see
\eqref{e:ptbd-fPcn}. At the bottom scale, the cubic behavior of the
phase gives \eqref{e:fP-ast} and \eqref{e:ptbd-fP*}.

For the pieces with $\cD>0$, stationary phase at the two nondegenerate
critical points gives the reduction in
Section~\ref{subsec:stationary-phase}. The mixed Hessian of each
reduced phase has rank one by Lemma~\ref{lem:mixedH} and is
nondegenerate between the transverse directions determined by the
tangential rectangles; see
\eqref{e:mixedH-unscaled}--\eqref{e:mixednunu'}. After the anisotropic
rescaling in \eqref{LL}--\eqref{i:G->wtG} and freezing the tangential
variables, a one-dimensional H\"ormander-type estimate gives the
required $L^2$ bound and completes the proof of
Proposition~\ref{prop:L2est}.

\medskip
\noindent
\emph{Almost orthogonality and summation.}
It remains to sum the fully refined pieces without losing the gain
already present in their individual estimates.
Proposition~\ref{prop:alortho-1} gives off-diagonal decay between
pieces associated with separated tangential rectangles, while
Proposition~\ref{prop:alortho-2} gives additional almost orthogonality
between separated tangential subrectangles $\fs$ within a fixed input
rectangle. These estimates are obtained by comparing the spatial
gradients of the phases and repeatedly integrating by parts. Combined
with the incidence properties from Section~\ref{sec:decomp}, they
allow the Cotlar--Stein lemma to be applied in
Section~\ref{subsec:global-L2}.

More precisely, Proposition~\ref{prop:main-components1-0} gives the
global $L^1\to L^\infty$ estimates, and
Proposition~\ref{prop:main-components2-2} follows by combining
Propositions~\ref{prop:alortho-1}, \ref{prop:alortho-2}, and
\ref{prop:L2est}. Interpolation of these two global bounds and dyadic
summation, split at $\delta=(\mu')^2$ in
Section~\ref{subsec:proof-main-p}, prove Proposition~\ref{main-p} and
hence Theorem~\ref{thm:core}.

\subsection*{Organization of the paper}
Section~\ref{sec:prelim} reduces the main theorem to the localized
oscillatory integral estimate and disposes of the contribution away
from the symmetric time. Section~\ref{sec:geometry} develops the
geometry of the level sets of $\cD$, and Section~\ref{sec:decomp}
constructs the recursive space--time decomposition and the
tangential rectangle coverings. Section~\ref{sec:alortho} establishes
the almost-orthogonality estimates, Section~\ref{sec:individual} proves
the bounds for the fully refined localized pieces, and
Section~\ref{sec:conclude} combines these estimates to complete the
proof.

\subsection*{Notation} For positive quantities $A$ and $B$, $A\lesssim B$ means that there exists a constant $C>0$, independent of $A$ and $B$, such that $A\le CB$. Moreover, we write $A\sim B$ if and only if $A\lesssim B$ and $B\lesssim A$. We write $A\ll B$ if $A\lesssim B$ and the implicit constant is sufficiently small. We also denote $A\lesssim_N B$ if the implicit constant depends on a parameter $N$.  
%
  We denote $\partial_x=(\partial_{x_1},\partial_{x_2})$ and
$\partial_y=(\partial_{y_1},\partial_{y_2})$. More generally, let
$w$ denote either $x$ or $y$. Let $R$ be a rectangle with side
lengths $L_1$ and $L_2$ in the orthonormal directions
$\nu_1,\nu_2\in\mathbb S^1$, respectively. We say that a smooth
function $\chi=\chi(w)$ is adapted to $R$ if $\supp\chi\subset R$ and
\[
\big|(\inp{\nu_1}{\partial_w})^N
      (\inp{\nu_2}{\partial_w})^M\chi(w)\big|
\lesssim_{N,M}L_1^{-N}L_2^{-M}
\]
for all nonnegative integers $N$ and $M$, with constants independent
of $R$, $L_1$, and $L_2$.
 \section{Reduction to a localized oscillatory integral}\label{sec:prelim}
In this section, we carry out preparatory reductions toward the proof of  Theorem \ref{thm:decay}. The objective is to isolate the essential part, which will be addressed in the remaining sections.  We begin by recalling the steps of reduction from \cite{JLR24b}. 

We first fix \(0<\varepsilon_0\ll1\), and then choose
\(C_\ast\) sufficiently large, depending only on
\(\varepsilon_0\). These constants remain fixed throughout the
proof. All implicit constants are uniform in
\(\lambda,\mu,\mu'\), although they may depend on
\(\varepsilon_0\) and \(C_\ast\).

\subsection{Further localization via a Whitney-type decomposition}
Under the hypotheses of Theorem~\ref{thm:decay}, the Koch--Tataru
estimate \eqref{e:KTlocal} shows that \eqref{cap} already holds when
\(\mu'>\epz\mu\); see \cite[Sections~2--3]{JLR24b}. Thus, taking
\(c_0=\epz\), it suffices to assume that
\Be
\label{mm}
C_\ast\lambda^{-\frac{2}{3}}< \mu'\le \epz  \mu,    \quad \mu  \le \epz
 \Ee
where \(\epz\) and \(C_\ast\) are fixed as above. Let
\[  \cP(x,y,t) = \frac{t}{2} + \frac{(|x|^2+|y|^2)\cos t-2\inp xy}{2\sin t}. \]
By the kernel representation of the projection operator $\Pi_\lambda$ (see \cite[Lemma 2.1]{JLR24b}) and the scaling $(x,y)\to \sqrt\lambda(x,y)$, the estimate \eqref{cap} 
is equivalent to 
 \begin{align}
    \label{cap0}
         \|\chi_{A_{\mu}}\mathcal O_\lambda\chi_{A_{\mu'}}\|_{q'_e \to q_e }\le C\lambda^{-\frac{4}{5}}\biggl(\frac{\mu'}{\mu}\biggr)^{\frac{1}{20}},
    \end{align}
where $A_{\mu}$ denotes $A_{1,\mu}$, and the operator $\mathcal O_\lambda$ is given by its kernel 
\[   \mathcal O_\lambda(x,y)= \int_{-\pi}^\pi \frac{e^{i\lambda \cP(x,y,t)}}{\sin t}  dt. \]
Indeed, in dimension two the dilation contributes
\(\lambda^{1+1/q_e-1/q_e'}=\lambda^{3/5}\) to the operator norm, so
the factor \(\lambda^{-4/5}\) in \eqref{cap0} corresponds to
\(\lambda^{-1/5}\) in \eqref{cap}.
 
Throughout this paper, to simplify notation, we identify an operator with its kernel as long as no confusion can arise. Thus, for an operator \(T\), we write \(T(x,y)\) for its kernel, so that
\[
    Tf(x)=\int T(x,y)f(y)\,dy.
\]
Conversely, when a kernel \(T(x,y)\) defines the associated operator $T$ through the right-hand side, we denote this operator again by \(T\).

Let $\psi\in C_c^\infty((1/2, 2))$ such that  $\sum_{j\in \Z}\psi(2^jt) = 1$ for $t>0$.
We briefly review the reduction steps in \cite[Sections 2--3]{JLR24b}. These reductions are dimension-independent and apply without change
in the present two-dimensional setting. Indeed, after dyadically decomposing in $t$ away from the singularities $\pm \pi,0$ and exploiting the symmetry of the phase function, it is sufficient to prove \eqref{cap0} with
 $\mathcal O_\lambda(x,y)$ replaced by
\[   \int \psi(2^jt)\,\frac{e^{i\lambda \cP(x,y,t)}}{\sin t}  dt\] 
for $2^{-j}\sim \mu^{1/2}$ after discarding the acceptable minor contributions.  
In what follows, by $\psi_\mu$ we denote a smooth function such that 
 \[\supp \psi_{\mu} \subset I_\mu:= [ C_\ast^{-1} \smu, C_\ast\smu ]\]
 for a large constant $C_\ast>0$ and $(d/dt)^N\psi_{\mu}
    =O(\mu^{-N/2})$ for any nonnegative integer $N$.  Thus, we may replace the above integral with 
    \begin{align}\label{d:Plapsij}
      \mathcal O_\la^\mu (x,y) = \int \psi_\mu(t)\,\frac{e^{i\lambda \cP(x,y,t)}}{\smu}  dt. \footnotemark 
    \end{align}
 \footnotetext{More precisely, $\psi_\mu= \sqrt \mu \psi(2^j t)/\sin t$ with $2^{-j}\sim \smu$.} 
 

\begin{figure}[t]
\centering
\begin{tikzpicture}[
    >={Stealth[length=2.2pt,width=2.4pt]},
    scale=2.6,
    dim/.style={<->,thin},
    guide/.style={gray!65,densely dotted},
    every node/.style={font=\scriptsize}
]

\draw[->] (-0.15,0) -- (4.15,0)
    node[right] {$e_1$};

\draw[gray!85,densely dotted]
    plot[domain=-14:34,samples=80,variable=\t]
    ({3.6*cos(\t)},{3.6*sin(\t)});
\node[gray!70] at (3.60,1.55) {$|z|=1$};
\node[below] at (3.6,0) {$1$};

\filldraw[
    fill=blue!15,
    draw=blue!75!black,
    thick
]
    (1.55,-0.133333) rectangle (2.45,0.133333);

\node[blue!75!black,font=\small] at (2.00,0)
    {$\mathcal B$};

\filldraw[
    fill=red!15,
    draw=red!75!black,
    thick
]
    (3.00,0.62) rectangle (3.20,1.02);

\node[red!75!black,font=\small] at (3.10,0.82)
    {$\mathcal B'$};

\draw[guide] (2.00,-0.62) -- (2.00,-0.133333);
\draw[guide] (3.10,-0.62) -- (3.10,0.62);
\draw[guide] (3.60,-0.62) -- (3.60,0);

\draw[dim] (2.00,-0.25) -- (3.60,-0.25)
    node[midway,fill=white,inner sep=1pt]
    {$\sim\mu$};

\draw[dim] (3.10,-0.39) -- (3.60,-0.39)
    node[midway,fill=white,inner sep=1pt]
    {$\sim\mu'$};

\draw[guide] (2.89,0) -- (3.10,0);
\draw[guide] (2.89,0.82) -- (3.00,0.82);

\draw[dim] (2.89,0) -- (2.89,0.82)
    node[pos=2/3,left]
    {$\sim\sqrt{\mu\mu'}$};

\draw[dim] (1.55,0.256667) -- (2.45,0.256667)
    node[midway,above] {$\epz\mu$};

\draw[dim] (3.00,0.49) -- (3.20,0.49)
    node[midway,below] {$\epz\mu'$};

\draw[dim] (3.29,0.62) -- (3.29,1.02)
    node[midway,right]
    {$\epz \sqrt{\mu\mu'}$};

\end{tikzpicture}

\caption{A typical configuration of $\mathcal B$ and $\mathcal B'$
when $\mu'\ll\mu$. The figure is schematic and not drawn to scale.}
\label{fig:typical-B-Bprime}
\end{figure} 

For a sufficiently small number $\epz>0$, let $\cB$ and $\cB'$ be rectangles of dimensions $\epz\mu\times \epz(\mu\mu')^{1/2}$ and $\epz\mu'\times \epz(\mu\mu')^{1/2}$, whose sides are in the directions of $x_1$, $x_2$, respectively, such that 
\Be\begin{aligned}
\label{bb}
\mathcal B &\subset\, \big\{x: 1-x_1\sim \mu, \,|x_2| \ll   \mmt   \big \}, \\  \mathcal B' &\subset\, \big\{y: 1-y_1\sim {\mu'}, \,  y_2 \sim   \mmt\big\}. 
\end{aligned}
\Ee
Let $\wt\chi_{\cB}$ and $\wt\chi_{\cB'}$ be nonnegative smooth cutoff functions, adapted to $\cB$ and $\cB'$, respectively, such that 
\Be\label{suppBB}  \supp\wt\chi_{\cB} \Subset \cB, \quad \supp\wt\chi_{\cB'} \Subset \cB',\Ee
$\partial_1^k \partial_2^l\wt \chi_{\cB}=O(\mu^{-k} \times (\mu\mu')^{-l/2})$,  and $\partial_1^k \partial_2^l \wt\chi_{\cB'}=O((\mu')^{-k} \times (\mu\mu')^{-l/2})$. 

Now, after a sectorial Whitney-type decomposition of $A_\mu\times A_{\mu'}$ and a simultaneous orthogonal transformation (a rotation and a reflection), and discarding acceptable contributions again, it is enough to prove the estimate 
 \Be \label{main-est} \|\wt\chi_{\cB}  \mathcal O_\la^\mu \wt\chi_{\cB'}\|_{q_e '\to q_e }\lesssim \lambda^{-\frac{4}{5}}\biggl(\frac{\mu'}{\mu}\biggr)^{\frac{1}{20}},\Ee
under the assumption that  
\begin{align}
 \label{r:cond-cDh}
    &|\cD(x,y)|\le C_0\epz\mu\mu',\quad (x,y)\in \cB\times \cB'
\end{align}
where \(C_0>0\) is fixed. The complementary pieces with
\(|\cD|\gtrsim\epz\mu\mu'\) are controlled by the same kernel
estimates, in particular \cite[(3.3), p.~1329]{JLR24b}. Thus, we are
reduced to the following theorem.

\begin{thm}\label{thm:core}
Let \(\mu,\mu'\in2^{-\N}\) satisfy \eqref{mm}. Suppose that
\(\cB,\cB'\) and the adapted cutoffs
\(\wt\chi_{\cB},\wt\chi_{\cB'}\) are as above and that
\eqref{r:cond-cDh} holds. Then \eqref{main-est} holds uniformly over
cutoffs with fixed adaptation constants.
\end{thm}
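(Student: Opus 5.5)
We would prove Theorem~\ref{thm:core} by decomposing the kernel of $\wt\chi_{\cB}\mathcal O_\la^\mu\wt\chi_{\cB'}$ according to the distance of $t$ from the symmetric time $S(x,y)=\cos^{-1}\inp xy$ and the size of $\cD(x,y)$. We then interpolate, piece by piece, between an $L^1\to L^\infty$ kernel bound and an $L^2\to L^2$ bound, and sum dyadically.

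\emph{First step.} We separate the part of the $t$-integral with $|t-S(x,y)|\gtrsim \delt^{1/2}\mu^{-1/2}$, where $\delt\sim\mu\mu'$. Since $\partial_t^2\cP$ vanishes to first order only near $S$ when $\cD\approx0$, the identity expressing $\partial_t\cP$ in terms of $\cD$ and $(t-S)^2$ gives a lower bound on $|\partial_t\cP|$ there. We decompose this region dyadically in $|t-S|$ and estimate each piece as follows: the $L^\infty$ kernel bound comes from van der Corput, and the $L^2$ bound from a Cotlar--Stein sum after input-side localization. Interpolating at $q_e=10/3$ then gives an acceptable contribution.

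\emph{Recursive step.} For the remainder we run a recursion over dyadic $\delta$ with $\dels=\lambda^{-2/3}\mu\le\delta\le\delt$. The spatial support in $\cB\times\cB'$ is split into three regions, $\cD\sim\delta$, $\cD\sim-\delta$, and $|\cD|\lesssim\delta$. In the last region the time support is split again: the shell $|t-S|\sim\delta^{1/2}\mu^{-1/2}$ is extracted and the inner part is passed to scale $\delta/2$. The pieces are treated as follows.
\begin{itemize}
\item For $\cD<0$ and for the extracted shells the phase is nonstationary, and van der Corput gives the $L^\infty$ kernel bounds.
\item At the bottom scale $\dels$ the cubic degeneracy gives $|\mathcal O|\lesssim \lambda^{-1/3}\mu^{-1/2}\dels^{\ldots}$, and the lower bound $\mu'>C_\ast\lambda^{-2/3}$ from \eqref{mm} makes this piece acceptable.
\item For $\cD>0$ we apply stationary phase at the two nondegenerate critical points. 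This reduces the kernel to $e^{i\lambda\Phi_\pm(x,y)}a_\pm$ with amplitude of size $\lambda^{-1/2}\mu^{-1/2}\cdot(\lambda\mu^{-1/2}\delta^{1/2})^{-1/2}$.
\end{itemize}

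\emph{Localization and the main $L^2$ estimate.} We cover each spatial region by pairs of rectangles $\fr\times\fr'$ tangent to the level curves of $\cD$, using the direction stability of $\nabla\cD$. Each input rectangle $\fr'$ is further refined tangentially into rectangles $\fs$ shorter by the factor $(\mu'/\mu)^{1/2}$. For each piece $\fP^\kappa_{\delta,\fr,\fs}$ we aim to prove
\[\|\fP^\kappa_{\delta,\fr,\fs}\|_{2\to2}\lesssim\lambda^{-1}\delta^{1/2}(\mu\mu')^{-1/4}.\]
The approach is to show that the mixed Hessian of $\Phi_\pm$ has rank one and is nondegenerate across the transverse directions, rescale anisotropically, freeze the tangential variables, and apply a one-dimensional H\"ormander estimate. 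Interpolating with the kernel bound gives
\[\lambda^{-4/5}\delta^{1/5}\mu^{-1/4}\mu'^{-3/20}\le\lambda^{-4/5}(\mu'/\mu)^{1/20}\]
for $\delta\lesssim\mu\mu'$.

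\emph{Summation.} To sum over $\fr$ and $\fs$ without loss we prove off-diagonal decay by integration by parts: the spatial gradients of the phases separate when the rectangles or subrectangles are separated, and we combine this with bounded overlap and apply Cotlar--Stein. The global $L^1\to L^\infty$ bound is direct, since the supports are disjoint. Finally we sum over $\delta$, splitting at $\delta=\mu'^2$ so that the geometric series in $\delta^{1/5}$ converges on each side.

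\emph{Main obstacle.} We expect the hardest part to be this almost orthogonality between the $\fs$-subrectangles inside a fixed $\fr'$. The tangential separation there is tiny, so the gradient difference must be shown to dominate the uncertainty scales $\lambda^{-1}\times$(dimensions) using only the curvature of the level sets of $\cD$.
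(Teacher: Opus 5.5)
Your proposal follows the paper's proof essentially step by step. The away-from-$S(x,y)$ part is Proposition~\ref{prop:estfE}. The near part is Proposition~\ref{main-p}, proved with the same ingredients: the recursive space--time decomposition, tangential rectangles with the $(\mu'/\mu)^{1/2}$ input-side refinement, the diagonal bound of Proposition~\ref{prop:L2est}, the off-diagonal estimates of Propositions~\ref{prop:alortho-1} and \ref{prop:alortho-2} combined through Cotlar--Stein, and interpolation with the dyadic sum split at $\delta=(\mu')^2$.

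There is one computational slip. The stationary-phase amplitude is $\mu^{-1/2}(\lambda\delta^{1/2}\mu^{-1/2})^{-1/2}=\lambda^{-1/2}(\delta\mu)^{-1/4}$, without your extra factor $\lambda^{-1/2}$. This corrected size is in fact the one your interpolated bound $\lambda^{-4/5}\delta^{1/5}\mu^{-1/4}(\mu')^{-3/20}$ already uses.
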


The remainder of this paper is devoted to the proof of Theorem \ref{thm:core}.  
Throughout the paper, we always assume that 
\[  (x,y)\in \cB\times \cB'\]
even if it is not made explicit.  


\subsection{Decomposition away from the symmetric point} \label{sec:prune}
In this section, we single out the main part before carrying out the decomposition in the spatial variables in Section \ref{sec:decomp}. As explained in the introduction, we decompose the integral in $t$ away from the symmetric point $S(x,y)$, with the bottom scale comparable to $(\mup)^{1/2}$.  
This will allow us to localize tightly in $t$ in the integral defining the kernel. Moreover, this will serve as a gentle introduction to the more refined orthogonality argument that appears later in the paper.

Recalling \eqref{d:Plapsij}, we collect some properties of the phase function $\cP$. From \eqref{bb} and \eqref{mm}, note that 
\Be
\label{inp} 1-\inp xy\sim \mu, \quad   \forall (x,y)\in \cB\times\cB'.\Ee
Thus, after enlarging $C_\ast$ if necessary, we have a smooth map $S : \cB\times \cB' \to I_\mu$  
 such that 
\[ 
 \cos S (x,y) = \inp xy.
 \]
 A computation shows that  
\begin{align}\label{i:ptcP-Q}
    \partial_t \cP(x,y,t) = \frac{ \cD(x,y)-(\cos t - \cos S (x,y))^2}{2\sin^2 t}. 
\end{align}
Thanks to \eqref{r:cond-cDh}, if $|t-S(x,y)|$ is large enough, $\partial_t \cP$ has a good lower bound. 
  More precisely,  by \eqref{i:ptcP-Q} we have
\begin{align}\label{e:simpleobv}
    |\partial_t\cP(x,y,t)|\gtrsim \mu^{-1} |\cos t-\cos S (x,y)|^2 
\end{align}
 for $t\in I_\mu$ satisfying 
 \Be
\label{tsxy}
|t-S (x,y)|\ge 2C_\ast{\mu^{-\frac{1}{2}}}|\cD(x,y)|^{\frac{1}{2}}.\footnote{This is easy to see from the mean value theorem and the support condition of $\psi_\mu$.} 
\Ee  
Thus, if \eqref{tsxy} holds, we have a good lower bound \eqref{e:simpleobv} as long as $|\cD(x,y)|$ is controlled appropriately.

\newcommand{\smup}{\sqrt\mup}

Let us set
\begin{align}\label{d:top}
 \delt = 2^{\lfloor\log_2(C_0\epz\mu\mu')\rfloor}.
\end{align}
Hence, $| \cD(x,y)|\le 2\delt $ for $(x,y)\in \cB\times \cB'$.   Let $\eta\in C^\infty_c((-2, 2))$ satisfy $0\le\eta\le1$ and $\eta (t)=1$ for $|t|\le 1$.  Thus, $1-\eta(t)=0$ if $|t|\le 1 $.  
  We set 
\begin{align}
\label{psi-tri}
 \psi_{\vartriangle}(x,y,t)&=   \psi_\mu(t)   \eta\Big(\frac{t-S (x,y)}{2^2 C_\ast{\delt}^{\frac{1}{2}}\mu^{-\frac{1}{2}}}\Big) , 
 \\
 \label{psi-tri0}
  \wt\psi_{\vartriangle}(x,y,t)&=   \psi_\mu(t)  \Big( 1- \eta\Big(\frac{t-S (x,y)}{2^2 C_\ast{\delt}^{\frac{1}{2}}\mu^{-\frac{1}{2}}}\Big)  \Big),
 \end{align}
 which decompose the integral into the parts near and away from the symmetric point. 

To simplify notation, for a given function $h$ supported in $\cB\times\cB'\times I_\mu$, we denote 
\Be 
\label{ola}
  \mathcal O_\lambda[h](x,y)=  \int h(x,y,t) \,\frac{e^{i\lambda \cP(x,y,t)}}{\smu}   dt .
  \Ee
Thus, recalling \eqref{d:Plapsij},  we have 
\begin{align}\label{e:decompm-1st} 
\mathcal O_\lambda^\mu =   \mathcal O_\lambda[\psi_\vartriangle]+  \mathcal O_\lambda[\widetilde\psi_\vartriangle]. 
\end{align}

Since $| \cD(x,y)|\le 2\delt $ for $(x,y)\in \cB\times \cB'$, \eqref{tsxy} is satisfied provided that $\wt\psi_{\vartriangle}(x,y,t)\neq 0$ for  $(x,y)\in \cB\times \cB'$. 
So, we have \eqref{e:simpleobv}, which allows us to handle the contribution from $\mathcal O_\lambda[\widetilde\psi_\vartriangle]$ in a relatively easier manner. 
In fact, we have the following.

    \begin{prop}\label{prop:estfE} Let $\widetilde\psi_\vartriangle$ be given by \eqref{psi-tri0}. Then, we have 
    \begin{align}\label{e:sumfElk}
     \Big\| \wt\chi_{\cB} \mathcal O_\lambda[\widetilde\psi_\vartriangle]\wt\chi_{\cB'}\Big\|_{L^{q_e'}\to L^{q_e}} \lesssim \lambda^{-\frac{4}{5}}\biggl(\frac{\mu'}{\mu}\biggr)^{\frac{1}{20}}.
\end{align}
\end{prop}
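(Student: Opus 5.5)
Our plan is to decompose $\wt\psi_\vartriangle$ dyadically in $|t-S(x,y)|$, estimate each dyadic piece by interpolating a kernel bound with an $L^2$ bound, and sum the pieces with Cotlar--Stein, as the introduction indicates. Write
\[
\wt\psi_\vartriangle=\sum_{k\ge 0}\psi_k,\qquad
\psi_k(x,y,t)=\psi_\mu(t)\,\psi\Big(\frac{t-S(x,y)}{2^{k}\,2^2C_\ast\delt^{1/2}\mu^{-1/2}}\Big),
\]
so that on $\supp\psi_k$ we have $|t-S|\sim\tau_k:=2^k\delt^{1/2}\mu^{-1/2}$ and $\tau_k\lesssim\smu$. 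Hence only $O(\log(\mu/\mu'))$ values of $k$ occur.

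\emph{Kernel bound.} On $\supp\psi_k$, condition \eqref{tsxy} holds. By \eqref{e:simpleobv} and $|\cos t-\cos S|\sim\smu\,|t-S|$ we get $|\partial_t\cP|\gtrsim\tau_k^2$. Each $t$-derivative of the cutoff costs $\tau_k^{-1}$, and $\partial_t^2\cP$ is controlled accordingly. Integrating by parts in $t$ then gives
\[
|\mathcal O_\lambda[\psi_k](x,y)|\lesssim \mu^{-1/2}\tau_k\,(1+\lambda\tau_k^3)^{-N}.
\]
This is the estimate \eqref{l0}. Because $\lambda\tau_k^3\ge\lambda(\mu\mu')^{3/2}\mu^{-3/2}\cdot\ldots\gtrsim (\lambda^{2/3}\mu')^{3/2}\ge C_\ast^{3/2}$, this already yields a good $L^1\to L^\infty$ bound.

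\emph{$L^2$ bound.} For the $L^2$ estimate \eqref{l2}, we localize $x$ and $y$ into rectangles where $S(x,y)$ varies by less than $\tau_k$. We would take tangential (i.e.\ $x_2$ or $y_2$) pieces of length $\sim\tau_k\mu^{1/2}$, using $\partial_{x_2}S\sim\mu^{-1/2}\mmt/\mu^{1/2}$-type bounds derived from $\cos S=\inp xy$. On each such pair of rectangles the $t$-integral is effectively frozen, and $\nabla_x\cP$ varies in $y$ with a rank-one mixed Hessian of size $\sim(\sin t)^{-1}$. A one-dimensional H\"ormander (or Schur with integration by parts in the tangential variable) argument gives
\[
\|\cdot\|_{2\to2}\lesssim \lambda^{-1/2}\mu^{-1/2}\tau_k\cdot(\text{Jacobian factor}).
\]
Almost orthogonality between different rectangle pairs comes from $|\nabla_x\cP(x,y,t)-\nabla_x\cP(\tilde x,y,t)|$ being large. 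This is where Cotlar--Stein enters, with the off-diagonal decay obtained by integrating by parts in $x$ (respectively $y$). We would organize the pieces so that each output rectangle interacts with $O(1)$ input rectangles up to rapidly decaying tails, following the Cotlar--Stein argument of \S\ref{subsec:away-orthogonality}.

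\emph{Interpolation and summation.} Interpolating the $L^1\to L^\infty$ and $L^2\to L^2$ bounds at $q_e=10/3$ (weights $2/5$ and $3/5$) gives, for each $k$, a bound of the form
\[
\lambda^{-4/5}\big(\lambda\tau_k^3\big)^{-\epsilon}\Big(\frac{\mu'}{\mu}\Big)^{\alpha}
\]
with $\alpha\ge1/20$. The decay in $\lambda\tau_k^3$ is geometric in $k$, so the sum over $k$ converges and \eqref{e:sumfElk} follows.

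\emph{Main obstacle.} The main obstacle is the $L^2$ bound. Its naive size is too large by a power of $\mu/\mu'$, because the $y$-support $\cB'$ is much thinner than $\cB$. The needed gain must come from the input-side refinement together with orthogonality of the tangential pieces. First, we would verify that the stationary set of $\nabla_{x}\cP$ separates the tangential $y_2$-slabs at scale $\tau_k\smu$. If that fails, we would trade the missing factor against the large surplus $(\lambda\tau_k^3)^{-N}$ available in the kernel bound.
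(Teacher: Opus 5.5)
Your skeleton (dyadic pieces in $|t-S|$, kernel bound, $L^2$ bound via Cotlar--Stein, interpolation and summation) is the right one. Your kernel bound $\mu^{-1/2}\tau_k(1+\lambda\tau_k^3)^{-N}$ is correct, and even stronger than \eqref{l0}, since $|\partial_t\cP|\gtrsim\tau_k^2$ and $|\partial_t^j\cP|\lesssim\tau_k^{3-j}$ for $j\ge2$ on the support.

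\textbf{The gap is the $L^2$ bound.} It is the real content of the proposition, and your proposal leaves it as a placeholder (``Jacobian factor'', ``$\alpha\ge1/20$'').
\begin{itemize}
\item Schur's test on all of $\cB\times\cB'$ gives $K_k\mu\mu'$, with $K_k$ the kernel bound. Interpolating this at $\tau_k\sim\sqrt{\mu'}$, in the regime $\lambda(\mu')^{3/2}\sim1$ that \eqref{mm} allows, misses the target by exactly $(\mu/\mu')^{3/20}$. There $(\lambda\tau_k^3)^{-N}\sim1$, so there is no surplus to trade, and your fallback fails.
\item Your proposed localization does not produce the gain. Since $\tau_k\gtrsim\sqrt{\mu'}$, tangential pieces of length $\tau_k\mu^{1/2}$ are at least as long as the full tangential width $\sim\sqrt{\mu\mu'}$ of $\cB$ and $\cB'$, so nothing is decomposed.
\item The mechanism you describe is also off. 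For fixed $t$, $\partial_x\partial_y^\intercal\cP=-\mathbf I_2/\sin t$ is nondegenerate, not rank one; rank one appears only after stationary phase in $t$ (Lemma~\ref{lem:mixedH}), which plays no role in this non-stationary region.
\item The $t$-integral is not ``frozen''; it is the source of the decay.
\item $\nabla_x\cP(x,y,t)-\nabla_x\cP(\tilde x,y,t)$ is not the phase gradient occurring in either $TT^*$ or $T^*T$, where two independent times appear.
\end{itemize}

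\textbf{The missing idea.} Decompose only the input side, into $y_2$-slabs of width $\tau_k\sqrt{\mu'}$ (the paper's $2^{-\ell}(\mu')^{1/2}$), and use Lemma~\ref{lem:lbdiffcP-easy}:
\[
\partial_{x_2}(\cP(x,y,t)-\cP(x,w,s))=\frac{w_2-y_2}{\sin s}+\frac{x_2\sin(s-t)+y_2(\sin t-\sin s)}{\sin t\sin s}.
\]
The second term is $O(\tau_k(\mu'/\mu)^{1/2})$, because $|t-s|\lesssim\tau_k+\mu'\mu^{-1/2}$ by \eqref{ss} and $|x_2|,|y_2|\lesssim\sqrt{\mu\mu'}$. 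This error term is exactly what fixes the slab width.

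With that decomposition the argument closes as follows.
\begin{itemize}
\item Schur's test on each slab, with the van der Corput kernel bound, gives $\lambda^{-1}\tau_k^{-3/2}\mu^{1/4}\mu'$, which is \eqref{l2}. No H\"ormander-type argument is needed.
\item The $TT^*$ terms vanish for non-adjacent slabs.
\item The $T^*T$ terms decay like $|\lambda\tau_k\mu'(j-k)|^{-N}$ after integrating by parts in $x_2$. This uses that the phase has $\partial_{x_2}^2=\sin(s-t)/(\sin s\sin t)$ with vanishing higher $x_2$-derivatives, and that amplitude $x_2$-derivatives cost $(\mu\mu')^{-1/2}$.
\item Compared with the whole of $\cB'$, a slab gains $(\tau_k/\sqrt\mu)^{1/2}$ in $L^2$. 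At $\tau_k\sim\sqrt{\mu'}$ this is $(\mu'/\mu)^{1/4}$, which after interpolation is precisely the missing $(\mu'/\mu)^{3/20}$.
\item The resulting bound $\lambda^{-4/5}\tau_k^{-11/10}\mu^{-1/20}(\mu')^{3/5}$ sums over $\tau_k\gtrsim\sqrt{\mu'}$ to \eqref{e:sumfElk}.
\end{itemize}
The worst scale $\tau_k\sim\sqrt{\mu'}$ gives the claimed bound exactly, with no room to spare, so this bookkeeping cannot be left as a placeholder.
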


Proposition~\ref{prop:estfE} treats the contribution away from the
symmetric time. The central estimate is the corresponding bound for
the near-time operator, stated as Proposition~\ref{main-p} at the
beginning of Section~\ref{sec:geometry}. Together, these two
propositions prove Theorem~\ref{thm:core}.  Obtaining the desired bound for $\mathcal O_\lambda[\psi_\vartriangle]$ (see \eqref{e:main-fP}) is considerably more involved. We carry it out through the remaining sections.

To prove Proposition \ref{prop:estfE}, we make an additional decomposition 
\Be
\label{psi-tri-l} \widetilde\psi_\vartriangle^\ell(x,y,t)= \widetilde\psi_\vartriangle(x,y,t)  \psi(2^\ell|t-S (x,y)|).
\Ee
Hence, $ \mathcal O_\lambda[\widetilde\psi_\vartriangle]=\sum_{\ell }  \mathcal O_\lambda[\widetilde\psi_\vartriangle^\ell]$.  
Note that $S(x,y)\in I_\mu$ and $\supp \widetilde\psi_\vartriangle(x,y,\cdot)\subset I_\mu$ for $(x,y)\in \cB\times\cB'$. Since $\delt\sim \mu\mu'$, it is clear that $\widetilde\psi_\vartriangle^\ell\neq 0$ 
only if 
\Be \label{l-range}  \sqrt{\mu'} \lesssim 2^{-\ell} \lesssim \sqrt\mu.\Ee

The proof of \eqref{e:sumfElk} is rather straightforward once we obtain the estimates
    \begin{align}
   \label{l0}  \| \mathcal O_\lambda[\widetilde\psi_\vartriangle^\ell] \|_{L^1\to L^\infty}&\lesssim \lambda^{-\frac{1}{2}}2^{\frac{\ell}{2}}\mu^{-\frac{1}{2}},
        \\[2pt]
             \label{l2}    \| \mathcal O_\lambda[\widetilde\psi_\vartriangle^\ell] \|_{L^2\to L^2}&\lesssim \lambda^{-1}2^{\frac{3\ell}{2}}\mu^{\frac{1}{4}}\mu'.
    \end{align}
These estimates hold provided that $2^{-\ell}\ge C_1 \smp$. Indeed, interpolation gives 
\begin{align*}
 \Big\|  \mathcal O_\lambda[\widetilde\psi_\vartriangle^\ell] \Big\|_{L^{q_e'} \to L^{q_e}}\lesssim \lambda^{-\frac{4}{5}}2^{\frac{11}{10}\ell} \mu^{-\frac{1}{20}}(\mu')^{\frac{3}{5}}. 
\end{align*}
Summation over $\ell: 2^{-\ell}\gtrsim \smp $  yields    \eqref{e:sumfElk}.

The estimate \eqref{l0} follows from the oscillatory integral estimate  
\Be
\label{osc-est}    \Big| \int   
\widetilde\psi_\vartriangle^\ell(x,y,t)
e^{i\lambda \cP(x,y,t)}    dt\Big|\lesssim    \lambda^{-1} 2^{2\ell}.   
\Ee 
Indeed, since the integral is $O(2^{-\ell})$, taking geometric mean gives \eqref{l0}. To see \eqref{osc-est},  we use the 
van der Corput  lemma (\cite[Corollary in p.334]{St93}).  To this end, recall that  \eqref{e:simpleobv} holds whenever $\wt\psi_{\vartriangle}(x,y,t)\neq 0$ for  $(x,y)\in \cB\times \cB'$. Thus, from this and \eqref{psi-tri-l}, we have 
\begin{align}\label{e:lbptcP-E*}
    |\partial_t \cP(x,y,t)| & \gtrsim 2^{-2\ell}. 
\end{align}
provided that $\widetilde\psi_\vartriangle^\ell(x,y,t)\neq 0$.  A computation gives
   \[
        \partial_t^2\cP(x,y,t) = -\inp xy \frac{\cR(x,y,\cos t)}{\sin^3 t}
   \]
    where $\cR(x,y,\tau) = \tau^2-\inp xy^{-1}(|x|^2+|y|^2)\tau + 1$.  Since $\cR(x,y,\cdot)$ is quadratic,  $\partial_t^2\cP(x,y,\dot)$ changes sign at most twice on $(0,\pi/2)$ (in fact, it changes sign only once). Thus, the support of $\widetilde\psi_\vartriangle^\ell(x,y,\cdot)$ is included in a union of two disjoint intervals where $\partial_t\cP(x,y,\dot)$ is monotone. Since 
     $\sup_{x,y,t}  |\widetilde\psi_\vartriangle^\ell(x,y,t)|+ \sup_{x,y} \| \partial_t \widetilde\psi_\vartriangle^\ell(x,y,\cdot)\|_1\lesssim 1$, combining \eqref{e:lbptcP-E*} and   the van der Corput  lemma, we obtain \eqref{osc-est} as desired.

\medskip

The rest of this section is devoted to the proof of the estimate  \eqref{l2}.

\subsection{Contribution away from the symmetric point}\label{subsec:away-contribution} We start by making a further decomposition of the operator $\wt\chi_{\cB} \mathcal O_\lambda[\widetilde\psi_\vartriangle^\ell] \wt\chi_{\cB'}$ on the input side at 
scale $2^{-\ell}(\mu')^{1/2}$.  

Let $\zeta\in C_c^\infty(-1,1)$ be a nonnegative function such that $\sum_{n} \zeta(\cdot-n)=1$. 
Fixing $\ell$ satisfying \eqref{l-range},  we set 
\Be 
\label{en}    \mathcal E_n (x,y)= \wt\chi_{\cB}(x)\, \mathcal O_\lambda[\widetilde\psi_\vartriangle^\ell](x,y)\, (\wt\chi_{\cB'})_n(y)
\Ee
for $n\in \mathbb Z$,  where 
\Be 
\label{Bn} (\wt\chi_{\cB'})_n(y)=   \wt\chi_{\cB'}(y)   \zeta\Big( \frac{y_2- 2^{-\ell}(\mu')^{\frac{1}{2}}n}{2^{-\ell}(\mu')^{\frac{1}{2}}}  \Big).
\Ee 
Consequently, we have 
\[     \wt\chi_{\cB} \mathcal O_\lambda[\widetilde\psi_\vartriangle^\ell]  \wt\chi_{\cB'}=  \sum_n   \mathcal E_n .\]
There are only finitely many nonzero summands. In fact, $\mathcal E_n\neq 0$
only if
\[
|2^{-\ell}(\mu')^{\frac{1}{2}}n|\lesssim (\mu\mu')^{\frac{1}{2}}.
\]

In order to obtain bounds for $\sum_n \mathcal E_n$, we make use of almost orthogonality. More precisely, we utilize     
 the Cotlar--Stein lemma in the form of the following lemma. See \cite[Lemma 18.6.5]{H83}, where 
the case $M=M_\ast$ was proved. The stated two-space form follows from the usual Cotlar--Stein lemma by applying it on \(H_1\oplus H_2\) to the corresponding off-diagonal block operators.

\begin{lem} \label{lem:cotlarstein} Let $H_1$, $H_2$ be Hilbert spaces.
    Let $\{T_j\}_{j=1}^N$ be a family of bounded operators from $H_1$ to $H_2$. Let $T_k^\ast$ denote the adjoint operator of $T_k$.  Suppose that 
    \[   \sup_j \sum_{k=1}^N \|T_j^*T_k\|^{\frac{1}{2}}_{H_1\to H_1}\le M, \quad    \sup_j \sum_{k=1}^N \|T_jT_k^*\|^{\frac{1}{2}}_{H_2\to H_2}\le M_\ast  \]
     Then, we have $
        \big\|\sum_{j=1}^N T_j\big\|_{H_1\to H_2}\le \sqrt{MM_\ast}.
    $
\end{lem}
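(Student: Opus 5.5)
We will prove the lemma directly with the standard power trick of Cotlar--Stein. This keeps the two constants $M$ and $M_\ast$ separate. (Alternatively, one can embed into $H_1\oplus H_2$ via $S_j(u,v)=(0,T_ju)$. Then $S_j^\ast S_k=T_j^\ast T_k\oplus 0$ and $S_jS_k^\ast=0\oplus T_jT_k^\ast$, and any asymmetric version of the classical lemma applies.)

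Set $T=\sum_j T_j$ and fix $m\in\N$. Since $T^\ast T$ is self-adjoint, $\|T\|^{2m}=\|(T^\ast T)^m\|_{H_1\to H_1}$. Expanding gives
\[
\|T\|^{2m}\le \sum_{j_1,k_1,\dots,j_m,k_m}\|T_{j_1}^\ast T_{k_1}T_{j_2}^\ast T_{k_2}\cdots T_{j_m}^\ast T_{k_m}\|.
\]
We bound each summand in two ways. Grouping as $(T_{j_1}^\ast T_{k_1})\cdots(T_{j_m}^\ast T_{k_m})$ gives at most $\prod_i\|T_{j_i}^\ast T_{k_i}\|$. Grouping as $T_{j_1}^\ast (T_{k_1}T_{j_2}^\ast)\cdots (T_{k_{m-1}}T_{j_m}^\ast) T_{k_m}$ gives at most $\|T_{j_1}\|\,\|T_{k_m}\|\prod_{i<m}\|T_{k_i}T_{j_{i+1}}^\ast\|$. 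We take the geometric mean of the two bounds. Each summand is then at most
\[
\|T_{j_1}\|^{\frac12}\|T_{k_m}\|^{\frac12}\prod_{i=1}^m\|T_{j_i}^\ast T_{k_i}\|^{\frac12}\prod_{i=1}^{m-1}\|T_{k_i}T_{j_{i+1}}^\ast\|^{\frac12}.
\]
Note that $\|T_j\|=\|T_j^\ast T_j\|^{1/2}\le M$. Hence the endpoint factors are bounded by $M$.

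Next we sum along the chain $j_1\to k_1\to j_2\to\cdots\to k_m$. The sum over $j_1$ is free and contributes a factor $N$. For fixed $j_i$, the sum over $k_i$ of $\|T_{j_i}^\ast T_{k_i}\|^{1/2}$ is at most $M$; this happens $m$ times. For fixed $k_i$, the sum over $j_{i+1}$ of $\|T_{k_i}T_{j_{i+1}}^\ast\|^{1/2}=\|T_{j_{i+1}}T_{k_i}^\ast\|^{1/2}$ is at most $M_\ast$, by the symmetry of the norm under adjoints; this happens $m-1$ times. This yields
\[
\|T\|^{2m}\le N\,M\,M^mM_\ast^{m-1}.
\]
Taking $2m$-th roots and letting $m\to\infty$ gives $\|T\|\le\sqrt{MM_\ast}$, since $N^{1/(2m)}$, $M^{1/(2m)}$ and $M_\ast^{-1/(2m)}$ all tend to $1$. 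We may assume $M,M_\ast>0$; otherwise every $T_j$ vanishes.

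The only delicate point is the bookkeeping of the alternating chain. The sums over the $k_i$ must be controlled by the hypothesis involving $M$, and the sums over the $j_{i+1}$ by the one involving $M_\ast$. The hypothesis for $M_\ast$ is stated with a supremum over the first index, so matching the terms $\|T_{k_i}T_{j_{i+1}}^\ast\|$ to it requires the adjoint symmetry $\|AB^\ast\|=\|BA^\ast\|$. With that identity in hand, the argument is exactly Hörmander's proof with asymmetric constants.
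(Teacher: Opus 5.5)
Your proof is correct. It is the standard Cotlar--Stein power argument run with two separate constants. Summing from the tail ($k_m$, then $j_m$, and so on), and bounding the endpoint factors by $\|T_j\|=\|T_j^\ast T_j\|^{1/2}\le M$, gives $\|T\|^{2m}\le N M^{m+1}M_\ast^{m-1}$, which yields the claim as $m\to\infty$.

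The paper does not reprove the lemma. It cites H\"ormander's Lemma~18.6.5 for the case $M=M_\ast$ and handles the two Hilbert spaces by passing to the off-diagonal block operators on $H_1\oplus H_2$, which is the embedding in your parenthetical remark.

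The difference between the two routes is not only cosmetic. The block embedding only deals with the change of spaces. Combined with a single-constant version of the lemma, it gives $\|\sum_j T_j\|\le\max(M,M_\ast)$, not $\sqrt{MM_\ast}$. The asymmetric constant is genuinely used later. In Section~\ref{sec:conclude}, $B_2(\delta)$ is the geometric mean of the two different sums \eqref{e:sumPP*-1} and \eqref{e:sumP*P-1}. For $\delta\le(\mu')^2$ these differ by a factor $(\mu/\mu')^{1/2}$, so taking the maximum would lose the gain. Your direct proof therefore supplies exactly the two-constant version that the citation plus embedding leaves implicit, at the cost of a few lines. The paper's route buys brevity by leaning on a textbook result.

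One small correction: the adjoint symmetry you flag as necessary is not needed. The bound $\sum_{j_{i+1}}\|T_{k_i}T_{j_{i+1}}^\ast\|^{1/2}\le M_\ast$ is literally the second hypothesis, with $k_i$ as the fixed first index.
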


  In what follows, we also use the next lemma, which is a simple consequence of 
Schur's test or Riesz--Thorin interpolation. 

\begin{lem}
\label{schur}  Let $A, A'\subset \R^2$ and $L>0$.  Suppose $|T(x,y)|\le L \chi_{A\times A'}(x, y)$. Then, $\|T\|_{2\to 2}\le L(|A||A'|)^{1/2}$. 
\end{lem}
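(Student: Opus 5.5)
The statement is the Schur-test bound: if $|T(x,y)|\le L\chi_{A\times A'}(x,y)$ then $\|T\|_{2\to2}\le L(|A||A'|)^{1/2}$. The quickest route is Cauchy--Schwarz, which avoids Schur weights altogether. For $f\in L^2(\R^2)$ and $x\in A$ (for $x\notin A$ we have $Tf(x)=0$), we have
\[
|Tf(x)|\le L\int_{A'}|f(y)|\,dy\le L|A'|^{\frac12}\|f\|_2 .
\]
Squaring this and integrating over $x\in A$ gives
\[
\|Tf\|_2^2\le L^2|A||A'|\,\|f\|_2^2 ,
\]
which is the claim.

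Alternatively, one can use the weighted Schur test with constant weights. It gives $\sup_x\int|T(x,y)|\,dy\le L|A'|$ and $\sup_y\int|T(x,y)|\,dx\le L|A|$, hence only the bound $L(|A||A'|)^{1/2}$ after taking the geometric mean. This matches the claim, but the Cauchy--Schwarz argument above is simpler and is the one to use. It is in fact a Hilbert--Schmidt bound: $\|T\|_{2\to2}\le\|T\|_{L^2(A\times A')}\le L(|A||A'|)^{1/2}$.

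Nothing here is a genuine obstacle. The only point to check is measurability and finiteness: if $|A|$ or $|A'|$ is infinite the bound is vacuous, and otherwise the kernel is in $L^2$, so $T$ is well defined on $L^2$.
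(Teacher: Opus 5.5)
Your argument is correct. The paper writes out no proof of this lemma. It only remarks that the lemma is a simple consequence of Schur's test or Riesz--Thorin interpolation. That means combining the column bound $\sup_y\int|T(x,y)|\,dx\le L|A|$ (an $L^1\to L^1$ bound) with the row bound $\sup_x\int|T(x,y)|\,dy\le L|A'|$ (an $L^\infty\to L^\infty$ bound) and taking the geometric mean, which is exactly your alternative argument.

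Your primary route is pointwise Cauchy--Schwarz in $y$ followed by integration over $A$, which is the Hilbert--Schmidt bound $\|T\|_{2\to2}\le\|T\|_{L^2(A\times A')}$. It is more self-contained, since it needs no interpolation or Schur weights. It gives the same constant, because for a kernel dominated by $L\chi_{A\times A'}$ both the Hilbert--Schmidt norm and the Schur bound are controlled by $L(|A||A'|)^{1/2}$.

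In general the Schur/Riesz--Thorin formulation is the more flexible tool, since it needs only $L^1$ row and column control of the kernel rather than square integrability. For this lemma, where the sets in all applications are bounded rectangles, the two routes are interchangeable.
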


We first obtain a bound on $\|\mathcal E_n\|_{2\to 2}$. From the estimate \eqref{osc-est}, it follows that $|\mathcal E_n(x,y)|\lesssim \lambda^{-1} 2^{2\ell} \mu^{-1/2}\chi_{\cB}(x)  (\wt\chi_{\cB'})_n(y)$. Thus, Lemma \ref{schur} gives 
\Be\label{en-} \|\mathcal E_n \|_{L^2\to L^2}\lesssim \lambda^{-1}2^{\frac{3\ell}{2}}\mu^{\frac{1}{4}}\mu'. \Ee
Note $\mathcal E_j \mathcal E_k^*=0$ if $|j-k|\ge 2$. Since $\|\mathcal E_j\mathcal E_k^*\|_{2\to 2}\le \|\mathcal E_j\|_{2\to 2}\|\mathcal E_k\|_{2\to 2},$ it follows that 
\[ \textstyle  \sup_j \sum_{k=1}^N \|\mathcal E_j\mathcal E_k^*\|^{\frac{1}{2}}_{2\to 2}\lesssim  \lambda^{-1}2^{\frac{3\ell}{2}}\mu^{\frac{1}{4}}\mu'.\] 

Therefore, to prove  \eqref{l2} we also have  to show
\Be 
\label{diff}\textstyle \sup_j \sum_{k=1}^N \|\mathcal E_j^*\mathcal E_k\|^{\frac{1}{2}}_{2\to 2}\lesssim  \lambda^{-1}2^{\frac{3\ell}{2}}\mu^{\frac{1}{4}}\mu', \Ee
since Lemma \ref{lem:cotlarstein}, combined with these estimates, yields \eqref{l2}.  
On the other hand, \eqref{diff} is a consequence of the following lemma.   

\begin{lem}\label{prop:alortho-easy}  There is a constant $C$ such that
    \begin{align}
    \label{ejek}
      \|\mathcal E_j^*\mathcal E_{k}\|_{L^2\to L^2} \lesssim_N  2^{-3\ell}\mu^{\frac{1}{2}}(\mu')^2 |\la 2^{-\ell}\mu'(j-k)|^{-N}
    \end{align}
for any $N\in \mathbb N$ provided that $|j-k|\ge C$. 
\end{lem}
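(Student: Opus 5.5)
We write the kernel of $\mathcal E_j^*\mathcal E_k$ as
\[
\mathcal E_j^*\mathcal E_k(y,y')=\iint\!\!\int \overline{(\wt\chi_{\cB'})_j(y)}\,(\wt\chi_{\cB'})_k(y')\,|\wt\chi_{\cB}(x)|^2\,\overline{\widetilde\psi_\vartriangle^\ell(x,y,t)}\,\widetilde\psi_\vartriangle^\ell(x,y',t')\,\frac{e^{i\lambda(\cP(x,y',t')-\cP(x,y,t))}}{\mu}\,dx\,dt\,dt',
\]
with $y,y'\in\cB'$ and $|y_2-y_2'|\gtrsim 2^{-\ell}(\mu')^{1/2}|j-k|$. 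The plan is to integrate by parts in $x$, after which Lemma \ref{schur} finishes the estimate.

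\emph{Trivial bound.} Without integrating by parts, the $t,t'$ integrals each contribute $2^{-\ell}$ and the $x$-integral contributes $|\cB|\sim\mu(\mu\mu')^{1/2}$. The kernel is therefore $O(\mu^{-1}2^{-2\ell}\mu^{3/2}\mu'^{1/2})$. Lemma \ref{schur} with $|A|,|A'|\lesssim \mu'2^{-\ell}\mu'^{1/2}$ (the slabs of $\cB'$) gives
\[
\|\mathcal E_j^*\mathcal E_k\|_{2\to 2}\lesssim 2^{-3\ell}\mu^{1/2}(\mu')^2,
\]
which is the prefactor in \eqref{ejek}. It remains to gain $|\lambda2^{-\ell}\mu'(j-k)|^{-N}$ from the $x$-integration.

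\emph{Phase difference.} The $x$-gradient of the phase is
\[
\partial_x\cP(x,y,t)=\frac{x\cos t-y}{\sin t},
\]
so
\[
\partial_x[\cP(x,y',t')-\cP(x,y,t)]=x\,(\cot t'-\cot t)-\Big(\frac{y'}{\sin t'}-\frac{y}{\sin t}\Big).
\]
We use the $x_2$-component, since $\cB$ is long in that direction with length $\epz(\mu\mu')^{1/2}$ and $|x_2|\ll(\mu\mu')^{1/2}$. The $x_2$-component equals
\[
x_2(\cot t'-\cot t)-\frac{y_2'-y_2}{\sin t'}-y_2\Big(\frac1{\sin t'}-\frac1{\sin t}\Big).
\]
The main term is $(y_2'-y_2)/\sin t'\sim \mu^{-1/2}2^{-\ell}\mu'^{1/2}|j-k|$. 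The obstacle is controlling the other two terms, which depend on $t-t'$. Here $t,t'$ are only localized to within $2^{-\ell}$ of $S(x,y)$ and $S(x,y')$ respectively, so $|t-t'|$ could be as large as $2^{-\ell}+|S(x,y)-S(x,y')|$. The second error term is then of size $y_2|t-t'|/\mu\sim(\mu\mu')^{1/2}2^{-\ell}\mu^{-1}$, which is comparable to the main term when $|j-k|\sim1$.

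\emph{Handling the $t$-dependent errors.} My first attempt is to exploit the $t$ and $t'$ integrations, rather than fighting the $x$-gradient directly.
\begin{enumerate}[label=(\roman*)]
\item Use the directional derivative in $x$ along a direction chosen to kill the $(t-t')$-dependence to leading order, for instance the combination of $\partial_x$ with $\partial_t,\partial_{t'}$ dictated by $\nabla_{x,t,t'}$ of the phase.
\item Alternatively, note that $\partial_t\cP\gtrsim 2^{-2\ell}$ on the support. One can integrate by parts in $t$ freely, which effectively forces $t$ to lie near a point determined by $x$, and similarly for $t'$.
\end{enumerate}
The cleanest variant is to perform a joint integration by parts with the vector field
\[
L=\frac{\nabla\Phi\cdot\nabla}{i\lambda|\nabla\Phi|^2},\qquad \nabla=(\partial_{x_2},\partial_t,\partial_{t'}),
\]
and to show that $|\nabla\Phi|\gtrsim$ main term, where $\Phi$ is the phase. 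If $|\partial_{x_2}\Phi|$ is small, the $t$-errors must cancel the main term, which forces $|t-t'|\gtrsim 2^{-\ell}|j-k|$-type separation. This would make $|\partial_t\Phi|$ or $|\partial_{t'}\Phi|$ large, provided the lower bound \eqref{e:lbptcP-E*} and the signs of $\partial_t\cP$ are compatible.

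\emph{Counting the gain.} Each integration by parts in $x_2$ costs derivatives of size $(\mu\mu')^{-1/2}$ on the cutoffs. Each integration by parts in $t$ costs $2^{\ell}$ on $\psi(2^\ell\cdot)$ and $\widetilde\psi_\vartriangle$, and since $2^{-\ell}\gtrsim\mu'^{1/2}$, the $\eta$ factor costs at most $\delt^{-1/2}\mu^{1/2}\lesssim 2^{\ell}$. Together with $|\nabla\Phi|\gtrsim\mu^{-1/2}2^{-\ell}\mu'^{1/2}|j-k|$, each step gains
\[
\big(\lambda\mu^{-1/2}2^{-\ell}\mu'^{1/2}|j-k|\cdot(\mu\mu')^{1/2}\big)^{-1}=|\lambda2^{-\ell}\mu'(j-k)|^{-1}.
\]
The $t$-derivative gains are at least as good. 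Iterating $N$ times yields \eqref{ejek}.

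The main obstacle is the lower bound for $|\nabla\Phi|$, namely ruling out the cancellation between the $y$-separation term and the $(t-t')$ terms. This is where $|j-k|\ge C$ must be used.
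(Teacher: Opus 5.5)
Your skeleton is the paper's: integrate by parts in $x_2$ only, then apply Lemma~\ref{schur} on slabs of size $\mu'\times 2^{-\ell}(\mu')^{1/2}$. Your trivial bound correctly produces the prefactor $2^{-3\ell}\mu^{1/2}(\mu')^2$.

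\textbf{The gap.} The one nontrivial step is the lower bound $|\partial_{x_2}\Phi|\gtrsim \mu^{-1/2}2^{-\ell}(\mu')^{1/2}|j-k|$, and you leave it open: you call it ``the main obstacle'' and defer it to a joint integration by parts in $(x_2,t,t')$ that you never carry out. Your own size estimate already settles it.

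On the support,
$|t-t'|\le |t-S(x,y)|+|S(x,y)-S(x,y')|+|S(x,y')-t'|\lesssim 2^{-\ell}+\mu'\mu^{-1/2}\lesssim 2^{-\ell}.$
The middle bound $|S(x,y)-S(x,y')|\lesssim \mu'\mu^{-1/2}$ is \eqref{ss}. It follows from $\cos S(x,y)-\cos S(x,y')=\inp{x}{y-y'}=O(\mu')$ and $\sin S\sim\mu^{1/2}$. You used it implicitly and should state it.

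Since $|x_2|,|y_2|\lesssim(\mu\mu')^{1/2}$, both $t$-dependent terms are at most $C'\mu^{-1/2}2^{-\ell}(\mu')^{1/2}$. The main term is at least $c\,\mu^{-1/2}2^{-\ell}(\mu')^{1/2}|j-k|$. Your remark that they are ``comparable when $|j-k|\sim 1$'' therefore says exactly that the main term dominates once $|j-k|\ge C$ with $C>2C'/c$. No cancellation can occur. This is the entire role of the hypothesis $|j-k|\ge C$, and it is precisely Lemma~\ref{lem:lbdiffcP-easy}.

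\textbf{The alternative route.} The accounting in your joint route is also wrong. Where the gain comes from $\partial_t\Phi$, each integration by parts in $t$ gains at best $(\lambda 2^{-3\ell})^{-1}$, via \eqref{e:lbptcP-E*} and the $2^{\ell}$ cost on the cutoffs. That gain has no factor of $|j-k|$. Moreover, $2^{-2\ell}\ge \mu'|j-k|$ fails when $2^{-\ell}\sim(\mu')^{1/2}$ and $|j-k|$ is near its maximal size $\sim 2^{\ell}\mu^{1/2}$. So the claim that ``the $t$-derivative gains are at least as good'' is false in general, and that route would not yield \eqref{ejek} as stated.

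\textbf{What else the $x_2$-only argument needs.} Two routine facts that you did not verify:
\begin{itemize}
\item $\partial_{x_2}^2\Phi=\sin(t-t')/(\sin t\sin t')=O(2^{-\ell}\mu^{-1})$, with all higher $x_2$-derivatives vanishing. Hence $|\partial_{x_2}^2\Phi|/|\partial_{x_2}\Phi|\lesssim(\mu\mu')^{-1/2}$.
\item The amplitude bound $|\partial_{x_2}^N A|\lesssim(\mu\mu')^{-N/2}$. This requires controlling the $x$-dependence of $\widetilde\psi_\vartriangle^\ell$ through $S(x,y)$ via Lemma~\ref{lem:bdsS*}. Each $x_2$-derivative landing on these cutoffs costs at most $2^{\ell}\mu^{-1}(\mu')^{1/2}\lesssim\mu^{-1}$, which is harmless.
\end{itemize}
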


Indeed, after splitting the sum into two cases $|j-k|\ge C_2$ and $|j-k|< C_2$, we apply the above estimate with $N=6$ and \eqref{en-} to get 
\begin{align*}
  \textstyle   \sum_{k=1}^N \|\mathcal E_j^*\mathcal E_k\|^{\frac{1}{2}}_{2\to 2} \lesssim  ( \lambda^{-2} \mu'^{-3}+1) \lambda^{-1}2^{\frac{3\ell}{2}}\mu^{\frac{1}{4}} \mu'. 
\end{align*}
This gives the desired  estimate  \eqref{diff}, since  $\lambda^{-2/3}\lesssim \mu'$.

To complete the proof of \eqref{l2}, it remains to show Lemma \ref{prop:alortho-easy}.

\subsection{Proof of Lemma \ref{prop:alortho-easy}}\label{subsec:away-orthogonality} We prove Lemma \ref{prop:alortho-easy} by obtaining the estimate for the kernel of $\mathcal E_j^*\mathcal E_{k}$, which is given as an oscillatory integral. To this end, we need a couple of lemmas. 

\begin{lem}\label{lem:lbdiffcP-easy}  
    Let $x\in \cB$, $y,w\in \cB'$, and $2^{-\ell}\gtrsim \smp$. Assume that  $t\in \supp \widetilde\psi_\vartriangle^\ell(x,y,\cdot)$ and $s\in \supp \widetilde\psi_\vartriangle^\ell(x,w,\cdot)$. 
   Then, there are constants  $C$ and $C'$ such that 
   \Be
   \label{px22}
        \big|\partial_{x_2}\big(\cP(x,y,t) - \cP(x,w,s)\big)\big| \ge C\mu^{-\frac{1}{2}}(|y_2-w_2| - C'2^{-\ell}\smup)
   \Ee
    \end{lem}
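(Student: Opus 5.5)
We compute the derivative directly:
\[
\partial_{x_2}\cP(x,y,t)=\frac{x_2\cos t-y_2}{\sin t}.
\]
Hence
\[
\partial_{x_2}\big(\cP(x,y,t)-\cP(x,w,s)\big)
=\frac{w_2-y_2}{\sin s}
+\Big(\frac{y_2}{\sin s}-\frac{y_2}{\sin t}\Big)
+x_2\big(\cot t-\cot s\big).
\]
Since $t,s\in I_\mu$, we have $\sin s\sim\smu$. So the first term has size $\gtrsim \mu^{-1/2}|y_2-w_2|$, and the whole task is to show that the remaining two terms are $O(\mu^{-1/2}2^{-\ell}\smup)$. For this it suffices to prove $|t-s|\lesssim 2^{-\ell}\smup\,\mu^{-1/2}\cdot\smu=2^{-\ell}\smup$ up to the right normalization. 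We will track this carefully below.

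\emph{Step 1: size of the two remaining terms.} On $I_\mu$ we have $|\partial_t(1/\sin t)|\sim \mu^{-1}$ and $|\partial_t\cot t|\sim\mu^{-1}$. By \eqref{bb}, $|y_2|\lesssim\mmt$ and $|x_2|\ll\mmt$. Therefore both terms are bounded by
\[
\mu^{-1}\mmt\,|t-s|=\mu^{-1/2}\smup\,|t-s|.
\]
It remains to show $|t-s|\lesssim 2^{-\ell}$. Then the error is at most $C\mu^{-1/2}\smup 2^{-\ell}$, which is exactly the $C'2^{-\ell}\smup$ correction in \eqref{px22}.

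\emph{Step 2: bounding $|t-s|$.} By \eqref{psi-tri-l}, $|t-S(x,y)|\le 2\cdot 2^{-\ell}$ and $|s-S(x,w)|\le 2\cdot2^{-\ell}$. So it suffices to bound $|S(x,y)-S(x,w)|$. From $\cos S=\inp xy$ and $\sin S\sim\smu$ we get $|\partial_y S|\lesssim \mu^{-1/2}|x|\lesssim\mu^{-1/2}$. This alone is too crude, because $|y-w|$ can be as large as $\mmt$.

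The refinement comes from the anisotropic derivatives. We have $\partial_{y_2}S=-x_2/\sin S$, and $|x_2|\ll\mmt$ gives $|\partial_{y_2}S|\ll\smup$. In the other direction, $|\partial_{y_1}S|\lesssim\mu^{-1/2}$, while $|y_1-w_1|\le\epz\mu'$. Together,
\[
|S(x,y)-S(x,w)|\ll \smup\,\mmt+\mu^{-1/2}\mu'\lesssim\smup\cdot\smup+\smup\lesssim\smup\lesssim 2^{-\ell},
\]
using $\mu\le 1$, $\mu'\le\mu$ and \eqref{l-range}. Hence $|t-s|\lesssim2^{-\ell}$.

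\emph{Step 3: conclusion and main obstacle.} Combining the two steps, the left side of \eqref{px22} is at least
\[
c\mu^{-1/2}|y_2-w_2|-C\mu^{-1/2}\smup\,2^{-\ell},
\]
which is \eqref{px22}. The main obstacle is Step 2. A naive Lipschitz bound on $S$ loses a factor $(\mu/\mu')^{1/2}$. The fix is to exploit that $\cB'$ is thin in $y_1$ and that $x_2$ is small on $\cB$, so that $S(x,\cdot)$ is nearly constant across $\cB'$ at scale $\smup$. I would double-check that the constants implicit in "$|x_2|\ll\mmt$" suffice for this, if needed by shrinking $\epz$.
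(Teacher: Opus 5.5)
Your proof is correct and is essentially the paper's argument. You use the same splitting into $(w_2-y_2)/\sin s$ plus a remainder bounded by $\mu^{-1/2}\smup\,|t-s|$, and you get $|t-s|\lesssim 2^{-\ell}$ from the support of $\widetilde\psi^\ell_\vartriangle$ together with a bound on $|S(x,y)-S(x,w)|$; the paper obtains $|S(x,y)-S(x,w)|\lesssim \mu'\mu^{-1/2}$ from $\cos S(x,y)-\cos S(x,w)=\inp{x}{y-w}=O(\mu')$. One correction to your commentary: the naive Lipschitz bound is not too crude. Indeed $|\partial_y S|\lesssim\mu^{-1/2}$ and $\operatorname{diam}\cB'\lesssim\epz(\mu\mu')^{1/2}$ already give $|S(x,y)-S(x,w)|\lesssim\smup\lesssim 2^{-\ell}$, which is exactly the bound your refined estimate is weakened to anyway, so the ``obstacle'' you identify in Step 2 is not a real one.
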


\begin{proof} First observe that 
\Be \label{ss}   |S(x,y)- S(x,w)|  \le C \mu' \mu^{-\frac{1}{2}}\Ee
for $x\in \cB$ and $y, w\in \cB'$. To see this, note $\cos S(x,y)-\cos S(x,w)= \inp x{y-w}$. 
Since $x\in \cB$ and $y, w\in \cB'$, $\inp x{y-w}=O(\mu')$ by \eqref{bb}. Moreover, since $S(x,y), S(x,w) \sim \mu^{1/2}$, 
the mean value theorem gives  \eqref{ss}. 

Now, we  note that
   \Be
   \label{px2}
        \partial_{x_2}\big(\cP(x,y,t) - \cP(x,w,s)\big) = x_2\Big(\frac{\cos t}{\sin t} - \frac{\cos s}{\sin s}\Big) + \frac{w_2}{\sin s} - \frac{y_2}{\sin t}.
    \Ee
   Via an elementary trigonometric identity, the right-hand side is rewritten as  $I+ I\!I$, where 
    \begin{align*}
     I :=   \frac{w_2-y_2}{\sin s}, \qquad  I\!I:= \frac{x_2\sin(s-t) + y_2(\sin t - \sin s)}{\sin t \sin s}.
    \end{align*}
Since $|t- S(x,y)|\lesssim 2^{-\ell}$ and $|s- S(x,w)|\lesssim 2^{-\ell}$, using \eqref{ss}, we see   
    $|s-t|, |\sin s-\sin t|\lesssim  2^{-\ell} + \mu' \mu^{-1/2} $. From \eqref{bb} we also have $|x_2|,\ |y_2|\lesssim (\mu\mu')^{1/2}$.
   Since $t, s\sim \mu^{1/2}$ and $2^{-\ell}\gtrsim \smp$,  it follows that 
    \[  |I\!I|\lesssim  (2^{-\ell} + \mu' \mu^{-\frac{1}{2}}) \biggl(\frac{\mu'}{\mu}\biggr)^{\frac{1}{2}}\lesssim 2^{-\ell}  \biggl(\frac{\mu'}{\mu}\biggr)^{\frac{1}{2}}. \] 
   On the other hand,  we have $|I|\gtrsim \mu^{-1/2} |y_2- w_2|$. Therefore, the inequality \eqref{px22} follows. 
   \end{proof}

In order to prove Lemma \ref{prop:alortho-easy}, we need to control the derivatives $\partial_{x_2}^\beta S$. For later use, we do this in a more general setting than what is needed here.

\begin{lem}\label{lem:bdsS*}
    Let $(x,y)\in \cB\times\cB'$, and $\nu,\nu'\in \mathbb S^1$. Suppose  that 
    \[ |\inp{\nu}{e_1}|,\ |\inp{\nu'}{e_2}|\le c\biggl(\frac{\mu'}{\mu}\biggr)^{\frac{1}{2}}\] for some constant $c>0$. Then, for any $N\in\N$,  we have
    \begin{align}
    \label{nu}
        |\inp{\nu}{\partial_x}^N S (x,y)|& \lesssim_N  \mu^{\frac{1}{2}}\biggl(\frac{\mu^{\frac{3}{2}}}{\smp}\biggr)^{-N},
        \\
   \label{nup}      |\inp{\nu'}{\partial_y}^N S (x,y)|& \lesssim_N  \mu^{\frac{1}{2}-N}.
    \end{align}
\end{lem}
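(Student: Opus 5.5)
Since $x\mapsto\inp xy$ and $y\mapsto\inp xy$ are linear, we can compute both directional derivatives of $S$ exactly. Write $S(x,y)=g(\inp xy)$ with $g=\cos^{-1}$ on $(-1,1)$.

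For fixed $y$, the function $x\mapsto \inp xy$ is linear. Hence $\inp{\nu}{\partial_x}\inp xy=\inp{\nu}{y}$ and all higher derivatives along $\nu$ vanish. Faà di Bruno's formula therefore has a single surviving term:
\[
\inp{\nu}{\partial_x}^N S(x,y)=g^{(N)}(\inp xy)\,\inp{\nu}{y}^N .
\]
By symmetry, $\inp{\nu'}{\partial_y}^N S(x,y)=g^{(N)}(\inp xy)\,\inp{\nu'}{x}^N$. The proof thus reduces to two elementary bounds: a bound on $g^{(N)}$ near $u=1$, and bounds on the inner products $\inp\nu y$ and $\inp{\nu'}x$.

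\emph{Bound on $g^{(N)}$.} We have $g'(u)=-(1-u)^{-1/2}(1+u)^{-1/2}$. On the relevant range, \eqref{inp} gives $1-u\sim\mu$, so $1+u\sim 2$. Each further derivative either falls on $(1-u)^{-1/2}$, costing a factor $(1-u)^{-1}$, or on the smooth factor $(1+u)^{-1/2}$, costing $O(1)$. By induction,
\[
|g^{(N)}(u)|\lesssim_N (1-u)^{\frac12-N}\sim \mu^{\frac12-N}.
\]

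\emph{Bounds on the inner products.} For $y\in\cB'$, \eqref{bb} gives $|y_1|\le 1$ and $|y_2|\lesssim\mmt$. Using the hypothesis $|\inp\nu{e_1}|\le c(\mu'/\mu)^{1/2}$,
\[
|\inp\nu y|\le |\inp\nu{e_1}|+|y_2|\lesssim \Big(\frac{\mu'}{\mu}\Big)^{\frac12}+(\mu\mu')^{\frac12}\lesssim\Big(\frac{\mu'}{\mu}\Big)^{\frac12},
\]
where the last step uses $\mu\le1$. For $\nu'$ we only need the trivial bound $|\inp{\nu'}x|\le|x|\lesssim1$, since $x\in\cB$ is near $e_1$.

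\emph{Conclusion.} Combining the two bounds,
\[
|\inp{\nu}{\partial_x}^N S|\lesssim_N \mu^{\frac12-N}\Big(\frac{\mu'}{\mu}\Big)^{\frac N2}=\mu^{\frac12}\Big(\frac{\mu^{3/2}}{\smp}\Big)^{-N},
\]
which is \eqref{nu}. Likewise,
\[
|\inp{\nu'}{\partial_y}^N S|\lesssim_N\mu^{\frac12-N},
\]
which is \eqref{nup}. The hypothesis on $\inp{\nu'}{e_2}$ is not needed for \eqref{nup}.

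The only point that needs care is the uniform estimate $|g^{(N)}(u)|\lesssim_N(1-u)^{1/2-N}$, which relies on $1-\inp xy\sim\mu$ from \eqref{inp} holding throughout $\cB\times\cB'$. Everything else is the observation that linearity of $\inp xy$ in each variable collapses the chain rule to one term.
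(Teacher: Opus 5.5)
Your proof is correct, and it takes a more direct route than the paper's. You use $S=\cos^{-1}(\inp{x}{y})$, which is valid since $S$ takes values in $I_\mu\subset(0,\pi)$. Because $\inp{x}{y}$ is linear in each variable, the Fa\`a di Bruno formula collapses to the exact identity $\inp{\nu}{\partial_x}^N S=(\cos^{-1})^{(N)}(\inp{x}{y})\,\inp{\nu}{y}^N$. This reduces everything to the one-variable bound $|(\cos^{-1})^{(N)}(u)|\lesssim_N(1-u)^{\frac12-N}$, with $1-u\sim\mu$ by \eqref{inp}, together with the size bound $|\inp{\nu}{y}|\lesssim(\mu'/\mu)^{1/2}$.

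The paper never writes $S$ explicitly. It differentiates the implicit relation $\inp{\nu}{y}=-\sin S\,\inp{\nu}{\partial_x}S$ repeatedly and uses $\inp{\nu}{\partial_x}^N\cos S=0$ for $N\ge2$, which is the same linearity you exploit. It then controls the Leibniz remainder $E_k$ by induction on $N$, with $\sin S\sim\mu^{1/2}$ playing the role of your $1-u\sim\mu$.

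Your version is shorter and makes the scaling $\mu^{\frac12-N}\inp{\nu}{y}^N$ transparent. You are also right that the hypothesis on $\inp{\nu'}{e_2}$ is not needed for \eqref{nup}, since $|\inp{\nu'}{x}|\le|x|\lesssim1$ trivially. Your argument is in fact the same Fa\`a di Bruno argument the paper itself uses later for $S_\ell=\cos^{-1}(\cos S_\ell)$ in Lemma \ref{lem:bounds-Spm}. The paper's implicit induction buys nothing essential here beyond avoiding explicit derivative bounds for $\cos^{-1}$.
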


\begin{proof} 
    We first show \eqref{nu}  by induction on $N$. 
    Since  $\cos S (x,y)=\inp xy$, note that 
    \begin{align}\label{i:vpxS*}
       \inp\nu y= \inp{\nu}{\partial_x}\cos S (x,y) = -\sin S (x,y) \inp{\nu}{\partial_x}S (x,y).
    \end{align}
    Since $y\in \cB'$, $|y_2|\lesssim (\mu\mu')^{1/2}$. Thus, by the assumption on $\nu$, 
    $ \inp\nu y=  O((\mu'/\mu)^{1/2})$. The first equality gives 
    \begin{align}\label{e:pxcosS*}
        |\inp{\nu}{\partial_x}^N\cos S (x,y)|\lesssim \begin{cases}
            \bigl(\frac{\mu'}{\mu}\bigr)^{\frac{1}{2}} & N=1 \\
           \quad\!\! 0 & N\ge 2.
        \end{cases}
    \end{align} 
    Combining this with \eqref{i:vpxS*}, we have  \eqref{nu} for $N=1$. 
        We also used $\sin S(x,y) \sim \mu^{1/2}$. 
    
    Assume that the  bound \eqref{nu} holds for $1\le N\le k$. Since 
   $
    \inp{\nu}{\partial_x}^{k+1}\cos S (x,y) = 0,  
    $
    applying $\inp{\nu}{\partial_x}^k$ to both sides of the second equality in \eqref{i:vpxS*} gives 
    \begin{align}\label{i:pxS*-E*k}
        \sin S (x,y) \inp{\nu}{\partial_x}^{k+1}S (x,y) + E_k(x,y) = 0
    \end{align}
    where 
    \begin{align}\label{i:defEk*}
        E_k(x,y) = \sum_{j=1}^k C_j \inp{\nu}{\partial_x}^j (\sin S (x,y)) \inp{\nu}{\partial_x}^{k+1-j}S (x,y)
    \end{align}
    with suitable constants $C_j$. Since $\inp{\nu}{\partial_x} \sin S (x,y) = \inp{\nu}{\partial_x}S (x,y)\cos S (x,y)$, by our induction assumption and \eqref{e:pxcosS*} we have 
    \begin{align*}
        |\inp{\nu}{\partial_x}^j \sin S (x,y)|\lesssim_j \mu^{\frac{1}{2}}(\mu^{-\frac{3}{2}}\smp)^j
    \end{align*}
    for $1\le j\le k$. Substituting this into \eqref{i:defEk*} and using the induction hypothesis yields $|E_k(x,y)|\lesssim \mu (\mu^{-3/2}\smp)^{k+1}$. From this and \eqref{i:pxS*-E*k}, \eqref{nu} with $N=k+1$ follows, since $\sin S(x,y) \sim \mu^{1/2}$.

    We show the bounds on $\inp{\nu'}{\partial_y}^N S$, whose proof is almost identical to the previous one. Since $|\inp{\nu'}{e_2}|\le c(\mu'/\mu)^{1/2}$, 
    we have $\inp{\nu'}x=O(1)$ for $x\in \cB$. By the same argument, we obtain
    \begin{align*}
        |\inp{\nu'}{\partial_y}^N\cos S (x,y)|\lesssim \begin{cases}
            1 & N=1 ,\\
            0 & N\ge 2.
        \end{cases}
    \end{align*}
   One immediately obtains $|\inp{\nu'}{\partial_y}S (x,y)|\lesssim \mu^{-1/2}$, using  the identity
   \[
        \inp{\nu'}{\partial_y}\cos S (x,y) = -\sin S (x,y) \inp{\nu'}{\partial_y}S (x,y).\]
This gives \eqref{nup} for $N=1$.  
    For general $N$, we argue by induction as before. We omit the details.
\end{proof}

We proceed to prove Lemma \ref{prop:alortho-easy}.

\begin{proof}[Proof of Lemma \ref{prop:alortho-easy}]
    Assume that $y\in  \supp (\wt\chi_{\cB'})_j$, $w\in \supp (\wt\chi_{\cB'})_k$. Note
    \begin{align*}
        \mathcal E_j^*\mathcal E_k(y,w)  = \frac{ (\wt\chi_{\cB'})_j(y)(\wt\chi_{\cB'})_k(w)}{\mu}\iiint e^{-i\lambda\Psi(x,y,w,t,s) }     A(x,y,w,t,s) dxdsdt,
    \end{align*}
    where
    \begin{align*}
    \Psi(x,y,w,t,s)&=\cP(x,y,t)-\cP(x,w,s),
    \\
        A (x,y,w,t,s) &=  \big(\wt\chi_\cB(x)\big)^2  \widetilde\psi_\vartriangle^\ell(x,y,t)  \widetilde\psi_\vartriangle^\ell(x,w,s).
    \end{align*}
    
    By Lemma \ref{lem:bdsS*}, $\partial_{x_2}^N S (x,y)$ and $\partial_{x_2}^N S (x,w)$ are $O(\mu^{1/2}(\mu^{-3/2}\smp)^N)$.  Recalling \eqref{psi-tri} and \eqref{psi-tri-l},  note that
    \begin{align*}
        \big|\partial_{x_2}^N\big(  \widetilde\psi_\vartriangle^\ell(x,y,t)  \widetilde\psi_\vartriangle^\ell(x,w,s)\big)\big|\lesssim_N \mu^{-N}
    \end{align*}
    for all $N\in \N$, because $2^\ell\lesssim (\mu')^{-1/2}$ and $\mu'\ll \mu$.  
    By the adaptedness of $\wt\chi_\cB$,
    $|\partial_{x_2}^N\wt\chi_\cB(x)|\lesssim_N (\mu\mu')^{-N/2}$. Combining  these two estimates, we obtain
    \begin{align}\label{e:px2A*}
        |\partial_{x_2}^N A(x,y,w,t,s)|\lesssim_N (\mu\mu')^{-\frac{N}{2}}.
    \end{align}

   On the other hand, it follows from \eqref{px2} that $\partial_{x_2}^2 \Psi(x,y,w,t,s) = \sin(s-t)/ (\sin s \sin t)$. 
   Since $|s-t| \lesssim 2^{-\ell}$ (see the proof of Lemma \ref{lem:lbdiffcP-easy}), we have 
    \begin{align}\label{e:px2cPdiff}
       \partial_{x_2}^N  \Psi(x,y,w,t,s) = \begin{cases}
      O( 2^{-\ell}\mu^{-1}) , \quad &  N=2,
       \\ 
     \qquad  0, \quad & N\ge 3. 
     \end{cases}
           \end{align} 
               
    Now, we consider a differential operator $\mathcal L$ given by
    \begin{align*}
        \mathcal L f(x)
        =\partial_{x_2}\Big(
        \frac{f(x)}{-i\lambda\partial_{x_2}\Psi(x,y,w,t,s)}
        \Big).
    \end{align*}
    Let $\mathcal L^\ast$ denote the adjoint of $\mathcal L$. Since
    $\mathcal L^\ast e^{-i\lambda\Psi(x,y,w,t,s)}
    =e^{-i\lambda\Psi(x,y,w,t,s)}$, by integration by parts, for any
    $N\in\N$ we have
    \begin{align*}
        |\mathcal E_j^*\mathcal E_k(y,w)|
        \le \frac{(\wt\chi_{\cB'})_j(y)(\wt\chi_{\cB'})_k(w)}{\mu}
        \bigg|\iiint e^{-i\lambda\Psi(x,y,w,t,s)}
        \mathcal L^N\!A(x,y,w,t,s)\,dx\,ds\,dt\bigg|.
    \end{align*}

   Recalling \eqref{Bn}, we note   that  $|y_2-w_2|\sim 2^{-\ell}(\mu')^{1/2} |j-k|$  
    whenever  $y\in \supp (\wt\chi_{\cB'})_j$ and  $w\in \supp (\wt\chi_{\cB'})_k$ and $|j-k|\ge 3$.  Using Lemma \ref{lem:lbdiffcP-easy} we have the lower bound 
    \[|\partial_{x_2}\Psi(x,y,w,t,s)|\gtrsim  \mu^{-\frac{1}{2}}  2^{-\ell}(\mu')^{\frac{1}{2}} |j-k|\] 
  if    $|j-k|\ge C$ for a sufficiently large $C>0$. 
 Putting this  together with \eqref{e:px2A*} and \eqref{e:px2cPdiff}, via a routine computation we get
    \begin{align*}
        \big|\mathcal L^N \! A(x,y,w,t,s)\big|
        \lesssim_N |\lambda 2^{-\ell}\mu'(j-k)|^{-N}
    \end{align*}
    for any $N\in \N$.    
    
     Since $\cB$ has dimensions about $\mu\times \sqrt{\mu\mu'}$ and since 
    $\supp \widetilde\psi_\vartriangle^\ell(x,y,\cdot), \supp  \widetilde\psi_\vartriangle^\ell(x,w,\cdot)$ are contained in intervals of length $2^{-\ell}$, 
     we get
    \begin{align*}
        |\mathcal E_j^*\mathcal E_k(y,w)| \lesssim_N  2^{-2\ell}  \sqrt{\mu\mu'} |\lambda 2^{-\ell}\mu'(j-k)|^{-N}(\wt\chi_{\cB'})_j(y)(\wt\chi_{\cB'})_k(w).
    \end{align*}
         Note that $\supp (\wt\chi_{\cB'})_j$ and $\supp (\wt\chi_{\cB'})_k$ are included in rectangles of dimensions about $\mu'\times 2^{-\ell}\smup$. 
           Thus, using  Lemma \ref{schur}, we obtain  the desired bound \eqref{ejek}.  
\end{proof}
Thus the contribution away from the symmetric time satisfies \eqref{l2}; the remaining analysis begins with the geometry of \(\{\cD=0\}\) in Section~\ref{sec:geometry}.
 \section{Geometry of the degeneracy set \texorpdfstring{$\{\cD=0\}$}{D=0}}\label{sec:geometry}  
Through this and the subsequent sections we prove  the following, which  together with  \eqref{e:sumfElk}  proves  Theorem \ref{thm:core} (see \eqref{e:decompm-1st}).

\begin{prop}\label{main-p}
Under the hypotheses of Theorem~\ref{thm:core}, let
\(\mathcal O_\lambda[\psi_\vartriangle]\) be defined by
\eqref{ola} and \eqref{psi-tri}. Then,
\begin{align}\label{e:main-fP}
    \| \wt\chi_{\cB} \mathcal O_\lambda[\psi_\vartriangle]  \wt\chi_{\cB'}\|_{L^{q_e'}\to L^{q_e}}\lesssim   \lambda^{-\frac{4}{5}}\biggl(\frac{\mu'}{\mu}\biggr)^{\frac{1}{20}}.
\end{align}
\end{prop}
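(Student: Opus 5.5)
We follow the outline from the introduction: a recursive space--time decomposition of $\psi_\vartriangle$ descending from $\delt$ to the bottom scale. Set $\dels\sim\lambda^{-2/3}\mu$. For dyadic $\dels\le\delta\le\delt$ we split $\cB\times\cB'$ into three regions: $\fS_\delta^+$ where $\cD\sim\delta$, $\fS_\delta^-$ where $-\cD\sim\delta$, and $\fS_\delta^\circ$ where $|\cD|\lesssim\delta$. On $\fS_\delta^\circ$ we also cut in time. The part with $|t-S(x,y)|\sim\delta^{1/2}\mu^{-1/2}$ is extracted at this scale. The part with $|t-S|\ll\delta^{1/2}\mu^{-1/2}$ is passed to scale $\delta/2$. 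What remains at $\dels$ is the bottom piece, where the phase is cubically degenerate. This writes $\wt\chi_\cB\mathcal O_\lambda[\psi_\vartriangle]\wt\chi_{\cB'}$ as a sum over $\delta$ of operators $\fP^\kappa_\delta$, $\kappa\in\{+,-,\circ\}$, plus a bottom operator $\fP_\ast$.

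Next we cover each region by pairs of rectangles $(\fr,\fr')$ that are tangent to the level curves of $\cD$. The normal widths are chosen so that $\cD$ varies by $\lesssim\delta$ across a rectangle. The tangential lengths are fixed by the stability of $\nabla_x\cD/|\nabla_x\cD|$ and $\nabla_y\cD/|\nabla_y\cD|$. We then prove incidence bounds: each $\fr$ pairs with $O(1)$ rectangles $\fr'$, and conversely. On the input side, each $\fr'$ is subdivided tangentially into rectangles $\fs$ shorter by the factor $(\mu'/\mu)^{1/2}$.

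For a single piece $\fP^\kappa_{\delta,\fr,\fs}$ we need two bounds.
\begin{itemize}
\item \emph{Kernel bound.} For $\kappa=-$ and for the extracted time-away parts we use van der Corput. By \eqref{i:ptcP-Q}, $|\partial_t\cP|\gtrsim\delta\mu^{-1}$ on the support, and the time interval has length $\delta^{1/2}\mu^{-1/2}$. For $\kappa=+$ we use stationary phase at the two nondegenerate critical points $t_\pm$, which gives a factor $\sim(\lambda|\partial_t^2\cP|)^{-1/2}$. At the bottom scale we use the cubic van der Corput bound $\lambda^{-1/3}$ times the appropriate power of $\mu$.
\item \emph{$L^2$ bound.} Here we aim for $\|\fP^\kappa_{\delta,\fr,\fs}\|_{2\to2}\lesssim\lambda^{-1}\delta^{1/2}(\mu\mu')^{-1/4}$. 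For $\kappa=+$, the reduced phases $\cP(x,y,t_\pm(x,y))$ have a rank-one mixed Hessian. It is nondegenerate between the normal directions of $\fr$ and $\fs$. After anisotropic rescaling and freezing the tangential variables, a one-dimensional H\"ormander $L^2$ estimate gives the bound. The nonstationary pieces follow from Lemma~\ref{schur} using the kernel bound and the areas $|\fr|,|\fs|$.
\end{itemize}
Interpolating at $q_e$ gives each piece the bound $\lambda^{-4/5}\delta^{1/5}\mu^{-1/4}\mu'^{-3/20}$. Since $\delta\lesssim\mu\mu'$, this already contains the gain $(\mu'/\mu)^{1/20}$.

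To sum the pieces, the global $L^1\to L^\infty$ bound at fixed $\delta$ is immediate, because the kernels have essentially disjoint supports. The global $L^2$ bound comes from Lemma~\ref{lem:cotlarstein}, in the same way as Lemma~\ref{prop:alortho-easy}. For pieces on separated rectangles (or separated $\fs$ inside one $\fr'$), we integrate by parts in $x$ (respectively $y$) along the normal direction. The gradient difference of the phases is bounded below by the separation, as in Lemma~\ref{lem:lbdiffcP-easy}. Derivative control for $S$ and the critical times comes from Lemma~\ref{lem:bdsS*} and its analogues for $t_\pm$. Combined with the incidence bounds, this gives rapid off-diagonal decay. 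We then interpolate the two global bounds at each $\delta$ and sum dyadically. The sum is split at $\delta=(\mu')^2$: above it the sum is geometric toward $\delt$, and below it toward $\dels$. The bottom piece is controlled using the lower bound on $\mu'$ from \eqref{mm}.

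The main obstacle is the $L^2$ estimate for $\kappa=+$ with the refined input rectangles $\fs$. It needs precise control of the mixed Hessian directions of the reduced phase. These directions must stay stable over the thin rectangles, and the refinement must produce the factor $(\mu'/\mu)^{1/2}$ without destroying the orthogonality used in Cotlar--Stein. We would attack it by writing the mixed Hessian via $\nabla_x\cD\otimes\nabla_y\cD$, up to a factor, and comparing it with the rectangle normals.
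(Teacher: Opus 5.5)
Your proposal is correct and follows the paper's proof essentially step for step. It uses the recursive decomposition \eqref{i:decomp-main}, the tangential rectangles with the incidence bounds of Lemma~\ref{lem:cardinal-TT'}, the input refinement into the rectangles $\fs$, the diagonal bounds of Proposition~\ref{prop:L2est}, the off-diagonal estimates of Propositions~\ref{prop:alortho-1} and~\ref{prop:alortho-2} fed into Lemma~\ref{lem:cotlarstein}, and interpolation followed by a dyadic split at $\delta=(\mu')^2$.

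One clarification on the last step. The split is needed because on the $TT^\ast$ side there is decay only once $|\nu-\nu_\zc|\gg\delta^{1/2}(\mu\mu')^{-1/2}$. As a result the global $L^2$ bound is $B_2(\delta)$, which is weaker than the single-piece bound. Below $(\mu')^2$ the dyadic sum is then dominated by $\delta\sim(\mu')^2$, not by $\dels$.
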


In this section, we determine the sizes and directions of the gradients of \(\cD\), estimate its Hessians in the corresponding normal and tangential directions, and establish the stability properties needed for the rectangular decomposition in Section~\ref{sec:decomp}.

The decomposition \eqref{e:decompm-1st}  gives rise to a tighter  localization in $t$ for the kernel representation of $\mathcal O_\lambda[\psi_\vartriangle]$, while 
the other part is controlled by an acceptable bound (Proposition \ref{prop:estfE}). In fact, recalling \eqref{i:ptcP-Q}, one sees that 
the scale of the localization for $\mathcal O_\lambda[\psi_\vartriangle]$  is determined by the size of $\mathcal D$.  Hence, one may expect that further localization in time becomes possible, after discarding some acceptable contribution, if one  considers  the operator $\mathcal O_\lambda[\psi_\vartriangle]$ over a subset  $\cB\times \cB'$ where $\mathcal D\sim c$  for some nonzero constant $c$.   This idea naturally leads to decomposition of  $\cB\times \cB'$ into subsets on which    $\mathcal D$ has values comparable to dyadic numbers.  This  decomposition will play a crucial role in establishing the estimate \eqref{e:main-fP}.   We refer the reader forward to 
Section \ref{subsec:decomp-prod} for details.  

In order to obtain an appropriate decomposition for our purpose, we need to investigate  the behavior of $\cD$ near $\{ (x, y): \cD(x,y)=0\}$.   

Let $\dels$ denote the bottom scale given by
\begin{align}\label{d:top&bottom}
    \dels = 2^{\lfloor\log_2(\lambda^{-\frac{2}{3}}\mu)\rfloor}.
\end{align}
The scale \(\dels\) is chosen so that the corresponding time width
\(\dels^{1/2}\mu^{-1/2}\) is comparable to the cubic oscillatory scale
\(\lambda^{-1/3}\). Moreover, by \eqref{mm} and \eqref{d:top},
choosing \(C_\ast\) sufficiently large guarantees
\(2\dels\leq\delt\).
Here and henceforth,  $\delta$ denotes a  dyadic number  such that 
\Be \label{ddd} \dels \le \delta\le \delt.\Ee 
In this hierarchy, \(\delt\sim\epz\mu\mu'\) is the initial degeneracy scale, \(\delta\) records the dyadic size of \(|\cD|\), and \(\dels\sim\lambda^{-2/3}\mu\) is the terminal oscillatory scale. At level \(\delta\), the corresponding time width is \(\delta^{1/2}\mu^{-1/2}\), while the associated spatial scales are given in \eqref{scale}; the further input-side refinement is introduced in \eqref{s'}.

For each $\delta$, we define
\begin{align}
   \label{fsd} \fS_\delta^\pm &:= \{(x,y)\in \cB\times\cB': \delta < \pm\cD(x,y) \le 4 \delta\}, \\
    \label{fsdc}\fS_\delta^\circ &:= \left\{(x,y)\in \cB\times\cB': |\cD(x,y)| \le \frac{3\delta}{2} \right\}.
\end{align}
Thus, 
\Be
\label{bb-ss}
 \cB\times\cB'=\Big(\bigcup_{\kappa=\pm}\bigcup_{\dels \le \delta\le \delt} \fS_\delta^\kappa \Big) \cup   \fS_\dels^\circ .
 \Ee
 Throughout the paper, $\kappa$ denotes an element in $\{+,-,\circ\}$. 
Note that $\fS_\delta^+$ and $\fS_\delta^-$ overlap with $\fS_\delta^\circ$.  The regions $\fS_\delta^\pm$, $\fS_\delta^\circ$ serve as a preliminary decomposition 
for the more refined one given in Section \ref{sec:decomp}.  We also have
\[|\cD(x,y)|\le 2\delt, \quad  (x,y)\in \cB\times\cB'.\]

\subsection{The function \texorpdfstring{$\cD$}{D} and the normal vectors
\texorpdfstring{$\fa,\fb$}{n and n-prime}} 
In this subsection, we collect  the various properties of $\cD$,  which we use later.  We begin by observing the following. 

\begin{lem} Let $\mu, \mu'$ satisfy \eqref{mm}.  
\label{lem:pxpy} For $(x,y)\in \cB\times\cB'$, we have
\begin{align}
\label{e:sizeofxDyD}
    |\partial_x\cD(x,y)|&\sim (\mu\mu')^{\frac{1}{2}},
    \\
    \label{e:sizeofxDyD'}
     |\partial_y \cD(x,y)|&\sim \mu. 
\end{align}
\end{lem}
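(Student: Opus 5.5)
We compute the gradients of \(\cD(x,y)=1+\inp xy^2-|x|^2-|y|^2\) directly:
\[
\partial_x\cD(x,y)=2\inp xy\,y-2x,\qquad \partial_y\cD(x,y)=2\inp xy\,x-2y .
\]
We then read off the sizes of the components from the localization \eqref{bb}. For \((x,y)\in\cB\times\cB'\) we have \(x_1=1-O(\mu)\) with \(1-x_1\sim\mu\), and \(|x_2|\ll\mmt\). Likewise \(y_1=1-O(\mu')\) with \(1-y_1\sim\mu'\), and \(y_2\sim\mmt\). Also \(1-\inp xy\sim\mu\) by \eqref{inp}. It is convenient to write \(\inp xy=1-a\) with \(a\sim\mu\).

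\emph{The \(y\)-gradient.} We expand each component separately. The first component is
\[
\tfrac12\partial_{y_1}\cD=(1-a)x_1-y_1=(1-y_1)-(1-x_1)-a x_1 .
\]
Here \(1-x_1\) and \(a\) are both of size \(\sim\mu\), while \(1-y_1\sim\mu'\le\epz\mu\). Both \(-(1-x_1)\) and \(-ax_1\) are negative, so there is no cancellation and \(|\partial_{y_1}\cD|\sim\mu\). The second component is
\[
\tfrac12\partial_{y_2}\cD=(1-a)x_2-y_2=O(\mmt),
\]
which is \(\ll\mu\) since \(\mu'\le\epz\mu\). This gives \eqref{e:sizeofxDyD'}.

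\emph{The \(x\)-gradient.} The first component is
\[
\tfrac12\partial_{x_1}\cD=(1-a)y_1-x_1=(1-x_1)-(1-y_1)-ay_1 .
\]
Here the terms \((1-x_1)\) and \(a y_1\) are each of size \(\sim\mu\) and have opposite signs, so cancellation must be checked. Expanding \(a=1-x_1y_1-x_2y_2\) gives
\[
(1-x_1)-ay_1=(1-x_1)(1-y_1^2)+\dots
\]
after using \(x_1y_1^2\) and the \(x_2y_2\) terms. So I would compute exactly:
\[
(1-x_1)-(1-x_1y_1-x_2y_2)y_1-(1-y_1)=(1-x_1)(1-y_1)(\cdots)+x_2y_1y_2-\cdots .
\]
Each resulting term is \(O(\mu\mu')\) or \(O(|x_2|\,\mmt)\). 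Hence \(|\partial_{x_1}\cD|\lesssim\mu\mu'+|x_2|\mmt\), which is \(\ll\mmt\) because \(|x_2|\ll\mmt\ll1\) and \(\mu\mu'\ll\mmt\).

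The second component is
\[
\tfrac12\partial_{x_2}\cD=(1-a)y_2-x_2 .
\]
Since \(y_2\sim\mmt\) and \(|x_2|\ll\mmt\), this component is \(\sim\mmt\). Combining the two components gives \(|\partial_x\cD|\sim\mmt\), which is \eqref{e:sizeofxDyD}.

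\emph{Where the difficulty lies.} The only delicate step is the \(x_1\)-component. There the two \(O(\mu)\) terms cancel, and one must verify that the remainder is indeed \(O(\mu\mu'+|x_2|\mmt)\). I would do this by carrying out the exact algebraic expansion described above. Alternatively, one can use \eqref{r:cond-cDh}: since \(\cD=O(\epz\mu\mu')\), the identity
\[
\cD=(1-\inp xy)^2\cdot(\dots)
\]
relates the cancelling terms, and this controls the cancellation. The constant \(\epz\) in \eqref{mm} and \eqref{bb} is what guarantees that all error terms are dominated by the main terms.
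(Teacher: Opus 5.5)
Your componentwise route is legitimate, and your treatment of $\partial_y\cD$ and of the $x_2$-component of $\partial_x\cD$ is correct. However, the step you single out as the delicate one is wrong as stated, so the verification you plan would fail. Carrying out the expansion exactly gives
\[
\tfrac12\partial_{x_1}\cD=\inp xy\,y_1-x_1=-x_1(1-y_1^2)+x_2y_1y_2=-2(1-y_1)+O(\mu\mu')+x_2y_1y_2 .
\]
The factor multiplying $1-y_1$ is $-x_1(1+y_1)\approx-2$, and it carries no factor $1-x_1$. So after the two $O(\mu)$ terms cancel, the term $-(1-y_1)$ you wrote down survives, and in fact doubles. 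Since $1-y_1\sim\mu'$ and $|x_2y_1y_2|\ll\mu\mu'\le\epz\mu'$, this component is $\sim-\mu'$, not $O(\mu\mu'+|x_2|\mmt)$. This is exactly the bound $0<-\inp{\partial_x\cD}{e_1}\sim\mu'$ proved in Lemma~\ref{lem:directionab}.

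The lemma still holds, but for a different reason than the one you give. By \eqref{mm}, $\mu'\le\epz\mu$, hence $\mu'\le\epz^{1/2}\mmt$. So the $x_1$-component is still negligible compared with the $x_2$-component, which is $\sim\mmt$. Your ``alternative'' via an identity $\cD=(1-\inp xy)^2\cdot(\dots)$ is not an actual identity and should be dropped.

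The paper argues differently. It uses the exact identity $\tfrac14|\partial_x\cD|^2=(2-|x|^2-|y|^2)(1-|y|^2)+\cD(|y|^2-2)$ and its $x\leftrightarrow y$ counterpart. The first product is $\sim\mu\mu'$ (respectively $\sim\mu^2$), and the $\cD$-term is absorbed using \eqref{r:cond-cDh}. That argument is shorter and treats both gradients symmetrically, but its lower bound for $|\partial_x\cD|$ depends on the smallness $|\cD|\lesssim\epz\mu\mu'$.

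Once corrected, your computation needs only \eqref{bb}, \eqref{inp}, and \eqref{mm}. The lower bound comes from $y_2\sim\mmt$ alone, and you also obtain the component sizes and signs recorded in Lemma~\ref{lem:directionab}.
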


\begin{proof} 
Note that 
\begin{align}
\label{pxpy0} \partial_x \cD= 2y \inp x y- 2x, 
\\
 \label{pxpy1}   \partial_y \cD= 2x \inp x y- 2y .
\end{align}
Hence, we have 
\begin{align*}
    2^{-2} {|\partial_x \cD(x,y)|^2} = (2-|x|^2 - |y|^2)(1-|y|^2) + \cD(x,y) (|y|^2-2). 
 \end{align*}
By \eqref{bb},  \eqref{mm}, and \eqref{r:cond-cDh}  with a sufficiently  small $\epz>0$,  \eqref{e:sizeofxDyD} follows. 
Interchanging the roles of  $x$ and $y$ in the above identity, we obtain \eqref{e:sizeofxDyD'}. 
\end{proof}

\begin{figure}[t]
  \centering
  \vspace*{0.25cm}
  \begin{tikzpicture}[
    >={Stealth[length=2.5pt,width=2.8pt]},
    every node/.style={font=\scriptsize},
    normal/.style={->,thin,black},
    connector/.style={dash pattern=on 1.2pt off 0.7pt,gray!60},
    guide/.style={densely dotted,gray!60}
  ]
    \begin{scope}[scale=1.55]
      \draw[->,thin] (-0.15,0) -- (4.15,0) node[right] {$e_1$};
      \draw[guide,gray!80]
        plot[domain=-14:34,samples=80,variable=\t]
        ({3.6*cos(\t)},{3.6*sin(\t)});
      \node[gray!70] at (3.60,1.55) {$|z|=1$};
      \filldraw[fill=blue!12,draw=blue!70!black,thick]
        (1.55,-0.133) rectangle (2.45,0.133);
      \filldraw[fill=red!12,draw=red!70!black,thick]
        (3.00,0.62) rectangle (3.20,1.02);
      \coordinate (Bglobalcenter) at (2.00,0);
      \coordinate (Bpglobalcenter) at (3.10,0.82);
      \node[blue!70!black,font=\scriptsize] at (2.00,0) {$\mathcal B$};
      \node[red!70!black,font=\scriptsize] at (3.10,0.82) {$\mathcal B'$};
    \end{scope}

    \begin{scope}[shift={(6.95,-1.15)}]
      \begin{scope}[scale=0.67,transform shape]
        \filldraw[fill=blue!8,draw=blue!70!black,thick]
          (0,0) rectangle (5.0,1.45);
        \coordinate (Binsetcenter) at (2.50,0.725);
        \node[blue!70!black,font=\small] at (0.38,1.15) {$\mathcal B$};
        \draw[blue!85!black,thin]
          plot[smooth] coordinates
            {(0.25,0.42) (1.45,0.53) (2.55,0.70)
             (3.75,0.91) (4.75,1.15)};
        \coordinate (xpt) at (2.55,0.70);
        \fill (xpt) circle (2pt);
        \node[below right] at (xpt) {$x$};
        \draw[connector] (1.55,0.54) -- (3.85,0.94);
        \draw[normal] (xpt) -- ++(-0.20,1.15)
          node[above,font=\large] {$\mathbf n$};
        \node[blue!85!black] at (4.00,1.18) {$\Gamma_y$};
      \end{scope}
    \end{scope}

    \begin{scope}[shift={(9.30,0.70)}]
      \begin{scope}[scale=0.67,transform shape]
        \filldraw[fill=red!8,draw=red!70!black,thick]
          (0,0) rectangle (1.55,3.70);
        \coordinate (Bpinsetcenter) at (0.775,1.85);
        \node[red!70!black,font=\small] at (0.28,3.38) {$\mathcal B'$};
        \draw[red!85!black,thin]
          plot[smooth] coordinates
            {(0.98,0.22) (0.91,1.05) (0.82,1.85)
             (0.72,2.72) (0.60,3.45)};
        \coordinate (ypt) at (0.82,1.85);
        \fill (ypt) circle (2pt);
        \node[right] at (ypt) {$y$};
        \draw[connector] (0.92,0.95) -- (0.66,2.95);
        \draw[normal] (ypt) -- ++(-1.25,-0.163)
          node[left,yshift=0.15cm,font=\large] {$\mathbf n'$};
        \node[red!85!black,right] at (0.92,0.58) {$\Gamma'_x$};
      \end{scope}
    \end{scope}

    \draw[connector,shorten <=0.70cm,shorten >=1.69cm]
      (Bglobalcenter) -- (Binsetcenter);
    \draw[connector,shorten <=0.16cm,shorten >=0.52cm]
      (Bpglobalcenter) -- (Bpinsetcenter);
  \end{tikzpicture}
  \vspace{0.25cm}
  \caption{The global placement of $\cB$ and $\cB'$ is as in
  Figure~\ref{fig:typical-B-Bprime}. For $(x,y)\in\cB\times\cB'$, the
  local level curves
  $\Gamma_y=\{z\in\cB:\cD(z,y)=\cD(x,y)\}$ and
  $\Gamma'_x=\{w\in\cB':\cD(x,w)=\cD(x,y)\}$ are shown schematically.
  The arrows $\mathbf n$ and $\mathbf n'$ indicate their normal
  directions at $x$ and $y$, respectively.}
  \label{fig:local-level-curves}
\end{figure}

 For $(x,y)\in \cB\times\cB'$, 
we now define unit vectors $\fa, \fb$ by
\begin{align}
\label{normal}
 \fa=\fa(x,y) = \frac{\partial_x\cD(x,y)}{|\partial_x\cD(x,y)|},
\\
\label{normal'}
 \fb = \fb(x,y) = \frac{\partial_y \cD(x,y)}{|\partial_y \cD(x,y)|}.
\end{align}
Note that  $\fa(x,y)$ is normal to the curve  
$
    \{z\in \cB: \cD(z,y) = \cD(x,y)\}
$
at $z=x$. Similarly, $\fb(x,y)$ is normal to $\{w\in \cB': \cD(x,w) = \cD(x,y)\}$ at $w=y$.

The following lemma shows that $\fa$ and $\fb$ are almost parallel to $e_2$ and $e_1$, respectively.

\begin{lem}\label{lem:directionab}    Let $\mu, \mu'$ satisfy \eqref{mm}. 
For $(x,y)\in \cB\times\cB'$, we have  
    \begin{align}
    \label{e:direction}
        |\inp{\fa}{e_1}| \sim \biggl(\frac{\mu'}{\mu}\biggr)^{\frac{1}{2}},
       \\
       \label{e:direction'}
         |\inp{\fb}{e_2}|\sim \biggl(\frac{\mu'}{\mu}\biggr)^{\frac{1}{2}}.
    \end{align}
    Moreover, $\inp{\fa}{e_1}, \inp{\fb}{e_2}, \inp{\fb}{e_1}<0<  \inp{\fa}{e_2} $.
\end{lem}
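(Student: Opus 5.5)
Since $\fa=\partial_x\cD/|\partial_x\cD|$ and $\fb=\partial_y\cD/|\partial_y\cD|$, and Lemma~\ref{lem:pxpy} already gives $|\partial_x\cD|\sim(\mu\mu')^{1/2}$ and $|\partial_y\cD|\sim\mu$, it suffices to compute the components of $\partial_x\cD$ and $\partial_y\cD$ from \eqref{pxpy0}--\eqref{pxpy1} and bound each one using \eqref{bb} and \eqref{inp}. Concretely, we need
\[
-\inp{\partial_x\cD}{e_1}\sim \mu',\quad \inp{\partial_x\cD}{e_2}\sim (\mu\mu')^{1/2},\quad -\inp{\partial_y\cD}{e_2}\sim (\mu\mu')^{1/2},\quad -\inp{\partial_y\cD}{e_1}\sim \mu .
\]
Dividing by the gradient sizes then gives \eqref{e:direction}, \eqref{e:direction'}, and all the sign claims.

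\emph{Input normal $\fa$.} The second component is $\partial_{x_2}\cD=2y_2\inp xy-2x_2$. Since $\inp xy=1-O(\mu)$, $y_2\sim(\mu\mu')^{1/2}$ and $|x_2|\ll(\mu\mu')^{1/2}$, this is $\sim(\mu\mu')^{1/2}$ and positive. For the first component, write
\[
\partial_{x_1}\cD=2y_1\inp xy-2x_1=2\bigl(y_1(x_1y_1+x_2y_2)-x_1\bigr)=-2x_1(1-y_1^2)+2x_2y_1y_2 .
\]
Here $1-y_1^2\sim\mu'$ and $x_1\sim1$, so the first term is $\sim-\mu'$. The second term is $|x_2y_2|\ll \mu\mu'\cdot(\text{small})$, so it is at most $\epz$ times $\mu'$ (using $|x_2|\le\epz(\mu\mu')^{1/2}$ and $y_2\lesssim(\mu\mu')^{1/2}$, whence $|x_2y_2|\lesssim\epz\mu\mu'\le\epz\mu'$). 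Hence $\partial_{x_1}\cD\sim-\mu'$. Dividing by $|\partial_x\cD|\sim(\mu\mu')^{1/2}$ gives $\inp{\fa}{e_1}\sim-(\mu'/\mu)^{1/2}$ and $\inp{\fa}{e_2}>0$.

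\emph{Output normal $\fb$.} Similarly,
\[
\partial_{y_1}\cD=2x_1\inp xy-2y_1=-2y_1(1-x_1^2)+2x_1x_2y_2 .
\]
Here $1-x_1^2\sim\mu$, while $|x_2y_2|\lesssim\epz\mu\mu'\ll\mu$, so $\partial_{y_1}\cD\sim-\mu$, which is negative. Next,
\[
\partial_{y_2}\cD=2x_2\inp xy-2y_2 .
\]
Since $|x_2|\ll(\mu\mu')^{1/2}\sim y_2$, this is $\sim-(\mu\mu')^{1/2}$. Dividing by $|\partial_y\cD|\sim\mu$ gives $\inp{\fb}{e_2}\sim-(\mu'/\mu)^{1/2}$.

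\emph{Main obstacle.} The only delicate point is the small component $\partial_{x_1}\cD$. It is of size $\mu'$, and a naive bound on the cross term $x_2y_2$ only gives $\mu\mu'$ times a constant. We therefore need the smallness $|x_2|\ll(\mu\mu')^{1/2}$ from \eqref{bb}, i.e. the $\epz$ factor in the width of $\cB$, so that the cross term is dominated by $x_1(1-y_1^2)$. The analogous step for $\partial_{y_1}\cD$ is easy because there the main term is of size $\mu\gg\mu'$.
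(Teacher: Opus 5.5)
Your proof is correct and takes essentially the paper's route: compute the four components of $\partial_x\cD$ and $\partial_y\cD$ from \eqref{pxpy0}--\eqref{pxpy1}, estimate them with \eqref{bb} and \eqref{inp}, and divide by the gradient sizes from Lemma~\ref{lem:pxpy}. The one difference is in the $e_1$-components, where you expand $\inp xy=x_1y_1+x_2y_2$ directly while the paper rationalizes $x_1-\inp xy y_1$ and rewrites it as $(1+\inp xy^2)(1-|y|^2)-\cD+O(\mu\mu')$; your version therefore does not need \eqref{r:cond-cDh} at that step. Your closing remark overstates one point: the smallness of $x_2$ is not needed for the cross term in $\partial_{x_1}\cD$, since the crude bound $|x_2y_2|\lesssim\mu\mu'\le\epz\mu'$ already follows from $\mu\le\epz$ in \eqref{mm}, which is exactly how the paper absorbs its $O(\mu\mu')$ error.
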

\begin{proof}
    First,  from  \eqref{pxpy0}, \eqref{bb} and \eqref{inp} we note that 
    \[
    -\inp{\partial_x \cD(x,y)}{e_1}=2 (x_1-\inp{x}{y}y_1)=2\frac{x_1^2-\inp{x}{y}^2y_1^2}{x_1+\inp{x}{y}y_1}\sim x_1^2-\inp{x}{y}^2y_1^2.
    \]
    Since $|x_2|, \, |y_2| \lesssim  \sqrt{\mu\mu'}$,  $x_1^2-\inp{x}{y}^2y_1^2= |x|^2- \inp{x}{y}^2 |y|^2 +O(\mu\mu')$. Thus, 
    \begin{align*}
         -\inp{\partial_x\cD(x,y)}{e_1} =(1+\inp{x}{y}^2)(1-|y|^2) -\cD(x,y) +O(\mu\mu'). 
    \end{align*}
    Hence,   by \eqref{bb} and  \eqref{r:cond-cDh},  we  have  $0<- \inp{\partial_x\cD(x,y)}{e_1} \sim \mu'$.  
        Combining this with \eqref{e:sizeofxDyD}, we get \eqref{e:direction}. 
        
        On the other hand, since $\inp{\partial_y\cD(x,y)}{e_2} =2(\inp{x}{y}x_2 -y_2)$, by \eqref{bb} we get $0<-\inp{\partial_y\cD(x,y)}{e_2}\sim  \sqrt{\mu\mu'}$.  This and \eqref{e:sizeofxDyD'}   yield \eqref{e:direction'}. 
        
       Since $\inp{\fa}{e_1}, \inp{\fb}{e_2} <0$ is already shown above, we need only to show $\inp{\fb}{e_1}<0<  \inp{\fa}{e_2}$. Note that $\inp{\partial_x \cD(x,y)}{e_2}=2 (\inp{x}{y} y_2-x_2)$. By \eqref{bb}, it follows that $0<\inp{\partial_x \cD(x,y)}{e_2}\sim \sqrt{\mu\mu'}$. Hence, $0<  \inp{\fa}{e_2}$.  Finally, 
       note that $\inp{\partial_y\cD(x,y)}{e_1} =2(\inp{x}{y}x_1 -y_1)$. Since $|\cD(x,y)|\ll \mu'\mu$, by a similar argument as above we see  $-\inp{\partial_y\cD(x,y)}{e_1} \sim 
       y_1^2- x_1^2\inp{x}{y}^2= (1-|x|^2)(1+\inp{x}{y}^2)+O(\mu\mu')\sim \mu$.  Thus,  $\inp{\fb}{e_1}<0.$      
\end{proof}

Let $(x,y), (x_\zc,y_\zc)\in \cB\times\cB'$.  Then,  it follows  that 
\[  x-x_\zc= O(\epz \mu ) e_1+ O(\epz(\mu\mu')^{\frac{1}{2}}) e_2, \quad y-y_\zc= O(\epz \mu' ) e_1+ O(\epz(\mu\mu')^{\frac{1}{2}}) e_2.\]
Hence, the next lemma is an immediate consequence of Lemma \ref{lem:directionab}.

\begin{lem}\label{lem:size}
Let $(x,y), (x_\zc,y_\zc)\in \cB\times\cB'$. Then there exists $C>0$ such that
\begin{align*}
    &|\inp{x-x_\zc}{\fa(x_\zc,y_\zc)}|\le C\epz(\mu\mu')^{\frac{1}{2}}, \ &&|\inp{x-x_\zc}{\fa_{\!\perp}(x_\zc,y_\zc)}|\le C\epz \mu,\\
    & |\inp{y-y_\zc}{\fb(x_\zc,y_\zc)}|\le C\epz\mu', \ &&|\inp{y-y_\zc}{\fa'_{\!\perp}(x_\zc,y_\zc)}|\le C\epz (\mu\mu')^{\frac{1}{2}}.
\end{align*}
\end{lem}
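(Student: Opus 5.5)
The plan is to expand each displacement in the standard basis $e_1,e_2$ and pair it with the unit vectors $\fa(x_\zc,y_\zc)$, $\fb(x_\zc,y_\zc)$ and their perpendiculars, bounding each component with Lemma~\ref{lem:directionab}. Since $\cB$ and $\cB'$ are rectangles with sides parallel to the axes, of dimensions $\epz\mu\times\epz(\mu\mu')^{1/2}$ and $\epz\mu'\times\epz(\mu\mu')^{1/2}$, we have for $(x,y),(x_\zc,y_\zc)\in\cB\times\cB'$
\[
|\inp{x-x_\zc}{e_1}|\le\epz\mu,\quad |\inp{x-x_\zc}{e_2}|\le\epz(\mu\mu')^{\frac12},\quad |\inp{y-y_\zc}{e_1}|\le\epz\mu',\quad |\inp{y-y_\zc}{e_2}|\le\epz(\mu\mu')^{\frac12}.
\]
No smallness beyond $\mu'\le\mu$ (from \eqref{mm}) is needed.

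For the $x$-estimates, write $\fa=\fa(x_\zc,y_\zc)$. Then
\[
|\inp{x-x_\zc}{\fa}|\le|\inp{x-x_\zc}{e_1}||\inp{\fa}{e_1}|+|\inp{x-x_\zc}{e_2}||\inp{\fa}{e_2}|.
\]
By \eqref{e:direction}, $|\inp{\fa}{e_1}|\lesssim(\mu'/\mu)^{1/2}$, and trivially $|\inp{\fa}{e_2}|\le1$. This gives the bound $\epz\mu(\mu'/\mu)^{1/2}+\epz(\mu\mu')^{1/2}\lesssim\epz(\mu\mu')^{1/2}$. For $\fa_{\!\perp}$ we only use that its components are bounded by $1$, so $|\inp{x-x_\zc}{\fa_{\!\perp}}|\le\epz\mu+\epz(\mu\mu')^{1/2}\le2\epz\mu$, since $\mu'\le\mu$.

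For the $y$-estimates, \eqref{e:direction'} gives $|\inp{\fb}{e_2}|\lesssim(\mu'/\mu)^{1/2}$. Hence
\[
|\inp{y-y_\zc}{\fb}|\le\epz\mu'+\epz(\mu\mu')^{\frac12}(\mu'/\mu)^{\frac12}\lesssim\epz\mu'.
\]
For the perpendicular direction to $\fb$ (written $\fa'_{\!\perp}$ in the statement), the trivial bound gives $\epz\mu'+\epz(\mu\mu')^{1/2}\lesssim\epz(\mu\mu')^{1/2}$.

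There is no genuine obstacle here. The only point needing care is that the tilts $|\inp{\fa}{e_1}|$ and $|\inp{\fb}{e_2}|$ of size $(\mu'/\mu)^{1/2}$ exactly compensate the mismatch between the long and short sides of the rectangles. In this way the long sides of $\cB$ (length $\epz\mu$) and of $\cB'$ (length $\epz(\mu\mu')^{1/2}$) contribute at most the short-side scale in the normal directions. The constant $C$ is the implicit constant in Lemma~\ref{lem:directionab} plus $2$, so it is uniform in $\lambda,\mu,\mu'$.
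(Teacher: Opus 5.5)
Your proposal is correct and follows the paper's argument: the paper likewise writes $x-x_\zc$ and $y-y_\zc$ in the $e_1,e_2$ basis with components bounded by the side lengths of $\cB$ and $\cB'$, and then concludes directly from Lemma~\ref{lem:directionab}. You have also supplied the component-by-component bookkeeping that the paper leaves implicit.
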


We now observe that $\fa$ and $\fb$ are almost perpendicular.

\begin{lem}\label{lem:alp} 
Let $\dels \le \delta\le \delt$.  If  $(x,y)\in \fS^\pm_\delta$ or $\fS_{\delta}^\circ$, we have
\Be 
\label{e:anglefafb0}
 |\inp{\fa(x,y)}{\fb(x,y)}|    \lesssim \delta \mu^{-\frac{3}{2}}(\mu')^{-\frac{1}{2}}.
 \Ee
 In particular, we have  $ |\inp{\fa(x,y)}{\fb(x,y)}|  \ll  \delta^{1/2}\mu^{-1}.$  
\end{lem}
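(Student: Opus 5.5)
The plan is to compute the inner product $\inp{\partial_x\cD}{\partial_y\cD}$ exactly from the gradient formulas \eqref{pxpy0}--\eqref{pxpy1}. The expectation is that it factors as a multiple of $\cD$ itself, so the angle between $\fa$ and $\fb$ is controlled by the size of $\cD$, which is at most a constant times $\delta$ on the regions in question. Once this identity is in hand, no genuine obstacle remains. The only care needed is to use the lower bounds of Lemma~\ref{lem:pxpy} for the normalization and to track the constants in the final comparison.

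\emph{Step 1 (the identity).} Write $c=\inp xy$. From \eqref{pxpy0}--\eqref{pxpy1},
\[
\tfrac14\inp{\partial_x\cD}{\partial_y\cD}=\inp{cy-x}{cx-y}
= c^3-c|y|^2-c|x|^2+c
= c\,\cD(x,y).
\]
Thus $\inp{\partial_x\cD}{\partial_y\cD}=4\inp xy\,\cD(x,y)$.

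\emph{Step 2 (normalization).} By \eqref{normal}--\eqref{normal'},
\[
|\inp{\fa}{\fb}|=\frac{4|\inp xy|\,|\cD(x,y)|}{|\partial_x\cD|\,|\partial_y\cD|}.
\]
On $\cB\times\cB'$ we have $|\inp xy|\le 1$. By Lemma~\ref{lem:pxpy}, the denominator satisfies $|\partial_x\cD|\,|\partial_y\cD|\sim(\mu\mu')^{1/2}\mu$. If $(x,y)\in\fS^\pm_\delta$, then $|\cD|\le4\delta$ by \eqref{fsd}; if $(x,y)\in\fS^\circ_\delta$, then $|\cD|\le 3\delta/2$ by \eqref{fsdc}. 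In either case
\[
|\inp{\fa}{\fb}|\lesssim \delta\,\mu^{-\frac32}(\mu')^{-\frac12},
\]
which is \eqref{e:anglefafb0}.

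\emph{Step 3 (the consequence).} The final claim amounts to
\[
\delta\,\mu^{-\frac32}(\mu')^{-\frac12}\ll\delta^{\frac12}\mu^{-1},
\qquad\text{that is,}\qquad
\delta^{\frac12}\ll(\mu\mu')^{\frac12}.
\]
By \eqref{ddd} and \eqref{d:top}, we have $\delta\le\delt\le C_0\epz\mu\mu'$. Hence $\delta^{1/2}\le (C_0\epz)^{1/2}(\mu\mu')^{1/2}$. The implicit constant from Step 2 comes only from Lemma~\ref{lem:pxpy}, which does not depend on $\epz$ once $\epz$ is small. So the product of that constant with $(C_0\epz)^{1/2}$ is as small as desired after shrinking $\epz$.
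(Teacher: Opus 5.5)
Your proof is correct and follows the paper's argument: you derive the identity $\inp{\partial_x\cD}{\partial_y\cD}=4\inp xy\,\cD(x,y)$ from \eqref{pxpy0}--\eqref{pxpy1}, then normalize using the gradient sizes of Lemma~\ref{lem:pxpy} together with $|\cD|\lesssim\delta$ on $\fS^\pm_\delta$ and $\fS^\circ_\delta$. Your Step 3 makes explicit the ``in particular'' consequence via $\delta\le\delt\lesssim\epz\mu\mu'$, which the paper leaves implicit.
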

\begin{proof}  Using \eqref{pxpy0} and \eqref{pxpy1}, we have $
\inp{\partial_x\cD(x,y)}{\partial_y\cD(x,y)} = 4\inp xy \cD(x,y). 
$
Thus,  
\[  \inp{\fa(x,y)}{\fb(x,y)}= \frac{4\inp xy \cD(x,y)}{|\partial_x\cD(x,y)||\partial_y\cD(x,y)|}.\]
From \eqref{inp},  \eqref{e:sizeofxDyD},  and  \eqref{e:sizeofxDyD'}, the desired inequality  immediately follows.
\end{proof}

\newcommand{\fap}{\fa_{\!\perp}}
\newcommand{\fbp}{\fa'_{\!\perp}}

 For $(x,y)\in \cB\times \cB'$, let  
\[ \fa_{\!\perp} \equiv \fa_{\!\perp}(x,y), \quad \fa'_{\!\perp} \equiv \fa'_{\!\perp}(x,y)\] 
 be the unique unit vectors  perpendicular to $\fa(x,y)$ and $\fb(x,y)$ respectively, such that 
 $\inp{\fa_{\!\perp}}{e_1}, \inp{\fbp}{e_1}>0$.   Such choices are clearly possible thanks to Lemma \ref{lem:directionab}.
Using this, one can express
\begin{align}\label{i:relsfafbperp}
    \fa_{\!\perp} = \varrho_1\fa + c_1\fb,\quad \fa'_{\!\perp} = \varrho_2 \fb + c_2\fa
\end{align}
with $\varrho_1, \varrho_2$, $c_1$, and $c_2$ satisfying 
\Be
\label{rrcc} 
 |\varrho_1|, |\varrho_2| \lesssim \epz\biggl(\frac{\mu'}{\mu}\biggr)^{\frac{1}{2}},
\quad |c_1|, |c_2|\sim 1. 
\Ee 
Indeed, setting $\alpha=\inp{\fa}{\fb}$, the identity
$\inp{\partial_x\cD}{\partial_y\cD}=4\inp{x}{y}\cD$, Lemma \ref{lem:pxpy}, and
\eqref{r:cond-cDh} give
$
|\alpha|\lesssim |\cD(x,y)|\mu^{-3/2}(\mu')^{-1/2}
\lesssim \epz(\mu'/\mu)^{1/2}\ll1.
$
Taking inner products in \eqref{i:relsfafbperp} gives
$\varrho_i=-c_i\alpha$, while the unit-vector conditions give
$1=c_i^2(1-\alpha^2)$, proving \eqref{rrcc}.

Using this, we prove the following lemmas concerning the second-order derivatives of $\cD.$ While the inequality $\lesssim$ suffices for our purpose, we prove some of the estimates using the more informative relation $\sim$.

\begin{lem}\label{lem:pxpxD}
    Let $\mu$ and $\mu'$ satisfy \eqref{mm} and $(x,y)\in \cB\times\cB'$. Denote $\fa=\fa(x,y), \fb=\fb(x,y), \fap=\fap(x,y),$ and $\fbp=\fbp(x,y)$, respectively. Then, we have
    \begin{align}\label{e:quadDall}
|\inp{\fa}{\partial_x \partial_x^\intercal  \cD(x,y)\fa}|\sim 1,
          &\qquad |\inp{\fb}{\partial_y \partial_y^\intercal  \cD(x,y)\fb}|\lesssim 1 ,
    \\[6pt]
   \label{e:quadDall1}  |\inp{\fa_{\!\perp}}{\partial_x \partial_x^\intercal  \cD(x,y)\fa_{\!\perp}}| \sim \frac{\mu'}{\mu},
        &\qquad |\inp{\fa'_{\!\perp}}{\partial_y \partial_y^\intercal  \cD(x,y)\fa'_{\!\perp}}| \sim 1,
    \\[6pt]
   \label{e:quadDall2}
     |\inp{\fa}{\partial_x \partial_x^\intercal  \cD(x,y)\fa_{\!\perp}}|\sim \biggl(\frac{\mu'}{\mu}\biggr)^{\frac{1}{2}},
        &\qquad |\inp{\fb}{\partial_y \partial_y^\intercal  \cD(x,y)\fa'_{\!\perp}}|\sim \biggl(\frac{\mu'}{\mu}\biggr)^{\frac{1}{2}}.
\end{align}
\end{lem}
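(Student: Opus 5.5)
The plan is to compute the two Hessians exactly and then evaluate the quadratic forms using the explicit formulas \eqref{pxpy0}, \eqref{pxpy1} for the gradients. Since $\cD(x,y)=1+\inp xy^2-|x|^2-|y|^2$, we have
\[
\partial_x\partial_x^\intercal\cD(x,y)=2yy^\intercal-2I,\qquad \partial_y\partial_y^\intercal\cD(x,y)=2xx^\intercal-2I .
\]
Hence, for unit vectors $v,w$, we get $\inp{v}{\partial_x\partial_x^\intercal\cD\, w}=2\inp vy\inp wy-2\inp vw$, and the same formula holds for the $y$-Hessian with $y$ replaced by $x$. Every quantity in the lemma therefore reduces to the four inner products $\inp{\fa}{y}$, $\inp{\fap}{y}$, $\inp{\fb}{x}$ and $\inp{\fbp}{x}$. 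In the mixed terms $\inp vw=0$, because $\fap\perp\fa$ and $\fbp\perp\fb$.

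\emph{Step 1: the sizes of $\inp{\fa}{y}$ and $\inp{\fb}{x}$.} From \eqref{pxpy0}, $\inp{\partial_x\cD}{y}=2\inp xy|y|^2-2\inp xy=-2\inp xy(1-|y|^2)$. Here $\inp xy\sim1$, and $1-|y|^2=(1-y_1)(1+y_1)-y_2^2\sim\mu'$, since $y_2^2\lesssim\mu\mu'\ll\mu'$ by \eqref{bb} and \eqref{mm}. Dividing by $|\partial_x\cD|\sim(\mu\mu')^{1/2}$ (Lemma~\ref{lem:pxpy}) gives $|\inp{\fa}{y}|\sim(\mu'/\mu)^{1/2}$. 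In the same way, \eqref{pxpy1} gives $\inp{\partial_y\cD}{x}=-2\inp xy(1-|x|^2)$ with $1-|x|^2\sim\mu$, and dividing by $|\partial_y\cD|\sim\mu$ gives $|\inp{\fb}{x}|\sim1$.

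\emph{Step 2: the perpendicular components.} In $\R^2$ we have $|y|^2=\inp{\fa}{y}^2+\inp{\fap}{y}^2$, so $|\inp{\fap}{y}|\sim1$. For $\inp{\fbp}{x}$ I would argue directly rather than by subtraction. Since $\fbp$ is a unit rotation of $\partial_y\cD/|\partial_y\cD|$, the quantity $\inp{\fbp}{x}$ equals, up to sign, the $2\times2$ determinant $\det(\partial_y\cD,x)/|\partial_y\cD|$. Since $\det(\inp xy x-y,x)=-\det(y,x)=\pm(x_1y_2-x_2y_1)$, and by \eqref{bb} $x_1y_2\sim(\mu\mu')^{1/2}$ dominates $x_2y_1$ (as $|x_2|\ll(\mu\mu')^{1/2}$), we obtain $|\inp{\fbp}{x}|\sim(\mu\mu')^{1/2}/\mu=(\mu'/\mu)^{1/2}$.

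\emph{Step 3: assembling the six bounds.} We have $\inp{\fa}{\partial_x\partial_x^\intercal\cD\,\fa}=2(\inp{\fa}y^2-1)$, and this is $\sim1$ because $\inp{\fa}y^2\lesssim\mu'/\mu\ll1$. The form $\inp{\fb}{\partial_y\partial_y^\intercal\cD\,\fb}=2(\inp{\fb}x^2-1)$ is trivially $O(1)$. For the $\fap$ form, the naive expression $2(\inp{\fap}{y}^2-1)$ involves cancellation, so I would rewrite it as
\[
\inp{\fap}{\partial_x\partial_x^\intercal\cD\,\fap}=-2\big[(1-|y|^2)+\inp{\fa}{y}^2\big].
\]
Both bracketed terms are nonnegative, of sizes $\mu'$ and $\mu'/\mu$ respectively. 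Since $\mu\le\epz$, the sum is $\sim\mu'/\mu$. Analogously, $\inp{\fbp}{\partial_y\partial_y^\intercal\cD\,\fbp}=-2[(1-|x|^2)+\inp{\fb}x^2]\sim\mu+1\sim1$. The mixed terms equal $2\inp{\fa}y\inp{\fap}y\sim(\mu'/\mu)^{1/2}$ and $2\inp{\fb}x\inp{\fbp}x\sim(\mu'/\mu)^{1/2}$, by Steps 1 and 2.

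The main obstacle is the $\sim$ (rather than $\lesssim$) claims that involve cancellation: the $\fap$ quadratic form and $\inp{\fbp}{x}$. I handle both by rewriting them as sums of nonnegative terms or as determinants, so that no subtraction of comparable quantities occurs. Along the way I need $\mu'\ll\mu\le\epz$ from \eqref{mm}, so that $y_2^2\ll\mu'$ and $x_2y_1\ll x_1y_2$.
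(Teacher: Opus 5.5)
Your proof is correct, and it takes a more direct route than the paper's.

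\emph{The paper's route.} The paper works in the frame $\{\fa,\fb\}$. It writes $\fa_{\!\perp}=\varrho_1\fa+c_1\fb$ and $\fa'_{\!\perp}=\varrho_2\fb+c_2\fa$ with $|\varrho_i|\lesssim\epz(\mu'/\mu)^{1/2}$ and $|c_i|\sim1$, which rests on the near-orthogonality coming from $\inp{\partial_x\cD}{\partial_y\cD}=4\inp xy\cD$ and \eqref{r:cond-cDh}. It then expands each quadratic form into three terms built from quantities such as $\inp{\partial_y\cD}{(yy^\intercal-I)\partial_y\cD}$, $\inp{\partial_y\cD}{(yy^\intercal-I)\partial_x\cD}$ and $\inp{\partial_x\cD}{(yy^\intercal-I)\partial_x\cD}$. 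Each of these is shown to be $\sim\mu\mu'$ (or $\lesssim\mu^2$) by explicit polynomial identities. Finally it argues that the $c_i$-term dominates the $\varrho_i$-terms.

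\emph{Your route.} You use that the Hessians are exactly $2yy^\intercal-2I$ and $2xx^\intercal-2I$, so all six quantities reduce to the four scalars $\inp{\fa}{y}$, $\inp{\fa_{\!\perp}}{y}$, $\inp{\fb}{x}$, $\inp{\fa'_{\!\perp}}{x}$. The two-dimensional identity $|y|^2=\inp{\fa}{y}^2+\inp{\fa_{\!\perp}}{y}^2$ then turns the delicate $\fa_{\!\perp}$-form into the exact expression $-2[(1-|y|^2)+\inp{\fa}{y}^2]$, a sum of same-sign terms. The determinant formula gives $\inp{\fa'_{\!\perp}}{x}$ without any subtraction of comparable quantities.

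\emph{Comparison.} Your version yields exact closed forms and avoids any dominance-of-terms argument involving $\epz$. It also does not need Lemma~\ref{lem:alp} or \eqref{rrcc}; its only inputs are the coordinate description \eqref{bb} and the gradient sizes from Lemma~\ref{lem:pxpy}. The paper's route buys uniformity with the surrounding section. The same $\varrho_i,c_i$ expansion in the frame $\{\fa,\fb\}$ is reused immediately in Lemma~\ref{lem:pxpyD} for the mixed Hessian $\partial_x\partial_y^\intercal\cD$, and that frame is the natural one for the later stability and almost-orthogonality arguments.
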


\begin{proof} 
Using \eqref{pxpy0} and \eqref{pxpy1}, we see that  
\begin{align*}
    \inp{\partial_x\cD}{(yy^\intercal - I)\partial_x \cD} &= 4\inp xy^2(|y|^2-1)^2 - |\partial_x\cD|^2, \\
    \inp{\partial_y\cD}{(xx^\intercal - I)\partial_y \cD} &= 4\inp xy^2(|x|^2-1)^2 - |\partial_y\cD|^2. 
\end{align*}
Since $\mu'\ll \mu$, combining this with \eqref{e:sizeofxDyD} and 
\eqref{e:sizeofxDyD'}, we have 
\begin{align}\label{e:quadD}
    |\inp{\partial_x\cD}{(yy^\intercal - I)\partial_x \cD}| \sim \mu\mu',\quad |\inp{\partial_y\cD}{(xx^\intercal - I)\partial_y \cD}|\lesssim \mu^2.
\end{align}
Note that
\begin{align}\label{i:D2x2y}
   yy^\intercal - I=\frac{1}{2} \partial_x \partial_x^\intercal  \cD(x,y) ,\quad    xx^\intercal - I=\frac{1}{2}\partial_y\partial_y^\intercal \cD(x,y).
\end{align}
Substituting this into \eqref{e:quadD}, by  \eqref{e:sizeofxDyD} and \eqref{e:sizeofxDyD'} we obtain the estimates in  \eqref{e:quadDall}.

Next, we prove the estimates in   \eqref{e:quadDall1}. Using \eqref{i:relsfafbperp} and \eqref{i:D2x2y}, we write 
\begin{align}
\label{i:abperp2D}
    &2^{-1}\inp{\fa_{\!\perp}}{\partial_x \partial_x^\intercal  \cD(x,y)\fa_{\!\perp}} = c_1^2\frac{\inp{\partial_y\cD}{(yy^\intercal - I)\partial_y \cD}}{|\partial_y\cD|^{2}} \\
    &\qquad + 2c_1\varrho_1\frac{\inp{\partial_y\cD}{(yy^\intercal - I)\partial_x \cD}}{|\partial_y\cD||\partial_x\cD|}+ \varrho_1^2 \frac{ \inp{\partial_x\cD}{(yy^\intercal - I)\partial_x \cD}}{|\partial_x\cD|^{2}},  \nonumber
    \end{align}
    \begin{align} 
    \label{i:abperp2D'}
    &2^{-1}\inp{\fa'_{\!\perp}}{\partial_y \partial_y^\intercal  \cD(x,y)\fa'_{\!\perp}} = c_2^2\frac{\inp{\partial_x\cD}{(xx^\intercal - I)\partial_x \cD}}{|\partial_x\cD|^{2}} \\
    &\qquad + 2c_2\varrho_2\frac{\inp{\partial_x\cD}{(xx^\intercal - I)\partial_y \cD}}{|\partial_y\cD||\partial_x\cD|} + \varrho_2^2 \frac{\inp{\partial_y\cD}{(xx^\intercal - I)\partial_y \cD}}{|\partial_y\cD|^{2}}. \nonumber
\end{align}

 We separately handle  each term on the right-hand sides to obtain the desired bounds.  To this end, we first note that 
\begin{align}\label{e:quadD2x2y}
    |\inp{\partial_y\cD}{(yy^\intercal - I)\partial_y \cD}|\sim \mu\mu', \quad |\inp{\partial_x\cD}{(xx^\intercal - I)\partial_x \cD}|\sim \mu\mu'. 
\end{align}
Indeed, using \eqref{pxpy1}, after a computation, one has
\begin{align*}
    4^{-1}\inp{\partial_y\cD}{(yy^\intercal - I)\partial_y \cD}  &= -(1-|y|^2)(1-|x|^2) + \cD(|x|^2 - 1) + \cD + \cD^2. 
\end{align*}
Thus, by \eqref{r:cond-cDh} and \eqref{bb}, the former estimate in \eqref{e:quadD2x2y} follows.  The latter one can be shown in the same manner, since a similar identity holds by symmetry if we  interchange  the roles of $x,y$. 

Now, we claim that 
\begin{align}\label{e:quadDxys}
    |\inp{\partial_y\cD}{(yy^\intercal - I)\partial_x \cD}| \sim \mu\mu', \quad |\inp{\partial_x\cD}{(xx^\intercal - I)\partial_y \cD}|\sim \mu\mu'.
\end{align}
To show the first estimate, we observe 
\[  \inp{\partial_y\cD}{(yy^\intercal - I)\partial_x \cD} = 4\inp xy(|y|^2 - 1)(\inp xy^2 - |y|^2) -4\inp xy \cD(x,y).\]
Since $|\inp xy^2 - |y|^2|\sim \mu$,   the desired estimate follows by  \eqref{r:cond-cDh}.  The latter one in \eqref{e:quadDxys} can be shown similarly using the identity obtained by interchanging the roles of $x,y$ in the above, since $|\inp xy^2 - |x|^2|\sim \mu'$.

Now, combining the former inequalities in  \eqref{e:quadD}, \eqref{e:quadD2x2y}, and \eqref{e:quadDxys} into \eqref{i:abperp2D}, we obtain the first estimate in \eqref{e:quadDall1}.  Now, replacing \eqref{i:abperp2D} by \eqref{i:abperp2D'} and using the latter inequalities in \eqref{e:quadD}, \eqref{e:quadD2x2y}, and \eqref{e:quadDxys}, 
 yields the second one in  \eqref{e:quadDall1}.

The estimates in \eqref{e:quadDall2} follow in the same manner from \eqref{i:relsfafbperp} and \eqref{i:D2x2y}. In each expansion, the \(c_i\)-term has size \((\mu'/\mu)^{1/2}\), whereas the \(\varrho_i\)-term is \(O(\epz)\) times this size by \eqref{rrcc}. Since the \(c_i\)-term dominates the \(\varrho_i\)-term, the asserted lower bounds follow.
\end{proof}

In the following, we prove similar estimates for $\partial_x \partial_y^\intercal \cD$.

\begin{lem}\label{lem:pxpyD}
Let $\mu,\mu'$ satisfy \eqref{mm} and $(x,y)\in \cB\times\cB'$. Then,  we have
    \begin{align}
    \label{e:quadD-xyabperp}
    |\inp{\fa_{\!\perp}}{\partial_x \partial_y^\intercal \cD(x,y) \fa'_{\!\perp}}|&\sim \biggl(\frac{\mu'}{\mu}\biggr)^{\frac{1}{2}},
    \\
    \label{e:quadD-xyabperp1}
    |\inp{\fa}{\partial_x \partial_y^\intercal \cD(x,y) \fb}|&\sim \biggl(\frac{\mu'}{\mu}\biggr)^{\frac{1}{2}},
    \\
    \label{e:quadD-xyabperp2}
    |\inp{\fa}{\partial_x \partial_y^\intercal \cD(x,y) \fa'_{\!\perp}}|&\sim 1,
    \\
    \label{e:quadD-xyabperp3}
    |\inp{\fa_{\!\perp}}{\partial_x \partial_y^\intercal \cD(x,y) \fb}|&\lesssim 1. 
\end{align}
\end{lem}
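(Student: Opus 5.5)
The plan is to compute the mixed Hessian $\partial_x\partial_y^\intercal\cD$ explicitly and then evaluate each of the four bilinear forms using the sizes of $\inp{\partial_x\cD}{\cdot}$ and $\inp{\partial_y\cD}{\cdot}$ already implicit in Lemmas \ref{lem:pxpy} and \ref{lem:directionab}. Differentiating \eqref{pxpy0} in $y$ gives
\[
\partial_x\partial_y^\intercal\cD(x,y)=2\inp xy\, I+2\,y\,x^\intercal ,
\]
so for unit vectors $u,v$ we have $\inp{u}{\partial_x\partial_y^\intercal\cD\, v}=2\inp xy\inp uv+2\inp uy\inp xv$. In particular \eqref{e:quadD-xyabperp3} is immediate, since the matrix has norm $O(1)$ on $\cB\times\cB'$. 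For the other three estimates I need the sizes of $\inp{u}{v}$ and of the two factors $\inp{u}{y}$, $\inp{x}{v}$, and I need to know which of the two terms dominates.

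\emph{Auxiliary quantities.} From \eqref{pxpy0} and \eqref{pxpy1} one has the identities
\begin{align*}
\inp{\partial_x\cD}{y}&=2\inp xy(|y|^2-1),\qquad \inp{x}{\partial_y\cD}=2\inp xy(|x|^2-1),\\
\inp{\partial_y\cD}{y}&=-2(|y|^2-\inp xy^2),\qquad \inp{x}{\partial_x\cD}=-2(|x|^2-\inp xy^2).
\end{align*}
By \eqref{bb}, \eqref{inp} and \eqref{mm}, their sizes are $\mu'$, $\mu$, $\mu$ and $\mu'$, respectively. For the last one, the $e_1$-components give $(1-\mu)^2[1-(1-\mu')^2]\sim\mu'$, and the $x_2,y_2$ contributions are $O(\epz\mu')$ by \eqref{bb}; this is the step I expect to need the most care. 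Dividing by Lemma \ref{lem:pxpy} gives
\[
|\inp{\fa}{y}|\sim(\mu'/\mu)^{1/2},\quad |\inp x{\fb}|\sim1,\quad |\inp{\fb}y|\sim1,\quad |\inp x{\fa}|\sim(\mu'/\mu)^{1/2}.
\]
Inserting these into \eqref{i:relsfafbperp} with \eqref{rrcc}, the $c_i$-terms dominate, which yields $|\inp{\fap}{y}|\sim1$ and $|\inp{x}{\fbp}|\sim(\mu'/\mu)^{1/2}$. Finally, since we are in two dimensions, $\inp{\fap}{\fbp}=\pm\inp{\fa}{\fb}=\pm\alpha$ with $|\alpha|\lesssim\epz(\mu'/\mu)^{1/2}$ (as shown after \eqref{rrcc}), and $|\inp{\fa}{\fbp}|=\sqrt{1-\alpha^2}\sim1$.

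\emph{Conclusion.} For \eqref{e:quadD-xyabperp}, the second term $2\inp{\fap}y\inp x{\fbp}$ has size $(\mu'/\mu)^{1/2}$, while the first is $O(\epz(\mu'/\mu)^{1/2})$. For \eqref{e:quadD-xyabperp1}, the term $2\inp{\fa}y\inp x{\fb}$ has size $(\mu'/\mu)^{1/2}$, and $2\inp xy\alpha$ is again $\epz$ times smaller. For \eqref{e:quadD-xyabperp2}, the first term $2\inp xy\inp{\fa}{\fbp}\sim1$ dominates, since the second is $O(\mu'/\mu)\le\epz$ by \eqref{mm}. In each case the smallness of $\epz$ rules out cancellation, so the claimed two-sided bounds follow.

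The main obstacle is keeping the lower bounds honest. The conclusions in \eqref{e:quadD-xyabperp} and \eqref{e:quadD-xyabperp1} rely on the dominant term beating the $\alpha$-term only by a factor $\epz$. So I must verify that the implicit constants in $|\inp x{\fbp}|$ and $|\inp{\fa}y|$ are independent of $\epz$, which they are because they come from the $e_1$-geometry of \eqref{bb} and not from the $\epz$-sized boxes.
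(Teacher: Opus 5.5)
Your argument is correct and is essentially the paper's proof. Both start from $\partial_x\partial_y^\intercal\cD=2(yx^\intercal+\inp xy I)$, use \eqref{i:relsfafbperp}--\eqref{rrcc} to pass from $\fap,\fbp$ to $\fa,\fb$, and rely on the $\epz$-smallness of $\varrho_i$ and of $\inp{\fa}{\fb}$ to rule out cancellation. The only difference is bookkeeping: the paper computes the quartic quantities $E,F,G,H$ directly, whereas you factor them through the scalar-plus-rank-one structure into the linear pairings $\inp{\fa}{y}$, $\inp{x}{\fb}$, etc., and then use the planar identities $|\inp{\fap}{\fbp}|=|\inp{\fa}{\fb}|$ and $|\inp{\fa}{\fbp}|=(1-\inp{\fa}{\fb}^2)^{1/2}$, which is slightly more transparent.
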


\begin{proof}  The estimate \eqref{e:quadD-xyabperp3} is clear.  We need only to show the other estimates. 

We first prove \eqref{e:quadD-xyabperp}.  Note from \eqref{pxpy0} that \[\partial_x \partial_y^\intercal \cD(x,y) = \mathrm M:= 2(yx^\intercal + \inp xy I).\]
Combining this and \eqref{i:relsfafbperp}, we write $ \inp{\fa_{\!\perp}}{\partial_x \partial_y^\intercal \cD(x,y) \fa'_{\!\perp}} $ as follows: 
\begin{align}\label{i:quadD-xyabperp}
    \begin{aligned}
       & c_1c_2\inp{\fb}{\mathrm M\fa} + \varrho_1\varrho_2 \inp{\fa}{\mathrm M\fb} + \varrho_1c_2\inp{\fa}{\mathrm M\fa} + \varrho_2c_1\inp{\fb}{\mathrm M\fb}. 
    \end{aligned}
\end{align}
Using \eqref{pxpy0} and \eqref{pxpy1}, via a computation we get
\begin{align*}
  E:=  \inp{\partial_y \cD}{\mathrm M\partial_x \cD} &= 8(1-|x|^2 - \cD)(1- |y|^2 - \cD) + 8(\inp xy)^2 \cD,
     \\[2pt]
 F:=   \inp{\partial_x \cD}{\mathrm M\partial_y \cD} &= 8\inp xy^2(1-|x|^2)(1- |y|^2) + 8(\inp xy)^2 \cD,
\end{align*}
and
\begin{align*}
G:=   \inp{\partial_x \cD}{\mathrm M\partial_x \cD} &= 8\inp xy (1-|y|^2)(1-|y|^2-\cD)+2\inp xy |\partial_x\cD|^2, 
    \\[2pt]
  H:=  \inp{\partial_y \cD}{\mathrm M\partial_y \cD} &= 8\inp xy (1-|x|^2)(1-|x|^2-\cD)+2\inp xy |\partial_y\cD|^2.
\end{align*}

Combining those identities with \eqref{bb} and \eqref{r:cond-cDh}, we see $E\sim \mu'\mu$ and $F\sim \mu'\mu$. 
Recall \eqref{normal} and \eqref{normal'}. 
 Thus, dividing the estimates by 
$|\partial_x\cD||\partial_y\cD|$, by  Lemma \ref{lem:pxpy} we obtain 
\Be
\label{npm}
    \inp{\fb}{\mathrm M\fa}\sim \biggl(\frac{\mu'}{\mu}\biggr)^{\frac{1}{2}}, \quad \inp{\fa}{\mathrm M\fb}\sim \biggl(\frac{\mu'}{\mu}\biggr)^{\frac{1}{2}}.
    \Ee
	   Similarly, we also see $G\sim \mu'\mu$ and $H\sim  \mu^2$. Thus,  dividing these estimates by
$|\partial_x\cD|^2$ and $|\partial_y\cD|^2$, respectively,  we obtain 
\begin{align}
\label{nmn}
    \inp{\fa}{\mathrm M\fa}\sim 1,\quad &\inp{\fb}{\mathrm M\fb}\sim 1.
\end{align}

Now, we recall \eqref{rrcc}. 
Since $|\varrho_1|, |\varrho_2|\lesssim \epz  (\mu'/\mu)^{1/2},$
combining those bounds with \eqref{i:quadD-xyabperp}, we obtain the desired estimate  \eqref{e:quadD-xyabperp}.

The remaining  estimates \eqref{e:quadD-xyabperp1} and \eqref{e:quadD-xyabperp2}  can be handled easily. 
In particular, \eqref{e:quadD-xyabperp1}  follows from the latter inequality in \eqref{npm}. For \eqref{e:quadD-xyabperp2}, one uses the latter identity 
in \eqref{i:relsfafbperp} to get $\inp{\fa}{\partial_x \partial_y^\intercal \cD(x,y) \fa'_{\!\perp}}=\varrho_2 \inp{\fa}{\mathrm M\fb}+ c_2 \inp{\fa}{\mathrm M\fa}$. 
Here the \(c_2\)-term has size comparable to \(1\) by \eqref{nmn}, whereas the \(\varrho_2\)-term is \(O(\epz\mu'/\mu)\) by \eqref{rrcc} and \eqref{npm}. Since the \(c_2\)-term dominates the \(\varrho_2\)-term, \eqref{e:quadD-xyabperp2} follows.
\end{proof}

\newcommand{\scl}{\sigma}
\newcommand{\sclr}{\sigma_{\!\mathsmaller \perp}}
\newcommand{\sclp}{ \sigma^{\prime}}
\newcommand{\sclrp}{\sigma_{\!\mathsmaller \perp}^{\,\prime}}

\subsection{Tangential rectangles}    
In Section \ref{sec:decomp}, the regions $\fS_\delta^\kappa$, $\kappa=\pm, \circ$ will be covered by collections of rectangles, which are essentially tangential to 
the level sets.

For $\delta$ satisfying \eqref{ddd},  define basic scales  by setting  
\Be
\label{scale}
\begin{aligned}   
  &\scl=\epz\delta (\mu\mu')^{-\frac{1}{2}},  \qquad &&    \sclr=\epz\delta^{\frac{1}{2}}\mu^{\frac{1}{2}}(\mu')^{-\frac{1}{2}},
\\  
 & \sclp=\epz\delta \mu^{-1},    &&\sclrp=\epz\delta^{\frac{1}{2}}. 
\end{aligned}
\Ee

For $(x,y)\in \cB\times\cB'$,  define 
    \begin{align}
  \label{rx0}      R_y(x) &=\big\{ u\in \cB: |\inp{u-x}{\fa(x, y)}|\le c \scl, \ |\inp{u-x}{\fa_{\!\perp}(x, y)}|\le  c\sclr \big \}, 
        \\[3pt]
  \label{ry0}      R'_x(y) &= \big\{w\in \cB': |\inp{w-y}{\fb(x, y)}|\le  c\sclp,\ |\inp{w-y}{\fa'_{\!\perp}(x, y)}|\le c\sclrp\big\}
    \end{align}
    for an absolute constant \(0<c\ll1\), chosen sufficiently small as
needed below.

    The next lemma, which shows stability of the value of $\cD$ on  $R_y(x) \times R'_x(y)$,   suggests that $\fS_\delta^\kappa$ can be covered by those rectangles.

\newcommand{\yo}{{y_\zc}}
\newcommand{\xo}{{x_\zc}}

\begin{lem}\label{lem:locD-strong} 
    Let $\dels \le \delta \le \delt$ and $(x_\zc,y_\zc)\in \cB\times\cB'$.  Assume that  $|\cD(x_\zc, y_\zc)|\le 4\delta$. Then, for  $(x,y)\in R_{\yo} (\xo) \times R'_\xo(\yo) $,  we have
    \[
    |\cD(x,y) - \cD(x_\zc,y_\zc)|\lesssim c\epz\delta. 
    \]
\end{lem}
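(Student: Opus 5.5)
Since $\cD$ is a polynomial of degree four, we Taylor expand $\cD(x,y)$ about $(x_\zc,y_\zc)$ to second order. Write $u=x-x_\zc$, $v=y-y_\zc$, and decompose them in the frames fixed at $(x_\zc,y_\zc)$:
\[
u=a\fa+b\fap,\qquad v=a'\fb+b'\fbp,
\]
with $|a|\le c\scl$, $|b|\le c\sclr$, $|a'|\le c\sclp$, $|b'|\le c\sclrp$. We then bound each term of
\[
\cD(x,y)-\cD(x_\zc,y_\zc)=\inp{\partial_x\cD}{u}+\inp{\partial_y\cD}{v}+\tfrac12 Q(u,v)+(\text{cubic and quartic terms}),
\]
where $Q(u,v)=\inp{u}{\partial_x\partial_x^\intercal\cD\, u}+2\inp{u}{\partial_x\partial_y^\intercal\cD\, v}+\inp{v}{\partial_y\partial_y^\intercal\cD\, v}$, by $O(c\epz\delta)$.

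\emph{First-order terms.} Since $\fap$ is orthogonal to $\partial_x\cD$, we get $\inp{\partial_x\cD}{u}=a|\partial_x\cD|$. By Lemma~\ref{lem:pxpy} this is $\lesssim c\,\epz\delta(\mu\mu')^{-1/2}(\mu\mu')^{1/2}=c\epz\delta$. In the same way, $\inp{\partial_y\cD}{v}=a'|\partial_y\cD|\lesssim c\epz\delta\mu^{-1}\mu=c\epz\delta$. The scales $\scl,\sclp$ in \eqref{scale} were chosen exactly for this.

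\emph{Second-order terms.} We expand $Q$ in the frame and invoke Lemmas~\ref{lem:pxpxD} and \ref{lem:pxpyD} for each coefficient:
\begin{itemize}
\item $b^2\cdot\mu'/\mu\lesssim c^2\epz^2\delta$;
\item $b'^2\cdot 1\lesssim c^2\epz^2\delta$;
\item $bb'(\mu'/\mu)^{1/2}\lesssim c^2\epz^2\delta$;
\item $a^2\lesssim c^2\epz^2\delta^2(\mu\mu')^{-1}\lesssim c^2\epz^3\delta$, using $\delta\le\delt\lesssim\epz\mu\mu'$;
\item $a'^2\lesssim \epz^2\delta^2\mu^{-2}\le c\epz\delta$;
\item $ab(\mu'/\mu)^{1/2}\lesssim c^2\epz^2\delta^{3/2}\mu^{-1/2}(\mu')^{-1/2}\lesssim c\epz\delta$;
\item the cross terms $ab'$, $ba'$, $aa'$ are all $\lesssim c\epz\delta$ after multiplying out and using $\delta\lesssim\mu\mu'$;
\item the $a'b'$ term uses \eqref{e:quadDall2} and the $ba'$ term uses \eqref{e:quadD-xyabperp3}; both are handled the same way.
\end{itemize}
Every bound holds with a factor $c$, since $c\le1$ and $\epz\ll1$.

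\emph{Higher-order terms.} The third derivatives of $\cD$ are $O(1)$; more precisely, the only cubic and quartic terms come from $\inp{x}{y}^2$. A term of order three or four is bounded by products of $|u|,|v|$ with at least three factors. Here $|u|\lesssim c(\sclr+\scl)\lesssim c\epz\delta^{1/2}(\mu/\mu')^{1/2}$ and $|v|\lesssim c\epz\delta^{1/2}$. The largest term, $|u|^2|v|$, is $\lesssim c^3\epz^3\delta^{3/2}\mu/\mu'$. This is $\le c\epz\delta$ provided $\delta^{1/2}\mu\lesssim\mu'$, which can fail, so crude bounds are not enough. The remedy is to compute the cubic and quartic terms explicitly. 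We have $\cD=1+\inp{x}{y}^2-|x|^2-|y|^2$, and the increment of $\inp{x}{y}^2$ produces $2\inp{x_\zc}{y_\zc}\inp{u}{v}+\inp{u}{y_\zc}\dots$. The cubic terms are $2\inp{u}{v}(\inp{u}{y_\zc}+\inp{x_\zc}{v})$ and the quartic term is $\inp{u}{v}^2$.

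\emph{Main obstacle.} The step I expect to be hardest is bounding $\inp{u}{v}$ finely. Since $u$ is nearly $e_1$-directed with length up to $\sclr$ and $v$ is nearly $e_2$-directed, we use Lemma~\ref{lem:alp} and \eqref{i:relsfafbperp} to write $\inp{\fap}{\fbp}=O(\epz(\mu'/\mu)^{1/2})$ and $\inp{\fap}{\fb}=c_1+\dots$. This gives
\[
|\inp{u}{v}|\lesssim |b||a'|+|a||b'|+\epz(\mu'/\mu)^{1/2}|b||b'|+|a||a'|,
\]
each term small enough that the product with $|\inp{u}{y_\zc}|+|\inp{x_\zc}{v}|\lesssim|u|+|v|\lesssim 1$ stays $\lesssim c\epz\delta$. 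Here we use $\delta\lesssim\mu\mu'$ and \eqref{mm}. Summing all contributions gives $|\cD(x,y)-\cD(x_\zc,y_\zc)|\lesssim c\epz\delta$.
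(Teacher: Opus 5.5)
Your proposal is correct and follows essentially the same route as the paper. Both arguments Taylor expand $\cD$ about $(x_\zc,y_\zc)$ in the frames $\{\fa,\fa_{\!\perp}\}$ and $\{\fb,\fa'_{\!\perp}\}$. The first- and second-order terms are controlled by Lemmas~\ref{lem:pxpy}, \ref{lem:pxpxD}, and~\ref{lem:pxpyD}, and the cubic and quartic part is written explicitly as $\inp{u}{v}^2+2\inp{u}{v}\bigl(\inp{u}{y_\zc}+\inp{x_\zc}{v}\bigr)$ with $|\inp{u}{v}|\lesssim c\epz\delta$.

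The only difference is cosmetic. You bound $|\inp{\fa_{\!\perp}}{\fa'_{\!\perp}}|=|\inp{\fa}{\fb}|$ by the cruder $\epz(\mu'/\mu)^{1/2}$ rather than by \eqref{e:anglefafb0}. This still suffices, since $\epz\delta^{1/2}\mu^{1/2}(\mu')^{-1/2}\cdot\epz\delta^{1/2}\cdot\epz(\mu'/\mu)^{1/2}=\epz^3\delta$.
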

\begin{proof}
    Using the fact that $\cD$ is a polynomial of degree $4$, we write 
    \begin{align*}
    \cD(x,y) -\cD(x_\zc,y_\zc) &= \Big(\sum_{|\alpha|=1,2} + \sum_{|\alpha|=3,4}  \Big)\frac{\partial_{x,y}^\alpha\cD(x_\zc,y_\zc)}{\alpha!}(x-x_\zc,y-y_\zc)^\alpha \\
        &=: \mathcal E_1 + \mathcal E_2.
    \end{align*}
It is sufficient to show $|\mathcal E_1|,\ |\mathcal E_2|\le Cc \epz \delta$. We break 
\[ \mathcal E_1= \mathcal E_{1,1}+ \mathcal E_{1,2}+ {\mathcal E_{1,3}}, \]
where 
\begin{align}
\label{i:exp-E1}
\mathcal E_{1,1}=&\inp{\partial_x \cD(x_\zc,y_\zc)}{x-x_\zc}+
         \inp{\partial_y \cD(x_\zc,y_\zc)}{y-y_\zc},
         \\[2pt]
         \label{i:exp-E2}
\mathcal E_{1,2}= &\frac{\inp{x-x_\zc}{\partial_x\partial_x^\intercal\cD(x_\zc,y_\zc)(x-x_\zc)}}{2}+\frac{ \inp{y-y_\zc}{\partial_y\partial_y^\intercal\cD(x_\zc,y_\zc)(y-y_\zc)}}{2} ,
\\[2pt]
\label{i:exp-E3}
     \mathcal E_{1,3}=& \inp{y-y_\zc}{\partial_x^\intercal\partial_y\cD(x_\zc,y_\zc)(x-x_\zc)}.
\end{align}

For simplicity, we denote  
\Be 
\label{nnn} \fa =\fa(x_\zc,y_\zc),\quad \fa_{\!\perp}=\fa_{\!\perp}(x_\zc,y_\zc),\quad  \fb =\fb(x_\zc,y_\zc),\quad \fa_{\!\perp}'=\fa_{\!\perp}'(x_\zc,y_\zc),\Ee  
and  write
\begin{align}\label{d:xfayfb}
    x-x_\zc = c_{\fa} \fa+ c_{\fa_{\!\perp}}\fa_{\!\perp},\quad y-y_\zc= c_{\fb} \fb +c_{\fa'_{\!\perp}}\fa'_{\!\perp}. 
\end{align}
 Since $(x,y)\in R_{\yo} (\xo) \times R'_\xo(\yo)$, it follows that 
\begin{align}
    \begin{aligned}\label{e:locxy-RR'}
        |c_\fa| \le c\scl,\quad |c_{\fa_{\!\perp}}|\le c\sclr, \quad 
        |c_{\fb}|\le c\sclp, \quad |c_{\fa'_{\!\perp}}|\le c\sclrp.
    \end{aligned}
\end{align}

Combining  \eqref{i:exp-E1} and \eqref{d:xfayfb},  by Lemma \ref{lem:pxpy} and \eqref{scale} we immediately obtain $|\mathcal E_{1,1}| \lesssim c\epz \delta$. Similarly, applying Lemma \ref{lem:pxpxD}  after inserting \eqref{d:xfayfb} into \eqref{i:exp-E2}, by \eqref{e:locxy-RR'} we have
 \[  |\mathcal E_{1,2}|\lesssim  c^2 (\scl^2 + (\sclp)^2)+ \biggl(\frac{\mu'}{\mu}\biggr)^{\frac{1}{2}} c^2 (\scl\sclr+\sclp\sclrp)+ \frac{\mu'}{\mu} c^2 \sclr^2+c^2 (\sclrp)^2 \lesssim c\epz \delta.\]
The last inequality follows by \eqref{scale},  since $\delta\le \epz \mu'\mu$.   In the same manner, using Lemma  \ref{lem:pxpyD}, one can  show   $|\mathcal E_{1,3}|\lesssim c\epz \delta$. We omit the detail.  Consequently, 
we obtain 
\[|\mathcal E_1|\lesssim c\epz\delta.\]
    
    It remains to estimate $\mathcal E_2$.  A  computation  gives
 \[
    \mathcal E_2 = \inp{x-x_\zc}{y-y_\zc}^2 +2 \inp{x-x_\zc}{y-y_\zc}(\inp{x_\zc}{y-y_\zc}+\inp{y_\zc}{x-x_\zc}).
    \]
    Using \eqref{d:xfayfb} and \eqref{e:locxy-RR'}, we note that 
    \[ |\inp{x-x_\zc}{y-y_\zc}|\lesssim \scl\sclp+\scl\sclrp+ \sclr\sclp+ \sclr\sclrp|\inp{\fa_{\!\perp}}{\fa'_{\!\perp}}|\lesssim c\epz\delta.\] 
    The last inequality follows from \eqref{scale} and \eqref{e:anglefafb0}, since $|\inp{\fa_{\!\perp}}{\fa'_{\!\perp}}|=|\inp{\fa}{\fb}|$.  Combining this and the above identity yields $|\mathcal E_2|\lesssim c\epz \delta$.
\end{proof}

In order to decompose the regions $\fS_\delta^\kappa$ in a suitable manner for our purpose, it will be necessary to understand how the vectors $\fa(x,y), \fb(x,y)$ behave as $x, y$ vary within the set $\{|\cD(x,y)|\sim \delta\}$. To this end, we show the following.

\begin{lem}\label{lem:directionsofab}
    Let $(x_\zc, y_\zc),\ (x,y)\in \cB\times\cB'$ and $\dels\le \delta\le \delt$. Suppose that $|\cD(x_\zc, y_\zc)|\le  4\delta$. Then, the following hold.
    \begin{itemize}[leftmargin=6.5mm]
    \setlength\itemsep{0.4em}
        \item [i)] If $|\cD(x,y_\zc)|\le 4 \delta$ and $|\inp{\fa_{\!\perp}(x_\zc, y_\zc)}{x_\zc - x}|\ge C\delta/ \mu'$ with a sufficiently  large constant $C>0$, then we have 
        \Be 
        \label{nx0} 
        |\fa(x_\zc, y_\zc) - \fa(x,y_\zc)|\sim  \mu^{-\frac{3}{2}}(\mu')^{\frac{1}{2}} |\inp{\fa_{\!\perp}(x_\zc, y_\zc)}{x_\zc - x}|. 
        \Ee       
        \item [ii)] If $|\cD(x_\zc,y)| \le 4 \delta$ and $|\inp{\fa'_{\!\perp}(x_\zc, y_\zc)}{y_\zc - y}|\ge C\delta \mu^{-3/2}  (\mu')^{1/2} $ with a sufficiently  large constant  $C>0$,  then we have
        \Be\label{nx1}
        |\fb(x_\zc, y_\zc) - \fb(x_\zc,y)|\sim \mu^{-1}|\inp{\fa'_{\!\perp}(x_\zc, y_\zc)}{y_\zc-y}|.
        \Ee
    \end{itemize}
\end{lem}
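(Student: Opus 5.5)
Both items rest on the same mechanism. Since $\fa=\partial_x\cD/|\partial_x\cD|$, the change $\fa(x_\zc,y_\zc)-\fa(x,y_\zc)$ is governed by the tangential component of $\partial_x\cD(x,y_\zc)-\partial_x\cD(x_\zc,y_\zc)$, divided by $|\partial_x\cD|\sim(\mu\mu')^{1/2}$. The normal component only changes the length of the gradient.

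We prove i) in detail. Write $h=x-x_\zc=c_\fa\fa+c_{\fa_\perp}\fa_\perp$, with the notation of \eqref{nnn}. Since $\partial_x\cD(\cdot,y_\zc)$ is affine in $x$, with Hessian $2(y_\zc y_\zc^\intercal-I)$ by \eqref{i:D2x2y}, we have exactly
\[
\partial_x\cD(x,y_\zc)-\partial_x\cD(x_\zc,y_\zc)=2(y_\zc y_\zc^\intercal-I)h .
\]
For unit vectors $u=v/|v|$, $u_0=v_0/|v_0|$ with $|v-v_0|\ll|v_0|$, one has $|u-u_0|\sim|\inp{v-v_0}{u_{0\perp}}|/|v_0|$ up to quadratically small errors. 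So it suffices to show three things. First, $|h|\ll\mu$ in the relevant directions, which holds by Lemma \ref{lem:size}. Second,
\[
|\inp{\fa_\perp}{(y_\zc y_\zc^\intercal-I)h}|\sim \frac{\mu'}{\mu}|c_{\fa_\perp}|.
\]
Third, $|\partial_x\cD(x,y_\zc)-\partial_x\cD(x_\zc,y_\zc)|\ll(\mu\mu')^{1/2}$, so that the normalization is harmless; this should follow since $|c_{\fa_\perp}|\lesssim\epz\mu$ and $|c_\fa|\lesssim\epz(\mu\mu')^{1/2}$. Together these give
\[
|\fa-\fa(x,y_\zc)|\sim(\mu\mu')^{-1/2}\frac{\mu'}{\mu}|c_{\fa_\perp}|=\mu^{-3/2}(\mu')^{1/2}|c_{\fa_\perp}|,
\]
which is \eqref{nx0}.

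The main point is the second display. Expand
\[
\inp{\fa_\perp}{(y_\zc y_\zc^\intercal-I)h}=c_{\fa_\perp}\inp{\fa_\perp}{(\cdot)\fa_\perp}+c_\fa\inp{\fa_\perp}{(\cdot)\fa}.
\]
By \eqref{e:quadDall1}, the first term is $\sim\frac{\mu'}{\mu}|c_{\fa_\perp}|$. By \eqref{e:quadDall2}, the second is $\lesssim(\mu'/\mu)^{1/2}|c_\fa|$. We must therefore show $|c_\fa|\ll(\mu'/\mu)^{1/2}|c_{\fa_\perp}|$, and here we use the hypothesis $|\cD(x,y_\zc)|\le4\delta$. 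Taylor expansion in $x$ (exact, quadratic) gives
\[
\cD(x,y_\zc)-\cD(x_\zc,y_\zc)=|\partial_x\cD|\,c_\fa+\tfrac12\inp{h}{\partial_x\partial_x^\intercal\cD\,h}.
\]
The left side is $O(\delta)$. The quadratic term is $O\big(c_\fa^2+(\mu'/\mu)^{1/2}|c_\fa c_{\fa_\perp}|+\frac{\mu'}{\mu}c_{\fa_\perp}^2\big)$ by Lemma \ref{lem:pxpxD}. Using $|c_\fa|\lesssim\epz(\mu\mu')^{1/2}$ and $|c_{\fa_\perp}|\lesssim\epz\mu$, the terms involving $c_\fa$ can be absorbed into the left side. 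This yields
\[
(\mu\mu')^{1/2}|c_\fa|\lesssim\delta+\frac{\mu'}{\mu}c_{\fa_\perp}^2 .
\]
Hence
\[
(\mu'/\mu)^{1/2}|c_\fa|\lesssim\frac{\delta}{\mu}+\epz\frac{\mu'}{\mu}|c_{\fa_\perp}|.
\]
The first term is $\ll\frac{\mu'}{\mu}|c_{\fa_\perp}|$ exactly when $|c_{\fa_\perp}|\ge C\delta/\mu'$ with $C$ large, which is the hypothesis of i). The second term is $\epz$-small. This gives the claimed comparability for i).

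Part ii) runs identically with the roles of $x$ and $y$ swapped, using the $y$-Hessian bounds in \eqref{e:quadDall}–\eqref{e:quadDall2}. Here $\inp{\fbp}{(x_\zc x_\zc^\intercal-I)\fbp}\sim1$ and $|\partial_y\cD|\sim\mu$, so the rate is $\mu^{-1}$. The Taylor step now gives
\[
\mu|c_\fb|\lesssim\delta+c_{\fbp}^2 ,
\]
so the cross term satisfies
\[
(\mu'/\mu)^{1/2}|c_\fb|\lesssim(\mu'/\mu)^{1/2}\mu^{-1}\big(\delta+c_{\fbp}^2\big).
\]
We need this to be $\ll|c_{\fbp}|$. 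The $\delta$ part requires $|c_{\fbp}|\ge C\delta\mu^{-3/2}(\mu')^{1/2}$, which is precisely the assumed threshold. The $c_{\fbp}^2$ part is small because $|c_{\fbp}|\lesssim\epz(\mu\mu')^{1/2}$. The main obstacle in both parts is this absorption of the normal displacement via the level-set constraint, and in particular tracking that the constant $\epz$ is small enough to close it.
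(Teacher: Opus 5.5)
Your proof is correct and follows essentially the same route as the paper. In both, the change of the normal reduces to the tangential component $\inp{\fa_{\!\perp}}{\partial_x\cD(x,y_\zc)}$, computed exactly from the affine dependence of $\partial_x\cD$ on $x$ and the Hessian bounds of Lemma~\ref{lem:pxpxD}; the normal displacement $c_\fa$ is then controlled by the exact quadratic Taylor expansion of $\cD(\cdot,y_\zc)$ together with the level-set constraint, giving $(\mu\mu')^{1/2}|c_\fa|\lesssim\delta+\frac{\mu'}{\mu}c_{\fa_{\!\perp}}^2$, with the analogous bound in $y$ for ii). Your third point, smallness of the gradient increment, is harmless but not needed: Lemma~\ref{lem:pxpy} already gives $|\partial_x\cD|\sim(\mu\mu')^{1/2}$ at both points, and Lemma~\ref{lem:directionab} already shows the two normals are close.
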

\begin{proof}  We keep using the same notation as in the proof of Lemma \ref{lem:locD-strong} (see \eqref{nnn} and \eqref{d:xfayfb}). 
    Before proving the lemma, we first observe 
    \begin{align}
        \label{i:expDbyaaperp}
            \cD(x,y_\zc) &= \cD(x_\zc,y_\zc)+a(x) c_\fa + b(x)c_{\fa_{\!\perp}}^2, 
            \\
            \label{i:expDbyaaperp1}
            \cD(x_\zc, y) &= \cD(x_\zc,y_\zc)+a'(y) c_{\fb} + b'(y) c_{\fa'_{\!\perp}}^2,
    \end{align}
    with  $a$, $b$, $a'$, and $b'$ satisfying 
    \begin{align}
    \label{Dbyaaperp}
        |a(x)|&\sim (\mu\mu')^{\frac{1}{2}},  \hspace{-140pt}  &&\ |b(x)|\sim \frac{\mu'}{\mu},\\
    \label{Dbyaaperp1}
         |a'(y)|&\sim \mu, && |b'(y)|\sim 1.
    \end{align}
	   Indeed, to see this, note that $\cD(x,y)$ is a polynomial of degree $2$ in $x$. Thus, we may write 
		    \[ \cD(x,\yo)= \cD(x_\zc, y_\zc)+ \inp{\partial_x\cD(x_\zc,y_\zc)}{x-x_\zc} + \frac{1}{2} \inp{ x-x_\zc}{\partial_x\partial_x^\intercal\cD(x_\zc,y_\zc) (x-x_\zc)}.\] 
    Combining this with \eqref{d:xfayfb},  we have $ \cD(x,y_\zc)$ in the form of \eqref{i:expDbyaaperp} with 
    \begin{align*}
     a(x) = \inp{\partial_x\cD(x_\zc,y_\zc)}{\fa} &+   c_{\fa_{\!\perp}}\inp{\partial_x\partial_x^\intercal \cD(x_\zc,y_\zc)\fa}{\fa_{\!\perp}}+ \frac{c_{\fa}\inp{\partial_x\partial_x^\intercal \cD(x_\zc,y_\zc)\fa}{\fa}}{2},\\
      \quad b(x)&=\frac{\inp{\partial_x\partial_x^\intercal \cD(x_\zc,y_\zc)\fa_{\!\perp}}{\fa_{\!\perp}}}{2}.
    \end{align*}
	   Using \eqref{e:sizeofxDyD}, Lemma \ref{lem:size}, and Lemma \ref{lem:pxpxD}, we see that $a$ and $b$ satisfy \eqref{Dbyaaperp}. The identity \eqref{i:expDbyaaperp1} can be shown in the same manner. We omit the detail. 

    We first prove \eqref{nx0}.    Lemma \ref{lem:directionab} tells that the distance between $\fa$ and $\fa(x,y_\zc)$ is $O((\mu'/\mu)^{1/2})\ll 1$. Thus,  we have
    \[ |\fa -\fa(x,y_\zc)| \sim |\inp{\fa_{\!\perp}}{\fa(x,y_\zc)}|.\] 
   Consequently,    \eqref{nx0} follows by  \eqref{e:sizeofxDyD}  if we show 
\Be 
\label{nx0'}
| \inp{\fa_{\!\perp}}{\partial_x \cD(x,y_\zc)} | \sim  |c_{\fa_{\!\perp}}| \mu'\mu^{-1}
\Ee
provided that  $|c_{\fa_{\!\perp}}|\ge C\delta(\mu')^{-1}$ for a large constant $C>0$.  
      Since  $\inp{\fa_{\!\perp}}{\partial_x \cD(\cdot,y_\zc)}$  is a polynomial of degree $1$, 
     $ \inp{\fa_{\!\perp}}{\partial_x \cD(x,y_\zc)}= \inp{\fa_{\!\perp}}{\partial_x \cD(x_\zc,y_\zc)}+ \inp{\fa_{\!\perp}}{\partial_x\partial_x^\intercal \cD(x_\zc,y_\zc)(x-x_\zc)}$. Recalling \eqref{nnn},  we have     
  \Be 
  \label{npp}
            \inp{\fa_{\!\perp}}{\partial_x \cD(x,y_\zc)} ={c_\fa \inp{\fa_{\!\perp}}{\partial_x\partial_x^\intercal\cD(x_\zc,y_\zc)\fa} +c_{\fa_{\!\perp}}\inp{\fa_{\!\perp}}{\partial_x\partial_x^\intercal\cD(x_\zc,y_\zc)\fa_{\!\perp}}}.
\Ee

 From \eqref{i:expDbyaaperp}  and \eqref{Dbyaaperp}, we have \[ c_\fa = -\frac{b(x)}{a(x)}c_{\fa_{\!\perp}}^2 + O(\delta(\mu\mu')^{-\frac{1}{2}}),\] since $|\cD(x_\zc, y_\zc)|\le  4\delta$ and $|\cD(x, y_\zc)|\le  4\delta$.  Combining this, 
  Lemma \ref{lem:pxpxD}, and \eqref{Dbyaaperp}, we have
 \[  |{c_\fa \inp{\fa_{\!\perp}}{\partial_x\partial_x^\intercal\cD(x_\zc,y_\zc)\fa}}|\lesssim    c_{\fa_{\!\perp}}^2 \frac{\mu'}{\mu^2}+  O(\delta\mu^{-1}).  \]
On the other hand, by  Lemma \ref{lem:pxpxD} we have $|c_{\fa_{\!\perp}}\inp{\fa_{\!\perp}}{\partial_x\partial_x^\intercal\cD(x_\zc,y_\zc)\fa_{\!\perp}}|\sim   |c_{\fa_{\!\perp}}| \mu'/\mu$.  Note  $|c_{\fa_{\!\perp}}| \le |x-x_\zc|\ll \mu$. Since $|c_{\fa_{\!\perp}}|\ge C\delta(\mu')^{-1}$ for a large constant $C>0$, we obtain \eqref{nx0'} as desired. 
    
    The proof of  \eqref{nx1}  is similar to that of \eqref{nx0}.  As before, note  $|\inp{\fa'_{\!\perp}}{\fb(x_\zc,y)}|\sim |\fb-\fb(x_\zc,y)|$. Thus, thanks to  \eqref{e:sizeofxDyD'},  it suffices to show 
    \Be 
\label{nx1'}
| \inp{\fa'_{\!\perp}}{\partial_y \cD(x_\zc,y)} | \sim    |c_{\fa'_{\!\perp}}|
\Ee
provided that $|c_{\fa'_{\!\perp}}|\ge C\delta \mu^{-3/2}  (\mu')^{1/2} $ with a large $C>0$.
   Using \eqref{i:expDbyaaperp1} as before, we get  $c_\fb = -\bigl(b'(y)/a'(y)\bigr)c_{\fa'_{\!\perp}}^2 + O(\delta\mu^{-1}).$
       Set $r=|c_{\fa'_{\!\perp}}|$. Then,  we have $|c_\fb|\lesssim r^2\mu^{-1}+\delta\mu^{-1}$ and  $r\lesssim\epz(\mu\mu')^{1/2}$ by Lemma \ref{lem:size}.
      Hence,  the assumed lower bound on $r$ and  \eqref{e:quadDall2}  yield 
      \[
      \left|c_\fb\inp{\fa'_{\!\perp}}{\partial_y\partial_y^\intercal\cD(x_\zc,y_\zc)\fb}\right|
      \lesssim \left(\epz \frac{\mu'}{\mu}+C^{-1}\right)r\ll r.
      \]
     Combining this with the identity 
            \[   \inp{\fa'_{\!\perp}}{\partial_y \cD(x_\zc,y)} ={c_\fb \inp{\fa'_{\!\perp}}{\partial_y\partial_y^\intercal\cD(x_\zc,y_\zc)\fb} +c_{\fa'_{\!\perp}}\inp{\fa'_{\!\perp}}{\partial_y\partial_y^\intercal\cD(x_\zc,y_\zc)\fa'_{\!\perp}}},\]
            we obtain \eqref{nx1'}, since     $|c_{\fa'_{\!\perp}}\inp{\fa'_{\!\perp}}{\partial_y\partial_y^\intercal\cD(x_\zc,y_\zc)\fa'_{\!\perp}}|\sim r$ by \eqref{e:quadDall1}. 
      \end{proof}

\subsection{Stability of the normal vectors on tangential rectangles} We close this section by proving the following, which shows  the unit vectors $\fa(x,y)$ and $\fb(x,y)$ are contained in circular arcs of length $\delta^{1/2}\mu^{-1}$ when $(x,y)\in R_{\yo} (\xo) \times R'_\xo(\yo)$.

\begin{lem}\label{lem:locfab}
Let \(\dels\leq\delta\leq\delt\) be dyadic.
Let $(x_\zc,y_\zc)\in \cB\times\cB'$  and let $R:= R_\yo(x_\zc)$ and  $R':=R'_\xo(y_\zc)$.  Suppose that $|\cD(x_\zc, y_\zc)|\le  4\delta$. Then, for $(x,y)\in R\times R'$  there  is a constant 
    $C>0$ such that 
    \begin{align}
    \label{fafa}
        |\fa(x,y) - \fa(x_\zc,y_\zc)| \le C\epz \delta^{\frac{1}{2}}\mu^{-1},
        \\
           \label{fafa'}
         |\fb(x,y) - \fb(x_\zc,y_\zc)|\le C\epz \delta^{\frac{1}{2}}\mu^{-1}.
    \end{align}
    \end{lem}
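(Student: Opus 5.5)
Since $\fa=\partial_x\cD/|\partial_x\cD|$ and $|\partial_x\cD|\sim(\mu\mu')^{1/2}$ on $\cB\times\cB'$ by Lemma~\ref{lem:pxpy}, and the map $v\mapsto v/|v|$ is Lipschitz with constant $\lesssim |v|^{-1}$ on vectors of comparable size, it suffices to show
\[
|\partial_x\cD(x,y)-\partial_x\cD(\xo,\yo)|\lesssim \epz\delta^{\frac12}(\mu')^{\frac12}\mu^{-\frac12},\qquad
|\partial_y\cD(x,y)-\partial_y\cD(\xo,\yo)|\lesssim \epz\delta^{\frac12}.
\]
Moreover, the component of the difference parallel to the base normal changes $\fa$ only to second order. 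So I will estimate separately the components along $\fa,\fa_{\!\perp}$ (respectively $\fb,\fa'_{\!\perp}$), with the notation \eqref{nnn}, \eqref{d:xfayfb}, \eqref{e:locxy-RR'}. The $\fa_{\!\perp}$-component needs the bound $\epz\delta^{1/2}\mu^{-1}\cdot(\mu\mu')^{1/2}$. The $\fa$-component only needs $\ll(\mu\mu')^{1/2}$.

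\textbf{Expansion of the gradients.} Since $\partial_x\cD=2y\inp xy-2x$ is a cubic polynomial, I will expand it exactly:
\[
\partial_x\cD(x,y)-\partial_x\cD(\xo,\yo)=\partial_x\partial_x^\intercal\cD(\xo,\yo)(x-\xo)+\partial_x\partial_y^\intercal\cD(\xo,\yo)(y-\yo)+Q,
\]
where $Q$ collects the terms that are quadratic and cubic in $(x-\xo,y-\yo)$. I then pair with $\fa_{\!\perp}$ and insert \eqref{d:xfayfb}.

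By Lemma~\ref{lem:pxpxD}, the $x$-Hessian contributes
\[
\sclr\tfrac{\mu'}{\mu}+\scl\bigl(\tfrac{\mu'}{\mu}\bigr)^{1/2}\lesssim c\epz\delta^{1/2}(\mu')^{1/2}\mu^{-1/2},
\]
where the second term is smaller because $\delta\le\epz\mu\mu'$.

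By Lemma~\ref{lem:pxpyD} (the estimates \eqref{e:quadD-xyabperp} and \eqref{e:quadD-xyabperp3}), the mixed term contributes
\[
\sclrp\bigl(\tfrac{\mu'}{\mu}\bigr)^{1/2}+\sclp\cdot1=\epz\delta^{1/2}(\mu'/\mu)^{1/2}+\epz\delta\mu^{-1},
\]
and both pieces are of the right size.

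The terms in $Q$ involve $y-\yo$ paired with $x$ or with $x-\xo$. I will bound them as in the treatment of $\mathcal E_2$ in Lemma~\ref{lem:locD-strong}. There, $|\inp{x-\xo}{y-\yo}|\lesssim c\epz\delta$, and the component sizes give products like $(\scl+\sclr)(\sclp+\sclrp)$, which are of lower order.

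The normal components are handled by the same expansion using \eqref{e:quadDall} and \eqref{e:quadD-xyabperp2}. These give changes of size $\lesssim\scl+\sclrp\ll(\mu\mu')^{1/2}$, which is harmless.

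\textbf{The $y$-side.} For $\fb$ the argument is symmetric: I expand $\partial_y\cD=2x\inp xy-2y$ and pair with $\fa'_{\!\perp}$. The $y$-Hessian gives $\sclrp\cdot1+\sclp(\mu'/\mu)^{1/2}$ by \eqref{e:quadDall1} and \eqref{e:quadDall2}. The mixed Hessian gives $\sclr(\mu'/\mu)^{1/2}+\scl$, using \eqref{e:quadD-xyabperp} and \eqref{e:quadD-xyabperp2}, the latter with the roles of the two sides transposed. Each of these is $\lesssim\epz\delta^{1/2}$. Dividing by $|\partial_y\cD|\sim\mu$ gives \eqref{fafa'}.

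\textbf{Main obstacle.} The main difficulty is the mixed term in the $\fa_{\!\perp}$-component, specifically the product $\inp{\fa_{\!\perp}}{\partial_x\partial_y^\intercal\cD\,\fb}\,c_{\fb}$. The available bound \eqref{e:quadD-xyabperp3} is only $O(1)$, so this term has size $\sclp=\epz\delta\mu^{-1}$, and I must check that it is at most $\epz\delta^{1/2}(\mu'/\mu)^{1/2}$. This holds because $\delta^{1/2}\le(\epz\mu\mu')^{1/2}$. If the constants turn out to be tight, I would instead compute $\inp{\fa_{\!\perp}}{\mathrm M\fb}$ directly via \eqref{i:relsfafbperp} and \eqref{npm}, which gives the sharper size $(\mu'/\mu)^{1/2}$.
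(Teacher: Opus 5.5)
Your overall route is the paper's: reduce to the $\fa_{\!\perp}$-component of $\partial_x\cD$ (resp.\ the $\fa'_{\!\perp}$-component of $\partial_y\cD$), expand exactly about $(x_0,y_0)$, and control the linear terms by the sharp bounds of Lemmas~\ref{lem:pxpxD} and \ref{lem:pxpyD}. Your bookkeeping for the linear terms is correct.

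The gap is in the quadratic and cubic terms. Put $h=x-x_0$ and $k=y-y_0$. After pairing with $\fa_{\!\perp}$, these terms are twice $\inp{\fa_{\!\perp}}{k}\bigl(\inp{y_0}{h}+\inp{x_0}{k}+\inp{h}{k}\bigr)+\inp{y_0}{\fa_{\!\perp}}\inp{h}{k}$. Your claim that the component sizes $(\sigma+\sigma_\perp)(\sigma'+\sigma'_\perp)$ are of lower order is false. Since $\inp{y_0}{\fa_{\!\perp}}\sim1$, the factor $\inp{y_0}{h}$ can be as large as $\sigma_\perp=\epz\delta^{1/2}(\mu/\mu')^{1/2}$. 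With only the crude bound $|\inp{\fa_{\!\perp}}{k}|\le|k|\lesssim\sigma'_\perp=\epz\delta^{1/2}$, the term $\inp{\fa_{\!\perp}}{k}\inp{y_0}{h}$ is merely $O(\epz^2\delta(\mu/\mu')^{1/2})$. This exceeds the target $\epz\delta^{1/2}(\mu'/\mu)^{1/2}$ by the factor $\epz\delta^{1/2}\mu/\mu'$. For $\delta\sim\delt$ that factor is of size $\epz^{3/2}\mu^{3/2}(\mu')^{-1/2}$, which is unbounded as $\mu'$ approaches $C_\ast\lambda^{-2/3}$. The bound $|\inp{h}{k}|\lesssim c\epz\delta$ borrowed from $\mathcal E_2$ does not touch this term.

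The missing input is near-orthogonality applied to the factor $\inp{\fa_{\!\perp}}{k}$ itself. Write $k=c_{\fb}\fb+c_{\fa'_{\!\perp}}\fa'_{\!\perp}$ and use $|\inp{\fa_{\!\perp}}{\fa'_{\!\perp}}|=|\inp{\fa}{\fb}|\lesssim\delta\mu^{-3/2}(\mu')^{-1/2}$ from Lemma~\ref{lem:alp}. This gives $|\inp{\fa_{\!\perp}}{k}|\lesssim\epz\delta\mu^{-1}$, which is the first bound in \eqref{inin}. With it every higher-order term closes; the bad term becomes $O(\epz^2\delta^{3/2}(\mu\mu')^{-1/2})$.

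The same problem occurs on the $y$-side, which you dismiss as symmetric. There the term $\inp{\fa'_{\!\perp}}{h}\inp{y_0}{h}$ is only $O(\sigma_\perp^2)$ crudely, against the target $\epz\delta^{1/2}$. One needs $|\inp{\fa'_{\!\perp}}{h}|\lesssim\sigma+\sigma_\perp|\inp{\fa}{\fb}|\lesssim\delta^{1/2}$, again from Lemma~\ref{lem:alp}.

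Relatedly, the ``main obstacle'' you single out is not one: $c_{\fb}\inp{\fa_{\!\perp}}{\partial_x\partial_y^\intercal\cD\,\fb}=O(\epz\delta\mu^{-1})$ is trivially acceptable. The genuinely delicate points are the sharp linear bounds \eqref{e:quadDall1} and \eqref{e:quadD-xyabperp}, which you did use, and these higher-order factors.
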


\begin{proof} As before, we continue  using the same notation as in the proof of Lemma \ref{lem:locD-strong} (see \eqref{nnn}, \eqref{d:xfayfb}).  
   We first prove the estimate only for the unit vector $\fa$.  
   
   As in the proof of Lemma \ref{lem:directionsofab}, since $ |\fa(x,y) - \fa|\sim | \inp{\fa_{\!\perp}}{\fa(x,y)}|$,  it is sufficient to show that 
    \begin{equation}\label{eq:dist}
        |\inp{\fa(x,y)}{\fa_{\!\perp}}  |\lesssim \epz \delta^{\frac{1}{2}}\mu^{-1},\quad (x,y)\in R\times R'.
    \end{equation}
    Note that the left hand side of \eqref{eq:dist} equals  $|\partial_x \cD(x,y)|^{-1}| F (x,y)|$ with a polynomial  $ F(x,y)= \inp{\partial_x\cD(x,y)}{\fa_{\!\perp}}$. Thus, to verify \eqref{eq:dist}, it suffices to show 
    \Be 
    \label{Fz}
    | F(z)|\lesssim \epz \biggl(\frac{\delta\mu'}{\mu}\biggr)^{\frac{1}{2}}
    \Ee
    for every $z=(x,y)\in R\times R'.$
    Since $ F(z)$ is a polynomial of degree $3$ and $F(z_\circ)=0$,
    \begin{align}\label{e:tem}
       F(z) = \inp{\partial_z F(z_\circ)}{z-z_\circ} +\sum_{|\alpha|=2,3} \frac{\partial_z^\alpha F(z_\circ)(z-z_\circ)^\alpha}{\alpha!},
    \end{align}
    where $z_\circ=(x_\zc,y_\zc)$. Using  \eqref{d:xfayfb}, we write the first-order terms  as follows: 
    \Be 
    \label{pzF}
    \begin{aligned}
    \inp{\partial_z F(z_\circ)}{&z-z_\circ}=\inp{\partial_x\partial_x^\intercal \cD(z_\circ) \fa_{\!\perp}}{\fa} c_\fa + \inp{\partial_x\partial_x^\intercal \cD(z_\circ) \fa_{\!\perp}}{\fa_{\!\perp}} c_{\fa_{\!\perp}}\\[3pt]
    & \quad+ \inp{\partial_y\partial_x^\intercal \cD(z_\circ) \fa_{\!\perp}}{\fb} c_{\fb} + \inp{\partial_y\partial_x^\intercal \cD(z_\circ) \fa_{\!\perp}}{\fa'_{\!\perp}} c_{\fa'_{\!\perp}}.
    \end{aligned}
    \Ee
       Combining this with \eqref{e:locxy-RR'} (see, also, \eqref{scale}), Lemma \ref{lem:pxpxD}, and Lemma \ref{lem:pxpyD}, we obtain $|\inp{\partial_z F(z_\circ)}{z-z_\circ}|\lesssim \epz (\delta\mu'/\mu)^{1/2}$ for $z\in R\times R'$.

    To estimate the higher-order terms in \eqref{e:tem}, we use the identity  $F(x,y)=2(\inp{x}{y}\inp{y}{\fa_{\!\perp}}-\inp{x}{\fa_{\!\perp}})$, which follows from \eqref{pxpy0}. Then, after discarding the first order terms,\footnote{$F(x,y)=2(\inp{x-x_\zc+x_\zc}{y-y_\zc+y_\zc}\inp{y-y_\zc+ y_\zc}{\fa_{\!\perp}}-\inp{x-x_\zc+ x_\zc}{\fa_{\!\perp}})$} the sum of the higher-order terms in \eqref{e:tem} are written as two times 
\Be
\label{in-}    \begin{aligned}
        \inp{\fa_{\!\perp}}{y-y_\zc}&\big(\inp{y_\zc}{x-x_\zc} +  \inp{x_\zc}{y-y_\zc}+ \inp{x-x_\zc}{y-y_\zc}\big) 
        \\ &\qquad+ \inp{y_\zc}{\fa_{\!\perp}}\inp{x-x_\zc}{y-y_\zc}.
    \end{aligned}
    \Ee
   Since $ |\inp{y_\zc}{\fa_{\!\perp}}|\lesssim 1$, the desired inequality \eqref{Fz} follows if we show
   \Be
   \label{inin}
    \begin{aligned}
        &|\inp{\fa_{\!\perp}}{y-y_\zc}| \lesssim \epz \frac{\delta}{\mu},  & & |\inp{y_\zc}{x-x_\zc}| \lesssim \epz \biggl(\frac{\delta\mu}{\mu'}\biggr)^{\frac{1}{2}}, 
        \\
         &|\inp{x_\zc}{y-y_\zc}| \lesssim \epz \biggl(\frac{\delta\mu'}{\mu}\biggr)^{\frac{1}{2}},  & &|\inp{x-x_\zc}{y-y_\zc}| \lesssim \epz \delta.
    \end{aligned}
    \Ee
	  To show the first, using \eqref{d:xfayfb}, we have  $\inp{\fa_{\!\perp}}{y-y_\zc}= c_{\fb}  \inp{\fa_{\!\perp}}\fb +c_{\fa'_{\!\perp}} 
  \inp{\fa_{\!\perp}}{\fa'_{\!\perp}}$. Thus,  \eqref{e:locxy-RR'} and Lemma \ref{lem:alp} give
  $ |\inp{\fa_{\!\perp}}{y-y_\zc}|\le  |c_{\fb}| +  |c_{\fa'_{\!\perp}} |  |\inp{\fa}{\fb}| \lesssim  \epz \delta/\mu.$ 
  To show the second,  we write  $y_\zc= y_{\zc,1} e_1+ y_{\zc,2} e_2$. Then, combining  this with the first identity in \eqref{d:xfayfb} gives 
 \[  \inp{y_\zc}{x-x_\zc}= y_{\zc,1} c_{\fa}  \inp {e_1}\fa+ y_{\zc,2} c_{\fa}  \inp {e_2}\fa+ y_{\zc,1} c_{\fa_{\!\perp}} \inp {e_1} {\fa_{\!\perp}}+y_{\zc,2} c_{\fa_{\!\perp}} \inp {e_2} {\fa_{\!\perp}}.  \] 
 Note from \eqref{bb} that $|y_{\zc,1}|\sim 1$ and $|y_{\zc,2} |\lesssim (\mu\mu')^{1/2}$. Thus,  the desired inequality follows by Lemma \ref{lem:directionab} and \eqref{e:locxy-RR'}. 
  The third can be shown similarly. To show the last inequality, using \eqref{d:xfayfb}, we have 
  \[ \inp{x-x_\zc}{y-y_\zc}=  c_{\fa}c_{\fb}  \inp \fa\fb+  c_{\fa}c_{{\fa'_{\!\perp}}}  \inp\fa{\fa'_{\!\perp}} + c_{\fa_{\!\perp}} c_{\fb}  \inp{\fa_{\!\perp}}\fb+c_{\fa_{\!\perp}} c_{{\fa'_{\!\perp}}}  \inp{\fa_{\!\perp}}{\fa'_{\!\perp}}.\]
  Hence, the last inequality follows from \eqref{e:locxy-RR'} (\eqref{scale}) and Lemma \ref{lem:alp}. 
  
 We now address the estimate \eqref{fafa'}, whose proof proceeds in
	the same manner as before. Thus, we shall be brief.  As before, by \eqref{e:sizeofxDyD'}  it is sufficient to show that
\Be \label{Gz} |G(z)|\lesssim \epz \delta^{\frac{1}{2}}\Ee
 for $z=(x,y)\in R\times R'$, where
$
  G(x,y)=\bigl\langle \partial_y \cD(x,y),\, \fa'_{\!\perp} \bigr\rangle.
$
Since $G$ is a polynomial of degree 3 and $G(z_\circ)=0$,  we may write 
\[
  G(z)
	  = \bigl\langle \partial_z G(z_\circ),\, z-z_\circ \bigr\rangle
  + \sum_{|\alpha|=2,3}
	    \frac{\partial_z^\alpha G(z_\circ)}{\alpha!}(z-z_\circ)^\alpha.
\]
Using \eqref{d:xfayfb} (cf. \eqref{pzF}), we have
  \[ 
    \begin{aligned}
    \inp{\partial_z G(z_\circ)}{&z-z_\circ}=\inp{\partial_x\partial_y^\intercal \cD(z_\circ) \fbp}{\fa} c_\fa + \inp{\partial_x\partial_y^\intercal \cD(z_\circ) \fbp}{\fa_{\!\perp}} c_{\fa_{\!\perp}}\\[3pt]
    & \quad+ \inp{\partial_y\partial_y^\intercal \cD(z_\circ) \fbp}{\fb} c_{\fb} + \inp{\partial_y\partial_y^\intercal \cD(z_\circ) \fbp}{\fa'_{\!\perp}} c_{\fa'_{\!\perp}}.
    \end{aligned}
    \]
Combining this with \eqref{e:locxy-RR'} (see, also, \eqref{scale}), Lemma \ref{lem:pxpxD}, and Lemma \ref{lem:pxpyD},  we obtain 
$ 
	  \bigl|\bigl\langle \partial_z G(z_\circ),\, z-z_\circ \bigr\rangle\bigr|
  \lesssim \epz \delta^{1/2}.
$

Now, from  \eqref{pxpy1} we note  $G(x,y)=2(\inp{x}{y}\inp{x}{\fbp}-\inp{y}{\fbp})$. Thus, discarding the first order terms, we see the sum of the higher-order terms is equal to  2 times 
\begin{align*}
  \bigl\langle  \fbp, x-x_\zc \bigr\rangle
    \big(& \bigl\langle y_\zc, x-x_\zc \bigr\rangle
  +\bigl\langle x_\zc, y-y_\zc \bigr\rangle
     +    \bigl\langle x-x_\zc, y-y_\zc \bigr\rangle \big)
    \\
    &\qquad +\bigl\langle x_\zc, \fbp \bigr\rangle
    \bigl\langle x-x_\zc, y-y_\zc \bigr\rangle.
\end{align*}
Using \eqref{d:xfayfb}, \eqref{e:locxy-RR'}, and Lemma \ref{lem:alp}, we have
\[
|\langle \fbp, x-x_\zc\rangle|\lesssim\delta^{\frac{1}{2}}.
\]
Since $\delta\le\delt\lesssim\epz\mu\mu'$, it follows from
\eqref{inin} that all the higher-order terms above are
$O(\epz\delta^{1/2})$. Together with the estimate for the
first-order term, this proves \eqref{Gz}.
    \end{proof}

\begin{cor}\label{rem:sizefab}
Let \((x_\ast,y_\ast)\in\cB\times\cB'\). Then, for
\((x,y)\in\cB\times\cB'\),
\[
|\fa(x,y)-\fa(x_\ast,y_\ast)|
+|\fb(x,y)-\fb(x_\ast,y_\ast)|
\lesssim
\epz\biggl(\frac{\mu'}{\mu}\biggr)^{\frac12}.
\]
\end{cor}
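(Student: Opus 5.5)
The plan is to treat this as a global Lipschitz-type bound for the unit vector fields $\fa,\fb$ on $\cB\times\cB'$. These fields are the normalized gradients $\partial_x\cD/|\partial_x\cD|$ and $\partial_y\cD/|\partial_y\cD|$. The denominators are controlled from below by Lemma \ref{lem:pxpy}: $|\partial_x\cD|\sim(\mu\mu')^{1/2}$ and $|\partial_y\cD|\sim\mu$. It therefore suffices to bound the variation of the numerators. Concretely, we use the elementary inequality
\[
\Bigl|\frac{u}{|u|}-\frac{v}{|v|}\Bigr|\le \frac{2|u-v|}{|v|},
\]
so that
\[
|\fa(x,y)-\fa(x_\ast,y_\ast)|\lesssim (\mu\mu')^{-1/2}\,|\partial_x\cD(x,y)-\partial_x\cD(x_\ast,y_\ast)|,
\]
and similarly for $\fb$ with the factor $\mu^{-1}$.

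For $\fa$ we use the explicit formula \eqref{pxpy0}, $\partial_x\cD=2y\inp xy-2x$, and split the difference along the coordinate directions $e_1,e_2$. By \eqref{bb} the displacements satisfy
\[
x-x_\ast=O(\epz\mu)e_1+O(\epz(\mu\mu')^{1/2})e_2,\qquad y-y_\ast=O(\epz\mu')e_1+O(\epz(\mu\mu')^{1/2})e_2.
\]
The $e_2$-component of $\partial_x\cD$ is $2(\inp xy y_2-x_2)$, and its variation is $O(\epz(\mu\mu')^{1/2})$. This is only $O(\epz)$ relative to the size of the gradient, which is too crude for the $e_2$-component. The saving comes from the geometry: $\fa$ is nearly parallel to $e_2$ by Lemma \ref{lem:directionab}. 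For a vector close to $e_2$, a change of the $e_2$-component moves the direction only to second order. So I will measure the difference of the unit vectors through the angle, namely through the $e_1$-component $\inp{\fa}{e_1}=\inp{\partial_x\cD}{e_1}/|\partial_x\cD|$. It suffices to show that this quantity varies by $O(\epz(\mu'/\mu)^{1/2})$.

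By Lemma \ref{lem:directionab}, $\inp{\fa}{e_1}$ is itself of size $\sim(\mu'/\mu)^{1/2}$. We therefore need a relative variation of order $\epz$ in both $-\inp{\partial_x\cD}{e_1}=2(x_1-\inp xy y_1)\sim\mu'$ and $|\partial_x\cD|$. For the $e_1$-component, the proof of Lemma \ref{lem:directionab} gives
\[
-\inp{\partial_x\cD}{e_1}\sim (1+\inp xy^2)(1-|y|^2)-\cD+O(\mu\mu').
\]
The main term varies by $O(\epz\mu')$, because $1-|y|^2$ is determined up to $O(\epz\mu')$ on $\cB'$; indeed $\cB'$ has $e_1$-width $\epz\mu'$, and the $y_2$-variation contributes $y_2\,\Delta y_2=O(\epz\mu\mu')$. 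The term $\cD$ varies by $O(\epz\mu\mu')$ by \eqref{r:cond-cDh}. The main obstacle is the $O(\mu\mu')$ error, which must be tracked exactly rather than bounded. I would rewrite $2(x_1-\inp xy y_1)$ directly and differentiate it along $\cB\times\cB'$. The derivative in $x_1$ is $2(1-y_1^2)=O(\mu')$, and multiplied by the width $\epz\mu$ this gives $O(\epz\mu\mu')$. Since $\mu\mu'\le\epz\mu'$ by \eqref{mm}, all such contributions are $\ll\epz\mu'$. The other partial derivatives give similar bounds: in $x_2$, $-2y_1y_2$ times $\epz(\mu\mu')^{1/2}$ is $O(\epz\mu\mu')$; in $y_1$ and $y_2$, the bounds are again $O(\epz\mu')$. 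The magnitude $|\partial_x\cD|$ is dominated by its $e_2$-component, whose variation is $O(\epz(\mu\mu')^{1/2})$, i.e. relative variation $O(\epz)$. Combining these gives
\[
|\fa(x,y)-\fa(x_\ast,y_\ast)|\lesssim\epz(\mu'/\mu)^{1/2}.
\]

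For $\fb$ the same scheme applies with the roles of the coordinates exchanged. Now $\fb$ is close to $-e_1$, and we control the $e_2$-component
\[
\inp{\partial_y\cD}{e_2}=2(\inp xy x_2-y_2)\sim(\mu\mu')^{1/2}.
\]
Its variation over $\cB\times\cB'$ is $O(\epz(\mu\mu')^{1/2})$, because $x_2,y_2$ range over intervals of length $\epz(\mu\mu')^{1/2}$ and $\inp xy$ changes by $O(\epz\mu)$. For the magnitude, $|\partial_y\cD|\sim\mu$ has relative variation $O(\epz)$: this follows from the identity in the proof of Lemma \ref{lem:pxpy} with the roles of $x$ and $y$ swapped, since $1-|x|^2$ varies by $O(\epz\mu)$. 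Dividing by $|\partial_y\cD|\sim\mu$ gives
\[
|\fb(x,y)-\fb(x_\ast,y_\ast)|\lesssim\epz(\mu'/\mu)^{1/2},
\]
which completes the corollary.
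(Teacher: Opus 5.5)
Your argument is correct, but it takes a different route from the paper.

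\textbf{The paper's route.} The paper proves the corollary by rerunning the proof of Lemma~\ref{lem:locfab} on the whole box. It Taylor-expands $F(z)=\inp{\partial_x\cD(z)}{\fa_{\!\perp}(z_\ast)}$ and $G(z)=\inp{\partial_y\cD(z)}{\fa'_{\!\perp}(z_\ast)}$, both of which vanish at $z_\ast=(x_\ast,y_\ast)$, in the adapted frame $\fa,\fa_{\!\perp},\fb,\fa'_{\!\perp}$ at $z_\ast$. It then inserts the full-box displacement bounds of Lemma~\ref{lem:size} and the second-derivative bounds of Lemmas~\ref{lem:pxpxD} and \ref{lem:pxpyD}. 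This gives $|F|\lesssim\epz\mu'$ and $|G|\lesssim\epz(\mu\mu')^{1/2}$, and dividing by $|\partial_x\cD|\sim(\mu\mu')^{1/2}$ and $|\partial_y\cD|\sim\mu$ yields the claim.

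\textbf{Your route.} You stay in the fixed frame $e_1,e_2$. On all of $\cB\times\cB'$, $\fa$ stays near $e_2$ and $\fb$ near $-e_1$ (Lemma~\ref{lem:directionab}). So the distance between the unit vectors is comparable to the change in the small component $\inp{\fa}{e_1}$, resp.\ $\inp{\fb}{e_2}$. You control these quotients by mean-value estimates on the convex box, using only \eqref{pxpy0}, \eqref{pxpy1}, and \eqref{bb}. The partial-derivative bounds you list check out. For instance, $\partial_{x_1}$ of $2(x_1-\inp{x}{y}y_1)$ is $2(1-y_1^2)=O(\mu')$, and $\partial_{y_2}$ of it is $-2x_2y_1=O((\mu\mu')^{1/2})$. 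They give exactly the variations $O(\epz\mu')$ for the numerator and $O(\epz(\mu\mu')^{1/2})$ for the denominator that are needed, and the analogous bounds hold for $\fb$.

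\textbf{What each buys.} Your approach is elementary and self-contained: it needs neither the Hessian lemmas nor the frame relations \eqref{i:relsfafbperp}, only that $\cB\times\cB'$ is axis-parallel with the right aspect ratios. The paper's approach buys uniformity with Lemma~\ref{lem:locfab}. At the finer scales $\delta$, the rectangles are tilted along $\fa$ by an angle $\sim(\mu'/\mu)^{1/2}$, far larger than the precision $\delta^{1/2}\mu^{-1}$ required there. A fixed coordinate frame would therefore not suffice, and the frame-based expansion is needed anyway; the corollary is then just its coarsest instance.

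\textbf{Two small remarks.} Your opening bound via $|u/|u|-v/|v||\le 2|u-v|/|v|$ plays no role in the final argument. Also, the $O(\mu\mu')$ term you single out as the main obstacle is already $O(\epz\mu')$, since $\mu\le\epz$ by \eqref{mm}. The real defect of that intermediate formula is the non-constant factor $2/(x_1+\inp{x}{y}y_1)$ hidden in the $\sim$, which your direct differentiation sidesteps.
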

\begin{proof}
Argue as in Lemma~\ref{lem:locfab}, using the full-rectangle bounds
from Lemma~\ref{lem:size} and
\(|\cD|\lesssim\epz\mu\mu'\).
\end{proof}
The stability estimates above provide the geometric input for the multiscale rectangle decomposition in Section~\ref{sec:decomp}.
 \section{Multiscale space--time decomposition}\label{sec:decomp}
In this section, we decompose the operator
\[
    \wt\chi_{\cB}\mathcal O_\lambda[\psi_\vartriangle]\wt\chi_{\cB'},
\]
which will play a crucial role in the proof of \eqref{e:main-fP}.

As observed in Section \ref{sec:prelim}, the behavior of the oscillatory kernel of \(\mathcal O_\lambda[\psi_\vartriangle]\) is closely related to the value of \(\cD(x,y)\). From the perspective of the van der Corput lemma, \(\cD(x,y)\) affects the decay properties of the kernel. Indeed, the identity \eqref{i:ptcP-Q} shows that \(\partial_t\cP(x,y,t)\) becomes small when \(t\) is close to \(S(x,y)\) as \(\cD(x,y)\) approaches zero. Moreover, if \(\cD(x,y)=0\), then both the first and second derivatives \(\partial_t\cP\) and \(\partial_t^2\cP\) vanish simultaneously at \(t=S(x,y)\). This connection between the discriminant \(\cD\) and the oscillatory behavior of the kernel naturally leads us to decompose the operator in the \(x,y\) variables according to the size of \(|\cD(x,y)|\).

Recalling \eqref{fsd} and \eqref{fsdc}, we write 
\begin{align*}
    \cB\times\cB' = \fS_{\dels}^\circ \cup \Big(\bigcup_{\dels \le \delta \le \delt}\fS_\delta^+\cup \fS_\delta^-\Big).
\end{align*}

\subsection{Decomposition of the dyadic level regions}\label{subsec:setsS}
The aim of this subsection is to decompose the regions $\fS_\delta^\kappa$ into almost disjoint rectangles $R_y(x)\times R_x'(y)$ given by \eqref{rx0} and \eqref{ry0}. 
Lemma \ref{lem:locfab}  tells that the vectors  $\fa$, $\fb$ are stable at the scale of $\delta^{1/2}\mu^{-1}$ within the rectangle $R_y(x)\times R_x'(y)$.  
Note both the rectangles $R_y(x)$, $R_x'(y)$ have ({\it length of shorter side/length of longer side})$\sim \delta^{1/2}\mu^{-1}$.  Such rectangles remain essentially unchanged under rotation  of  an angle $\lesssim  \delta^{1/2}\mu^{-1}$.  Thus, we discretize the possible  major directions of  the rectangles.

From Corollary \ref{rem:sizefab} we note that 
the sets $\{\fa(x,y):  x\in \cB, y\in \cB'\}$  and $\{\fb(x,y):  x\in \cB, y\in \cB'\}$ are contained in circular arcs $\mathcal A, \mathcal A'$  of length $\ll (\mu'/\mu)^{1/2}$.
Let $\Theta$, $\Theta'$ be  maximally $\epz\delta^{1/2}\mu^{-1}$-separated subsets of  $\mathcal A, \mathcal A'$, respectively.

For $\nu\in \Theta\cup \Theta'$, we denote by $\nu_\sprp$ the unit vector in $\mathbb S^1$ orthogonal to $\nu$
such that $\inp{\nu_\sprp}{e_1}>0$. Recalling \eqref{scale}, for $\nu\in \Theta$ and $\mathfrak n = (\fn_1, \fn_2)\in \Z^2$, we define 
\Be
\label{ftau}
\begin{aligned}
    \fr_{\nu,\fn} = \{x\in \R^2:\, & 0 \le \inp{x}{\nu} -\fn_1\scl < \scl, \quad  0\le \inp{x}{\nu_\sprp} - \fn_2 \sclr < \sclr\}.
\end{aligned}
\Ee

\newcommand{\sig}{\sigma_{\mathsmaller \delta}}
\newcommand{\sigr}{\sigma_{\mathsmaller \delta, \mathsmaller \perp}}
\newcommand{\sigp}{\sigma_{\mathsmaller \delta}^{\,\prime}}
\newcommand{\sigrp}{\sigma_{\mathsmaller \delta, \mathsmaller \perp}^{\,\prime}}

Similarly, for $\nu'\in \Theta'$ and $\fn\in \Z^2$, let
\Be
\label{ftaup}
\begin{aligned}
    \fr_{\nu',\fn}' = \{y\in \R^2:\, &0 \le \inp{y}{\nu'} -\fn_1\sclp < \sclp, \quad   0 \le \inp{y}{\nu'_\sprp} -\fn_2 \sclrp< \sclrp\}.
\end{aligned}
\Ee
Thus, the collections   \[ \cR_{\delta, \nu} := \{\fr_{\nu,\fn}: {\fn\in \Z^2} \}, \quad \cR_{\delta, \nu'}' := \{\fr'_{\nu',\fn}:{\fn\in \Z^2}\}, \] 
 tile $\R^2$. Clearly,  the set $\{ \fr\times \fr': \fr\in \cR_{\delta, \nu}, \, \fr'\in \cR_{\delta, \nu'}'\}$  tiles $\mathbb R^4$ for every pair $(\nu,\nu')\in \Theta\times\Theta'$.  
 Let us set
\begin{align*}
    \mathcal U_\delta = \bigcup_{\nu\in \Theta} \cR_{\delta, \nu},\qquad \mathcal U_\delta' = \bigcup_{\nu'\in \Theta'} \cR_{\delta, \nu'}'.
\end{align*}

Henceforth, unless otherwise specified, \(\nu\) and \(\nu'\) denote elements of \(\Theta\) and \(\Theta'\), respectively. For simplicity, we shall occasionally write \(\fr\) and \(\fr'\) in place of \(\fr_{\nu,\fn}\) and \(\fr'_{\nu',\fn}\), respectively, suppressing the subscripts \(\nu,\nu'\), and \(\fn\), whenever no ambiguity arises. 
If \(\fr=\fr_{\nu,\fn}\) and \(\fr'=\fr'_{\nu',\fn}\), then we denote, respectively,  
\[
    \theta(\fr)=\nu, 
    \qquad 
    \theta(\fr')=\nu'. 
\]

\begin{defn}
\label{rkd}
For $\kappa=\pm, \circ$, we denote by $\mathfrak R^\kappa_\delta$ the collection of  rectangles $\fr\times \fr' \in   \mathcal U_\delta\times  \mathcal U_\delta'$  satisfying 
\begin{align}
 \label{int-con}
 &\qquad\quad \  \fr\times \fr'\cap \fS_\delta^\kappa \cap ( \supp\wt\chi_{\cB} \times \supp\wt\chi_{\cB'}) \neq\emptyset, 
 \\[3pt]
 \label{dir-con}
 &\begin{aligned}
  \exists (x_\zc, y_\zc), (\tilde x_\zc, \tilde y_\zc)&\in\,\fr\times \fr' \cap  \fS_\delta^\kappa \ \text{ s.t. }  
  \\[-1pt] &
   |\fa(x_\zc, y_\zc)-\theta(\fr)| + |\fb(\tilde x_\zc,\tilde y_\zc) - \theta(\fr')|\le 4\epz\delta^{\frac{1}{2}}\mu^{-1}.
 \end{aligned}
\end{align} 
By Lemma \ref{lem:locfab}  we may take $(x_\zc, y_\zc)= (\tilde x_\zc, \tilde y_\zc)$ by enlarging the constant. 
\end{defn}

It is clear that the elements in  $\mathfrak R^\kappa_\delta$ cover  $\fS_\delta^\kappa \cap ( \supp\wt\chi_{\cB} \times \supp\wt\chi_{\cB'})$. Indeed, let $(x,y)\in \fS_\delta^\kappa\cap(\supp\wt\chi_{\cB}\times\supp\wt\chi_{\cB'}).$ Since $\fS_\delta^\kappa \subset \cB\times\cB'$, by Corollary \ref{rem:sizefab}, there exists $(\nu,\nu')\in \Theta \times \Theta'$ such that $|\fa(x, y)-\nu| \le \epz \delta^{1/2}\mu^{-1}$, $|\fb(x, y)-\nu'| \le \epz \delta^{1/2}\mu^{-1}$.  Since $\cR_{\delta, \nu}\times\cR_{\delta, \nu'}'$ is a tiling of $\mathbb R^4$, there exists a pair of rectangles $\fr_\nu\times \fr'_{\nu'}$ in $\cR_{\delta, \nu} \times \cR_{\delta, \nu'}'$ such that $(x,y)\in \fr_\nu\times \fr'_{\nu'}$. By  Definition \ref{rkd}, $\fr_\nu\times \fr'_{\nu'}\in \mathfrak R^\kappa_\delta$.   

As throughout the proof, constants in the covering and incidence bounds below may depend on the fixed \(\epz\), but are uniform in \(\lambda,\mu,\mu',\delta\) and the rectangles.

Let \(2\fr\) and \(2\fr'\) denote the concentric dilates of \(\fr\) and \(\fr'\), respectively, by a factor of \(2\). Moreover, by Lemma \ref{lem:locfab}, the rectangles in $\{2\fr\times 2\fr': {\fr\times \fr'\in \fR^\kappa_\delta}\}$ have $O_{\epz}(1)$ overlaps, since there are only $O_{\epz}(1)$ rectangles satisfying \eqref{dir-con}. Indeed, 
let $(x,y)\in 2\fr\times2\fr'$, $\fr\times\fr'\in \fR^\kappa_\delta$. Then,  by  Lemma \ref{lem:locfab} and \eqref{dir-con}, we have $|\fa(x,y)-\theta(\fr)|\lesssim \delta^{1/2}\mu^{-1}$.
Similarly, $|\fb(x,y)-\theta(\fr')|\lesssim \delta^{1/2}\mu^{-1}$. Since $\Theta$ and $\Theta'$ are $\epz\delta^{1/2}\mu^{-1}$-separated, there are $O_{\epz}(1)$ possible choices of $\nu=\theta(\fr)\in\Theta$ and $\nu'=\theta(\fr')\in \Theta'$.  For each $(\nu, \nu')$, there are at most four $\fr\times \fr'\in \cR_{\delta, \nu} \times \cR_{\delta, \nu'}'$. Consequently, $(x,y)$ is contained in $O_{\epz}(1)$ rectangles  $\fr\times\fr'\in \fR^\kappa_\delta$. 
  Additional properties will be discussed in what follows.
  
  Choosing $\epz>0$ and $c$ (see \eqref{rx0} and \eqref{ry0}) sufficiently small, for $\delta$ satisfying \eqref{ddd},  we have
\begin{align}\label{r:inc-rr'fR}
    \bigcup_{\fr\times \fr'\in \fR^\kappa_\delta} &2\fr\times  2\fr'\subset \Big\{(x,y)\in \cB\times\cB':  \frac{3}{4} \delta\le \kappa \cD(x,y) \le \frac{17}{4}\delta \Big\},
    \\
    \label{rr'1}
       \bigcup_{\fr\times \fr'\in \fR^\kappa_\delta} \fr\times \fr'&\supset \Big\{(x,y)\in ( \supp\wt\chi_{\cB} \times \supp\wt\chi_{\cB'}):  \delta\le \kappa \cD(x,y)  \le  4 \delta \Big\}
\end{align}
for $\kappa\in \{+,-\}$, and
\begin{align}\label{r:inc-rr'fRcirc}
    \bigcup_{\fr\times \fr'\in \fR^\circ_\delta} &2\fr\times 2\fr'\subset \Big\{(x,y)\in \cB\times\cB': |\cD(x,y)|\le  \frac{7\delta}{4}\Big\},
    \\
     \label{rr'2}
    \bigcup_{\fr\times \fr'\in \fR^\circ_\delta} \fr\times \fr'&\supset \Big\{(x,y)\in ( \supp\wt\chi_{\cB} \times \supp\wt\chi_{\cB'}): |\cD(x,y)|\le  \frac{3\delta}{2} \Big\}.
\end{align}
Indeed, since  $\mathfrak R^\kappa_\delta$ covers $\fS_\delta^\kappa \cap ( \supp\wt\chi_{\cB} \times \supp\wt\chi_{\cB'})$,  
\eqref{rr'1} and \eqref{rr'2} are clear. Recalling \eqref{suppBB},  \eqref{fsd}, and  \eqref{fsdc},    we use Lemma \ref{lem:locD-strong}, which implies $\delta- Cc\epz \delta \le |\cD(x,y)|\le 4\delta + Cc\epz \delta$ for $(x,y)\in \fr\times \fr'$ if  $\fr\times \fr'\in \fR^\kappa_\delta$.
Taking $\epz$ so small that $Cc \epz\le 1/4$ gives \eqref{r:inc-rr'fR}. Of course, \eqref{r:inc-rr'fRcirc} is clear by the same argument.

\medskip

We prove the following, which will be used in Section \ref{sec:conclude}.

\begin{lem}\label{lem:cardinal-TT'}
    Let $\kappa\in \{+,-,\circ\}$. Then the following holds for $\fr_\zc\in \mathcal U_\delta$ and $\fr'_\zc\in \mathcal U_\delta^{\prime}$. 
    There exists a constant $C>0$,   independent of $\delta$ and  $\fr_\zc,\fr_\zc'$, such that
     \begin{align}
     \label{sharp0}
     \#\big\{\fr\in \mathcal U_\delta:  \fr\times \fr_\zc'\in \fR^\kappa_\delta \big\}&\le C,
     \\
     \label{sharp0'}
     \#\big\{\fr'\in \mathcal U_\delta': \fr_\zc\times \fr'\in \fR^\kappa_\delta \big\}&\le C. 
     \end{align} 
\end{lem}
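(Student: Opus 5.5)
Fix $\fr'_\zc\in\mathcal U'_\delta$ and count the $\fr\in\mathcal U_\delta$ with $\fr\times\fr'_\zc\in\fR^\kappa_\delta$. The claim \eqref{sharp0'} follows by the symmetric argument, using part ii) of Lemma~\ref{lem:directionsofab} in place of part i). The strategy is to show that all admissible $\fr$ lie inside a single dilated rectangle of dimensions $\sim\scl\times\sclr$, with direction fixed up to $O(\delta^{1/2}\mu^{-1})$. There are only $O_{\epz}(1)$ directions $\nu\in\Theta$ within that angle, and for each $\nu$ only $O(1)$ tiles of $\cR_{\delta,\nu}$ meet such a rectangle. So the count is $O(1)$.

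First I fix a reference point. Suppose $\fr\times\fr'_\zc\in\fR^\kappa_\delta$. By \eqref{dir-con}, with the enlargement remark following Definition~\ref{rkd}, there is $(x,y)\in\fr\times\fr'_\zc\cap\fS^\kappa_\delta$ with $|\fa(x,y)-\theta(\fr)|+|\fb(x,y)-\theta(\fr'_\zc)|\lesssim\epz\delta^{1/2}\mu^{-1}$. Fix one such admissible $\fr_1$ together with its point $(x_1,y_1)$, and let $\fr$, $(x,y)$ be any other admissible pair. Both $y$ and $y_1$ lie in $\fr'_\zc$, whose dimensions are $\sclp\times\sclrp$ and whose direction is close to $\fb$. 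Lemma~\ref{lem:locfab}, which by its proof applies to rectangles of this size, together with Lemma~\ref{lem:locD-strong} and the $y$-linear part of Lemma~\ref{lem:locfab}, lets me replace $y$ by $y_1$. This costs $O(c\epz\delta)$ in $\cD$ and $O(\epz\delta^{1/2}\mu^{-1})$ in $\fa$. Hence $|\cD(x,y_1)|\le 5\delta$ and $|\fa(x,y_1)-\theta(\fr)|\lesssim\epz\delta^{1/2}\mu^{-1}$. I expect this transfer to be the main technical point. It needs $\fa(x,\cdot)$ to vary by at most $\epz\delta^{1/2}\mu^{-1}$ across $\fr'_\zc$ at an arbitrary $x$. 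This is exactly what the estimate for $F$ in \eqref{Fz} gives, restricted to its $y$-increment terms; it uses the smallness of $\sclp,\sclrp$ and does not require $x\in R_{y_1}(x_1)$.

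Next I localize $x$ in the tangential direction. Apply Lemma~\ref{lem:directionsofab}~i) with base point $(x_1,y_1)$. If $|\inp{\fap(x_1,y_1)}{x-x_1}|\ge C\delta/\mu'$, then $|\fa(x,y_1)-\fa(x_1,y_1)|\sim\mu^{-3/2}(\mu')^{1/2}|\inp{\fap}{x-x_1}|$. So whenever $\theta(\fr)$ and $\theta(\fr_1)$ agree up to $K\epz\delta^{1/2}\mu^{-1}$, the tangential offset is at most $\lesssim \max(\delta/\mu',\,K\epz\delta^{1/2}\mu^{1/2}(\mu')^{-1/2})\lesssim K\sclr$. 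Here I use $\delta/\mu'\lesssim\epz\delta^{1/2}\mu^{1/2}(\mu')^{-1/2}$, which holds since $\delta\lesssim\epz\mu\mu'$. Conversely, the tangential offset is bounded by $\epz\mu$, so $|\theta(\fr)-\theta(\fr_1)|\lesssim \mu^{-3/2}(\mu')^{1/2}\epz\mu+\epz\delta^{1/2}\mu^{-1}$. This is not yet an $O(\delta^{1/2}\mu^{-1})$ bound.

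To close the argument I reverse the logic: I group the admissible $\fr$ by the value of $\theta(\fr)$ and show that only $O(1)$ values occur. For the normal direction, \eqref{i:expDbyaaperp} gives $c_\fa=-(b/a)c_{\fap}^2+O(\delta(\mu\mu')^{-1/2})$. Thus $x$ lies within $O(\scl)$ of the parabola-like level curve $\{\cD(\cdot,y_1)=\cD(x_1,y_1)\}$. Along that curve the normal $\fa$ turns monotonically at rate $\sim\mu^{-3/2}(\mu')^{1/2}$ per unit tangential length. The required agreement $|\fa(x,y_1)-\theta(\fr)|\lesssim\epz\delta^{1/2}\mu^{-1}$ then pins the tangential position of $x$ to a window of length $\lesssim\sclr$ determined by $\theta(\fr)$. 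Meanwhile $\fr$ itself has tangential width $\sclr$ and contains $x$. For a fixed $\nu$, the admissible $\fr\in\cR_{\delta,\nu}$ therefore lie in a $C\scl\times C\sclr$ box around a single curve point, which gives $O(1)$ tiles. The remaining difficulty is bounding the number of distinct $\nu$. Distinct $\nu$ force distinct tangential windows, and I must show these windows cannot all meet the fixed $\fr'_\zc$-compatible set. For this I will use the fact that $\fb(x,y_1)$ must also stay within $\epz\delta^{1/2}\mu^{-1}$ of $\theta(\fr'_\zc)$, and that by the $x$-dependence of $\fb$ (via \eqref{e:quadD-xyabperp2}, of size $\sim1$ in the $\fa'$-$\fap$ pairing, divided by $|\partial_y\cD|\sim\mu$) this forces $|\inp{\fap}{x-x_1}|\lesssim\epz\delta^{1/2}$. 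Combined with the rate above, this allows only $O(1)$ directions $\nu$, which completes the count.
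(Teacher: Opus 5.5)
Your reduction to a common reference point $y_1\in\fr'_\zc$ is sound. It uses Lemmas~\ref{lem:locD-strong} and \ref{lem:locfab} on rectangles of size $\sim\sclp\times\sclrp$, exactly as the paper does with $c(\fr'_\zc)$. Your count of admissible tiles for a \emph{fixed} direction $\nu$ is also sound: tangentially via Lemma~\ref{lem:directionsofab}~i), normally via \eqref{i:expDbyaaperp}. Both coincide with the paper's proof of \eqref{1st-claim}.

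The gap is in the step you flag as the remaining difficulty, bounding the number of directions $\theta(\fr)$. You assert that $\fb(\cdot,y_1)$ turns at rate $\sim\mu^{-1}$ as $x$ moves along $\fap$, citing \eqref{e:quadD-xyabperp2}, and conclude $|\inp{\fap}{x-x_1}|\lesssim\epz\delta^{1/2}$. But \eqref{e:quadD-xyabperp2} is the pairing $\inp{\fa}{\partial_x\partial_y^\intercal\cD\,\fbp}$. It measures how $\fb$ turns when $x$ moves in the \emph{normal} direction $\fa$. For tangential motion the relevant pairing is \eqref{e:quadD-xyabperp}, of size $(\mu'/\mu)^{1/2}$. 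So along a level curve of $\cD(\cdot,y_1)$, the vector $\fb(\cdot,y_1)$ turns only at rate $\sim\mu^{-3/2}(\mu')^{1/2}$, the same rate as $\fa$.

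Your intermediate claim is in fact false. Take two points on one such level curve at tangential distance a small multiple of $\sclr$; this exceeds $\epz\delta^{1/2}$ by a factor $\sim(\mu/\mu')^{1/2}\gg1$. Both $\fa$ and $\fb$ differ between them by only a small multiple of $\epz\delta^{1/2}\mu^{-1}$. So both points can lie in admissible tiles of the same direction paired with the same $\fr'_\zc$.

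With the correct rate your route can be repaired, but it needs an estimate the paper does not state. Lemma~\ref{lem:directionsofab} treats $\fa$ as a function of $x$ and $\fb$ as a function of $y$, never $\fb$ as a function of $x$. You would need a lower bound $|\fb(x,y_1)-\fb(x_1,y_1)|\gtrsim\mu^{-3/2}(\mu')^{1/2}|\inp{\fap}{x-x_1}|-C\delta\mu^{-3/2}(\mu')^{-1/2}$. The error term comes from the normal offset $c_{\fa}=O(\delta(\mu\mu')^{-1/2})+O((\mu')^{1/2}\mu^{-3/2}c_{\fap}^{2})$; this is where \eqref{e:quadD-xyabperp2} genuinely enters. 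Since $\delta\lesssim\epz\mu\mu'$, this yields $|\inp{\fap}{x-x_1}|\lesssim\epz^{1/2}\delta^{1/2}(\mu/\mu')^{1/2}$. Then \eqref{npp} together with \eqref{i:expDbyaaperp} gives $|\theta(\fr)-\theta(\fr_1)|\lesssim\epz^{1/2}\delta^{1/2}\mu^{-1}$, i.e.\ $O_{\epz}(1)$ directions.

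The equality of the two turning rates is just the near-perpendicularity of Lemma~\ref{lem:alp}, and the paper uses it directly. Since $|\fa(z)+\fbp(z)|\lesssim\delta\mu^{-3/2}(\mu')^{-1/2}$ and $\fb\mapsto\fbp$ is an isometry on the relevant arc, one has $|\fa(x,y)-\fa(\bar x,\bar y)|\le|\fb(x,y)-\fb(\bar x,\bar y)|+C\delta\mu^{-3/2}(\mu')^{-1/2}\lesssim\epz^{1/2}\delta^{1/2}\mu^{-1}$. The last step holds because both values of $\fb$ lie within $O(\epz\delta^{1/2}\mu^{-1})$ of $\theta(\fr'_\zc)$. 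This gives \eqref{nub} in one line with no tangential localization. The same shortcut, with the roles of $\fa$ and $\fb$ exchanged, handles the direction count in your symmetric argument for \eqref{sharp0'}.
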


\begin{proof}
We first  claim that  
\begin{align}
\label{2nd-claim} \# \{ \nu \in \Theta:  \exists  \fr\times \fr_\zc'\in \fR^\kappa_\delta \ \text{  s.t. } \ \theta(\fr)=\nu \} \le C,
\\
\label{2nd-claim'} \# \{ \nu' \in \Theta':  \exists  \fr_\zc \times \fr'\in \fR^\kappa_\delta \ \text{  s.t. } \ \theta(\fr')=\nu' \} \le C
\end{align}
for some constant $C>0$. 
We only prove \eqref{2nd-claim}. A  symmetric argument shows   \eqref{2nd-claim'}.

 To show \eqref{2nd-claim}, let  $\nu, \bar \nu\in \Theta$ and assume that there exist 
\[\fr\times \fr_\zc', \quad \bar\fr\times \fr_\zc' \in \fR^\kappa_\delta   \ \text{  s.t. } \  \theta(\fr)=\nu,  \quad \theta(\bar\fr)=\bar\nu. \] 
Since   $\Theta$ is $\epz\delta^{1/2}\mu^{-1}$-separated,  it is sufficient for \eqref{2nd-claim} to show  that 
\Be 
\label{nub}
|\nu - \bar\nu|\le C \epz^{\frac{1}{2}} \delta^{\frac{1}{2}}\mu^{-1}
\Ee
for some constant $C>0$, independent of the choice of $\nu, \bar\nu$.
Thus, by \eqref{dir-con} and Lemma \ref{lem:locfab}, there exist two points
$(x, y)\in \fr\times \fr_\zc'$ and
$(\bar x , \bar y )\in \bar\fr\times \fr_\zc'$ such that
\begin{align}
\nonumber |\nu  - \fa(x, y)| + |\bar\nu  - \fa(\bar x , \bar y )|\le C\epz \delta^{\frac{1}{2}}\mu^{-1},  
 \\
 \label{2nd-}
  |\nu'_\zc - \fb(x, y)| + |\nu'_\zc - \fb(\bar x , \bar y )|\le  C\epz \delta^{\frac{1}{2}}\mu^{-1}
 \end{align}
 for some positive constant $C$ where $\nu'_\zc= \theta(\fr'_\zc)$.  Thus, by the first inequality and the triangle inequality  it follows that
\begin{align}
  \label{e:nu1-nu2}  |\nu  - \bar\nu | \le C\epz \delta^{\frac{1}{2}}\mu^{-1} + |\fa(x, y) - \fa(\bar x , \bar y )|.
\end{align}
We now recall 
 \eqref{e:anglefafb0}, which, with the orientation convention for the perpendicular vectors, tells that
$|\fa(z)+\fa'_\sprp(z)|\lesssim \delta \mu^{-3/2}(\mu')^{-1/2}$ for $z\in \cB\times \cB'$. Since the operation $\fa'(z)\mapsto\fa'_\sprp(z)$ is an isometry on the range under consideration, using this estimate twice gives
\begin{align*}
    |\fa(x, y) - \fa(\bar x , \bar y )|\le |\fb(x, y) - \fb(\bar x , \bar y )| + C\delta\mu^{-\frac{3}{2}}(\mu')^{-\frac{1}{2}}.
\end{align*}
From \eqref{2nd-}, note  that  $ |\fb(x, y) - \fb(\bar x , \bar y )|\le C\epz \delta^{1/2}\mu^{-1}$.  We get 
\[
    |\fa(x, y) - \fa(\bar x , \bar y )|\le  C\epz^{\frac{1}{2}} \delta^{\frac{1}{2}}\mu^{-1}, 
\]
since  $\delta \lesssim \epz \mu\mu'$. Combining  this and \eqref{e:nu1-nu2} gives \eqref{nub}.

We now show  that,  for $\nu\in \Theta$, 
\Be\label{1st-claim} \#\{\fr\times \fr_\zc'\in \fR^\kappa_\delta: \theta (\fr)=\nu \}\le C\Ee 
 with a uniform constant $C$.   Combining this and  \eqref{2nd-claim} yields \eqref{sharp0}.
 
  Let  $\fr\times \fr_\zc',\ \bar\fr\times \fr_\zc' \in \fR^\kappa_\delta$ such that $\theta(\fr)=\theta(\bar\fr)=\nu$.  Let 
 \Be 
 \label{(xy)} (x, y)\in \fr\times \fr_\zc', \quad  (\bar x ,\bar y )\in \bar\fr\times \fr_\zc'.
  \Ee
 By  the definition of $\fR^\kappa_\delta$  and Lemma \ref{lem:locfab}  we have 
  \Be
  \label{nxy} |\fa(x, y) - \nu| + |\fa(\bar x ,\bar y )-\nu|\le C\epz \delta^{\frac{1}{2}}\mu^{-1}\Ee
  for some $C>0$.   Since $\fr, \bar\fr\in \cR_{\delta, \nu}$,   \eqref{1st-claim} follows by \eqref{scale}  if we show 
 \Be
 \label{inpn}   |\inp{\nu}{x-\bar x }|\le C \delta(\mu\mu')^{-\frac{1}{2}} , \quad     |\inp{\nu_\sprp}{x-\bar x }|\le C \delta^{\frac{1}{2}}\biggl(\frac{\mu}{\mu'}\biggr)^{\frac{1}{2}}. 
 \Ee
  
    By Lemma \ref{lem:locfab} and  \eqref{nxy}, we have $|\fa(x, c(\fr_\zc')) - \fa(\bar x , c(\fr_\zc'))|\lesssim \epz\delta^{1/2}\mu^{-1}$. Here, $c(\fr_\zc')$ denotes the centers of $\fr_\zc'$.      Combining this with  \eqref{nx0}  gives \begin{align}\label{e:b-r1r2ap}
    |\inp{\fa_{\!\perp}(x,c(\fr_\zc'))}{x-\bar x }|\le C
  \epz \delta^{\frac{1}{2}}\biggl(\frac{\mu}{\mu'}\biggr)^{\frac{1}{2}}. 
\end{align} 
Indeed, if  $|\inp{\fa_{\!\perp}(x,c(\fr_\zc'))}{x-\bar x }|\ge C\delta(\mu')^{-1}$, \eqref{e:b-r1r2ap} follows from  \eqref{nx0}. Otherwise, 
note that $\delta(\mu')^{-1} \lesssim \delta^{1/2}(\mu/\mu')^{1/2}$.

On the other hand,  $|\cD(x,y) - \cD(x , c(\fr_\zc') )|,  $  $|\cD(\bar x,\bar y) - \cD(\bar x , c(\fr_\zc') )|\lesssim \epz \delta$ by Lemma \ref{lem:locD-strong}. Thus, using the  identity  \eqref{i:expDbyaaperp} with $x_\zc=\bar x$ and $y_\zc=c(\fr_\zc')$ (also, see  \eqref{nnn} and \eqref{d:xfayfb}), we obtain 
\begin{align*}
    |\cD(x,y) - \cD(\bar x , \bar y )| &\ge C_1(\mu\mu')^{\frac{1}{2}}|\inp{\fa(x, c(\fr_\zc'))}{x - \bar x }| \\
    & \qquad - C_2\mu'\mu^{-1}|\inp{\fa_{\!\perp}(x, c(\fr_\zc'))}{x - \bar x }|^2 - C_3\epz \delta 
\end{align*}
 for some constants $C_1, C_2, C_3$. Since $\fr\times \fr_\zc',\ \bar\fr\times \fr_\zc' \in \fR^\kappa_\delta$, from \eqref{r:inc-rr'fR}, \eqref{r:inc-rr'fRcirc}, and \eqref{(xy)} we have $|\cD(x,y) - \cD(\bar x , \bar y )|\lesssim \delta$. Thus, 
combining the above inequality with  \eqref{e:b-r1r2ap}, we get
\begin{align}\label{e:b-r1r2a}
    |\inp{\fa(x,c(\fr_\zc'))}{x-\bar x }|\le C\delta(\mu\mu')^{-\frac{1}{2}}
\end{align}
for a constant $C =C(d,\epz)>0$. 

Note that the rectangle determined by \eqref{e:b-r1r2ap} and \eqref{e:b-r1r2a} has ({\it length of shorter side/length of longer side})$\sim \delta^{1/2}\mu^{-1}$. Since $|\fa(x,c(\fr_\zc')) - \nu|,  |\fa_\sprp (x,c(\fr_\zc')) - \nu_\sprp|  \lesssim \epz \delta^{1/2}\mu^{-1}$ by Lemma \ref{lem:locfab},  we may  replace  $\fa_\sprp (x,c(\fr_\zc'))$ in \eqref{e:b-r1r2ap} and  $\fa(x,c(\fr_\zc')) $ in \eqref{e:b-r1r2a} with $\nu_\sprp$ and $\nu$, respectively, by enlarging the constant $C$. Therefore, \eqref{inpn}  follows. 

We now show that,  for $\nu'\in \Theta'$, 
\Be\label{1st-claim'} \#\{\fr_\zc\times \fr'\in \fR^\kappa_\delta: \theta (\fr')=\nu' \}\le C\Ee 
 with a uniform constant $C$.  This, combined with \eqref{2nd-claim'}, yields \eqref{sharp0'}.  The inequality \eqref{1st-claim'}  can be shown similarly as before, so we shall be brief. 
 
 As before, 
  let 
 \Be 
 \label{(xy)0} (x, y)\in \fr_\zc\times \fr', \quad  (\bar x ,\bar y )\in \fr_\zc\times \bar \fr', 
  \Ee
  where $\fr_\zc\times \fr',  \fr_\zc\times \bar \fr'  \in \fR^\kappa_\delta$ such that $\theta(\fr')=\theta(\bar\fr')=\nu'$. 
  For $\eqref{1st-claim'}$,  recalling \eqref{scale}, we need only to show that 
 \Be
 \label{inpn0}   |\inp{\nu'}{y-\bar y }|\le C \delta\mu^{-1} , \quad     |\inp{\nu_\sprp'}{y-\bar y }|\le C \delta^{\frac{1}{2}}. 
 \Ee
 
 By  Lemma \ref{lem:locfab}  we have  
$ |\fb(x, y) - \nu'| + |\fb(\bar x ,\bar y )-\nu'|\lesssim \epz \delta^{1/2}\mu^{-1}$. Using Lemma \ref{lem:locfab} again, we also have  $|\fa'(c(\fr_\zc),y) - \fa'(c(\fr_\zc),\bar y)|\lesssim \epz\delta^{1/2}\mu^{-1}$.  Combining this with  \eqref{nx1}   gives
 \begin{align}\label{e:b-r1r2ap0}
    |\inp{\fa_{\!\perp}'(c(\fr_\zc),y)}{y-\bar y }|\le C
  \epz \delta^{\frac{1}{2}}. 
\end{align} 
By Lemma \ref{lem:locD-strong}, \eqref{r:inc-rr'fR},
\eqref{r:inc-rr'fRcirc}, and \eqref{(xy)0}, we have
$
|\cD(c(\fr_\zc),y)-\cD(c(\fr_\zc),\bar y)|\lesssim\delta.
$
Hence, applying \eqref{i:expDbyaaperp1} with
$x_\zc=c(\fr_\zc)$ and $y_\zc=\bar y$, we obtain
\[
\delta\gtrsim
\mu|\inp{\fa'(c(\fr_\zc),y)}{y-\bar y}|
-C|\inp{\fa_{\!\perp}'(c(\fr_\zc),y)}{y-\bar y}|^2.
\]
Combining this with \eqref{e:b-r1r2ap0}, we get
\begin{align}\label{e:b-r1r2a0}
|\inp{\fa'(c(\fr_\zc),y)}{y-\bar y}|
\lesssim\delta\mu^{-1}.
\end{align}
 Now, note that the rectangle given by \eqref{e:b-r1r2ap0} and \eqref{e:b-r1r2a0} has ({\it length of shorter side/length of longer side})$\sim \delta^{1/2}\mu^{-1}$. Since $|\fa'(c(\fr_\zc),y) - \nu'|,  |\fa'_{\!\perp}(c(\fr_\zc),y) - \nu_\sprp'|  \lesssim \epz \delta^{1/2}\mu^{-1}$ by Lemma \ref{lem:locfab},  we may  replace $\fa'_{\!\perp}(c(\fr_\zc),y)$ in \eqref{e:b-r1r2ap0} and $\fa'(c(\fr_\zc),y)$ in  \eqref{e:b-r1r2a0} with $\nu_\sprp'$ and $\nu'$, respectively, by enlarging the constant $C$. Therefore, \eqref{inpn0}  follows. 
\end{proof}

\subsection{Recursive decomposition in space and time}
\label{subsec:decomp-prod}
The aim of this subsection is to obtain a dyadic decomposition of $\wt\chi_{\cB} \mathcal O_\lambda[\psi_\vartriangle]  \wt\chi_{\cB'}$ of the form
\begin{align}\label{i:decomp-main}
    \wt\chi_{\cB} \mathcal O_\lambda[\psi_\vartriangle]  \wt\chi_{\cB'}  = \sum_{\delta\in 2^{-\N}: \dels\le \delta < \delt} \big(\fP_{\lambda,\delta}^+ + \fP_{\lambda,\delta}^-+ \fP_{\lambda,\delta}^\circ \big) +  \fP_{\lambda, \dels}  \end{align}
where $\fP_{\lambda,\delta}^+$, $\fP_{\lambda,\delta}^-$, $\fP_{\lambda,\delta}^\circ$, and $\fP_{\lambda,\dels} $ are operators to  be defined  below. 

We define the operators $\fP_{\lambda,\delta}^\kappa$ in \eqref{i:decomp-main} inductively along dyadic scales in decreasing order from the operator   given in the previous step. For the purpose,  set
\Be
\label{olambda}
 \mathcal O_\lambda[ \varphi, \psi](x,y)=( \varphi \mathcal O_\lambda[ \psi]) (x,y):=\varphi(x,y)\bigg(\int \psi(x,y,t) \frac{e^{i\lambda \cP(x,y,t)}}{\sqrt\mu}dt\bigg).
\Ee

We call an operator 
\Be
\label{def:fpl}    \fP_{\lambda,\delta} =   \mathcal O_\lambda[ \varphi_\delta, \psi_\delta] ,
\Ee
admissible at scale $\delta$ if the smooth functions $\varphi_\delta$ and $\psi_{\delta}$ satisfy 
\begin{align}
\label{r:suppcon-vphi}
    \supp(\varphi_\delta )\subset \{(x,y)\in \cB\times\cB': |\cD(x,y)|\le 2\delta\}, 
    \\[2pt]
    \label{r:suppcon-psi}
    \psi_{\delta}(x,y,t) = 0 \  \  \text{if} \ \ |t-S (x,y)|>  2^{3}C_\ast\delta^{\frac{1}{2}}\mu^{-\frac{1}{2}}.
\end{align}

Let $\dels\le\delta<\delt$, and assume that an admissible operator
$\fP_{\lambda,2\delta}$ at scale $2\delta$ is given. We define the next generation operators
$\fP_{\lambda,\delta}$ and $\fP_{\lambda,\delta}^\kappa$,
$\kappa=+,-,\circ$, out of  $\fP_{\lambda,2\delta}$, as follows. 
At each step, the two signed spatial pieces and the central piece away from \(t=S(x,y)\) are retained at scale \(\delta\), while only the central piece near \(t=S(x,y)\) forms the admissible remainder passed to the next dyadic scale.

 For $\fr\in \cR_{\delta, \nu}$, let $\varphi_\fr$ be a smooth nonnegative function adapted to $2\fr$ such that
 $\varphi_\fr\ge c_0$ on $\fr$ for a  constant $c_0>0$, and
$\sum_{\fr\in \cR_{\delta, \nu}}\varphi_\fr \equiv 1$ on $\R^2$, and 
\begin{align}\label{e:bvarphifr}
    |\inp{\nu}{\partial_x}^a\inp{\nu_\sprp}{\partial_x}^b \varphi_\fr(x)|\lesssim \scl^{-a}\sclr^{-b}
\end{align}
for  $x\in\R^2$ and $a,b\in \N_0$. Similarly, for $\fr'\in \cR_{\delta, \nu'}'$, let $\varphi_{\fr'}$ be a nonnegative smooth function adapted to $2\fr'$
 such that
$\varphi_{\fr'}\ge c_0$ on $\fr'$, $\sum_{\fr'\in \cR_{\delta, \nu'}'}\varphi_{\fr'} \equiv 1$ and
\begin{align}\label{e:bvarphifr'}
    |\inp{\nu'}{\partial_y}^a\inp{\nu'_\sprp}{\partial_y}^b \varphi_{\fr'}(y)|\lesssim (\sclp)^{-a}(\sclrp)^{-b}
\end{align}
for $y\in\R^2$ and $a,b\in \N_0$. 
We now define, for $\fr\times \fr'\in \fR_{\delta}^\kappa$ 
\begin{align}\label{pkxy}
\varphi_{\delta,\fr,\fr'}^\kappa (x,y)
   =\frac{\varphi_{2\delta}(x,y)\varphi_\fr(x)\varphi_{\fr'}(y)}
   {\displaystyle\sum_{\kappa_0=\pm,\circ}
    \sum_{\bar\fr\times\bar\fr'\in\fR_\delta^{\kappa_0}}
    \varphi_{\bar\fr}(x)\varphi_{\bar\fr'}(y)}.
\end{align}
Recall that the sets $2\fr\times 2\fr'$, with $\fr\times\fr'\in \fR_{\delta}^\kappa$, are boundedly overlapping. Moreover, thanks to \eqref{r:suppcon-vphi}, \eqref{rr'1}, and \eqref{rr'2}, the denominator does not vanish on $\supp\varphi_{2\delta}$. Thus, the cutoff functions $\varphi_{\delta,\fr,\fr'}^\kappa$ are well defined.

Let us set
\begin{align}
\label{i:decomp-varp*del1}
     \varphi_{\delta}= \sum_{\fr\times \fr'\in \fR_{\delta}^\circ} \varphi_{\delta,\fr,\fr'}^\circ, \qquad
     \varphi_{\delta}^\pm= \sum_{\fr\times \fr'\in \fR_{\delta}^\pm} \varphi_{\delta,\fr,\fr'}^\pm.
\end{align}
Hence, we have
\Be
\label{i:decomp-varp*del10}
\varphi_{2\delta}(x,y)=\varphi_{\delta}(x,y)+\varphi_{\delta}^+(x,y)+\varphi_{\delta}^-(x,y).
\Ee
By \eqref{r:inc-rr'fRcirc}, it is clear that $\varphi_{\delta}$ satisfies \eqref{r:suppcon-vphi}.
By \eqref{r:inc-rr'fR}, we also have
\begin{align}
\label{r:supp-vphipm}
    \supp(\varphi_{\delta}^\pm)\subset \Big\{(x,y)\in \cB\times\cB': \frac{3}{4}\delta\le \pm \cD(x,y)\le \frac{17}{4}\delta\Big\}.
\end{align}
We define the operators $\fP_{\lambda,\delta}^+$ and $\fP_{\lambda,\delta}^-$ by
\begin{align}
\label{Oldpm}
    \fP_{\lambda,\delta}^\pm(x,y)=\mathcal O_\lambda[\varphi_{\delta}^\pm,\psi_{2\delta}](x,y).
\end{align}
Next, we make decomposition in the $t$-variable. Define $\psi_{\delta},\psi_{\delta}^\circ:\cB\times\cB'\times\R\to\R$ by
\begin{align}
\label{psi0}
    \psi_{\delta}(x,y,t)&=\psi_{2\delta}(x,y,t)\eta\Big(\frac{t-S(x,y)}{2^2C_\ast\delta^{\frac{1}{2}}\mu^{-\frac{1}{2}}}\Big),
    \\
\label{psi1}
    \psi_{\delta}^\circ(x,y,t)&=\psi_{2\delta}(x,y,t)\Big(1-\eta\Big(\frac{t-S(x,y)}{2^2C_\ast\delta^{\frac{1}{2}}\mu^{-\frac{1}{2}}}\Big)\Big).
\end{align}
Thus, $\psi_{\delta}$ satisfies \eqref{r:suppcon-psi}. Since
$\varphi_{\delta}$ satisfies \eqref{r:suppcon-vphi}, the bound
\eqref{e:simpleobv} holds whenever
\[
\varphi_{\delta}(x,y)\psi_{\delta}^\circ(x,y,t)\neq0.
\]
Now, define the operators $\fP_{\lambda,\delta}^\circ$ and $\fP_{\lambda,\delta}$ by
\begin{align}
\label{Oldc}
   \fP_{\lambda,\delta}^\circ&=\mathcal O_\lambda[\varphi_{\delta},\psi_{\delta}^\circ],
   \\
\label{Old}
   \fP_{\lambda,\delta}&=\mathcal O_\lambda[\varphi_{\delta},\psi_{\delta}].
\end{align}
Putting all together with \eqref{i:decomp-varp*del10} gives
\begin{align}
\label{iterat}
    \fP_{\lambda,2\delta}=\sum_{\kappa\in\{+,-,\circ\}}\fP_{\lambda,\delta}^\kappa+\fP_{\lambda,\delta}.
\end{align}

Recalling \eqref{ola}, let us set
\Be
\label{iter}
\varphi_{\delt}(x,y) = \wt\chi_\cB(x)\wt\chi_{\cB'}(y),\qquad \psi_{\delt}(x,y,t) = \psi_\vartriangle(x,y,t). 
\Ee
Then, we have 
\[ \wt\chi_{\cB} \mathcal O_\lambda[\psi_\vartriangle]  \wt\chi_{\cB'} = \mathcal O_\lambda[\varphi_{\delt}, \psi_\delt].\] 
 Note also from \eqref{r:cond-cDh}, \eqref{d:top}, and \eqref{psi-tri} that these functions satisfy \eqref{r:suppcon-vphi} and \eqref{r:suppcon-psi} with $\delta=\delt$.
 By \eqref{mm}, \eqref{d:top}, and the choice of \(\dels\) in \eqref{d:top&bottom}, we have \(2\dels\le\delt\); hence the first step \(\delta=\delt/2\) is available and the recursion produces the terminal cutoffs \(\varphi_{\dels}\) and \(\psi_{\dels}\).
 Starting with $2\delta=\delt$ and applying the identity \eqref{iterat} iteratively down to $\delta=\dels$, we obtain  the desired decomposition \eqref{i:decomp-main}. 
 
 In the following sections, we prove  Proposition \ref{main-p} by obtaining estimates for  the operators appearing in \eqref{i:decomp-main}.  

\subsection{Decomposition into tangential rectangles } We make additional decompositions of the operators $\fP_{\lambda,\delta}^\kappa$, $\fP_{\lambda,\dels}$ so that the decomposed pieces are localized in some rectangles $\fr\times \fr'$.

Recall  \eqref{i:decomp-varp*del1} and \eqref{psi1}.  
For $\dels\le \delta < \delt$,  let
\begin{align}
\label{Oldpmt}   \fP_{\delta, \fr,\fr'}^\pm &= \mathcal O_\lambda[ \varphi_{\delta, \fr,\fr'}^\pm,  \psi_{2\delta}],  \quad \fr\times \fr'\in \fR^\pm_{\delta}
\\
 \label{Oldct}  \fP_{\delta, \fr,\fr'}^\circ &= \mathcal O_\lambda[ \varphi_{\delta, \fr,\fr'}^\circ,  \psi_{\delta}^\circ], \quad \fr\times \fr'\in \fR^\circ_{\delta}. 
 \end{align} 
Moreover, for $\fr\times \fr'\in \fR_{\dels}^\circ$ define
\begin{align}
\label{Olds}     \fP_{\dels, \fr,\fr'} = \mathcal O_\lambda[ \varphi_{\dels, \fr,\fr'}^\circ,  \psi_{\dels}].
\end{align}
Consequently,  from \eqref{i:decomp-varp*del1} it is clear that 
\begin{align}
\label{pdk}
\fP_{\lambda,\delta}^\kappa &=\sum_{\fr\times \fr'\in \fR_{\delta}^\kappa}\fP_{\delta,\fr,\fr'}^\kappa, \quad \kappa= \pm, \circ,
\\
\label{pds}
  \fP_{\lambda, \dels} &=\sum_{\fr\times \fr'\in \fR_{\dels}^\circ}\fP_{\dels,\fr,\fr'} . 
  \end{align}

Before  closing  this section, we prove bounds on derivatives of cutoff functions  $\psi_\delta$, $\psi^\circ_\delta$, and $\varphi_{\delta, \fr, \fr'} ^\kappa$. 
Those bounds will be used to estimate kernels appearing in $TT^*$ and $T^*T$ forms.  

For $\dels\le \delta \le \delt$ and $\nu, \nu'\in \mathbb S^1$, let $\mathcal L_{\delta,\nu}$ and $\mathcal L_{\delta,\nu'}$ denote differential operators given by
\begin{align}\label{d:diff-L}
    \mathcal L_{\delta,\nu} f =  \scl  \inp{\nu}{\partial_x f},\quad \mathcal L_{\delta,\nu'} g =\sclp\inp{\nu'}{\partial_y g}, 
\end{align}
where $\sigma$ and $\sigma'$  are given by   \eqref{scale}.

\begin{lem}\label{lem:bd-derivpsi}   Let $\dels\le\delta<\delt$, and let $\nu, \nu'\in \mathbb S^1$ satisfy $|\inp{\nu}{e_1}|,\,|\inp{\nu'}{e_2}|\le c(\mu'/\mu)^{1/2}$ for a constant $c>0$ $(cf.\, \eqref{e:direction}\,  and\, \eqref{e:direction'})$. Then, for every $ M, N\in\N_0$, there exists a constant $C_{M,N}>0$,  independent of $\delta$, such that
     \begin{equation} 
     \label{lmln}
        |\mathcal L^M\partial_t^N \psi_\delta(x,y,t)| \le C_{M,N} (\delta^{\frac{1}{2}}\mu^{-1})^M(\delta^{-\frac{1}{2}}\mu^{\frac{1}{2}})^N
    \end{equation}
       for  $(x,y, t)\in \cB\times\cB'\times \R$ where $\cL$ denotes either $\cL_{\delta,\nu}$ or $\cL_{\delta,\nu'}$. The same bounds 
       also hold for $\psi^\circ_\delta$. 
The corresponding top-scale estimate for \(\psi_{\delt}\) is recorded in \eqref{e:bd-psi-top} below.
\end{lem}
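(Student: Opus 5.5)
The proof will go by downward induction on the dyadic scale, starting at $\delt$ and using the recursive definition \eqref{psi0}--\eqref{psi1}. Write
\[
\psi_\delta(x,y,t)=\psi_\mu(t)\prod_{\delta\le\delta_1\le\delt}\eta\Big(\frac{t-S(x,y)}{2^2C_\ast\delta_1^{1/2}\mu^{-1/2}}\Big).
\]
The top factor is $\delta_1=\delt$, coming from \eqref{psi-tri}. Because $\eta\equiv1$ on $[-1,1]$, at any point of the support of the product the factor $\eta(\cdot/\delta_1^{1/2})$ is locally constant unless $|t-S|\sim\delta_1^{1/2}\mu^{-1/2}$. On the support, however, $|t-S|\lesssim\delta^{1/2}\mu^{-1/2}$, so only the boundedly many factors with $\delta_1\sim\delta$ can be nonconstant. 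Every other factor equals $1$ on a neighborhood of the support, so its derivatives vanish there. This reduces the bound to a single factor of the form
\[
\eta\big((t-S(x,y))/(c\,\delta_1^{1/2}\mu^{-1/2})\big),\qquad \delta_1\sim\delta,
\]
multiplied by $\psi_\mu(t)$. For $\psi^\circ_\delta$ the same argument applies with one factor replaced by $1-\eta$, whose derivatives are those of $\eta$ up to sign.

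The $t$-derivatives are handled directly. Each $\partial_t$ falling on the $\eta$-factor gives $\delta^{-1/2}\mu^{1/2}$. Each $\partial_t$ falling on $\psi_\mu$ gives $\mu^{-1/2}$, which is no larger since $\delta\le\mu$.

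For the spatial derivatives, I will apply the Faà di Bruno formula to $\mathcal L^M$ acting on $\eta^{(k)}(g)$, where
\[
g=(t-S)\,\delta^{-1/2}\mu^{1/2}/c .
\]
Here $\partial_t^N$ commutes with $\mathcal L$, and $\partial_t^N$ acting on the composite only produces $\eta^{(k)}$ times powers of $\delta^{-1/2}\mu^{1/2}$. It therefore suffices to prove
\[
|\mathcal L^j g|\lesssim (\delta^{1/2}\mu^{-1})^j\qquad\text{for } j\ge1 .
\]
Indeed, a product of such factors with total order $M$ is bounded by $(\delta^{1/2}\mu^{-1})^M$. Since $\mathcal L^j g=-\delta^{-1/2}\mu^{1/2}\mathcal L^jS/c$, this amounts to
\[
|\mathcal L^j S|\lesssim \delta^{j/2}\mu^{-j-1/2}.
\]

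This is where Lemma~\ref{lem:bdsS*} enters. For $\mathcal L=\mathcal L_{\delta,\nu}=\scl\inp{\nu}{\partial_x}$, \eqref{nu} gives
\[
|\mathcal L^jS|\lesssim \scl^j\mu^{1/2}(\mu^{-3/2}\smp)^j=\epz^j\delta^j\mu^{1/2}\mu^{-2j}.
\]
Using $\delta\lesssim\mu^2$, we have $\delta^j\mu^{-2j}\le\delta^{j/2}\mu^{-j}$, so the right-hand side is $\lesssim\delta^{j/2}\mu^{-j}\mu^{1/2}$. This is stronger than needed by a factor $\mu$.

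For $\mathcal L=\mathcal L_{\delta,\nu'}=\sclp\inp{\nu'}{\partial_y}$, \eqref{nup} gives
\[
|\mathcal L^jS|\lesssim(\epz\delta\mu^{-1})^j\mu^{1/2-j}=\epz^j\delta^j\mu^{1/2-2j}.
\]
This is again $\lesssim\delta^{j/2}\mu^{-j-1/2}$, because $\delta^{j/2}\lesssim\mu^{j-1}$ once $\delta\le\mu^2$ and $j\ge1$; for $j=1$ the check is $\delta\mu^{-3/2}\le\delta^{1/2}\mu^{-3/2}$.

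Finally, $\psi_\mu$ does not depend on $(x,y)$, so $\mathcal L$ only falls on the $\eta$-factors. This gives \eqref{lmln}.

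The main obstacle is bookkeeping: the factors with $\delta_1\gg\delta$ must not contribute large derivatives. The locally-constant observation handles this. If one preferred not to rely on it, these factors still obey the bounds with $\delta_1$ in place of $\delta$, and those bounds are better because the exponents favor the larger $\delta_1$. Even so, summing over many scales would require care, which is why I use the support argument.
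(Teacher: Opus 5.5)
Your argument is correct in substance, but it takes a different route from the paper's proof. The paper never expands $\psi_\delta$ as a product. It proves the single-scale bound \eqref{e:bd-chid} for one cutoff $\eta(q_\rho)$, $q_\rho=(t-S)/(2^2C_\ast\rho^{1/2}\mu^{-1/2})$. It then runs an induction on $M+N$, combined with a descending induction over dyadic scales in the recursion $\psi_\delta=\psi_{2\delta}\,\eta(q_\delta)$. Because $\cL_\delta=2^{-1}\cL_{2\delta}$, a bound inherited from scale $2\delta$ gains a factor $2^{-(M+N)/2}$ relative to $(\delta^{1/2}\mu^{-1})^M(\delta^{-1/2}\mu^{1/2})^N$. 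That gain absorbs the lower-order terms and keeps $C_{M,N}$ independent of the $O(\log\lambda)$ number of scales.

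Your argument is instead the direct computation mentioned in Remark~\ref{diff-proof}. Since $\eta\equiv1$ on $[-1,1]$ and $\supp\eta\Subset(-2,2)$, one has $|q_{\delta_1}|<1$ for every $\delta_1\ge4\delta$ on a neighborhood of $\supp\eta(q_\delta)$. So only $\psi_\mu$ and the factors with $\delta_1\in\{\delta,2\delta\}$ survive; for $\psi^\circ_\delta$ the survivors are $1-\eta(q_\delta)$, $\eta(q_{2\delta})$, $\eta(q_{4\delta})$. The number of scales then never enters. Your route is shorter and explicit, but it relies on the nested structure of the $\eta$-cutoffs. The paper's scheme needs only single-scale bounds plus the scaling contraction, and the same template is reused for $\varphi_\delta=\varphi_{2\delta}W^\circ_\delta$ in Lemma~\ref{lem:bd-derivwtphi}.

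There is, however, an exponent error in your reduction that you need to fix. From $\cL^jg=-\delta^{-1/2}\mu^{1/2}\cL^jS/c$, the requirement $|\cL^jg|\lesssim(\delta^{1/2}\mu^{-1})^j$ is equivalent to $|\cL^jS|\lesssim\delta^{(j+1)/2}\mu^{-j-1/2}$, not $\delta^{j/2}\mu^{-j-1/2}$. Both of your final checks are made against the too-weak target.

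In particular, the simplified bound $\delta^{j/2}\mu^{1/2-j}$ you keep in the $\nu$-case misses the true requirement by a factor $\mu\delta^{-1/2}\gg1$. So the remark that it is stronger than needed by a factor $\mu$ is false; at $j=1$ the estimate is actually sharp up to $\epz$.

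The repair uses what you already wrote. In both cases $|\cL^jS|\lesssim\epz^j\delta^j\mu^{1/2-2j}$, and $\delta^j\mu^{1/2-2j}\le\delta^{(j+1)/2}\mu^{-j-1/2}$ is equivalent to $(\delta^{1/2}\mu^{-1})^{j-1}\le1$. This holds because $\delta\le\delt\lesssim\mu\mu'\le\mu^2$. It is exactly the paper's estimate $|\cL_\rho^jq_\rho|\lesssim\rho^{j-1/2}\mu^{1-2j}\lesssim(\rho^{1/2}\mu^{-1})^j$. Likewise, for $t$-derivatives falling on $\psi_\mu$, the comparison $\mu^{-1/2}\lesssim\delta^{-1/2}\mu^{1/2}$ needs $\delta\lesssim\mu^2$, not merely $\delta\le\mu$.
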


\begin{proof}
For $\dels\le\rho\le\delt$, set 
\[
    a_\rho=\rho^{\frac{1}{2}}\mu^{-1},\qquad
    b_\rho=\rho^{-\frac{1}{2}}\mu^{\frac{1}{2}},
\]
and let $\cL_\rho$ denote either $\cL_{\rho,\nu}$ or
$\cL_{\rho,\nu'}$.  We also set
\[
    q_\rho(x,y,t)
    =\frac{t-S(x,y)}{2^2C_\ast\rho^{\frac{1}{2}}\mu^{-\frac{1}{2}}},
    \qquad
    \chi_\rho(x,y,t)=\eta(q_\rho(x,y,t)).
\]
Thus, \eqref{psi0} and \eqref{psi1} become
\begin{equation}\label{i:recursion-psid}
    \psi_\delta=\psi_{2\delta}\chi_\delta,
    \qquad
    \psi_\delta^\circ=\psi_{2\delta}(1-\chi_\delta).
\end{equation}

We first record derivative bounds for the cutoff functions.  By
Lemma \ref{lem:bdsS*} and \eqref{scale}, for every $j\ge1$,
$
    |\cL_\rho^jS(x,y)|
    \lesssim_j \rho^j\mu^{1/2-2j}.
$
Since $\rho\le\delt\lesssim\mu\mu'\lesssim\mu^2$, the chain rule
therefore gives
\begin{equation}\label{e:bd-chid}
    |\cL_\rho^M\partial_t^N\chi_\rho|
    +|\cL_\rho^M\partial_t^N(1-\chi_\rho)|
    \lesssim_{M,N}a_\rho^M b_\rho^N
\end{equation}
for all $M,N\in\N_0$.  Indeed,
$|\partial_tq_\rho|\lesssim b_\rho$, while $|\cL_\rho^jq_\rho|  \lesssim_j
    \rho^{j-1/2}\mu^{1-2j}     \lesssim_j a_\rho^j$
    for $j\ge1$.

We next verify the estimate at the initial scale.  By \eqref{psi-tri}
and \eqref{iter},
\[
    \psi_{\delt}=\psi_\mu\chi_{\delt}.
\]
Moreover, \eqref{d:top} and \eqref{mm} imply
$\delt^{1/2}\lesssim\mu$, and hence
$
    |\partial_t^N\psi_\mu(t)|
    \lesssim_N\mu^{-N/2}
    \lesssim_N b_{\delt}^N.
$
Combining this with \eqref{e:bd-chid} and the product rule gives
\begin{equation}\label{e:bd-psi-top}
    |\cL_{\delt}^M\partial_t^N\psi_{\delt}|
    \lesssim_{M,N}a_{\delt}^M b_{\delt}^N.
\end{equation}

We now prove the desired estimate \eqref{lmln}  for $\psi_\delta$ by induction on
$M+N$ and, for each fixed pair $(M,N)$, by descending induction over
the dyadic scales.  Notice from \eqref{scale} that
\[
    \cL_\delta=2^{-1}\cL_{2\delta},\qquad
    a_{2\delta}=2^{\frac{1}{2}}a_\delta,\qquad
    b_{2\delta}=2^{-\frac{1}{2}}b_\delta.
\]
Consequently, if the desired estimate holds at scale $2\delta$, then
\begin{equation}\label{e:scale-psi2d}
    |\cL_\delta^m\partial_t^n\psi_{2\delta}|
    \le
    2^{-\frac{m+n}{2}}C_{m,n}a_\delta^m b_\delta^n.
\end{equation}
Besides, taking only $t$-derivatives in
\eqref{i:recursion-psid} gives the  identity
\begin{align}\label{i:ptN-psidc}
    \partial_t^K\psi_\delta
    =\sum_{L=0}^K A_L
    \partial_t^L\psi_{2\delta}\,
    \partial_t^{K-L}\chi_\delta,
    \qquad A_L=\binom KL.
\end{align}

The case $M=N=0$ follows from \eqref{e:bd-psi-top},
\eqref{i:recursion-psid}, and $0\le\chi_\delta\le1$.  Suppose now
that $M+N\ge1$ and that the estimates have already been proved for
all lower total derivative orders.  Applying the product rule to
\eqref{i:recursion-psid}, and then using
\eqref{e:bd-chid} and \eqref{e:scale-psi2d} for $m+n< M+N$, yields
\begin{align*}
    |\cL_\delta^M\partial_t^N\psi_\delta|
    &\le
    \sum_{m=0}^M\sum_{n=0}^N
    \binom Mm\binom Nn
    |\cL_\delta^m\partial_t^n\psi_{2\delta}|\,
    |\cL_\delta^{M-m}\partial_t^{N-n}\chi_\delta|  \\
    &\le
    \big(2^{-\frac{M+N}{2}}C_{M,N}+C'_{M,N}\big)
    a_\delta^M b_\delta^N,
\end{align*}
where $C'_{M,N}$ depends only on constants belonging to lower total
derivative orders.  Since $2^{-(M+N)/2}<1$, we may choose
$C_{M,N}$, independently of $\delta$, so that
\[
    2^{-\frac{M+N}{2}}C_{M,N}+C'_{M,N}\le C_{M,N}.
\]
Starting from \eqref{e:bd-psi-top} and descending over the dyadic
scales proves the asserted estimate for $\psi_\delta$.

Finally, the second identity in \eqref{i:recursion-psid}, together
with the already established estimate for $\psi_{2\delta}$ and the
second bound in \eqref{e:bd-chid}, gives the same estimate for
$\psi_\delta^\circ$.
\end{proof}

\begin{rem}
\label{diff-proof}   From the construction of $\psi_\delta$, $\psi_\delta$ is given by a finite product of cutoff  functions. 
Considering the product, one can also prove Lemma \ref{lem:bd-derivpsi} by direct computation. 
\end{rem}

\begin{lem}\label{lem:bd-derivwtphi}
    Let $\kappa\in \{+,-,\circ\}$ and $\dels \le \delta <\delt$.    Let $\nu, \nu'\in \mathbb S^1$ satisfying $|\inp{\nu}{e_1}|,$ $|\inp{\nu'}{e_2}|\le c(\mu'/\mu)^{1/2}$ for a constant $c>0$.
    Suppose that  $\fr\times \fr'\in \fR^\kappa_\delta$. 
    Then, for every $N\in \N_0$, we have
    \begin{align}
    \begin{aligned}\label{b:wtphircrc'}
        |\mathcal L^N\varphi_{\delta,\fr, \fr'}^\kappa(x,y)|& \lesssim_N  1 ,
    \end{aligned}
    \end{align}
    for all $(x,y)\in \cB\times\cB'$, where $\cL$ denotes either $\cL_{\delta,\nu}$ or $\cL_{\delta,\nu'}$ defined by \eqref{d:diff-L}.
\end{lem}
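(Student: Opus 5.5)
The plan is to prove \eqref{b:wtphircrc'} by descending induction over the dyadic scales, in the same way as Lemma~\ref{lem:bd-derivpsi}. The point is that $\varphi^\kappa_{\delta,\fr,\fr'}$ is obtained from $\varphi_{2\delta}$ through the quotient \eqref{pkxy}. Each factor in that quotient will be shown to obey uniform bounds under $\cL_{\delta,\nu}$ and $\cL_{\delta,\nu'}$. The steps below concern $\cL_{\delta,\nu}$, which acts in $x$; the case $\cL_{\delta,\nu'}$ is symmetric, with $\sclp,\sclrp$ replacing $\scl,\sclr$.

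\emph{Step 1: bounds for the adapted cutoffs.} Let $\tilde\nu\in\Theta$ be an arbitrary direction, and let $\varphi_{\bar\fr}$ be adapted to $\bar\fr\in\cR_{\delta,\tilde\nu}$. I write
\[
\nu=\inp{\nu}{\tilde\nu}\,\tilde\nu+\inp{\nu}{\tilde\nu_\sprp}\,\tilde\nu_\sprp .
\]
By Lemma~\ref{lem:directionab} and Corollary~\ref{rem:sizefab}, all elements of $\Theta$ lie within $O((\mu'/\mu)^{1/2})$ of $e_2$. The hypothesis $|\inp{\nu}{e_1}|\le c(\mu'/\mu)^{1/2}$ therefore gives
\[
|\inp{\nu}{\tilde\nu_\sprp}|\lesssim (\mu'/\mu)^{1/2}.
\]
Using \eqref{e:bvarphifr}, this yields
\[
|\scl\inp{\nu}{\partial_x}\varphi_{\bar\fr}|\lesssim 1+\scl(\mu'/\mu)^{1/2}\sclr^{-1}.
\]
By \eqref{scale},
\[
\scl(\mu'/\mu)^{1/2}\sclr^{-1}=\delta^{1/2}\mu^{-1}\le 1 .
\]
Iterating this gives $|\cL^N\varphi_{\bar\fr}|\lesssim_N1$. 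The analogous computation on the $y$-side gives
\[
\sclp(\mu'/\mu)^{1/2}(\sclrp)^{-1}=\delta^{1/2}\mu^{-1}(\mu'/\mu)^{1/2}\le1,
\]
so $|\cL^N\varphi_{\bar\fr'}|\lesssim_N1$ as well. A factor depending only on $y$ is constant under $\cL_{\delta,\nu}$.

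\emph{Step 2: the denominator.} Call the denominator in \eqref{pkxy} $\Sigma(x,y)$. On $\supp\varphi_{2\delta}$ we have $\Sigma\gtrsim c_0^2$, by \eqref{rr'1}, \eqref{rr'2}, \eqref{r:suppcon-vphi} and $\varphi_{\fr}\ge c_0$ on $\fr$. At each point, only $O_{\epz}(1)$ summands are nonzero, by the bounded overlap of $\{2\fr\times2\fr'\}$. Step 1 then gives $|\cL^N\Sigma|\lesssim_N1$. I would apply Faà di Bruno to $1/\Sigma$, restricted to a neighborhood of $\supp\varphi_{2\delta}$ where $\Sigma\gtrsim1$ persists. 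This gives bounded derivatives of the quotient wherever the numerator is nonzero, which is all that is needed.

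\emph{Step 3: induction on $\varphi_{2\delta}$.} Since $\cL_{\delta,\nu}=\tfrac12\cL_{2\delta,\nu}$, a bound $|\cL_{2\delta}^m\varphi_{2\delta}|\le C_m$ becomes $2^{-m}C_m$ at scale $\delta$. The product rule then gives
\[
|\cL_\delta^M\varphi^\kappa_{\delta,\fr,\fr'}|\le 2^{-M}C_M+C'_M ,
\]
where $C'_M$ depends only on lower-order constants. The same bound holds for the sums $\varphi_\delta,\varphi_\delta^\pm$, using bounded overlap. I would choose $C_M$ with $2^{-M}C_M+C'_M\le C_M$, exactly as in the proof of Lemma~\ref{lem:bd-derivpsi}. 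The base case is $\varphi_{\delt}=\wt\chi_\cB\wt\chi_{\cB'}$. For it, $\scl\le\epz\mu$ at $\delta=\delt$. Also $\scl|\inp{\nu}{e_2}|(\mu\mu')^{-1/2}\lesssim \delta(\mu\mu')^{-1}\lesssim1$, so the adaptedness of $\wt\chi_\cB$ gives the bound; the $y$-side is analogous.

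The main obstacle is Step 3. At each level, $\varphi_{2\delta}$ must be decomposed into pieces at the finer scale. The induction only closes if the bound for $\varphi_{\delta}$ (the sum) feeds into the next generation with a constant independent of $\delta$. The $2^{-M}$ gain from rescaling $\cL$ is what prevents the constants from accumulating over the $O(\log\lambda)$ generations. A secondary subtlety is that the sum over $\fR^\circ_\delta$ must be controlled pointwise through bounded overlap, not through the number of rectangles.
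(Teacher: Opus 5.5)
Your architecture is the same as the paper's: uniform bounds for the adapted cutoffs, the denominator bounded below near $\supp\varphi_{2\delta}$, and descending induction over scales using $\cL_\delta=\tfrac12\cL_{2\delta}$. Step 1 is correct, though more than needed; the paper only uses $\scl\le\sclr$ and $\sclp\le\sclrp$.

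\textbf{The gap is in Step 3.} You assert that ``the same bound holds for the sums $\varphi_\delta,\varphi^\pm_\delta$, using bounded overlap.'' At a given point there are $K=O_{\epz}(1)$ nonzero pieces. Summing the per-rectangle bounds over them gives $K(2^{-M}C_M+C_M')$, not $2^{-M}C_M+C_M'$. Only $\varphi_\delta$ is fed into the next generation, not the individual pieces or $\varphi^\pm_\delta$. With the extra factor $K$ the recursion does not close:
\begin{itemize}
\item Already at $M=0$ you only get $\|\varphi_\delta\|_\infty\le K\|\varphi_{2\delta}\|_\infty$. This compounds to $K^{\log_2(\delt/\dels)}$, which is a power of $\lambda$.
\item At top order, $K2^{-M}C_M\le C_M$ fails whenever $K\ge 2^M$.
\end{itemize}

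\textbf{The fix.} The zeroth-order and top-order coefficients are controlled by the partition-of-unity structure, not by overlap. Write $\varphi_\delta=\varphi_{2\delta}W^\circ_\delta$, where $W^\circ_\delta=\Sigma^{-1}\sum_{\fr\times\fr'\in\fR^\circ_\delta}\varphi_\fr\varphi_{\fr'}$.
\begin{itemize}
\item The numerator is a subsum of the nonnegative denominator, so $0\le W^\circ_\delta\le 1$. Hence $0\le\varphi_\delta\le\varphi_{2\delta}\le1$.
\item In $\cL_\delta^M(\varphi_{2\delta}W^\circ_\delta)$, the term containing $\cL_\delta^M\varphi_{2\delta}$ is therefore bounded by $2^{-M}C_M$.
\item Bounded overlap is needed only to show $|\cL_\delta^jW^\circ_\delta|\lesssim_j1$ for $j\ge1$. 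Those bounds enter $C_M'$.
\end{itemize}
With this the induction closes exactly as in the paper. The bound for each piece $\varphi^\kappa_{\delta,\fr,\fr'}=\varphi_{2\delta}\Sigma^{-1}\varphi_\fr\varphi_{\fr'}$ then follows from the product rule and the already uniform bound on $\varphi_{2\delta}$, with no further induction.
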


\begin{proof}
We begin by setting 
\[
    G_\delta(x,y)
    =\sum_{\kappa'\in\{+,-,\circ\}}
      \sum_{\fr\times\fr'\in\fR_\delta^{\kappa'}}
      \varphi_\fr(x)\varphi_{\fr'}(y),
    \qquad
    G_\delta^\circ(x,y)
    =\sum_{\fr\times\fr'\in\fR_\delta^\circ}
      \varphi_\fr(x)\varphi_{\fr'}(y).
\]
By the choice of the partitions $\{\varphi_\fr\}$ and $\{\varphi_{\fr'}\}$ (see near
\eqref{e:bvarphifr} and \eqref{e:bvarphifr'}),  together with \eqref{r:suppcon-vphi}, \eqref{rr'1}, and \eqref{rr'2}, it follows that
$    G_\delta\ge c_0^2
    \quad\text{on }\supp\varphi_{2\delta}. $
Consequently, there is an open neighborhood $U_\delta$ of
$\supp\varphi_{2\delta}$ on which $G_\delta\ge c_0^2/2$. On this
neighborhood, set
\[
    \cA_\delta=G_\delta^{-1},
    \qquad
    W_\delta^\circ=\cA_\delta G_\delta^\circ.
\]
In particular,  $0\le W_\delta^\circ\le1$ on $U_\delta$.

We first record uniform derivative bounds for these functions. If
$\fr\in\cR_{\delta,\omega}$ for some $\omega\in \Theta$, then 
\eqref{e:bvarphifr}, together with $\scl\le\sclr$, gives  $
    |\cL_{\delta,\nu}^N\varphi_\fr|\lesssim_N1.
$
Indeed, this follows by writing $\nu$ in the orthonormal basis
$\{\omega,\omega_\sprp\}$. The same argument, using
$\sclp\le\sclrp$ and \eqref{e:bvarphifr'}, gives
$
    |\cL_{\delta,\nu'}^N\varphi_{\fr'}|\lesssim_N1.
$
The families $\{2\fr\times2\fr'\}$ have bounded overlap, uniformly in
$\delta$. Hence, 
\[
    |\cL_\delta^N G_\delta|
    +|\cL_\delta^N G_\delta^\circ|\lesssim_N1,
\]
where $\cL_\delta$ denotes either
$\cL_{\delta,\nu}$ or $\cL_{\delta,\nu'}$. The chain and product rules therefore yield, for $N\in\N_0$ and on
$U_\delta$,
\begin{equation}\label{b:normalizer}
    |\cL_\delta^N\cA_\delta|
    +|\cL_\delta^N W_\delta^\circ|\lesssim_N1.
\end{equation}

We next prove
\begin{equation}\label{b:spatial-recursive}
    |\cL_\delta^N\varphi_\delta|\lesssim_N1
\end{equation}
uniformly over all dyadic scales
$\dels\le\delta\le\delt$.  The bound at the top scale follows from  
$
    \varphi_{\delt}
    =\wt\chi_{\cB}\wt\chi_{\cB'}
$
together with \eqref{suppBB}, \eqref{d:top}, and the assumptions on
$\nu,\nu'$. For $\delta<\delt$, we have
\[
    \varphi_\delta=\varphi_{2\delta}W_\delta^\circ,
    \qquad
    \cL_\delta=2^{-1}\cL_{2\delta}.
\]

We now use the same induction on the derivative order, followed by
descending induction on the dyadic scale, as in the proof of
Lemma \ref{lem:bd-derivpsi}. The case $N=0$ follows from
$0\le W_\delta^\circ\le1$. For $N\ge1$, assume that the bounds for
all lower derivative orders have already been established and that
the bound of order $N$ holds at scale $2\delta$ with constant $C_N$.
Then the product rule and \eqref{b:normalizer} give
\[
    |\cL_\delta^N\varphi_\delta|
    \le 2^{-N}C_N+C_N',
\]
where $C_N'$ depends only on the bounds for lower derivative orders
and is independent of $\delta$. Since $2^{-N}<1$, we may choose
$C_N$, independently of $\delta$, so that  $
    2^{-N}C_N+C_N'\le C_N. $
Starting from $\delta=\delt$ and descending to $\delta=\dels$
proves \eqref{b:spatial-recursive}.

Finally, on $U_\delta$, \eqref{pkxy} can be written as
\[
    \varphi_{\delta,\fr,\fr'}^\kappa
    =\varphi_{2\delta}\,\cA_\delta\,
      \varphi_\fr\varphi_{\fr'}.
\]
The support of $\varphi_{2\delta}$ is compactly contained in
$U_\delta$, so the right-hand side extends smoothly by zero outside
$U_\delta$. Applying the product rule, and using
\eqref{b:normalizer}, \eqref{b:spatial-recursive},
\eqref{e:bvarphifr}, and \eqref{e:bvarphifr'}, proves
\eqref{b:wtphircrc'}.
\end{proof}
The resulting decomposition and cutoff bounds will be combined with the off-diagonal estimates of Section~\ref{sec:alortho} and the diagonal estimates of Section~\ref{sec:individual}.
 \section{Almost orthogonality for the localized pieces}\label{sec:alortho}

In order to prove Proposition \ref{main-p}, we need to obtain the estimates for the operators $\fP_{\lambda,\delta}^+$, $\fP_{\lambda,\delta}^-$, $\fP_{\lambda,\delta}^\circ$, and $\fP_{\lambda,\dels} $ appearing in \eqref{i:decomp-main} (see Propositions \ref{prop:main-components1-0} and \ref{prop:main-components2-2} below). For this purpose, 
we make use of almost orthogonality between the localized operators appearing in   \eqref{pdk} and \eqref{pds}. 
The main objective of this section is   to establish the off-diagonal estimates, which will be used for the almost orthogonality argument. 
In fact, we prove these estimates with an additional decomposition.

\subsection{Further decomposition on the input side}\label{subsec:input-refinement}
We further decompose the operator  $\fP_{\delta,\fr,\fr'}^\kappa$ to make it possible to obtain the precise diagonal estimate (see Section \ref{sec:individual}). 

Let $\fr\times\fr'\in \fR^\kappa_{\delta}$ with 
\[\fr'=  \fr_{\nu',\fn}',\] 
which is given by \eqref{ftaup} and \eqref{scale}. 
 We break $2\fr'$ into $O((\mu/\mu')^{1/2})$ smaller rectangles
of equal side length $\sim  \sclrp\big(\mu'/\mu\big)^{1/2}$ in the direction of $\nu'_\sprp$.
Choosing a constant \(c_1\sim1\), we may denote those rectangles by
\begin{align}\label{s'}
\fs= \fs(\ell,\fr')
=\Big\{y\in2\fr':0\le
\inp{y-c(\fr')}{\nu'_\sprp}
-c_1\ell\sclrp\Big(\frac{\mu'}{\mu}\Big)^{1/2}
<c_1\sclrp\Big(\frac{\mu'}{\mu}\Big)^{1/2}\Big\},
\end{align}
for $\ell\in \Z$. 
This refinement scale is where the diagonal estimate in Section~\ref{sec:individual} balances the off-diagonal summation, and it is the source of the asymmetric gain in Proposition~\ref{main-p}. 
Let $\mathfrak S(\fr')$ denote the collection of nonempty rectangles $\fs(\ell, \fr')$. 

For $\fs\in\mathfrak S(\fr')$, let $\varphi_{\fr',\fs}$ be a
nonnegative smooth cutoff function adapted to $2\fs$, chosen so that
\[
\sum_{\fs\in\mathfrak S(\fr')}\varphi_{\fr',\fs}(y)=1
\quad\text{on }2\fr'.
\]
In particular,
\begin{align}\label{e:bvarphirs}
\big|
\inp{\nu'}{\partial_y}^{a}
\inp{\nu'_\sprp}{\partial_y}^{b}
\varphi_{\fr',\fs}(y)
\big|
\lesssim_{a,b}
(\sclp)^{-a}
\biggl(
    \sclrp\Bigl(\frac{\mu'}{\mu}\Bigr)^{\frac12}
\biggr)^{-b}
\end{align}
for all $a,b\in\mathbb N_0$. For $\fs\in\mathfrak S(\fr')$, we define
an operator $\fP_{\delta,\fr,\fs}^\kappa$ by setting its kernel
\Be\label{bss}
 \fP_{\delta,\fr,\fs}^\kappa (x,y) =   \fP_{\delta,\fr,\fr'}^\kappa(x,y)\varphi_{\fr',\fs}(y).
\Ee 

Also,  for $\fr\times\fr'\in \fR^\circ_{\dels}$ and  $\fs\in  \mathfrak S(\fr')$,  
\Be\label{bsso}
    \fP_{\dels, \fr,\fs}(x,y) :=  \fP_{\dels, \fr,\fr'}(x,y) \varphi_{\fr',\fs}(y).
\Ee
Consequently, since the kernel of $\fP_{\delta,\fr,\fr'}^\kappa$ is supported in $2\fr\times 2\fr'$, we have 
\begin{align}
\label{pdts}
&\fP_{\delta, \fr,\fr'}^\kappa= \sum_{\fs\in \mathfrak S(\fr')} \fP_{\delta,\fr,\fs}^\kappa,  \quad \kappa\in \{+,-, \circ\},
\\
\label{pdts'}
&\fP_{\dels, \fr,\fr'} = \sum_{\fs\in \mathfrak S(\fr')}\fP_{\dels, \fr,\fs}.
\end{align}
We also set
\begin{align*}
    \varphi^\kappa_{\delta,\fr,\fs}(x,y)
    &= \varphi^\kappa_{\delta,\fr,\fr'}(x,y)\varphi_{\fr',\fs}(y),
    \quad \fr\times\fr'\in \fR^\kappa_{\delta},\quad
    \kappa\in\{+,-,\circ\}.
\end{align*}
For the bottom-scale operator, we write
\[
    \varphi_{\dels,\fr,\fs}
    :=\varphi^\circ_{\dels,\fr,\fs}.
\]
Then, note that 
\[ \fP_{\delta, \fr,\fs}^\pm= \mathcal O_\lambda[ \varphi^\pm_{\delta,\fr,\fs}, \psi_{2\delta}], \quad 
\fP_{\delta, \fr,\fs}^\circ= \mathcal O_\lambda[ \varphi^\circ_{\delta,\fr,\fs}, \psi_{\delta}^\circ], \quad 
\fP_{\dels, \fr,\fs}= \mathcal O_\lambda[ \varphi_{\dels,\fr,\fs}, \psi_{\dels}].\]

Let $\theta(\fr)=\nu$ and $\theta(\fr')=\nu'$.  Recall  \eqref{bss}, \eqref{bsso}. By \eqref{e:bvarphirs},
$\mathcal L_{\delta,\nu'}^N\varphi_{\fr',\fs}(y)\lesssim_N1$. Thus,
Lemma \ref{lem:bd-derivwtphi} implies that, for every $\nu, \nu'\in \mathbb S^1$ such that $|\inp{\nu}{e_1}|$, $|\inp{\nu'}{e_2}|\lesssim (\mu'/\mu)^{1/2}$, and for every $N\in\N$,
    \begin{align}
    \begin{aligned}\label{b:wtphirr's'}
        |\mathcal L_{\delta, \nu}^N \varphi_{\delta,\fr, \fs}^\kappa(x,y)| + |\mathcal L_{\delta, \nu'}^N \varphi_{\delta,\fr,\fs}^\kappa(x,y)|& \lesssim_N 1.
    \end{aligned}
    \end{align}
To obtain the bound for $\mathcal L_{\delta, \nu'}^N\varphi_{\delta,\fr, \fs}^\kappa$, we also use the fact that $\delta\mu^{-1}\lesssim \delta^{1/2}\smp\mu^{-1/2}$.

\subsection{Almost orthogonality across rectangles} 
The following proposition provides quantitative controls for
\begin{align*}
(\fP_{\delta,\fr_\zc,\fs_\zc}^\kappa)^*
    \fP_{\delta,\fr,\fs}^\kappa
\quad\text{and}\quad
\fP_{\delta,\fr_\zc,\fs_\zc}^\kappa
    (\fP_{\delta,\fr,\fs}^\kappa)^*
\end{align*}
in terms of $\nu',\nu_\zc'$ and $\nu,\nu_\zc$.

\begin{prop}\label{prop:alortho-1}
Let $\dels\le\delta<\delt$ and $\kappa\in\{+,-,\circ\}$.  Let $\fr_\zc\times \fr_\zc'\in \fR^\kappa_\delta$, $\fr\times \fr'\in \fR^\kappa_\delta$, and let  $\fs_\zc\in \mathfrak S(\fr_\zc')$, $\fs\in \mathfrak S(\fr')$.  Suppose that $\theta(\fr_\zc)=\nu_\zc$, $\theta(\fr_\zc')=\nu_\zc'$, $\theta(\fr)=\nu$,  and $\theta(\fr')=\nu'$. Then, the following hold. 
    \begin{itemize}[leftmargin=2.0em]
     \setlength{\itemsep}{2pt}
    \setlength{\topsep}{2pt}
        \item [i)] If         $|\nu_\zc'-\nu'|\gg \delta^{1/2}\mu^{-1}$, then
        \begin{align*}
            \|(\fP_{\delta,\fr_\zc,\fs_\zc}^\kappa)^*\fP_{\delta,\fr,\fs}^\kappa\|_{2\to 2}& \lesssim_N   B(\delta):=\frac{\delta^4\mu^{-\frac{7}{2}}(\mu')^{-\frac{1}{2}}}{(|\nu'-\nu'_\zc|\lambda\delta(\mu')^{-\frac{1}{2}})^{N}}, \quad \kappa=\pm, \circ. 
        \end{align*}
        \item [ii)] If          $|\nu_\zc-\nu |\gg \delta^{1/2}(\mu\mu')^{-1/2}$, then 
        \begin{align*}
            \|\fP_{\delta,\fr_\zc,\fs_\zc}^\kappa(\fP_{\delta,\fr,\fs}^\kappa)^*\|_{2\to 2}& \lesssim_N  B'(\delta):= \frac{\delta^4\mu^{-\frac{7}{2}}(\mu')^{-\frac{1}{2}}}{(|\nu_\zc - \nu |\lambda^{}\delta\mu^{-1}\smp)^{N}}, \quad \kappa=\pm, \circ.
               \end{align*}
 \item [iii)] At the bottom scale, suppose that
 $\fr_\zc\times\fr_\zc',\,\fr\times\fr'\in\fR^\circ_{\dels}$.
 If $|\nu_\zc'-\nu'|\gg\dels^{1/2}\mu^{-1}$, then
 \[
 \|(\fP_{\dels,\fr_\zc,\fs_\zc})^*
 \fP_{\dels,\fr,\fs}\|_{2\to2}\lesssim B(\dels).
 \]
 If $|\nu_\zc-\nu|\gg\dels^{1/2}(\mu\mu')^{-1/2}$, then
 \[
 \|\fP_{\dels,\fr_\zc,\fs_\zc}
 (\fP_{\dels,\fr,\fs})^*\|_{2\to2}\lesssim B'(\dels).
 \]
\par
    \end{itemize}

\end{prop}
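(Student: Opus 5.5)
We follow the scheme of the proof of Lemma~\ref{prop:alortho-easy}: write the kernel of the composed operator as an oscillatory integral, integrate by parts in the common variable along a well-chosen direction, and finish with Lemma~\ref{schur}. For i), the kernel of $(\fP_{\delta,\fr_\zc,\fs_\zc}^\kappa)^*\fP_{\delta,\fr,\fs}^\kappa$ at $(w,y)$ is
\[
\mu^{-1}\iiint e^{i\lambda(\cP(x,y,t)-\cP(x,w,s))}\,\overline{\varphi^\kappa_{\delta,\fr_\zc,\fs_\zc}(x,w)}\,\varphi^\kappa_{\delta,\fr,\fs}(x,y)\,\overline{\psi(x,w,s)}\psi(x,y,t)\,dx\,ds\,dt,
\]
with $\psi=\psi_{2\delta}$ or $\psi^\circ_\delta$ ($\psi_{\dels}$ in iii)). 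Set $\Psi=\cP(x,y,t)-\cP(x,w,s)$. We integrate by parts in $x$ along the direction $\nu_\zc$ (essentially $\fa$, the short direction of $\fr_\zc$). The amplitude then satisfies $|\cL_{\delta,\nu_\zc}^N(\cdot)|\lesssim 1$ by \eqref{b:wtphirr's'} and Lemma~\ref{lem:bd-derivpsi}, that is, derivative cost $\scl^{-1}=\epz^{-1}\delta^{-1}(\mu\mu')^{1/2}$ per step in $\inp{\nu_\zc}{\partial_x}$.

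The key step is a lower bound on $|\inp{\nu_\zc}{\partial_x\Psi}|$. Since $\partial_x\cP(x,y,t)=(x\cos t-y)/\sin t$, and on the supports $t,s$ lie within $O(\delta^{1/2}\mu^{-1/2})$ of $S(x,y)$ and $S(x,w)$ respectively, we compare with $t=S(x,y)$. Then $\partial_x\cP(x,y,S(x,y))=(x\inp xy-y)/\sin S$, which up to the scalar $\sin S$ is parallel to the dual direction encoded by $\partial_y\cD$. Indeed, by \eqref{pxpy1}, $-\partial_y\cD/2=y-x\inp xy$. Hence the difference of spatial gradients at the symmetric times is governed by $\fb(x,y)-\fb(x,w)$, and by Lemma~\ref{lem:locfab} and \eqref{dir-con} this is $\nu'-\nu_\zc'$ up to $O(\epz\delta^{1/2}\mu^{-1})$. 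Its size is $|\partial_y\cD|\,|\nu'-\nu'_\zc|/\sin S\sim\mu^{1/2}|\nu'-\nu'_\zc|$. Projecting onto $\nu_\zc$ costs no loss, since $\fa$ and $\fb_\sprp$ are nearly parallel by Lemma~\ref{lem:alp}. The corrections from $t-S$, $s-S$ and from $\sin S(x,y)$ versus $\sin S(x,w)$ are of size $\lesssim\delta^{1/2}\mu^{-1}\cdot\mu^{1/2}$ times bounded factors, so they are absorbed under the hypothesis $|\nu'-\nu'_\zc|\gg\delta^{1/2}\mu^{-1}$. This gives $|\inp{\nu_\zc}{\partial_x}\lambda\Psi|\gtrsim\lambda\mu^{1/2}|\nu'-\nu'_\zc|$.

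Each integration by parts therefore gains $(\lambda\mu^{1/2}|\nu'-\nu_\zc'|\scl)^{-1}\sim(|\nu'-\nu'_\zc|\lambda\delta(\mu')^{-1/2})^{-1}$, which is the denominator of $B(\delta)$. The higher derivatives of $\inp{\nu_\zc}{\partial_x}\Psi$ must also be controlled; we do this using $\partial_x^2\cP=\cot t\,I$ and Lemma~\ref{lem:bdsS*} for the $S$-dependence. The prefactor then comes from the trivial size bound times supports, via Lemma~\ref{schur}: the factor $\mu^{-1}$, the $t,s$ lengths $(\delta^{1/2}\mu^{-1/2})^2$, $|2\fr|\sim\scl\sclr$, and the input areas $|2\fs|,|2\fs_\zc|\sim\sclp\sclrp(\mu'/\mu)^{1/2}$. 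Multiplying these gives the power $\delta^4\mu^{-7/2}(\mu')^{-1/2}$, which we check after inserting \eqref{scale}.

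Part ii) is symmetric: we integrate by parts in $y$ along $\nu'$, with derivative cost $\sclp^{-1}$. The phase difference $\partial_y\cP(x,y,t)-\partial_y\cP(z,y,s)$ at the symmetric times is governed by $\fa(x,y)-\fa(z,y)$ times $|\partial_x\cD|/\sin S\sim\mu'^{1/2}$, and the errors are absorbed when $|\nu-\nu_\zc|\gg\delta^{1/2}(\mu\mu')^{-1/2}$. This yields the gain $(|\nu-\nu_\zc|\lambda\delta\mu^{-1}\mu'^{1/2})^{-1}$ per step, and the same support count applies. For iii), the bottom scale, the argument is identical with $\psi_{\dels}$, using \eqref{e:bd-psi-top}-type bounds. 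The main obstacle is the lower bound on the phase gradient: the time-localization errors must stay below the angular separation. This is delicate precisely because the hypotheses sit at the threshold $\delta^{1/2}\mu^{-1}$; we would first verify it via the identity \eqref{i:ptcP-Q} and a Taylor expansion of $\partial_x\cP$ in $t$ about $S$.
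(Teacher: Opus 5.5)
Your proposal is correct and follows the paper's proof essentially step by step. It uses the same kernel formula and splits the phase gradient into its value at the symmetric times (expressed through $\fb$, resp.\ $\fa$) plus a time-correction. It then integrates by parts along a direction almost orthogonal to $\fb$ (resp.\ $\fa$), at cost $\sigma^{-1}$ (resp.\ $(\sigma')^{-1}$) per step, and finishes with Lemma~\ref{schur} using the same support count.

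One justification needs repair: the mismatch between $\sin S(x,y)$ and $\sin S(x,w)$ is not $O(\delta^{1/2}\mu^{-1/2})$ on its own. The same holds for the mismatch between $|\partial_y\cD(x,y)|$ and $|\partial_y\cD(x,w)|$, which you did not account for; each can contribute $\epz\mu'\mu^{-1/2}$. The resulting error is nevertheless harmless, because it is parallel to $\fb(x,w)\approx\nu_\zc'$ and is removed by the projection onto $\nu_\zc$. This is exactly how the paper handles it, via $|\inp{\nu_\zc}{\nu_\zc'}|\lesssim\delta^{1/2}\mu^{-1}$; the same remark applies to $\fa$ in ii). Alternatively, the identity $|\partial_y\cD(x,u)|^2/\sin^2S(x,u)=4(1-|x|^2)-4\cD(x,u)/(1-\inp{x}{u}^2)$ shows that the combined scalar varies only by $O(\delta\mu^{-3/2})$.
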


\begin{proof} 
The assertions $i)$ and $ii)$ for the three cases $\kappa = \pm, \circ$ can be proved  in the  same manner. Furthermore, the same argument also works for the assertion $iii)$.  
    This  is possible because of  the assumptions $|\nu_\zc'-\nu'|\gg \delta^{1/2}\mu^{-1}$,  $|\nu_\zc-\nu |\gg \delta^{1/2}(\mu\mu')^{-1/2}$, which allow us  to obtain the same bound for the associated oscillatory integrals, regardless of $\kappa = \pm, \circ$.

    We first prove the assertion \textit{i)}. We use Lemma \ref{schur} to obtain the desired estimate. Since $|\fs_\zc|,\ |\fs|\sim \delta^{3/2}\mu^{-3/2}\smp$, the estimate in the assertion  \textit{i)} follows from Lemma \ref{schur}  once we prove
    \begin{align}\label{e:kerest-P*P1}
        \sup_{(y,w)\in \fs_\zc\times \fs} \big|(\fP_{\delta,\fr_\zc,\fs_\zc}^\kappa)^*\fP_{\delta,\fr,\fs}^\kappa (y,w)\big|  \lesssim_N  \frac{\delta^{\frac{5}{2}}\mu^{-2}(\mu')^{-1}}{(|\nu'-\nu'_\zc|\lambda\delta(\mu')^{-\frac{1}{2}})^{N}}.
    \end{align}
   
   We show this estimate by integration by parts. To this end, 
   set
     \begin{align}
    \label{axyw} 
       \mathfrak a(x,y,w,t,s) = \varphi_{\delta,\fr_\zc,\fs_\zc}^\kappa(x,y) \varphi_{\delta,\fr,\fs}^\kappa(x,w)
       \Psi_\delta^\kappa(x,y,t)\Psi_\delta^\kappa(x,w,s),
    \end{align}
    where
    \[
    \Psi_\delta^\kappa=
    \begin{cases}
    \psi_{2\delta},&\kappa\in\{+,-\},\\
    \psi_\delta^\circ,&\kappa=\circ.
    \end{cases}
    \]
      Since $\supp\psi_\delta^\circ\subset\supp\psi_{2\delta}$, the same support property  remains valid for every $\kappa\in\{+,-,\circ\}$.
   Then,  fixing $(y,w)\in \fs_\zc\times \fs$, we note  that
    \begin{align}
    \label{e:kerest-P*P12}
        (\fP_{\delta,\fr_\zc,\fs_\zc}^\kappa)^*\fP_{\delta,\fr,\fs}^\kappa (y,w) = \frac{1}{\mu} \iiint e^{i\lambda(\cP(x,w,s) - \cP(x,y,t))}   \mathfrak a(x,y,w,t,s) dxdsdt. 
    \end{align}

   We may assume $x\in 2\fr_\zc\cap 2\fr$, since the integral vanishes  otherwise.     Note that $x\in 2\fr_\zc\cap 2\fr$, $y\in  2\fr_\zc'$, $w\in 2\fr'$.
   Moreover, by Lemma \ref{lem:bd-derivpsi}, together with
   \eqref{e:bd-psi-top} when \(2\delta=\delt\), and by
   \eqref{b:wtphirr's'}, it follows that
    \begin{align}\label{e:bounds-B[1]}
        |\inp{\bar\nu}{\partial_x}^N  \mathfrak a(x,y,w,t,s) | \lesssim_N   (\delta^{-1}(\mu\mu')^{\frac{1}{2}})^N, 
    \end{align}
    for every $\bar\nu\in \Theta$ and $N\in\N$.  Recalling   \eqref{ftaup}, \eqref{scale},  and  \eqref{psi0},  we  also note, for $(y,w)\in  2\fr_\zc'\times 2\fr'$, 
    \begin{align}\label{e:suppcB[1]}
        \big|\supp   \mathfrak a(\cdot,y,w,\cdot,\cdot)\big| \lesssim \delta^{\frac{5}{2}}(\mu\mu')^{-1}. 
    \end{align}

  We now consider  the phase function $\cP(x,w,s) - \cP(x,y,t)$ and  claim    
   \begin{align}\label{e:lb-px1diffcP1}
        \big|\inp{\nu_\zc}{\partial_x(\cP(x,w,s) - \cP(x,y,t))}\big|\gtrsim \mu^{\frac{1}{2}}|\nu_\zc'-\nu'|,
    \end{align}
   if  $|\nu_\zc'-\nu'|\gg \delta^{1/2}\mu^{-1}$. This plays a crucial role for integration by parts. 
In order to show this, 
    we write
    \begin{align}
    \label{i:p1xdiffcP}
        \partial_x\big(\cP(x,w,s) - \cP(x,y,t)\big) 
     = \mathcal M(x,y, w) + \cR(x,y,w,s,t),
    \end{align}
    where
    \begin{align}
       \label{Mxy}
     \mathcal M(x,y,w) &= \partial_x\cP(x,w,S (x,w)) - \partial_x\cP(x,y,S (x,y)),
     \\
    \label{i:defR[1]}
        \cR(x,y,w,s,t)  &= \int_{S (x,w)}^s \partial_x\partial_u\cP(x,w,u) du - \int_{S (x,y)}^t \partial_x\partial_u\cP(x,y,u) du.
    \end{align}
  Since  $|\nu_\zc'-\nu'|\gg \delta^{1/2}\mu^{-1}$, the lower bound \eqref{e:lb-px1diffcP1}  follows if we obtain 
       \begin{align}
\label{e:p1xdiffcP-L} 
      \big|\inp{\nu_\zc}{\mathcal M (x,y,w)}\big|\gtrsim \mu^{\frac{1}{2}}|\nu'-\nu'_\zc|,  
\\
\label{e:bd-R[1]}
        \big|\cR(x,y,w,s,t)\big|\lesssim \delta^{\frac{1}{2}}\mu^{-1}\smp.
    \end{align}

    Since $\partial_x \cP(x,y,t)= ( \cos t\, x  -y)/\sin t $,   we have 
    \Be
    \label{pPs}\partial_x\cP(x,w,S (x,w)) = \frac{\inp{x}{w}x-w}{\sin S (x,w)}. 
    \Ee 
	    Here we used $\cos S(x,w)= \inp{x}{w}$. Hence,  combining this with  \eqref{pxpy1}  and \eqref{normal'}, we 
    have 
    \[ \partial_x\cP(x,w,S (x,w)) =\frac{|\partial_y\cD(x,w)|\fb(x,w)}{ 2 \sin S (x,w)}.\] Consequently, we get 
        \begin{align*}
        \mathcal M(x,y,w)=
        \frac{1}{2}\Big(\frac{|\partial_y\cD(x,w)|\fb(x,w)}{\sin S (x,w)} - \frac{|\partial_y\cD(x,y)|\fb(x,y)}{\sin S (x,y)}\Big). 
    \end{align*}
    
  Since $\sin S (x,w)$, $\sin S (x,y)\sim \smu$, by \eqref{e:sizeofxDyD'} we also see that 
    \[\mathcal M(x,y,w)=\mu^{\frac{1}{2}}(C_1\fb(x,w) - C_2\fb(x,y))\] with positive numbers  $C_1=C_1(x,w), C_2=C_2(x,y)$ such that $C_1, \,C_2\sim 1$ and $|C_1-C_2|\lesssim  \epz$ since $w, y\in \cB'$.  Recall that  $x\in 2\fr_\zc\cap 2\fr$, $y\in  2\fr_\zc'$, $w\in 2\fr'$, $\theta(\fr')=\nu'$, and $\theta(\fr'_\zc)=\nu'_\zc$.  Also, note that 
	    $\fr_\zc\times \fr_\zc'\in \fR^\kappa_\delta$, $\fr\times \fr'\in \fR^\kappa_\delta$. Hence, writing $C_1\fb(x,w) - C_2\fb(x,y)= C_1 (\fb(x,w) -\fb(x,y))+ (C_1-C_2)\fb(x,y)$, by Lemma \ref{lem:locfab} we have 
	    \[   \mu^{-\frac{1}{2}} \mathcal M(x,y,w)= C_1 (\nu ' -\nu_\zc')+ (C_1-C_2)\nu_\zc'+ O(\delta^{\frac{1}{2}}\mu^{-1}),\] 
    Since $\fr_\zc\times \fr_\zc'\in \fR^\kappa_\delta$, 
    by Lemma \ref{lem:alp} and Lemma \ref{lem:locfab}, $|\inp {\nu_\zc}{\nu_\zc'}|\lesssim \delta^{1/2}\mu^{-1}$. Hence,  it follows that 
	   \[\mu^{-\frac{1}{2}}\inp{\nu_\zc}{ \mathcal M(x,y,w)}= C_1 \inp{\nu_\zc}{(\nu ' -\nu_\zc')} + O(\delta^{\frac{1}{2}}\mu^{-1}).\] 
	 Also,  from Lemma \ref{lem:directionab} and Lemma \ref{lem:alp} we note that the unit vectors $\nu', \nu_\zc'$ satisfy  $ |\nu'-\nu_\zc'|\lesssim  (\mu'/\mu)^{1/2}\ll 1$ and $|\inp{\nu_\zc}{\nu_\zc'}|\lesssim  (\mu'/\mu)^{1/2}\ll 1$, taking $\epz$ sufficiently small. Thus,  it follows that  $|\inp{\nu_\zc}{(\nu ' -\nu_\zc')}|\sim |\nu ' -\nu_\zc'|$.  Therefore,  we get \eqref{e:p1xdiffcP-L}.

 To show \eqref{e:bd-R[1]}, we first observe that 
    \begin{align}\label{e:pxpucP}
        |\partial_x\partial_u\cP(x,w,u)| \lesssim \biggl(\frac{\mu'}{\mu}\biggr)^{\frac{1}{2}}
    \end{align}
    for $u\in [S (x,w), s]$ whenever $\psi_{2\delta}(x,w,s)\neq 0$.  
    Indeed, since $[S (x,w), s]\subset I_\mu$, note that 
        \[ |\partial_x\partial_u\cP(x,w,u)| =\frac{|w\cos u - x|}{\sin^2 u}\sim  \mu^{-1}|w\cos u - x| .\]
   Write  $w\cos u-x = \inp{w}{x}w - x + w(\cos u-\cos S (x,w)).$  By   \eqref{pxpy0} and   \eqref{e:sizeofxDyD}, we have  $\inp{w}{x}w - x =O((\mu\mu')^{1/2})$. 
   Hence, it suffices to show 
   \[\cos u-\cos S (x,w)= O((\mu\mu')^{\frac{1}{2}}).\]  Recalling \eqref{axyw}, the support properties of the cutoff function  $\psi_{2\delta}(x,w,s)$ gives  $|s-S (x,w)|\lesssim \delta^{1/2}\mu^{-1/2}$ (see \eqref{psi0}). Since 
$
        |\cos u -\cos S (x,w)|\sim \mu^{1/2} |u-S (x,w)|
$
    for $u\in [S (x,w),s]$, we have $|\cos u-\cos S (x,w)|\lesssim \delta^{1/2}$, which is less than $(\mu\mu')^{1/2}$.

    By the same argument, we also obtain $|\partial_x\partial_u\cP(x,y,u)|\lesssim (\mu'/\mu)^{1/2}$ for $u\in [S (x,y), t]$ whenever 
    $\psi_{2\delta}(x,y, t)\neq 0$.
    Combining  this  and \eqref{e:pxpucP} with \eqref{i:defR[1]} yields \eqref{e:bd-R[1]}.  

 To apply integration by parts, besides \eqref{e:lb-px1diffcP1} we also need estimates for the higher-order derivatives of the phase function $\cP(x,w,s)-\cP(x,y,t)$.
 For any unit vector $\bar\nu\in\Theta$, we have
    \begin{align}\label{e:ub-px2diffcP1}
        \big|\inp{\bar\nu}{\partial_x}^2\big(\cP(x,w,s)-\cP(x,y,t)\big)\big|
        \lesssim \mu^{-\frac{1}{2}}
    \end{align}
 provided that $\mathfrak a(x,y,w,t,s)\neq0$, and
 $\inp{\bar\nu}{\partial_x}^N(\cP(x,w,s)-\cP(x,y,t))=0$ for $N\ge3$.
 Indeed, the latter assertion follows since $\cP(\cdot,\cdot,s)$ and
 $\cP(\cdot,\cdot,t)$ are polynomials of degree $2$, while
 \[
    \inp{\bar\nu}{\partial_x}^2
    \big(\cP(x,w,s)-\cP(x,y,t)\big)
    =\frac{\sin(t-s)}{\sin s\sin t}.
 \]
 Since $s,t\in I_\mu$, we have $|t-s|\lesssim\mu^{1/2}$ and
 $\sin s,\sin t\sim\mu^{1/2}$, which proves \eqref{e:ub-px2diffcP1}.

    Now we consider  a  differential operator  
    \begin{align*}
        \cL f = \inp{\nu_\zc}{\partial_x}\Big(\frac{f(x)}{-i \la\inp{\nu_\zc}{\partial_x}(\cP(x,w,s) - \cP(x,y,t))}\Big).
    \end{align*}
Repeated integration by parts using the adjoint of \(\mathcal L\) gives, up to a factor of modulus one,
    \begin{align*}
        (\fP_{\delta,\fr_\zc,\fs_\zc}^\kappa)^*\fP_{\delta,\fr,\fs}^\kappa(y,w) = \frac{1}{\mu} \iiint e^{i\lambda(\cP(x,w,s) - \cP(x,y,t))}  \cL^N \mathfrak a(x,y,w,t,s) dxdsdt.
    \end{align*}
 Thus, the matter of proving \eqref{e:kerest-P*P1} reduces to showing 
  \Be
    \label{LNa}
        |\cL^N\mathfrak a(x,y,w,t,s)|
        \lesssim_N
        (|\nu_\zc'-\nu'|\lambda\delta(\mu')^{-\frac{1}{2}})^{-N}, 
    \Ee
 which  combined with   \eqref{e:suppcB[1]}
 gives \eqref{e:kerest-P*P1}. 
 To show \eqref{LNa},  set
\[
    \Phi(x)=\cP(x,w,s)-\cP(x,y,t),  
    \qquad
    \vartheta=|\nu_\zc'-\nu'|.
\]
By \eqref{e:lb-px1diffcP1} and \eqref{e:ub-px2diffcP1}, we have 
$
    |\inp{\nu_\zc}{\partial_x}\Phi|\gtrsim\mu^{1/2}\vartheta 
    $ 
    and 
    $
    |\inp{\nu_\zc}{\partial_x}^2\Phi|\lesssim\mu^{-1/2},
$
and $\inp{\nu_\zc}{\partial_x}^j\Phi=0$ for $j\ge3$. Hence, using
$\vartheta\gg\delta^{1/2}\mu^{-1}$ and
$\delta\lesssim\mu\mu'$, we have
$
    |\inp{\nu_\zc}{\partial_x}^2\Phi|/|\inp{\nu_\zc}{\partial_x}\Phi|
    \lesssim \mu^{-1}\vartheta^{-1} 
    \lesssim \delta^{-1}(\mu\mu')^{1/2}.
$
Thus, it follows that
\[
    \big|\inp{\nu_\zc}{\partial_x}^k(\inp{\nu_\zc}{\partial_x}\Phi)^{-1}\big|
    \lesssim_k \mu^{-\frac{1}{2}}\vartheta^{-1} 
    \big(\delta^{-1}(\mu\mu')^{\frac{1}{2}}\big)^k.
\]
Combining this with \eqref{e:bounds-B[1]} and repeatedly applying
the product rule, we obtain \eqref{LNa}. 

    We proceed to show \textit{ii)}. The proof is similar to that of \textit{i)}. Thus, we shall be brief. 
    Since $|\fr_\zc|,\ |\fr|\sim \delta^{3/2}(\mu')^{-1}$,  thanks to Lemma \ref{schur}  it is sufficient to prove
    \begin{align}\label{e:kerest-PP*1}
        \sup_{(x,z)\in \fr_\zc\times \fr} \big|\fP_{\delta,\fr_\zc,\fs_\zc}^\kappa(\fP_{\delta,\fr,\fs}^\kappa)^*(x,z)\big|  \lesssim_N  \frac{\delta^{\frac{5}{2}}\mu^{-\frac{7}{2}}\smp}{(|\nu_\zc - \nu |\lambda\delta\mu^{-1}\smp)^{N}}. 
    \end{align}
Note that 
    \begin{align*}
      \fP_{\delta,\fr_\zc,\fs_\zc}^\kappa(\fP_{\delta,\fr,\fs}^\kappa)^*(x,z)&= \frac{1}{\mu} \iiint e^{i\lambda(\cP(x,y,t) - \cP(z,y,s))}  \tilde{\mathfrak a}(x,z,y,t,s)  dydsdt, 
    \end{align*}
    where 
       \[
        \tilde{\mathfrak a}(x,z,y,t,s) = \varphi_{\delta,\fr_\zc,\fs_\zc}^\kappa(x,y) \varphi_{\delta,\fr,\fs}^\kappa(z,y) 
       \Psi_\delta^\kappa(x,y,t)\Psi_\delta^\kappa(z,y,s).
    \]

      As before, we prove \eqref{e:kerest-PP*1} by integration by parts.   We claim that 
      \begin{align}\label{e:p1ydiffcP}
        \big|\inp{\nu_\zc'}{\partial_y}(\cP(x,y,t) - \cP(z,y,s))\big|\gtrsim \smp|\nu_\zc - \nu |. 
    \end{align}
    Since  $|\nu_\zc-\nu |\gg \delta^{1/2}(\mu\mu')^{-1/2}\ge \delta^{1/2}\mu^{-1}$, it suffices to show that 
        \begin{align}
        \label{nzm}
        |\inp{\nu_\zc'}{ \tilde{\mathcal M}(x,z,y)  }|&\gtrsim \smp|\nu_\zc - \nu |, 
    \end{align}
    where 
     \[\tilde{\mathcal M}(x, z, y)= \partial_y\cP(x,y,S (x,y)) - \partial_y\cP(z,y,S (z,y)).\]
   Indeed, $ \partial_y(\cP(x,y,t) - \cP(z,y,s))= \tilde{\mathcal M}(x, z, y)+ \tilde{\mathcal R}(x,z, y,t,s)$ where 
    \[ 
       \tilde{\cR}(x,z,y,t,s) = \int_{S (x,y)}^t \partial_y\partial_u\cP(x,y,u) du - \int_{S (z,y)}^s \partial_y\partial_u\cP(z,y,u) du
\]
    Observe $|\partial_y\partial_u\cP(x,y,u)| = |x\cos u - y|/\sin^2 u\lesssim 1.$\footnote{This is clear since $x\cos u - y= x(\cos u-1) +(x- y)= O(\mu)$ and $u\in I_\mu$.}  
    Also, by the support properties of the functions $\psi_{2\delta}(x,y,\cdot)$ and $\psi_{2\delta}(z,y,\cdot)$,   $|t-S (x,y)|, |s- S(z,y)| \lesssim \delta^{1/2}\mu^{-1/2}$. 
    Hence, $|\widetilde{\cR}(x,z,y,t,s)|\lesssim \delta^{1/2}\mu^{-1/2}=O(\delta^{1/2}\mu^{-1/2}).$

	    As before, since $\partial_y\cP(x,y,t)= ( \cos t\, y  -x)/\sin t $, using  \eqref{pxpy1} and \eqref{normal'},  we get 
    \[   \tilde{\mathcal M}(x,z,y)   = \frac{|\partial_x\cD(x,y)|\fa(x,y)}{2\sin S (x,y)} - \frac{|\partial_x\cD(z,y)|\fa(z,y)}{2\sin S (z,y)} .\]
	   Note that  $x\in 2\fr_\zc, y\in 2\fr_\zc'\cap 2\fr', z\in 2\fr $.  Combining the identity with  \eqref{e:sizeofxDyD},  Lemma \ref{lem:alp},  and Lemma \ref{lem:locfab},  the same argument gives   \eqref{nzm}. 
    
In addition to \eqref{e:p1ydiffcP}, we have the bound 
\[
        \big|\inp{\nu_\zc'}{\partial_y}^2\big(\cP(x,y,t) - \cP(z,y,s)\big)\big|\lesssim \mu^{-\frac{1}{2}}, 
            \]
  where the higher order derivatives are identically zero. Using
  Lemma \ref{lem:bd-derivpsi}, together with
  \eqref{e:bd-psi-top} when \(2\delta=\delt\), and
  \eqref{b:wtphirr's'}, we also have
  \[ 
        |\inp{\nu'_\zc}{\partial_y}^N\tilde{\mathfrak a}(x,z,y,t,s)| \lesssim_N  (\delta^{-1}\mu)^N
\] 
 for any $N\in \mathbb N_0$.
        Putting those estimates together with \eqref{e:p1ydiffcP}, we get
     \begin{align}\label{e:bounds-B[2]} 
             |\tilde \cL^N \tilde{\mathfrak a}(x,z,y,t,s)| \lesssim_N  (|\nu_\zc-\nu |\lambda\delta\mu^{-1}\smp)^{-N}, 
    \end{align}
 since  $\delta\lesssim \mu\mu'$  and $|\nu_\zc-\nu |\gg \delta^{1/2}(\mu\mu')^{-1/2}$,   where 
     \begin{align*}
        \tilde\cL f = \inp{\nu_\zc'}{\partial_y}\Big(\frac{f(y)}{-i\la\inp{\nu_\zc'}{\partial_y}(\cP(x,y,t) - \cP(z,y,s))}\Big).
    \end{align*}
Since \(\tilde \cL^\ast e^{i\lambda(\cP(x,y,t)-\cP(z,y,s))}=e^{i\lambda(\cP(x,y,t)-\cP(z,y,s))}\), we use integration by parts for the integral \(\fP_{\delta,\fr_\zc,\fs_\zc}^\kappa(\fP_{\delta,\fr,\fs}^\kappa)^*(x,z)\) with \(\tilde\cL^\ast\) and \(\tilde\cL\). As a result, using \eqref{e:bounds-B[2]}, we obtain \eqref{e:kerest-PP*1}.

Finally, concerning the assertion $iii)$, one can show the estimates for  the operators  $(\fP_{\dels, \fr_\zc,\fs_\zc})^*\fP_{\dels, \fr,\fs}$ and $\fP_{\dels, \fr_\zc,\fs_\zc}(\fP_{\dels, \fr,\fs})^*$ by the same argument. We omit the details. 
\end{proof}

\subsection{Almost orthogonality within a fixed input rectangle} 
Proposition \ref{prop:alortho-1}  shows almost orthogonality between  the operators $\fP_{\delta,\fr_\zc,\fs_\zc}^\pm$ and $\fP_{\delta,\fr,\fs}^\pm$ when $\theta(\fr_\zc)$ and $\theta(\fr)$  or $\theta(\fr_\zc')$ and  $\theta(\fr')$ are sufficiently separated. 
However,  if  $\theta(\fr_\zc)=\theta(\fr)$  and $\theta(\fr_\zc')=\theta(\fr')$, no such almost orthogonality is expected. 
In this case, thanks to the fact that $\fr\times\fr', \fr_\zc\times \fr'_\zc\in \mathfrak R^\pm_\delta$, we need to consider  the 
case $\fr=\fr_\zc$ and $\fr'=\fr'_\zc$ (see Section \ref{sec:conclude}). However,  even for such a case, it turns out that there is  almost orthogonality between the operators  
$\fP_{\delta,\fr,\fs}^\kappa$, $\fs\in \mathfrak S(\fr')$, provided that $\fs_\zc$ and $\fs$ are separated enough.

\begin{prop}\label{prop:alortho-2} Let $\dels\le\delta<\delt$ and $\kappa\in\{+,-,\circ\}$.  Let $\fr\times \fr'\in \fR^\kappa_\delta$, and   $\fs_\zc=\fs(\ell_1,\fr'),\ \fs=\fs(\ell_2,\fr')\in \mathfrak S(\fr')$
 (see \eqref{s'}). Then, if $|\ell_1 - \ell_2|\gg 1$, i.e. $\dist(\fs_\zc, \fs)\gg  \delta^{1/2}(\mu'/\mu)^{1/2} $, the estimate
    \begin{align}
    \label{fpdel}
            \|(\fP_{\delta,\fr,\fs_\zc}^\kappa)^*\fP_{\delta,\fr,\fs}^\kappa\|_{2\to 2} \lesssim_N   B(\delta):=\frac{\delta^4\mu^{-\frac{7}{2}}(\mu')^{-\frac{1}{2}}}{(|\ell_1-\ell_2|\lambda\delta^{\frac{3}{2}}\mu^{-\frac{3}{2}})^{N}}, \quad \kappa=\pm, \circ
    \end{align}
    holds  for any  $N\in \N$. Furthermore, if $\delta =\dels$, then for $|\ell_1-\ell_2|\gg 1$,
    \begin{align}
        \label{fpdel*}
        \|(\fP_{\dels, \fr,\fs_\zc})^*\fP_{\dels, \fr,\fs}\|_{2\to 2} \lesssim_N  B(\dels)=|\ell_1-\ell_2|^{-N}\lambda^{-\frac{8}{3}}\mu^{\frac{1}{2}}(\mu')^{-\frac{1}{2}}.
    \end{align}
\end{prop}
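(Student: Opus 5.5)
We will follow the template of part i) of Proposition~\ref{prop:alortho-1}: estimate the kernel of $(\fP_{\delta,\fr,\fs_\zc}^\kappa)^*\fP_{\delta,\fr,\fs}^\kappa$ pointwise by integrating by parts in $x$, and then apply Lemma~\ref{schur}. Here the separation comes from the $\nu'_\sprp$-coordinates of $y\in 2\fs_\zc$ and $w\in 2\fs$, not from the directions. Since $|\fs_\zc|,|\fs|\sim\delta^{3/2}\mu^{-3/2}\smp$, Lemma~\ref{schur} reduces \eqref{fpdel} to the kernel bound
\[
\sup_{(y,w)\in\fs_\zc\times\fs}\big|(\fP_{\delta,\fr,\fs_\zc}^\kappa)^*\fP_{\delta,\fr,\fs}^\kappa(y,w)\big|\lesssim_N \frac{\delta^{\frac52}\mu^{-2}(\mu')^{-1}}{(|\ell_1-\ell_2|\lambda\delta^{\frac32}\mu^{-\frac32})^N}.
\]
The kernel is given by \eqref{e:kerest-P*P12}, with amplitude $\mathfrak a$ as in \eqref{axyw}. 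The support bound \eqref{e:suppcB[1]} still holds, and so does the derivative bound \eqref{e:bounds-B[1]} in the direction $\nu=\theta(\fr)$.

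The key step is a lower bound for the $\nu$-derivative of the phase:
\[
|\inp{\nu}{\partial_x}(\cP(x,w,s)-\cP(x,y,t))|\gtrsim \mu^{-\frac12}|\inp{\nu'_\sprp}{y-w}|\sim \mu^{-\frac12}|\ell_1-\ell_2|\,\delta^{\frac12}\Big(\frac{\mu'}{\mu}\Big)^{\frac12}
\]
when $|\ell_1-\ell_2|\gg1$. To prove it we use the splitting \eqref{i:p1xdiffcP}, $\partial_x(\cdots)=\mathcal M+\cR$. By \eqref{pPs}, $\partial_x\cP(x,w,S(x,w))=(\inp xw x-w)/\sin S(x,w)$, so $\mathcal M$ is linear in $w-y$ up to the variation of $1/\sin S$:
\[
\mathcal M=\frac{\inp x{w-y}x-(w-y)}{\sin S(x,w)}+(\inp xy x-y)\Big(\frac1{\sin S(x,w)}-\frac1{\sin S(x,y)}\Big).
\]
Pairing the first term with $\nu$ gives $\mu^{-1/2}\big(\inp x{w-y}\inp x\nu-\inp{w-y}\nu\big)$. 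Since $\nu\approx\fa\approx -\fa'_\sprp$ by Lemma~\ref{lem:alp} and the orientation conventions, and $\inp x\nu=O((\mu'/\mu)^{1/2})$ by Lemma~\ref{lem:directionab}, this term is $\sim\mu^{-1/2}|\inp{\nu'_\sprp}{w-y}|$. The $\nu'$-component of $w-y$ is at most $\sclp=\epz\delta\mu^{-1}$, much smaller than the separation. The second term involves $S(x,w)-S(x,y)$, which we control with Lemma~\ref{lem:bdsS*} (bound \eqref{nup}) and the fact $|\inp{\nu'}{w-y}|\lesssim\delta\mu^{-1}$. Its $\nu$-component is of size $\mu^{-1}\cdot|\inp{\nu}{\inp xy x-y}|\cdot|w-y|$, and $|\inp\nu{\inp xy x -y}|\lesssim|\inp{\fa}{\fb}|\mu+\delta^{1/2}$ is small by Lemma~\ref{lem:alp}, so this contribution is dominated.

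This comparison is the main obstacle. The separation $|\ell_1-\ell_2|\delta^{1/2}(\mu'/\mu)^{1/2}$ is only slightly larger than the remainder $\cR$ bounded in \eqref{e:bd-R[1]}, namely $\delta^{1/2}\mu^{-1}\smp$, after multiplying by $\mu^{-1/2}$: both are of order $\delta^{1/2}\smp\mu^{-1}$. So we need the implicit constant in $|\ell_1-\ell_2|\gg1$ to be large, compared with the constant in \eqref{e:bd-R[1]} and with $c_1$. If the crude bound on $\cR$ does not suffice, we would refine it. The $\nu$-component of $\partial_x\partial_u\cP=(x-w\cos u)/\sin^2u$ is $\mu^{-1}|\inp\nu{x-\inp xw w}|+O(\mu^{-1}\delta^{1/2})$, and $x-\inp xw w\parallel \fa(x,w)\approx\nu$ has size $\sim(\mu\mu')^{1/2}$. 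Hence the main part of $\cR$ is $\mu^{-1/2}(\mu')^{1/2}\big((s-S(x,w))-(t-S(x,y))\big)$ times comparable factors. This is again $\lesssim\delta^{1/2}\smp\mu^{-1}$ with a fixed constant that does not depend on $|\ell_1-\ell_2|$, so taking $|\ell_1-\ell_2|$ large beats it.

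With the lower bound in hand, we use the same second-derivative bound \eqref{e:ub-px2diffcP1}, and note that $\inp{\nu}{\partial_x}^j$ of the phase vanishes for $j\ge3$. The ratio of the second to the first derivative is $\lesssim \mu^{-1/2}/(\mu^{-1/2}\delta^{1/2}(\mu'/\mu)^{1/2})\lesssim \delta^{-1}(\mu\mu')^{1/2}$, using $\delta\lesssim\mu\mu'$. Each integration by parts then gains a factor
\[
\big(\lambda\,\mu^{-\frac12}|\ell_1-\ell_2|\delta^{\frac12}(\mu'/\mu)^{\frac12}\big)^{-1}\delta^{-1}(\mu\mu')^{\frac12}=(|\ell_1-\ell_2|\lambda\delta^{\frac32}\mu^{-\frac32})^{-1},
\]
and combined with \eqref{e:suppcB[1]} this gives the kernel bound. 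For $\delta=\dels$ the argument is the same, with $\psi_{\dels}$ in place of $\Psi^\kappa_\delta$. Substituting $\dels\sim\lambda^{-2/3}\mu$ gives $\lambda\dels^{3/2}\mu^{-3/2}\sim1$ and $\dels^4\mu^{-7/2}(\mu')^{-1/2}\sim\lambda^{-8/3}\mu^{1/2}(\mu')^{-1/2}$, which is \eqref{fpdel*}.
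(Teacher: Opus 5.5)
Most of your argument is sound: the reduction via Lemma~\ref{schur}, the use of \eqref{e:suppcB[1]} and \eqref{e:bounds-B[1]}, and the lower bound $|\inp{\nu}{\partial_x}(\cP(x,w,s)-\cP(x,y,t))|\gtrsim\delta^{1/2}\mu^{-1}\smp|\ell_1-\ell_2|$, with $\cR$ absorbed by taking $|\ell_1-\ell_2|$ large. Your direct expansion of $\mathcal M$ linearly in $w-y$ is a legitimate and somewhat more elementary substitute for the paper's route through $\fb(x,w)-\fb(x,y)$, Lemma~\ref{lem:directionsofab}\,ii) and \eqref{e:bd-Ixwy}.

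The gap is in the integration by parts. Write $\Phi=\cP(x,w,s)-\cP(x,y,t)$. You keep the crude bound \eqref{e:ub-px2diffcP1}, $|\inp{\nu}{\partial_x}^2\Phi|\lesssim\mu^{-1/2}$. You then assert $\mu^{-1/2}/(\delta^{1/2}\mu^{-1}\smp)=\delta^{-1/2}(\mu/\mu')^{1/2}\lesssim\delta^{-1}(\mu\mu')^{1/2}$ "because $\delta\lesssim\mu\mu'$". That inequality is actually equivalent to $\delta\lesssim(\mu')^2$. This fails on most of the range $\dels\le\delta<\delt$, since $\delt\sim\epz\mu\mu'\gg(\mu')^2$. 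When $\delta\gg(\mu')^2$ the second-derivative term dominates. Each integration by parts then gains only $\mu^{-1/2}(\lambda|\inp{\nu}{\partial_x}\Phi|^2)^{-1}\sim\mu^{3/2}(\lambda\delta\mu'|\ell_1-\ell_2|^2)^{-1}$. This is worse than the claimed factor $(|\ell_1-\ell_2|\lambda\delta^{3/2}\mu^{-3/2})^{-1}$ by $\delta^{1/2}/(\mu'|\ell_1-\ell_2|)$. For instance, with $\mu$ fixed, $\mu'\sim\lambda^{-1/2}$ and $\delta\sim\delt$, the gain is $\sim|\ell_1-\ell_2|^{-2}$ with no power of $\lambda$ at all.

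That $\lambda$-gain is essential. The Schur-type diagonal size $\delta^4\mu^{-7/2}(\mu')^{-1/2}$ exceeds the square of \eqref{e:fP-dkap} by $(\lambda\delta^{3/2}\mu^{-3/2})^2$, so without it \eqref{fpdel} and its use in Section~\ref{sec:conclude} are lost. The same problem affects \eqref{fpdel*} whenever $\dels\gg(\mu')^2$.

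The missing observation, which the paper uses, is that $y$ and $w$ lie in the same rectangle $2\fr'$. This makes the second derivative much smaller than in Proposition~\ref{prop:alortho-1}. Indeed $\inp{\nu}{\partial_x}^2\Phi=\sin(t-s)/(\sin s\sin t)$, and $|t-s|\le|t-S(x,y)|+|S(x,y)-S(x,w)|+|S(x,w)-s|\lesssim\delta^{1/2}\mu^{-1/2}$. The two outer terms are controlled by the time cutoffs. The middle one is controlled because, for $y,w\in2\fr'$, $|\cos S(x,y)-\cos S(x,w)|=|\inp{x}{y-w}|\lesssim\delta\mu^{-1}+\delta^{1/2}(\mu'/\mu)^{1/2}$. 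Hence $|\inp{\nu}{\partial_x}^2\Phi|=O(\delta^{1/2}\mu^{-3/2})$. The ratio to the first derivative is then $\lesssim(\mu\mu')^{-1/2}|\ell_1-\ell_2|^{-1}\lesssim\delta^{-1}(\mu\mu')^{1/2}$, now genuinely by $\delta\lesssim\mu\mu'$. With this replacement, your per-step gain and the rest of your argument, including the bottom-scale case, go through.
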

\begin{proof}
    The proof is similar to that of Proposition \ref{prop:alortho-1}.   As before,  we prove \eqref{fpdel} by obtaining an estimate for the kernel  $(\fP_{\delta,\fr,\fs_\zc}^\kappa)^*\fP_{\delta,\fr,\fs}^\kappa$ via integration by parts. 
    
 To do this, we recall that  \eqref{e:kerest-P*P12} with $\fr_\zc=\fr$. Once 
    we have \begin{align}
    \label{e:ub-difffss0}
        \big|\inp{\nu}{\partial_x(\cP(x,w,s) - \cP(x,y,t))}\big|\gtrsim \delta^{\frac{1}{2}}\mu^{-1}\smp|\ell_1-\ell_2|.
    \end{align}
   whenever $\mathfrak a(x,y,w,t,s)\neq 0$, routine integration by parts yields \eqref{fpdel}. Indeed, 
   Observe also that \eqref{e:bounds-B[1]} and \eqref{e:suppcB[1]} remain
   valid. Moreover, in the present case $y,w\in2\fr'$, so
   \eqref{axyw}, \eqref{psi0}, \eqref{psi1}, and \eqref{scale} give
   \[
       \big|\inp{\nu}{\partial_x}^2
       \big(\cP(x,w,s)-\cP(x,y,t)\big)\big|
       \lesssim\delta^{\frac{1}{2}}\mu^{-\frac{3}{2}}.
   \]
   Since $|\ell_1-\ell_2|\gg1$, the same argument as in the proof of
   Proposition \ref{prop:alortho-1} yields the desired estimate.
   
    To show \eqref{e:ub-difffss0}, breaking $\cP(x,w,s) - \cP(x,y,t)$ in the same way as before,  we have \eqref{i:p1xdiffcP}. Since $\fr\times \fr'\in \fR^\kappa_\delta$, the estimate  \eqref{e:bd-R[1]} continues to hold.  Consequently, we may regard $\cR$ as a minor error, since $|\ell_1-\ell_2|\gg 1$.  
 Therefore, \eqref{e:ub-difffss0} follows if we prove 
    \begin{align}\label{e:ub-difffss}
        \big|\inp{\nu}{\mathcal M(x,y,w)}\big|\gtrsim \delta^{\frac{1}{2}}\mu^{-1}\smp|\ell_1-\ell_2| 
    \end{align}
    for $(y,w)\in \fs_\zc\times \fs$ and $x\in \fr$, where $\mathcal M(x,y,w)$ is given by \eqref{Mxy}.  Recalling  \eqref{pPs}, \eqref{pxpy1},   and \eqref{normal'}, we can write 
    \begin{align}
        \begin{aligned}\label{i:diffpxcP-2t}
            \mathcal M(x,y,w)= H(x,w,y) (\fb(x,w) - \fb(x,y)) + I(x,w,y)\fb(x,y),
        \end{aligned}
    \end{align}
    where
    \begin{align*}
     H(x,w,y)= \frac{|\partial_y\cD(x,w)|}{2\sin S (x,w)}, \quad   I(x,w,y) = \frac{1}{2}\Big(\frac{|\partial_y\cD(x,w)|}{\sin S (x,w)} - \frac{|\partial_y\cD(x,y)|}{\sin S (x,y)}\Big).
    \end{align*}
   Note that $H(x,w,y)\sim \mu^{1/2}$ by \eqref{e:sizeofxDyD'} and
   $\sin S(x,w)\sim\mu^{1/2}$. Since $|\ell_1-\ell_2|\gg 1$ and
   $\mu'\ll \mu$, to show \eqref{e:ub-difffss} we need only to prove
    \begin{align}\label{e:ub-difffss-normal}
     |\inp{\nu}{\fb(x,w) - \fb(x,y)}|  \sim    |\ell_1-\ell_2| \delta^{\frac{1}{2}}\mu^{-\frac{3}{2}} (\mu')^{\frac{1}{2}}.
    \end{align} 
     
      Indeed, this follows  since  $
        |I(x,w,y)|\lesssim \delta\mu^{-3/2}\ll  \delta^{1/2}(\mu')^{1/2}\mu^{-1}. $ To see this, setting  $\displaystyle J(x,w,y)= 2|\partial_y\cD(x,w)|/\sin S (x,w) + 2|\partial_y\cD(x,y)|/\sin S (x,y)$, we  have
    \begin{align}\label{i:Ixwy24}
        I(x,w,y)J(x,w,y)= \mathfrak b(x,w){|\partial_y\cD(x,w)|^2} - \mathfrak b(x,y) {|\partial_y\cD(x,y)|^2}. 
    \end{align} 
    where $\mathfrak b(x,y)=(1-\inp xy^2)^{-1}$.  On the other hand,  \eqref{pxpy1} gives   $|\partial_y\cD(x,u)|^2/4= (1- \inp xu^2)(1-|x|^2)- \cD(x,u)$. Thus, 
      \begin{align*}
     I(x,w,y)J(x,w,y)= 4\big(\mathfrak b(x,y){\cD(x,y)}- \mathfrak b(x,w){\cD(x,w)}\big).
    \end{align*}
Note from \eqref{inp} that  $\mathfrak b(x,y)$, $\mathfrak b(x,w)\sim \mu^{-1}$, and $J(x,w,y)\sim  \mu^{1/2} $ by \eqref{e:sizeofxDyD'}. 
  Since 
$|\cD(x,y)|\lesssim \delta$ and $|\cD(x,w)|\lesssim \delta$,   we get $|I(x,w,y)|\mu^{1/2}\lesssim \delta/\mu$. Therefore,  
\Be \label{e:bd-Ixwy}         |I(x,w,y)|\lesssim \delta\mu^{-\frac{3}{2}}.\Ee
 
Now, it remains to show \eqref{e:ub-difffss-normal}. 
    On the other hand, since 
    \[ |\inp{\nu_\sprp'}{w-y}|\gtrsim  |\ell_1-\ell_2| \delta^{\frac{1}{2}}\mu^{-\frac{1}{2}} (\mu')^{\frac{1}{2}}, 
    \] 
    for $(y,w)\in \fs_\zc\times \fs$, using Lemma \ref{lem:locfab} we have $|\inp{\fb_\sprp(x,w)}{w-y}|\gg  \delta \mu^{-3/2}(\mu')^{1/2}$. Thus, 
	     by $ii)$ in  Lemma \ref{lem:directionsofab}, 
	     \[ |\fb(x,w) - \fb(x,y)|\sim \mu^{-1}| \inp{\fb_\sprp(x,w)}{y-w}|\gtrsim    |\ell_1-\ell_2| \delta^{1/2}\mu^{-3/2} (\mu')^{1/2} .\]
Set \(u=\fb(x,w)\) and \(v=\fb(x,y)\). Since \(u\) and \(v\) are
unit vectors, \(u-v\) is orthogonal to \(u+v\). By Lemma
\ref{lem:locfab}, both \(u\) and \(v\) lie in a sufficiently small arc
about \(\nu'\); hence
$
    |\inp{\nu_\sprp'}{u-v}|\sim |u-v|.
$
Moreover, Lemma \ref{lem:alp} and the orientation in Lemma
\ref{lem:directionab} imply that \(|\nu+\nu_\sprp'|\ll1\). Therefore, $|\inp{\nu}{u-v}|\sim |u-v|.$ Combining this with the preceding estimate gives
\[
    |\inp{\nu}{\fb(x,w)-\fb(x,y)}|
    \sim |\ell_1-\ell_2|\delta^{\frac12}\mu^{-\frac32}(\mu')^{\frac12}.
\]
   
     Consequently, using Lemma \ref{lem:pxpy}  we have 
	     \[  |H(x,w,y) \inp{\nu}{\fb(x,w) - \fb(x,y)}|  \sim    |\ell_1-\ell_2| \delta^{\frac{1}{2}}\mu^{-1} (\mu')^{\frac{1}{2}}.\] 
 On the other hand,    $|\inp{\nu}{\fb(x,y)}|\lesssim \delta^{1/2}  \mu^{-1}$ by Lemmas \ref{lem:alp} and \ref{lem:locfab}. Combining this with 
    \eqref{e:bd-Ixwy}, we see 
    \[  |I(x,w,y) \inp{\nu}{\fb(x,y)}|\ll  \delta^{\frac{1}{2}}\mu^{-1} (\mu')^{\frac{1}{2}}.\] 
 Putting these two inequalities together yields \eqref{e:ub-difffss}. 
\end{proof}
These off-diagonal estimates provide the almost-orthogonality input for Section~\ref{sec:conclude}; the corresponding diagonal \(L^2\) bounds are established next in Section~\ref{sec:individual}.
 \section{Estimates for the localized pieces}
\label{sec:individual}
In this section, we obtain $L^2$ bounds on the localized operators $\fP_{\delta,\fr,\fs}^\kappa$ and $\fP_{\dels, \fr,\fs}$.

\begin{prop}\label{prop:L2est}
    Let $\dels\le\delta<\delt$ and $\kappa\in\{+,-,\circ\}$. Suppose that $\fr\times \fr'\in \fR^\kappa_\delta$ and $\fs\in \mathfrak S(\fr')$. Then 
    \begin{align}\label{e:fP-dkap}
        \|\fP_{\delta,\fr,\fs}^\kappa \|_{L^2\to L^2}\lesssim \lambda^{-1}\delta^{\frac{1}{2}}(\mu\mu')^{-\frac{1}{4}}.
    \end{align}
    Furthermore,  if  $\fr\times \fr'\in \fR_{\dels}^\circ$ and $\fs\in \mathfrak S(\fr')$, then 
    \begin{align}\label{e:fP-ast}
        \|\fP_{\dels, \fr,\fs} \|_{L^2\to L^2}\lesssim \lambda^{-\frac{4}{3}}\mu^{\frac{1}{4}}(\mu')^{-\frac{1}{4}}.
    \end{align}
\end{prop}

As shown below, the estimates for $\fP_{\delta,\fr,\fs}^-$, $\fP_{\delta,\fr,\fs}^\circ$, and $\fP_{\dels, \fr,\fs}$ are less involved than the estimate for 
$\fP_{\delta,\fr,\fs}^+$, whose analysis constitutes the main part of this section.

\subsection{Nonstationary and bottom-scale pieces}   We first deal with $\fP_{\delta,\fr,\fs}^-$, $\fP_{\delta,\fr,\fs}^\circ$, and $\fP_{\dels,\fr,\fs}$. 
We begin by showing estimates for the operator $\fP_{\dels,\fr,\fs}$, which is the marginal case with $\dels\sim \lambda^{-2/3}\mu$. 

\subsubsection*{Estimate for $\fP_{\dels,\fr,\fs}$} The desired estimate \eqref{e:fP-ast} is easy to show; in fact, it follows from the van der Corput lemma and Schur’s test. Indeed, note that 
\[ (\partial_t)^3\mathcal P(x,y,t)
=
\frac{  H(x,y,\cos t) }{\sin^4 t}, \] 
where $ H(x,y,s)=\langle x,y\rangle s(s^2+5)
-(|x|^2+|y|^2)(1+ 2s^2)
$. Expanding at $t=0$, we have 
\[
H(x,y,\cos t) 
=
-3|x-y|^2
+2|x-y|^2t^2
+\left(
\frac{13}{12}\langle x,y\rangle
-\frac{2}{3}(|x|^2+|y|^2)
\right)t^4
+O(t^6).
\]
By the localization of \(\cB\times\cB'\), the coefficient of \(t^4\)
is bounded above by a fixed negative constant, uniformly in \(x\) and
\(y\). On
\(\supp\psi_\vartriangle\), we have \(t\sim\mu^{1/2}\). Since
\(\mu\le\epz\) with \(\epz\) sufficiently small, the positive
\(2|x-y|^2t^2\) term and the \(O(t^6)\) remainder are absorbed by the
two negative leading terms. Consequently,
\[
    -H(x,y,\cos t)\gtrsim |x-y|^2+t^4\gtrsim\mu^2.
\]
The reverse bound follows from the same expansion, and hence
\(|H(x,y,\cos t)|\sim\mu^2\) on \(\supp\psi_\vartriangle\).
Consequently,  $|(\partial_t)^3\mathcal P(x,y,t)|\sim 1$ for $(x,y,t)\in \supp \psi_\vartriangle$.  

Now, recall \eqref{Olds} and \eqref{bsso}.   By the van der Corput lemma (see \cite{St93}) we see that 
\Be\label{e:ptbd-fP*}  |\fP_{\dels,\fr,\fs}(x,y)|\lesssim \lambda^{-\frac{1}{3}} \mu^{-\frac{1}{2}} \chi_{2\fr} (x) \chi_{2\fs}(y).\Ee 
 Since $|\fr|\le  \sigma\sigma_\sprp  $ and $|\fs|\le \sclp \sclrp\big(\mu'/\mu\big)^{1/2} $, by Lemma \ref{schur} and \eqref{scale} with $\delta=\dels$ we get \eqref{e:fP-ast}. 
	 Here we used that $\dels\sim \lambda^{-2/3}\mu$. 
 
\subsubsection*{Estimates for $\fP_{\delta,\fr,\fs}^-$ and $\fP_{\delta,\fr,\fs}^\circ$}

To obtain \eqref{e:fP-dkap} for $\kappa = -,\circ$, we first  recall  \eqref{Oldpm},  \eqref{Oldc}, and \eqref{bss}.  Observe that $\partial_t\cP(x,y,t)$ has a lower bound 
\begin{align}\label{e:1stlb-cP}
    |\partial_t\cP(x,y,t)|\gtrsim \delta\mu^{-1}
\end{align}
provided that  $\varphi^-_\delta(x,y)\psi_{2\delta}(x,y,t)\neq 0$ or $\varphi_\delta(x,y)\psi_{\delta}^\circ(x,y,t)\neq 0$. Concerning the first case, note that $-\cD(x,y) \sim \delta$, \eqref{e:1stlb-cP} follows from \eqref{i:ptcP-Q}. In the latter case, \eqref{psi1} gives \(|t-S(x,y)|\ge 2^2C_\ast\delta^{1/2}\mu^{-1/2}\), while \eqref{r:suppcon-vphi} gives \(|\cD(x,y)|\le2\delta\). Hence \eqref{tsxy} holds, and \eqref{e:simpleobv} yields \eqref{e:1stlb-cP}.

As seen in the proof of \eqref{osc-est}, $\partial_t \cP(x,y,t)$ is monotonic on a finite number of intervals. Thus, the van der Corput lemma together with Lemma \ref{lem:bd-derivpsi} (and \eqref{e:bd-psi-top} when \(2\delta=\delt\)) gives 
\begin{align}\label{e:ptbd-fPcn}
   | \fP_{\delta,\fr,\fs}^\kappa(x,y)|\lesssim \lambda^{-1}\delta^{-1}\mu^{\frac{1}{2}}  \chi_{2\fr} (x) \chi_{2\fs}(y)
\end{align}
for $\kappa=\circ,-.$ Combining this with Lemma \ref{schur}, we obtain the desired estimates in Proposition \ref{prop:L2est}.

\medskip

The rest of this section is devoted to the proof of \eqref{e:fP-dkap} for $\kappa=+$.

\subsection{Stationary-phase reduction}\label{subsec:stationary-phase}
 We now address the task of establishing \eqref{e:fP-dkap} with $\kappa = +$. For this purpose, we use the stationary phase method to obtain an asymptotic expansion for the kernel of $\fP_{\delta,\fr,\fs}^+$.

Note that  $\cD(x,y)\sim\delta$ and $t\in I_\mu$ whenever  $\varphi^+_\delta(x,y)\psi_{2\delta}(x,y,t)\neq 0$.
Henceforth in this subsection, \(S_1\) and \(S_2\) are considered only
on \(\supp\varphi_\delta^+\), where \(\cD(x,y)\sim\delta>0\).
Recalling \eqref{i:ptcP-Q}, for such \((x,y)\) the function
\(\partial_t\cP(x,y,t)\) has two zeros \(S_1(x,y),S_2(x,y)\) on
\([0,\pi/2]\), given by
\begin{equation}
\label{cosS}
\cos S_\ell (x,y) = \inp xy - (-1)^\ell \sqrt{\cD(x,y)}, \quad \ell=1,2
\end{equation}
On this support, \(S_1(x,y)<S(x,y)<S_2(x,y)\), and both
\(S_1(x,y)\) and \(S_2(x,y)\) belong to \(I_\mu\). Moreover,
\[
\cos S_1(x,y)-\cos S_2(x,y)=2\sqrt{\cD(x,y)}.
\]
Thus, it follows that 
\[
   S_2(x,y) - S_1(x,y) \sim \mu^{-\frac{1}{2}}\delta^{\frac{1}{2}}.
\]
By \eqref{i:ptcP-Q},  we have 
\Be 
 \label{1st-d}  \partial_t \cP(x,y,t)=- \frac{\prod_{\ell=1}^2(\cos t - \cos S_\ell (x,y))}{2\sin^2 t}.
\Ee  Hence, it follows that
\[ 
    \partial_t^2 \cP(x,y,S_\ell(x,y)) =  (-1)^\ell \frac{ \cos S_2(x,y) - \cos S_1(x,y)}{2\sin S_\ell(x,y)}, \quad \ell=1, 2. 
\] 
So, we have 
\begin{align}\label{e:est-pt^2cP}
    |\partial_t^2 \cP(x,y,S_\ell(x,y))| \sim \delta^{\frac{1}{2}}\mu^{-\frac{1}{2}}, \quad \ell=1, 2. 
\end{align}

On the other hand, a calculation shows
\begin{align*}
    \cP(x,y,t) = \frac{t}{2} -\frac{(|x|^2+|y|^2)\sin t}{2(1 + \cos t)} + \frac{(1 - \inp xy)^2}{2\sin t} - \frac{\cD(x,y)}{2 \sin t}.
\end{align*}
Since $(1 - \inp xy)^2\lesssim \mu^2$ and $\cD(x,y)\lesssim \mu\mu'$, the above expression implies
\begin{align}\label{e:est-pt^ncP}
    |\partial_t^n \cP(x,y,t)| \lesssim \mu^{-\frac{1}{2}(n-3)}
\end{align}
for $t\sim \mu^{1/2}$ and $n\ge 3$. To apply the stationary phase method, we need to decompose the integral into the parts near and 
away from the critical points.  To do this,  break 
\begin{align*}
    \psi_{2\delta}(x,y,t) = \psi_{\delta,0}^*(x,y,t) + \sum_{\ell =1,2}\psi_{\delta,\ell}^*(x,y,t), 
\end{align*}
where 
\begin{align}\label{i:defpsipm}
    \psi_{\delta, \ell}^*(x,y,t) = \psi_{2\delta}(x,y,t)\eta\Big(\frac{t - S_\ell(x,y)}{\epz\delta^{\frac{1}{2}}\mu^{-\frac{1}{2}}}\Big), \quad \ell=1, 2
\end{align}
for a sufficiently small $\epz > 0$. Note that $\supp(\psi_{\delta, 1}^*)$ and $\supp(\psi_{\delta, 2}^*)$ are separated by $c\delta^{1/2}\mu^{-1/2}$ with some $c>0$ if $\epz$ is sufficiently small. Thus, by Lemma \ref{lem:bd-derivpsi} (and by \eqref{e:bd-psi-top} when $2\delta=\delt$), we have
\begin{align}\label{e:psi*circbds}
    |\partial_t^N\psi_{\delta,\ell}^*(x,y,t)| \lesssim_N (\delta^{-\frac{1}{2}}\mu^{\frac{1}{2}})^N, \quad \ell=0,1,2 
\end{align}
for $N\in\N$.

Recalling $\fP_{\delta, \fr,\fs}^+= \mathcal O_\lambda[ \varphi^+_{\delta,\fr,\fs}, \psi_{2\delta}]$, we set 
\[ \fP_{\ell,\delta,\fr,\fs}^+=  \mathcal O_\lambda[ \varphi^+_{\delta,\fr,\fs}, \psi_{\delta, \ell}^*],\quad \ell=0,1,2.  \] 
Note that \eqref{e:1stlb-cP}  holds on the support of $\psi_{\delta,0}^*(x,y,t)$.  
 Using the van der Corput lemma, one can deduce that 
 \Be
 \label{p+circ}
  |\fP_{0,\delta,\fr,\fs}^+(x,y)| \lesssim \lambda^{-1}\delta^{-1}\mu^{\frac{1}{2}}  \chi_{2\fr} (x) \chi_{2\fs}(y).
  \Ee
As before,     applying Lemma \ref{schur} yields the desired $L^2$ estimate for $\fP_{0,\delta,\fr,\fs}^+$ (cf. \eqref{e:ptbd-fPcn}). 

We proceed to estimate $\fP_{\ell,\delta,\fr,\fs}^+$, $\ell=1,2$. The two operators can be handled in the same manner. 
Recall  \eqref{olambda}. Note that the kernel of the operator $\fP_{\ell,\delta,\fr,\fs}^+$ is given by a product
$\varphi_{\delta,\fr,\fs}^+(x,y)$ and  $\mathcal O_\lambda[\psi_{\delta, \ell}^*]$. 
After changing  variables $t\to \tau_\ell(t)$,  we have 
\[ 
 \mathcal O_\lambda[\psi_{\delta, \ell}^*](x,y)= {\delta^{\frac{1}{2}} \mu^{-1}}   \int   e^{i   (\delta^{\frac{3}{2}}\mu^{-\frac{3}{2}} \lambda)  \Phi_\ell(x,y, t)}     \psi_{\delta,\ell}^*(x,y, \tau_\ell(t)) dt,   \] 
where
\begin{align*}
    \tau_\ell(x,y,t) &:= \delta^{\frac{1}{2}}\mu^{-\frac{1}{2}} t + S_\ell (x,y),
    \\ 
    \Phi_\ell(x,y, t)&:=   \delta^{-\frac{3}{2}}\mu^{\frac{3}{2}} \cP(x,y, \tau_\ell(x,y,t)). 
\end{align*}

Note $\supp  \psi_{\delta,\ell}^*(x,y, \tau_\ell(\cdot))\subset (-\epz, \epz)$.  To obtain an asymptotic expansion for $ \mathcal O_\lambda[\psi_{\delta, \ell}^*]$, we use \cite[Theorem 7.7.5]{H90}, which allows us to handle the
oscillatory integral uniformly in $x$ and $y$. For this purpose, we need to check several conditions. First, note from  \eqref{e:est-pt^ncP} that 
 $ \partial_t^k \Phi_\ell(x,y,\cdot)= O(1)$ for $k\ge 3$ on $\supp   \psi_{\delta,\ell}^*(x,y, \tau_\ell(\cdot)).$  
Also, using \eqref{1st-d}, we have $\partial_t\Phi_\ell(x,y,0)=0$, $|\partial_t^2\Phi_\ell(x,y,\cdot)|\sim 1$, and  
 \[ \left|\frac{t}{\partial_t\Phi_\ell(x,y,t)}\right|\lesssim 1\] 
on $\supp  \psi_{\delta,\ell}^*(x,y, \tau_\ell(\cdot))$ with sufficiently small $\epz>0$.  
For $\ell=1,2$, set
\[
c_\ell
:=\sqrt{2\pi}\exp\left(
  \frac{i\pi}{4}\operatorname{sgn}
  \big(\partial_t^2\Phi_\ell(x,y,0)\big)
\right)
=\sqrt{2\pi}\,e^{\frac{i(-1)^{\ell+1}\pi}{4}}.
\]
Finally, the large parameter \(\Lambda:=\lambda\delta^{3/2}\mu^{-3/2}\) satisfies \(\Lambda\gtrsim1\), since \(\delta\ge\dels\sim\lambda^{-2/3}\mu\). Together with the preceding support, derivative, and nondegeneracy bounds, this verifies the hypotheses for a uniform application of \cite[Theorem 7.7.5]{H90}.
Then, by \cite[Theorem 7.7.5]{H90}, we obtain  
\begin{align}\label{i:exp-fP+}
   \mathcal O_\lambda[\psi_{\delta, \ell}^*](x,y)= \lambda^{-\frac{1}{2}}(\delta\mu)^{-\frac{1}{4}} 
   c_\ell\psi_{\delta,\ell}^*(x,y, S_\ell(x,y)) \frac{e^{i(\lambda\delta^{\frac{3}{2}}\mu^{-\frac{3}{2}})\Phi_\ell(x,y,0)}}{\sqrt {|\partial_t^2 \Phi_\ell(x,y, 0)|}}   + E_{\ell}(x,y)
\end{align}
for $\ell=1,2$, where   
\begin{align*}
    E_{\ell}(x,y) &= O(\lambda^{-\frac{3}{2}}\delta^{-\frac{7}{4}} \mu^{\frac{5}{4}}). 
\end{align*}
The contribution from $E_{\ell}$ is acceptable. Indeed, since
$\lambda^{-2/3}\mu\lesssim \delta$,
\[
|(\varphi^+_{\delta,\fr,\fs} E_{\ell})(x,y)|
\lesssim \lambda^{-1}\delta^{-1}\mu^{\frac{1}{2}}
\chi_{2\fr}(x)\chi_{2\fs}(y).
\]
Lemma \ref{schur} gives
\[
\| \varphi^+_{\delta,\fr,\fs} E_{\ell}\|_{2\to 2}
\lesssim \lambda^{-1}\delta^{\frac{1}{2}}(\mu\mu')^{-\frac{1}{4}}.
\]
Therefore, we need only to handle the contribution from the major term. 

Let us set 
\begin{align}
\label{phiell}
 \Phi_\ell(x,y) &=\Phi_\ell(x,y,0)=  \delta^{-\frac{3}{2}}\mu^{\frac{3}{2}} \cP(x,y, S_\ell(x,y))
 \\ 
 \label{aell}
    A_{\ell}(x,y) &=  c_\ell\varphi_{\delta,\fr,\fs}^+(x,y)  \frac{\psi_{\delta,\ell}^*(x,y, S_\ell(x,y))}{\sqrt {|\partial_t^2 \Phi_\ell(x,y, 0)|}}.  
\end{align}
Consequently, the matter now reduces to showing 
\begin{align}\label{e:L2est-fG}
    \|\mathfrak O_{\ell} f\|_{L^2}\lesssim \lambda^{-\frac{1}{2}}\delta^{\frac{3}{4}}(\mu')^{-\frac{1}{4}}\|f\|_{L^2},
\end{align}
where 
\[
    \mathfrak O_{\ell} f(x) = \int e^{i(\lambda\delta^{\frac{3}{2}}\mu^{-\frac{3}{2}})\Phi_\ell(x,y)} A_{\ell}(x,y)  f(y) dy.
\]

\subsection{Estimates for the amplitude and the phase functions}
To establish \eqref{e:L2est-fG}, we follow an argument in \cite{LR22}. We will apply H\"ormander's $L^2$ boundedness theorem after freezing one coordinate in each of $x$ and $y$. For this purpose, it is necessary to obtain bounds on derivatives of the amplitude $A_{\ell}$ and the phase function $\Phi_\ell$.

Recalling the differential operators $\mathcal L_{\delta,\nu}$ and $\mathcal L_{\delta,\nu'}$ defined in \eqref{d:diff-L}, we begin with the following estimates for the derivatives of $S_\ell$.

\begin{lem}\label{lem:bounds-Spm}
    Let $\fr\times \fr'\in \cR_{\delta, \nu}\times \cR_{\delta,\nu'}'$ and  $\ell\in \{1,2\}$. Let $\wt\nu'$ be a unit vector such that $|\wt\nu'-\nu'|\le \epz(\mu'/\mu)^{1/2}$.
    Assume $\fr\times\fr'\in \fR^+_\delta$. Then, for $(x,y)\in 2\fr\times 2\fr'$ and $N\in\N$,
    \begin{align*}
        |(\mathcal L_{\delta, \nu})^NS_\ell(x,y)| + |(\mathcal L_{\delta, \wt\nu'})^NS_\ell(x,y)|  \lesssim_N  \delta^{\frac{1}{2}}\mu^{-\frac{1}{2}}.
    \end{align*}
\end{lem}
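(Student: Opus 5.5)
We will write $S_\ell$ explicitly and differentiate. By \eqref{cosS},
\[
\cos S_\ell(x,y)=\cos S(x,y)+(-1)^{\ell+1}\sqrt{\cD(x,y)} ,
\]
so $S_\ell=F_\ell(\inp xy,\cD(x,y))$ with $F_\ell(a,b)=\cos^{-1}\bigl(a+(-1)^{\ell+1}\sqrt b\bigr)$. On $2\fr\times2\fr'$ we have $\cD\sim\delta$ by \eqref{r:inc-rr'fR}, and $\sin S_\ell\sim\mu^{1/2}$. We will bound the derivatives of the two inputs, $\inp xy$ and $\sqrt{\cD}$, and then combine them with Faà di Bruno. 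The target is $\delta^{1/2}\mu^{-1/2}$, which equals $\delta^{1/2}$ (the size of a first variation of $\sqrt\cD$) divided by $\sin S_\ell$.

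\emph{Step 1: derivatives of $\inp xy$.} We have $\mathcal L_{\delta,\nu}\inp xy=\scl\inp{\nu}{y}$. Since $|\inp\nu{e_1}|\lesssim(\mu'/\mu)^{1/2}$ and $|y_2|\lesssim(\mu\mu')^{1/2}$, we get $\inp\nu y=O((\mu'/\mu)^{1/2})$. Hence this derivative is $O(\epz\delta\mu^{-1})$, and all higher $\mathcal L_{\delta,\nu}$-derivatives vanish. Similarly, $\mathcal L_{\delta,\wt\nu'}\inp xy=\sclp\inp{\wt\nu'}{x}=O(\delta\mu^{-1})$, with higher derivatives again zero. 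Both bounds are $\ll\delta^{1/2}$, since $\delta\le\mu\mu'$. This is the same mechanism as in Lemma~\ref{lem:bdsS*}.

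\emph{Step 2: derivatives of $\cD$.} We need $|\mathcal L^k\cD|\lesssim_k\delta$ on $2\fr\times2\fr'$ for $\mathcal L=\mathcal L_{\delta,\nu}$ or $\mathcal L_{\delta,\wt\nu'}$.

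For $k=1$ in the $x$-direction, write $\inp\nu{\partial_x\cD}=|\partial_x\cD|\inp\nu\fa$. By Lemma~\ref{lem:locfab} and \eqref{dir-con}, $|\fa-\nu|\lesssim\delta^{1/2}\mu^{-1}$, so $|\inp\nu{\partial_x\cD}|\lesssim(\mu\mu')^{1/2}$. Multiplying by $\scl=\epz\delta(\mu\mu')^{-1/2}$ gives $O(\delta)$.

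For $k=1$ in the $y$-direction, $|\inp{\wt\nu'}{\partial_y\cD}|\lesssim\mu$ by \eqref{e:sizeofxDyD'}. Multiplying by $\sclp=\epz\delta\mu^{-1}$ gives $O(\delta)$.

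Here we use that $\mathcal L$ only involves the short scales $\scl$ and $\sclp$, so tilting $\wt\nu'$ by $\epz(\mu'/\mu)^{1/2}$ does not matter.

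For $k\ge2$, $\cD$ is quadratic in each of $x$ and $y$ separately. Lemma~\ref{lem:pxpxD} bounds the second derivatives by $O(1)$, giving $\scl^2\cdot O(1)\lesssim\delta$ and $(\sclp)^2\cdot O(1)\lesssim\delta$ because $\delta\le\mu\mu'$. Third and higher derivatives in a single variable vanish.

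Since $\cD\sim\delta$, these bounds give $|\mathcal L^k\sqrt{\cD}|\lesssim_k\delta^{1/2}$ by the chain rule.

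\emph{Step 3: assembling.} Write $S_\ell=\cos^{-1}(g)$ with $g=\inp xy\pm\sqrt\cD$. Steps 1--2 give $|\mathcal L^kg|\lesssim_k\delta^{1/2}$ for $k\ge1$. Each derivative of $\cos^{-1}$ evaluated at $g$ is bounded by a power of $(1-g^2)^{-1/2}\sim\mu^{-1/2}$. Faà di Bruno then expresses $\mathcal L^NS_\ell$ as a sum of terms $(\cos^{-1})^{(j)}(g)\prod_{i=1}^j\mathcal L^{k_i}g$, each bounded by $\mu^{1/2-j}\,\delta^{j/2}$. The $j=1$ term gives $\delta^{1/2}\mu^{-1/2}$. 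The terms with $j\ge2$ carry an extra factor $(\delta/\mu)^{(j-1)/2}$, and $\delta/\mu\le\mu'\ll1$, so they are smaller.

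Alternatively, one can argue by induction as in Lemma~\ref{lem:bdsS*}: differentiate the identity $\cos S_\ell=g$ repeatedly and divide by $\sin S_\ell\sim\mu^{1/2}$.

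\emph{Main obstacle.} The delicate point is the first-order $x$-derivative in Step 2. The crude bound $|\partial_x\cD|\sim(\mu\mu')^{1/2}$ alone is only barely sufficient, so the estimate depends on $\scl$ being exactly the normal scale $\delta(\mu\mu')^{-1/2}$. It also relies on the factor $\epz$, or on the directional alignment from Lemma~\ref{lem:locfab}, to keep the constants uniform as $\nu$ ranges over $\Theta$. If that alignment is not needed, the plain size bound already suffices.
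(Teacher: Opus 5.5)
Your proposal is correct and follows the paper's proof essentially step for step. You bound every $\mathcal L^k$-derivative of $\langle x,y\rangle$ and of $\sqrt{\mathcal D}$ by $\delta^{1/2}$, using $\mathcal D\sim\delta$ on $2\fr\times 2\fr'$, the vanishing of third derivatives in each variable, and $\delta\lesssim\mu\mu'$. Then you apply Fa\`a di Bruno to $S_\ell=\arccos(\cos S_\ell)$ with $|(d/du)^m\arccos u|\lesssim\mu^{1/2-m}$, exactly as the paper does.

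There is one small arithmetic slip. The relative size of the $j\ge2$ terms is $(\delta^{1/2}\mu^{-1})^{j-1}=(\delta/\mu^2)^{(j-1)/2}$, not $(\delta/\mu)^{(j-1)/2}$. This is still $\lesssim(\mu'/\mu)^{(j-1)/2}\ll1$ because $\delta\lesssim\mu\mu'$, so the conclusion is unaffected. Also, as you note, the alignment from Lemma~\ref{lem:locfab} in Step~2 is not needed, since $|\langle\nu,\partial_x\mathcal D\rangle|\le|\partial_x\mathcal D|$ already suffices.
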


\begin{proof}
    Assume $(x,y)\in 2\fr\times 2\fr'$. To prove the lemma, we first obtain the following bound.
    \begin{align}\label{e:bounds-D-1/2}
    |(\mathcal L_{\delta, \nu})^N \cD^{\frac{1}{2}}(x,y)|,\  |(\mathcal L_{\delta, \wt\nu'})^N \cD^{\frac{1}{2}}(x,y)|  \lesssim_N \delta^{\frac{1}{2}}.
    \end{align}

    Let $\mathcal L$ denote either
    $\mathcal L_{\delta,\nu}$ acting in the $x$ variable or
    $\mathcal L_{\delta,\wt\nu'}$ acting in the $y$ variable, as defined in \eqref{d:diff-L}.
    Note that 
    \[
        |\mathcal L\cD(x,y)|+|\mathcal L^2\cD(x,y)|
        \lesssim\delta,
        \qquad
        \mathcal L^j\cD(x,y)=0\quad (j\ge3).
    \]
    The estimate follows from Lemma \ref{lem:pxpy}, \eqref{scale}, and
    $\delta\le \delt\lesssim\mu\mu'$, while the vanishing follows
    because $\cD$ is quadratic in either $x$ or $y$.
    Since $\cD(x,y)\sim\delta$, repeated application of the chain rule
    (or the Fa\`a di Bruno formula) to
    $u\mapsto u^{1/2}$ gives \eqref{e:bounds-D-1/2}.

    By Lemmas \ref{lem:directionab} and \ref{lem:locfab}, together
    with \eqref{dir-con},
    $|\langle \nu,y\rangle|\lesssim(\mu'/\mu)^{1/2}$.
    For $\mathcal L_{\delta,\wt\nu'}$ acting in the $y$ variable, we
    instead use $|\langle\wt\nu',x\rangle|\lesssim1$. Hence, in either
    of the two cases for $\mathcal L$, \eqref{scale}
    gives  $
        |\mathcal L\langle x,y\rangle|
        \lesssim\delta\mu^{-1}
        \lesssim\delta^{1/2}.
   $
    Since the higher derivatives of $\langle x,y\rangle$ vanish, using 
    \eqref{cosS} and \eqref{e:bounds-D-1/2}, we have 
    \[
        |\mathcal L^j\cos S_\ell(x,y)|
        \lesssim_j\delta^{\frac{1}{2}},
        \qquad j\ge1.
    \]
    On the other hand, $S_\ell\in I_\mu$, so
    $\sin S_\ell\sim\mu^{1/2}$. Thus, on the relevant range, i.e.,  $\cos  I_\mu$,  we have 
    $
        |(d/du)^m\arccos u |
        \lesssim_m\mu^{1/2-m}.
    $
    As before, applying the Fa\`a di Bruno formula to
    $S_\ell=\arccos(\cos S_\ell)$, we obtain
    \[
        |\mathcal L^N S_\ell(x,y)|
        \lesssim_N
        \sum_{m=1}^N\mu^{\frac{1}{2}-m}\delta^{\frac{m}{2}}
        \lesssim_N\delta^{\frac{1}{2}}\mu^{-\frac{1}{2}}.
    \]
    Here the last inequality follows since $ \delta^{1/2}/\mu  \lesssim (\mu'/\mu)^{1/2}\ll1.$
    For later use, the same argument applied to
    $u\mapsto(1-u^2)^{-1/2}$ also gives
    \begin{equation}\label{e:bounds-devSk}
        |\mathcal L^N(\sin S_\ell(x,y))^{-1}|
        \lesssim_N\mu^{-\frac{1}{2}},
        \qquad N\in\N_0. \qedhere
    \end{equation}
\end{proof}

Using Lemma \ref{lem:bounds-Spm}, we have the following bounds on derivatives of the amplitude function.

\begin{lem}\label{lem:bdsamp-1}
Let $\fr\times\fr'\in\fR_\delta^+$, where
$\fr\in\cR_{\delta,\nu}$ and $\fr'\in\cR_{\delta,\nu'}'$.
Let $\fs\in\mathfrak S(\fr')$ and $\ell\in\{1,2\}$.
Then
\[
\big|(\mathcal L_{\delta,\nu})^N A_\ell(x,y)\big|
\lesssim_N 1
\]
for every $N\in\N_0$ and $(x,y)\in 2\fr\times2\fr'$.
\end{lem}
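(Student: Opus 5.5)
The amplitude
\[
A_\ell=c_\ell\,\varphi^+_{\delta,\fr,\fs}(x,y)\,\psi^*_{\delta,\ell}(x,y,S_\ell(x,y))\,\bigl|\partial_t^2\Phi_\ell(x,y,0)\bigr|^{-1/2}
\]
is a product of three factors, so I will use the Leibniz rule and bound \((\mathcal L_{\delta,\nu})^N\) of each factor separately by \(O_N(1)\) on \(2\fr\times2\fr'\).

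\emph{Factor 1.} The spatial cutoff \(\varphi^+_{\delta,\fr,\fs}\) is handled directly by \eqref{b:wtphirr's'}.

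\emph{Factor 2.} For \(\psi^*_{\delta,\ell}(x,y,S_\ell(x,y))\), I recall \eqref{i:defpsipm}:
\[
\psi^*_{\delta,\ell}(x,y,S_\ell)=\psi_{2\delta}(x,y,S_\ell(x,y))\,\eta(0).
\]
The second factor is the constant \(\eta(0)\), so only \(\psi_{2\delta}(x,y,S_\ell(x,y))\) needs to be controlled. By the chain rule (Fa\`a di Bruno), \(\mathcal L^N\) of this composite is a sum of terms
\[
(\mathcal L^{a}\partial_t^{b}\psi_{2\delta})\big|_{t=S_\ell}\ \prod_{i=1}^{b}\mathcal L^{k_i}S_\ell,\qquad a+\textstyle\sum k_i=N .
\]
Lemma \ref{lem:bd-derivpsi} at scale \(2\delta\) (or \eqref{e:bd-psi-top} when \(2\delta=\delt\)) gives
\[
|\mathcal L^a\partial_t^b\psi_{2\delta}|\lesssim(\delta^{1/2}\mu^{-1})^a(\delta^{-1/2}\mu^{1/2})^b .
\]
Lemma \ref{lem:bounds-Spm} gives \(|\mathcal L^{k}S_\ell|\lesssim\delta^{1/2}\mu^{-1/2}\). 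Hence each \(t\)-derivative costs \(\delta^{-1/2}\mu^{1/2}\) and is paid for by exactly \(\delta^{1/2}\mu^{-1/2}\), so every term is \(O(1)\) because \(\delta^{1/2}\mu^{-1}\le1\). Both lemmas are stated for \(\mathcal L_{\delta,\nu}\), and their hypothesis \(|\inp{\nu}{e_1}|\lesssim(\mu'/\mu)^{1/2}\) holds by Lemma \ref{lem:directionab} and Corollary \ref{rem:sizefab}.

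\emph{Factor 3.} This is the main step. By \eqref{phiell}, the definition of \(\tau_\ell\), and the formula for \(\partial_t^2\cP\) at \(S_\ell\),
\[
\partial_t^2\Phi_\ell(x,y,0)=\delta^{-1/2}\mu^{1/2}\,\partial_t^2\cP(x,y,S_\ell)=(-1)^\ell\,\delta^{-1/2}\mu^{1/2}\,\frac{\cos S_2-\cos S_1}{2\sin S_\ell}.
\]
By \eqref{cosS}, \(\cos S_2-\cos S_1=-2\sqrt{\cD}\), so
\[
\partial_t^2\Phi_\ell(x,y,0)=(-1)^{\ell+1}\,\mu^{1/2}\,\frac{\cD^{1/2}}{\delta^{1/2}}\,(\sin S_\ell)^{-1}.
\]
The estimate \eqref{e:bounds-D-1/2} gives \(|\mathcal L^N(\delta^{-1/2}\cD^{1/2})|\lesssim1\), and \eqref{e:bounds-devSk} gives \(|\mathcal L^N\bigl(\mu^{1/2}(\sin S_\ell)^{-1}\bigr)|\lesssim1\). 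Their product is therefore a smooth function \(g\) with \(|\mathcal L^Ng|\lesssim1\) and \(|g|\sim1\) by \eqref{e:est-pt^2cP}. Applying Fa\`a di Bruno to \(u\mapsto|u|^{-1/2}\) on \(|u|\sim1\) then gives \(|\mathcal L^N|g|^{-1/2}|\lesssim1\).

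Since \(\cD>0\) is needed for \(S_\ell\) to be defined, I will work only on \(\supp\varphi^+_\delta\), which lies where \(\cD\sim\delta\) by \eqref{r:supp-vphipm}. Lemma \ref{lem:bounds-Spm} is stated for \(2\fr\times2\fr'\), so the amplitude bound holds there.

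The main obstacle is Factor 3. A cruder route (differentiating \(\cP\) at \(t=S_\ell\)) would seem to lose powers of \(\delta\), and the clean explicit formula above avoids that; careful bookkeeping in Factor 2 is the secondary difficulty.
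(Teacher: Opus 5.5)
Your proof is correct and follows the paper's route. You apply the Leibniz rule over the three factors and use \eqref{b:wtphirr's'} for the cutoff. For $\psi^*_{\delta,\ell}(x,y,S_\ell)$ you use the chain rule with Lemma~\ref{lem:bd-derivpsi} (or \eqref{e:bd-psi-top}) and Lemma~\ref{lem:bounds-Spm}. For the denominator you write $\partial_t^2\Phi_\ell(x,y,0)=(-1)^{\ell+1}\delta^{-1/2}\mu^{1/2}\cD^{1/2}/\sin S_\ell$ and combine it with \eqref{e:bounds-D-1/2} and \eqref{e:bounds-devSk}. Your observation that the $\eta$-factor in \eqref{i:defpsipm} equals $\eta(0)=1$ at $t=S_\ell$, and your explicit Fa\`a di Bruno bookkeeping, simply fill in details the paper leaves implicit.
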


\begin{proof}
Assume $(x,y)\in2\fr\times2\fr'$.
By \eqref{b:wtphirr's'}, we have
\[
\big|(\mathcal L_{\delta,\nu})^N
\varphi_{\delta,\fr,\fs}^+(x,y)\big|
\lesssim_N 1.
\]
Moreover, Lemma \ref{lem:bd-derivpsi}, the initial-scale bound
\eqref{e:bd-psi-top}, and Lemma \ref{lem:bounds-Spm}, together with the chain rule, give
\[
\big|(\mathcal L_{\delta,\nu})^N
\big(\psi_{\delta,\ell}^*(x,y,S_\ell(x,y))\big)\big|
\lesssim_N 1.
\]

It remains to control the denominator in \eqref{aell}.
Since $|\partial_t^2\Phi_\ell(x,y,0)|\sim1$, it is
sufficient to prove
\begin{align}\label{e:wtnudev-wcP}
\big|(\mathcal L_{\delta,\nu})^N
\partial_t^2\Phi_\ell(x,y,0)\big|
\lesssim_N 1.
\end{align}
From the definitions of $\tau_\ell$ and $\Phi_\ell$, we have
\[
\partial_t^2\Phi_\ell(x,y,0)
=
(-1)^{\ell+1}\delta^{-\frac{1}{2}}\mu^{\frac{1}{2}}
\frac{\cD^{\frac{1}{2}}(x,y)}{\sin S_\ell(x,y)}.
\]
Thus, \eqref{e:wtnudev-wcP} follows once we prove
\begin{align}\label{e:wtnudev-cP}
\big|(\mathcal L_{\delta,\nu})^N
\big(\cD^{\frac{1}{2}}(x,y)(\sin S_\ell(x,y))^{-1}\big)\big|
\lesssim_N\delta^{\frac{1}{2}}\mu^{-\frac{1}{2}}.
\end{align}
Taking $\mathcal L=\mathcal L_{\delta,\nu}$ in
\eqref{e:bounds-devSk} and combining the resulting estimate with
\eqref{e:bounds-D-1/2} gives
\eqref{e:wtnudev-cP}, and hence \eqref{e:wtnudev-wcP}.

Since $|\partial_t^2\Phi_\ell(x,y,0)|\sim1$, the chain rule
also gives
\[
\big|(\mathcal L_{\delta,\nu})^N
|\partial_t^2\Phi_\ell(x,y,0)|^{-\frac{1}{2}}\big|
\lesssim_N 1.
\]
The desired estimate now follows from \eqref{aell} and
the Leibniz rule.
\end{proof}

We next analyze the phase function $\Phi_\ell$. Since the proof of \eqref{e:L2est-fG} relies on a standard $TT^*$ argument, it is essential to understand the behavior of the mixed Hessian $\partial_x\partial_y^\intercal \Phi_\ell$. To state the result, we introduce some notation. Let us denote
\begin{align*}
    {\mathbf v}_\ell(x,y) = \cos S_\ell(x,y) y - x,\quad \mathbf v'_\ell(x,y) = \cos S_\ell(x,y) x - y.
\end{align*}
We occasionally omit the dependence on $x,y$ to lighten the notation. Using \eqref{pxpy0}, \eqref{pxpy1}, and \eqref{cosS}, note that ${\mathbf v}_\ell = (1/2)\partial_x\cD -(-1)^\ell \sqrt{\cD}\,y$ and $\mathbf v_\ell'= (1/2)\partial_y\cD  -(-1)^\ell  \sqrt{\cD}\,x$. Hence, by \eqref{e:sizeofxDyD},  \eqref{e:sizeofxDyD'},  and the inequality $|\cD| \lesssim \epz\mu\mu'$,
\begin{align}\label{e:sizeofwtab}
    |{\mathbf v}_\ell(x,y)| \sim (\mu\mu')^{\frac{1}{2}},\quad |\mathbf v_\ell'(x,y)| \sim \mu,\quad \ell\in\{1,2\}
\end{align}
on \(\supp\varphi_\delta^+\), which implies that \({\mathbf v}_\ell\)
and \(\mathbf v_\ell'\) are nonzero vectors.
 Moreover, since $|{\mathbf v}_\ell(x,y)- (1/2)\partial_x \cD(x,y)|\lesssim \delta^{1/2}$ and $|\mathbf v'_\ell(x,y)- (1/2)\partial_y \cD(x,y)|\lesssim \delta^{1/2}$, 
\Be 
\label{ab}  \mathbf n_\ell(x,y)   -  \fa(x,y)=O(\delta^{\frac{1}{2}}(\mu\mu')^{-\frac{1}{2}}), \quad   \mathbf n_\ell'(x,y) -  \fb (x,y)=O(\delta^{\frac{1}{2}}\mu^{-1})  .
\Ee
where  
\[ \mathbf n_\ell(x,y) =\frac{{\mathbf v}_\ell(x,y)}{|{\mathbf v}_\ell(x,y)|}, \quad  \mathbf n_\ell'(x,y) =   \frac{{\mathbf v}_\ell'(x,y)}{|{\mathbf v}_\ell'(x,y)|}. \]

We define 
\begin{align*}
    {\mathbf v}_\ell^\perp = {\mathbf v}_\ell' - \frac{\inp{{\mathbf v}_\ell}{{\mathbf v}_\ell'}}{|{\mathbf v}_\ell|^2}{\mathbf v}_\ell ,\qquad {\mathbf v}_\ell'^\perp = {\mathbf v}_\ell - \frac{\inp{{\mathbf v}_\ell}{{\mathbf v}_\ell'}}{|{\mathbf v}_\ell'|^2}{\mathbf v}_\ell',
\end{align*}
which are orthogonal to $\mathbf v_\ell$ and $\mathbf v_\ell'$, respectively. The following result states that the mixed Hessian of $\phi_\ell$ is of rank $1$, and that its image is spanned by ${\mathbf v}_\ell'^\perp$.

\begin{lem}\label{lem:mixedH}
Let $(x,y)\in\cB\times\cB'$ with $\cD(x,y)>0$, and let
$\ell\in\{1,2\}$. Set
$
    \phi_\ell(x,y)
    :=
    \cP\bigl(x,y,S_\ell(x,y)\bigr). 
$
Then
\begin{align}
\label{e:mixedH-unscaled}
    \partial_x\partial_y^\intercal\phi_\ell(x,y)
    &= 
    \frac{(-1)^{\ell}}{\sqrt{\cD(x,y)}\,\sin^3 S_\ell(x,y)}
    \mathbf v_\ell(x,y)
    \mathbf v_\ell'(x,y)^\intercal 
    -\frac{\mathbf I_2}{\sin S_\ell(x,y)}
   ,
\end{align}
where  $\mathbf I_2$ is the $2\times 2$  identity matrix. 
Moreover,
\begin{align}
\label{e:mixedH-v}
    \partial_x\partial_y^\intercal\phi_\ell(x,y)
    \mathbf v_\ell(x,y)
    &=0,
\\
\label{e:mixedH-vperp}
    \partial_x\partial_y^\intercal\phi_\ell(x,y)
    \mathbf v_\ell^\perp(x,y)
    &=
    \frac{(-1)^{\ell}(1-|x|^2)}{\sqrt{\cD(x,y)}\,\sin S_\ell(x,y)}
    \mathbf v_\ell'^\perp(x,y).
\end{align}
\end{lem}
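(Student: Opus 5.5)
The plan is a direct computation. Since $S_\ell$ is a critical point of $t\mapsto\cP(x,y,t)$, the $y$-gradient of $\phi_\ell$ has a closed form. We then differentiate it in $x$ and obtain $\partial_xS_\ell$ by implicit differentiation. The identities \eqref{e:mixedH-v}--\eqref{e:mixedH-vperp} then reduce to two algebraic identities relating $c:=\cos S_\ell$, $p:=\inp xy$, $|x|^2$ and $|y|^2$. Throughout, $(x,y)\in\cB\times\cB'$ with $\cD(x,y)>0$, so $S_\ell$ is smooth and lies in $I_\mu$.

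\emph{Step 1 (first derivatives).} Because $\partial_t\cP(x,y,S_\ell)=0$, the chain rule gives $\partial_y\phi_\ell=(\partial_y\cP)(x,y,S_\ell)$. From $\partial_y\cP(x,y,t)=(\cos t\,y-x)/\sin t$ we get
\[
\partial_y\phi_\ell=\frac{\mathbf v_\ell}{\sin S_\ell}.
\]
Differentiating in $x$ and using $\frac{d}{dt}\big[(\cos t\,y-x)/\sin t\big]=(\cos t\,x-y)/\sin^2t$, we obtain
\[
\partial_x\partial_y^\intercal\phi_\ell=-\frac{\mathbf I_2}{\sin S_\ell}+\frac{(\partial_xS_\ell)\,(\mathbf v_\ell')^\intercal}{\sin^2S_\ell}.
\]
Here the rows are indexed by $x$.

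\emph{Step 2 (the derivative $\partial_xS_\ell$).} Differentiating the identity $\partial_t\cP(x,y,S_\ell(x,y))=0$ in $x$ gives $\partial_x\partial_t\cP+\partial_t^2\cP\,\partial_xS_\ell=0$ at $t=S_\ell$. A direct computation gives $\partial_x\partial_t\cP(x,y,t)=(\cos t\,y-x)/\sin^2t$, which equals $\mathbf v_\ell/\sin^2S_\ell$ at $t=S_\ell$. Next, \eqref{cosS} gives $\cos S_1-\cos S_2=2\sqrt{\cD}$. Combined with the formula for $\partial_t^2\cP(x,y,S_\ell)$ derived from \eqref{1st-d}, this yields $\partial_t^2\cP(x,y,S_\ell)=(-1)^{\ell+1}\sqrt{\cD}/\sin S_\ell$. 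Hence
\[
\partial_xS_\ell=\frac{(-1)^\ell\,\mathbf v_\ell}{\sqrt{\cD}\,\sin S_\ell}.
\]
Inserting this into Step 1 gives \eqref{e:mixedH-unscaled}.

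\emph{Step 3 (the two algebraic identities).} Write $c=\cos S_\ell$ and $p=\inp xy$. By \eqref{cosS}, $c-p=-(-1)^\ell\sqrt{\cD}$. Squaring and using the definition of $\cD$ gives $|x|^2+|y|^2=1-c^2+2cp$. We will check the following two identities.
\begin{itemize}
\item[(a)] $\inp{\mathbf v_\ell}{\mathbf v_\ell'}=c^2p-c(|x|^2+|y|^2)+p=(p-c)(1-c^2)=(-1)^\ell\sqrt{\cD}\,\sin^2S_\ell$.
\item[(b)] $|\mathbf v_\ell'|^2=c^2|x|^2-2cp+|y|^2=(1-c^2)(1-|x|^2)=(1-|x|^2)\sin^2S_\ell$.
\end{itemize}

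\emph{Step 4 (conclusion).} Applying \eqref{e:mixedH-unscaled} to $\mathbf v_\ell$ and using (a), the two terms cancel, which gives \eqref{e:mixedH-v}. Since $\mathbf v_\ell^\perp$ differs from $\mathbf v_\ell'$ by a multiple of $\mathbf v_\ell$, \eqref{e:mixedH-v} gives
\[
\partial_x\partial_y^\intercal\phi_\ell\,\mathbf v_\ell^\perp=\partial_x\partial_y^\intercal\phi_\ell\,\mathbf v_\ell'=\frac{(-1)^\ell|\mathbf v_\ell'|^2}{\sqrt{\cD}\sin^3S_\ell}\,\mathbf v_\ell-\frac{\mathbf v_\ell'}{\sin S_\ell}.
\]
Substituting (b) for $|\mathbf v_\ell'|^2$ and (a) for the coefficient $\inp{\mathbf v_\ell}{\mathbf v_\ell'}/|\mathbf v_\ell'|^2$ in the definition of $\mathbf v_\ell'^\perp$, the right-hand side becomes $\frac{(-1)^\ell(1-|x|^2)}{\sqrt{\cD}\sin S_\ell}\,\mathbf v_\ell'^\perp$, which is \eqref{e:mixedH-vperp}.

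The main obstacle is bookkeeping rather than any conceptual step. The signs $(-1)^\ell$ in $\partial_t^2\cP(x,y,S_\ell)$ and in $c-p$ must be tracked consistently, and one must recognize that the relation $\cD=(c-p)^2$ is what produces the factorizations in (a) and (b).
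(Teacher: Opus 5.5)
Your proof is correct and follows essentially the same route as the paper's proof in Appendix~\ref{app:mixed-hessian}. Both use the envelope identity $\partial_y\phi_\ell=\mathbf v_\ell/\sin S_\ell$, differentiate it in $x$, and then use the identities $\inp{\mathbf v_\ell}{\mathbf v_\ell'}=(-1)^\ell\sqrt{\cD}\,\sin^2 S_\ell$ and $|\mathbf v_\ell'|^2=(1-|x|^2)\sin^2 S_\ell$ to obtain \eqref{e:mixedH-v} and \eqref{e:mixedH-vperp}. The only cosmetic difference is how you get $\partial_x S_\ell=(-1)^\ell\mathbf v_\ell/(\sqrt{\cD}\sin S_\ell)$: you use implicit differentiation of $\partial_t\cP(x,y,S_\ell)=0$, while the paper differentiates the explicit formula \eqref{cosS} for $\cos S_\ell$ and recombines the terms via $\sin^2 S_\ell\, y+\cos S_\ell\,\mathbf v_\ell=-\mathbf v_\ell'$.
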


One can prove this lemma following the argument in the proof of \cite[Lemma 2.13]{LR22}. For the reader's convenience, we include its proof in Appendix~\ref{app:mixed-hessian}.

Note that $| \mathbf v_\ell^\perp(x,y)|\sim \mu$ and $|\mathbf v_\ell'^\perp(x,y)|\sim (\mu\mu')^{1/2}$.
 Indeed, this follows since $\mathbf v_\ell -(1/2)\partial_x \cD=O(\delta^{1/2})$ and $\mathbf v_\ell' -(1/2)\partial_y \cD=O(\delta^{1/2})$ and 
$\inp{\partial_x \cD}{\partial_y \cD}=O(\delta)$.   Thus, by \eqref{e:mixedH-vperp}, we have 
    \begin{align*}
        \partial_x\partial_y^\intercal \Phi_\ell(x,y) \mathbf n_{\ell, \perp}(x,y) = L\,
          \delta^{-2}\mu^{\frac{3}{2}} \smp\,   \mathbf n'_{\ell, \perp}(x,y)
    \end{align*}
    with $L=L(x,y,\mu, \mu') $ such that $L\sim 1$, where 
    \[    \mathbf n'_{\ell, \perp}(x,y)=\frac{{\mathbf v}_\ell'^\perp(x,y)}{|{\mathbf v}_\ell'^\perp(x,y)|}, \qquad  \mathbf n_{\ell, \perp}(x,y)=\frac{{\mathbf v_\ell^\perp}(x,y)}{|{\mathbf v}_\ell^\perp(x,y)|}. \] 

 The lemma implies that  
\begin{align}\label{e:mixednunu'}
    |\inp{\nu}{\partial_x\partial_y^\intercal \Phi_\ell(x,y) \nu'}|\sim \delta^{-2}\mu^{\frac{3}{2}}(\mu')^{\frac{1}{2}}, \quad  (x,y)\in 2\fr\times 2\fr'
\end{align}
where $\nu=\theta(\fr)$ and $\nu'=\theta(\fr')$. 
To see this, write $\nu'=  \inp {\nu'}{ \mathbf n_\ell} \mathbf n_\ell  + \inp {\nu'}{\mathbf n_{\ell, \perp}}\mathbf n_{\ell, \perp}$,  $\nu=  \inp \nu{ \mathbf n_\ell'} \mathbf n_\ell'  + \inp \nu{ \mathbf n'_{\ell, \perp}} \mathbf n'_{\ell, \perp}$. Then, by Lemma \ref{lem:mixedH}  it suffices to  show  $|\inp{\nu}{\mathbf n'_{\ell, \perp}}|>c$ and $|\inp{\nu'}{\mathbf n_{\ell, \perp}}|>c$ for some constant $c>0$.  Indeed, using Lemma \ref{lem:locfab} and \eqref{ab},  
we obtain
\begin{align*}
    |\inp{\nu}{\mathbf n'_{\ell,\perp}}|
    &=|\inp{\fa}{\mathbf n'_{\ell,\perp}}|
    +O(\delta^{\frac{1}{2}}\mu^{-1})=|\inp{\fa_{\!\perp}}{\mathbf n'_\ell}|
    +O(\delta^{\frac{1}{2}}\mu^{-1})\\
    &=1+O(\delta^{\frac{1}{2}}(\mu\mu')^{-\frac{1}{2}}).
\end{align*}
For the last equality we used Lemma \ref{lem:alp}, i.e., $\inp{\mathbf n}{\mathbf n'}=O(\delta^{1/2}\mu^{-1})$, and $\mu'\ll \mu$. 
With a sufficiently small $\epz>0$,  this gives $|\inp{\nu}{\mathbf n'_{\ell,\perp}}|> c$, as desired. Similarly, we can obtain $|\inp{\nu'}{\mathbf n_{\ell, \perp}}|>c$. 
Consequently, \eqref{e:mixednunu'} follows. 

Lastly, we control derivatives of the phase function $\Phi_\ell$. The following bounds suffice for our purposes.

\begin{lem}\label{lem:bdsforPhi}
    Let $\theta(\fr)=\nu$, $\theta(\fr')=\nu'$ and let $\fs\in \mathfrak S(\fr')$. Let $\ell\in \{1,2\}$. Assume $\fr\times \fr'\in \fR^+_\delta$. Then,
\begin{align}\label{e:bdsforPhi}
        \big|\inp{\nu}{\partial_x}^N\inp{\nu'}{\partial_y}\Phi_\ell(x,y)\big|\lesssim \delta^{-2}\mu^{\frac{3}{2}}\smp(\delta^{-1}(\mu\mu')^{\frac{1}{2}})^{N-1}
    \end{align}
    for every $(x,y)\in 2\fr\times 2\fr'$ and $N\in \N$.
\end{lem}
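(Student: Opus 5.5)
The plan is to compute $\inp{\nu'}{\partial_y}\Phi_\ell$ explicitly and then differentiate that formula $N-1$ more times in $\inp{\nu}{\partial_x}$. First observe that $\partial_t\cP(x,y,S_\ell(x,y))=0$. The chain rule therefore gives
\[
\partial_y\phi_\ell(x,y)=\partial_y\cP(x,y,S_\ell)=\frac{\cos S_\ell\, y-x}{\sin S_\ell}=\frac{\mathbf v_\ell(x,y)}{\sin S_\ell(x,y)},
\]
and hence
\[
\inp{\nu'}{\partial_y}\Phi_\ell=\delta^{-\frac32}\mu^{\frac32}\,\frac{\cos S_\ell\,\inp{\nu'}{y}-\inp{\nu'}{x}}{\sin S_\ell}.
\]
The derivative $\inp{\nu}{\partial_x}$ falls on three objects: $S_\ell$ (through $\cos S_\ell$ and $(\sin S_\ell)^{-1}$), and the linear term $\inp{\nu'}{x}$. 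Since $\inp{\nu}{\partial_x}\inp{\nu'}{x}=\inp{\nu}{\nu'}$ and all higher derivatives of this term vanish, the linear term costs nothing beyond first order.

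For $N=1$, use the mixed Hessian formula \eqref{e:mixedH-unscaled} directly:
\[
\inp{\nu}{\partial_x\partial_y^\intercal\Phi_\ell\,\nu'}=\delta^{-\frac32}\mu^{\frac32}\Big(\frac{\pm\inp{\nu}{\mathbf v_\ell}\inp{\mathbf v'_\ell}{\nu'}}{\sqrt{\cD}\sin^3S_\ell}-\frac{\inp{\nu}{\nu'}}{\sin S_\ell}\Big).
\]
The required bounds on the inner products are:
\begin{itemize}
\item $|\inp{\nu}{\mathbf v_\ell}|\lesssim(\mu\mu')^{1/2}$ and $|\inp{\nu'}{\mathbf v'_\ell}|\lesssim\mu$, by \eqref{e:sizeofwtab};
\item $\sqrt{\cD}\sim\delta^{1/2}$ and $\sin S_\ell\sim\mu^{1/2}$;
\item $|\inp{\nu}{\nu'}|\lesssim\delta^{1/2}\mu^{-1}$, by Lemma \ref{lem:alp} and Lemma \ref{lem:locfab}.
\end{itemize}
With these, the first term is $\lesssim\delta^{-2}\mu^{3/2}\smp$. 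The second term is $\lesssim\delta^{-1}\mu^{-1/2}$, which is smaller because $\delta\lesssim\mu\mu'$. This gives \eqref{e:bdsforPhi} for $N=1$; it is the upper half of \eqref{e:mixednunu'}.

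For $N\ge2$, I would work with the scaled operator $\mathcal L_{\delta,\nu}=\scl\inp{\nu}{\partial_x}$, since the target is equivalent to $|\mathcal L_{\delta,\nu}^{N-1}\inp{\nu}{\partial_x}\inp{\nu'}{\partial_y}\Phi_\ell|\lesssim\delta^{-2}\mu^{3/2}\smp$ (note $\scl^{-1}\sim\delta^{-1}(\mu\mu')^{1/2}$). The $N=1$ expression is a product of the following factors, each with controlled $\mathcal L_{\delta,\nu}$-derivatives on $2\fr\times2\fr'$:
\begin{itemize}
\item $\cD^{-1/2}$: by \eqref{e:bounds-D-1/2} and the Fa\`a di Bruno formula, $|\mathcal L_{\delta,\nu}^k\cD^{-1/2}|\lesssim\delta^{-1/2}$.
\item $(\sin S_\ell)^{-3}$ and $(\sin S_\ell)^{-1}$: by \eqref{e:bounds-devSk}, each $\mathcal L_{\delta,\nu}$-derivative preserves the size $\mu^{-3/2}$, respectively $\mu^{-1/2}$.
\item $\inp{\nu}{\mathbf v_\ell}=\cos S_\ell\inp{\nu}{y}-\inp{\nu}{x}$: derivatives of $\cos S_\ell$ are $\lesssim\delta^{1/2}$ (as in the proof of Lemma \ref{lem:bounds-Spm}), times $|\inp{\nu}{y}|\lesssim(\mu'/\mu)^{1/2}$. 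The derivative of $-\inp{\nu}{x}$ is $\scl$. Both are $\lesssim(\mu\mu')^{1/2}$, since $\delta\lesssim\mu\mu'$.
\item $\inp{\mathbf v'_\ell}{\nu'}=\cos S_\ell\inp{x}{\nu'}-\inp{y}{\nu'}$: derivatives are $\lesssim\delta^{1/2}|\inp{x}{\nu'}|+\scl\lesssim\mu$.
\item $\inp{\nu}{\nu'}$: a constant.
\end{itemize}
Applying the Leibniz rule, every term retains the size of the $N=1$ bound, which gives the claim.

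The main obstacle is confirming that $\mathcal L_{\delta,\nu}$-derivatives do not degrade the small factor $\inp{\nu}{\mathbf v_\ell}$. This factor is a genuine inner product of size up to $(\mu\mu')^{1/2}$, not a cancelling quantity, so the crude bound above is enough and no cancellation argument is needed.
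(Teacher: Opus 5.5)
Your proof is correct, but it is organized differently from the paper's. The paper does not use Lemma \ref{lem:mixedH} here. It starts from the first-derivative identity \eqref{i:pyPhik}, giving $\inp{\nu'}{\partial_y}\Phi_\ell=\delta^{-3/2}\mu^{3/2}\inp{\mathbf v_\ell}{\nu'}/\sin S_\ell$. It then proves the single claim \eqref{e:bds-wtaksin}: $\inp{\mathbf v_\ell}{\nu'}$ has size $\delta^{1/2}$, and each $\inp{\nu}{\partial_x}$ costs a factor $\delta^{-1}(\mu\mu')^{1/2}$. To do this it writes $\inp{\mathbf v_\ell}{\nu'}=\frac12\inp{\partial_x\cD}{\nu'}-(-1)^\ell\sqrt{\cD}\inp{y}{\nu'}$. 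The square-root term is handled by \eqref{e:bounds-D-1/2}. The first term is linear in $x$, so only its first derivative is nonzero, and that derivative is bounded using $|\inp{\nu}{\nu'}|\lesssim\delta^{1/2}\mu^{-1}$ and $|\inp{y}{\nu}|\lesssim(\mu'/\mu)^{1/2}$. In that argument the singular factor $\cD^{-1/2}$ appears only implicitly, through $\inp{\nu}{\partial_x}\sqrt{\cD}$.

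Your route instead takes the first $x$-derivative exactly via the rank-one formula \eqref{e:mixedH-unscaled}. This exposes $\cD^{-1/2}$ explicitly and leaves factors ($\inp{\nu}{\mathbf v_\ell}$, $\inp{\mathbf v'_\ell}{\nu'}$, powers of $\sin S_\ell$) whose $\mathcal L_{\delta,\nu}$-derivatives are bounded by their sizes. So you never need to propagate the smallness $|\inp{\mathbf v_\ell}{\nu'}|\lesssim\delta^{1/2}$. It also makes the case $N=1$ literally the upper half of \eqref{e:mixednunu'}. The price is reliance on Lemma \ref{lem:mixedH}, whereas the paper's argument needs only \eqref{i:pyPhik}, \eqref{e:bounds-D-1/2}, and \eqref{e:bounds-devSk}.

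One arithmetic slip needs correcting. At $N=1$ the identity term is $\delta^{-3/2}\mu^{3/2}\cdot\delta^{1/2}\mu^{-1}\cdot\mu^{-1/2}=\delta^{-1}$, not $\delta^{-1}\mu^{-1/2}$. With the correct value it is dominated by $\delta^{-2}\mu^{3/2}\smp$, because $\delta\lesssim\mu\mu'\le\mu^{3/2}\smp$. Your stated bound would instead need $\delta\lesssim\mu^2\smp$, which does not follow from $\delta\lesssim\mu\mu'$ in general.
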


\begin{proof}
    Since $\partial_t \cP(x,y,S_\ell) = 0$, by the chain rule we get
    \begin{align}\label{i:pyPhik}
        \partial_y \Phi_\ell(x,y) = \delta^{-\frac{3}{2}}\mu^{\frac{3}{2}}\frac{y \cos S_\ell - x}{\sin S_\ell} = \delta^{-\frac{3}{2}}\mu^{\frac{3}{2}}\frac{{\mathbf v}_\ell(x,y)}{\sin S_\ell}.
    \end{align}
    We claim that, for every $(x,y)\in 2\fr\times 2\fr'$ and $N\in \N$,
    \begin{align}
        \begin{aligned}\label{e:bds-wtaksin}
            \big|\inp{\nu}{\partial_x}^N\inp{{\mathbf v}_\ell(x,y)}{\nu'}\big|&\lesssim \delta^{\frac{1}{2}}(\delta^{-1}(\mu\mu')^{\frac{1}{2}})^N.
        \end{aligned}
    \end{align}
    Taking $\mathcal L=\mathcal L_{\delta,\nu}$ in
    \eqref{e:bounds-devSk} and using \eqref{scale}, we have
    \[
    \big|\inp{\nu}{\partial_x}^N(\sin S_\ell(x,y))^{-1}\big|
    \lesssim_N \mu^{-\frac{1}{2}}
    (\delta^{-1}(\mu\mu')^{\frac{1}{2}})^N.
    \]
    By \eqref{i:pyPhik}, this estimate and \eqref{e:bds-wtaksin} imply \eqref{e:bdsforPhi}. To show the claim, we write 
    \[
    \inp{{\mathbf v}_\ell(x,y)}{\nu'} = 2^{-1}\inp{\partial_x\cD(x,y)}{\nu'} -(-1)^\ell\sqrt{\cD(x,y)}\inp{y}{\nu'},
    \]
    Since $|\inp{y}{\nu'}|\sim 1$, by \eqref{e:bounds-D-1/2}, we get
    \begin{align*}
        \big|\inp{\nu}{\partial_x}^N\big((-1)^\ell\cD^{\frac{1}{2}}(x,y)\inp{y}{\nu'}\big)\big|\lesssim \delta^{\frac{1}{2}}(\delta^{-1}(\mu\mu')^{\frac{1}{2}})^N.
    \end{align*}
    Thus, it is sufficient for us to obtain
    \[
    |\inp{\nu}{\partial_x}^N(\inp{\partial_x\cD(x,y)}{\nu'})|
    \lesssim \delta^{\frac{1}{2}}
    \bigl(\delta^{-1}(\mu\mu')^{\frac{1}{2}}\bigr)^N.
    \]
    However, notice that the statement is obvious when $N\ge 2$, because the derivative vanishes in this case. When $N=1$, we calculate
    \begin{align}\label{i:vpxpxDv'}
        \inp{\nu}{\partial_x}(\inp{\partial_x\cD(x,y)}{\nu'}) = 2\big(\inp{y}{\nu'}\inp{y}{\nu} - \inp{\nu}{\nu'}\big).
    \end{align}
    To bound the term on the right-hand side, we denote
    \begin{align*}
        \nu = b_1\fa(x,y) + c_1\fa_{\!\perp}(x,y),\quad \nu' = b_2\fb(x,y) + c_2\fa'_{\!\perp}(x,y),
    \end{align*}
    with constants $b_1, b_2, c_1$, and $c_2$ such that $|b_1|,\ |b_2|\sim 1$ and $|c_1|,\ |c_2|\lesssim \delta^{1/2}\mu^{-1}$. From this and Lemmas \ref{lem:alp} and \ref{lem:directionab}, the bounds $|\inp{\nu}{\nu'}|\lesssim \delta^{1/2}\mu^{-1}$ and $|\inp{y}{\nu}|\lesssim (\mu'/\mu)^{1/2}$ follow. Substituting these bounds into \eqref{i:vpxpxDv'} gives the desired estimate for the derivative of $\inp{\partial_x\cD(x,y)}{\nu'}$.
\end{proof}

\subsection{The \texorpdfstring{$L^2$}{L2} estimate for the stationary pieces} 
We now prove \eqref{e:fP-dkap} with $\kappa = +$. As noted above, it is sufficient to establish \eqref{e:L2est-fG}. Recall that $\nu$, $\nu'$ are unit vectors such that $\fr\in \cR_{\delta, \nu}$, $\fr'\in \cR_{\delta, \nu'}'$, respectively. Let $L_\fr$, $L_{\fs}$ be affine transformations defined by
\Be
\label{LL}
\begin{aligned}
    L_\fr(x) & = \delta (\mu\mu')^{-\frac{1}{2}}x_1\,\nu+ \delta^{\frac{1}{2}}\mu^{\frac{1}{2}}(\mu')^{-\frac{1}{2}}x_2\,\nu_\sprp  + c_\fr , \\
    L_{\fs}(y) & =\delta \mu^{-1} y_1\,\nu'+ \delta^{\frac{1}{2}}\smp\mu^{-\frac{1}{2}}y_2\,\nu'_\sprp + c_{\fs} .
\end{aligned}
\Ee
Recalling \eqref{phiell} and \eqref{aell}, we then define an operator $\bar{\mathfrak O}_\ell  $ by
\begin{align*}
   \bar{\mathfrak O}_\ell  f(x) = \int A_{\ell}(L_\fr(x),L_{\fs}(y)) e^{i(\lambda\delta^{\frac{3}{2}}\mu^{-\frac{3}{2}})\Phi_\ell(L_\fr(x), L_{\fs}(y))} f(y) dy.
\end{align*}
Changing  variables  $(x,y)\to ( L_\fr(x),  L_{\fs}(y) )$, we have
\begin{align}\label{i:G->wtG}
    \big\|{\mathfrak O}_\ell \big\|_{2\to 2} = \delta^{\frac{3}{2}}\mu^{-\frac{3}{4}}(\mu')^{-\frac{1}{4}}\big\|\bar{\mathfrak O}_\ell \big\|_{2\to 2}.
\end{align}
Also note that the function $A_{\ell}(L_\fr(x),L_{\fs}(y))$ is supported in $[-\epz, \epz]^2\times[-\epz, \epz]^2$.
To estimate  $\bar{\mathfrak O}_\ell  $, freezing $(x_2, y_2)\in [-\epz, \epz]^2$, we define an operator 
\begin{align*}
  \bar{\mathfrak O}_\ell^{x_2,y_2} g(u) = \int  A_\ell^{x_2, y_2}(u,v) e^{i(\lambda\delta^{\frac{3}{2}}\mu^{-\frac{3}{2}})\Phi^{x_2, y_2}_\ell(u,v)} g(v) dv,\quad u,\,v\in \R,
\end{align*}
where
\begin{align*}
    A_\ell^{x_2, y_2}(u,v) &:= A_{\ell}(L_\fr(u, x_2),L_{\fs}(v, y_2)), \\
  \Phi^{x_2, y_2}_\ell(u,v) &:= \Phi_\ell(L_\fr(u, x_2),L_{\fs}(v, y_2)).
\end{align*}
By Lemma \ref{lem:bdsamp-1} and \eqref{e:bdsforPhi}, we have 
\begin{align*}
    |\partial_u^N A_\ell^{x_2, y_2}(u,v)| \lesssim_N 1,\quad |\partial_u^N\partial_v  \Phi_\ell^{x_2, y_2}(u,v)| \lesssim_N 1,
\end{align*}
for $N\in \N$ and $(x_2, y_2)\in [-\epz, \epz] \times [-\epz, \epz]$, with a constant $C_N$ only depending on $N$. Moreover, by \eqref{e:mixednunu'} and \eqref{LL}, we have \(|\partial_u\partial_v\Phi_\ell^{x_2,y_2}(u,v)|\sim1\) on \(\supp A_\ell^{x_2,y_2}\), uniformly in \(x_2,y_2\). Therefore, H\"ormander's oscillatory-integral estimate \cite[Theorem 1.1]{H73}, applied with \(p=2\) and \(N=\lambda\delta^{3/2}\mu^{-3/2}\), gives
\begin{align*}
    \|\bar{\mathfrak O}_\ell^{x_2, y_2} g\|_{L^2}\lesssim \lambda^{-\frac{1}{2}}\delta^{-\frac{3}{4}}\mu^{\frac{3}{4}}\|g\|_{L^2}.
\end{align*}

Since \(x_2\) and \(y_2\) range over fixed intervals, Minkowski's
inequality, the uniform slice estimate above, and Cauchy--Schwarz give
\[
\begin{aligned}
\|\bar{\mathfrak O}_\ell f\|_{L^2_{u,x_2}}
&\le
\int_{-\epz}^{\epz}
\|\bar{\mathfrak O}_\ell^{x_2,y_2}
f(\,\cdot\,,y_2)\|_{L^2_{u,x_2}}\,dy_2\\
&\lesssim
\lambda^{-\frac12}\delta^{-\frac34}\mu^{\frac34}
\|f\|_{L^2_{v,y_2}}.
\end{aligned}
\]
Combining this with \eqref{i:G->wtG} yields \eqref{e:L2est-fG}. This completes the proof.

\medskip

Together with the almost-orthogonality estimates of Section~\ref{sec:alortho}, Proposition~\ref{prop:L2est} supplies the local bounds used in the summation of Section~\ref{sec:conclude}.
 \section{Completion of the proof} 
\label{sec:conclude}

Putting together the estimates obtained in the previous sections, we conclude the proof of Proposition \ref{main-p}. 

\subsection{Proof of  Proposition \ref{main-p}}\label{subsec:proof-main-p}
With the decomposition \eqref{i:decomp-main},  
 the proof of  Proposition \ref{main-p}  is reduced to obtaining the following two propositions.

\begin{prop}\label{prop:main-components1-0}
Under the hypotheses and notation of Proposition~\ref{main-p}, let $\mu,\mu'$ satisfy \eqref{mm} and $\dels\le\delta<\delt$. Set
$B_{1,\infty}(\delta)$
$:=\allowbreak\lambda^{-1/2}\allowbreak\delta^{-1/4}\allowbreak\mu^{-1/4}$. 
 Then, we  have
        \begin{align}
            \|\fP_{\lambda,\delta}^\kappa\|_{1\to \infty}&\lesssim
            B_{1,\infty}(\delta),
            \quad  \kappa\in \{+,-,\circ\},
            \label{1to0}
            \\[3pt]
        \|\fP_{\lambda, \dels}\|_{1\to \infty}&\lesssim
        B_{1,\infty}(\dels). 
         \label{1to0s}
        \end{align}
       \end{prop}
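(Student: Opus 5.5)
The bound $\|T\|_{1\to\infty}\le\sup_{x,y}|T(x,y)|$ reduces \eqref{1to0} and \eqref{1to0s} to pointwise kernel estimates. No almost orthogonality is needed. By \eqref{Oldpm}, \eqref{Oldc} and \eqref{Old}, each operator in \eqref{i:decomp-main} has the form $\mathcal O_\lambda[\varphi,\psi]$. The spatial cutoff $\varphi\in\{\varphi^\pm_\delta,\varphi_\delta,\varphi_{\dels}\}$ satisfies $0\le\varphi\le1$: these cutoffs are sums of the normalized pieces \eqref{pkxy}, whose total is $\varphi_{2\delta}\le1$, by \eqref{i:decomp-varp*del10} and induction from \eqref{iter}. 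Hence it suffices to show that, on the support of the relevant spatial cutoff,
\[
\Big|\int \psi(x,y,t)\,\frac{e^{i\lambda\cP(x,y,t)}}{\smu}\,dt\Big|\lesssim \lambda^{-\frac12}\delta^{-\frac14}\mu^{-\frac14},
\]
where $\psi\in\{\psi_{2\delta},\psi^\circ_\delta,\psi_{\dels}\}$ according to the case.

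\emph{Cases $\kappa=-,\circ$.} These are treated exactly as in \eqref{e:ptbd-fPcn}, but without the spatial localization.
\begin{itemize}
\item On the relevant supports the lower bound \eqref{e:1stlb-cP} holds.
\item $\partial_t\cP$ is monotone on $O(1)$ intervals, by the sign analysis of $\partial_t^2\cP$ given after \eqref{osc-est}.
\item $\|\partial_t\psi\|_{L^1_t}\lesssim1$, by Lemma~\ref{lem:bd-derivpsi} and \eqref{e:bd-psi-top}.
\end{itemize}
Together these give, via the van der Corput lemma, the bound $\lambda^{-1}\delta^{-1}\mu^{1/2}$ for the kernel. The trivial bound is $\delta^{1/2}\mu^{-1}$, since by \eqref{r:suppcon-psi} the $t$-support of $\psi_{2\delta}$ (which contains that of $\psi^\circ_\delta$) has length $\lesssim\delta^{1/2}\mu^{-1/2}$. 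The kernel is bounded by the minimum of the two, hence by their geometric mean, which is exactly $\lambda^{-1/2}\delta^{-1/4}\mu^{-1/4}=B_{1,\infty}(\delta)$.

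\emph{Case $\kappa=+$.} Split $\psi_{2\delta}=\psi^*_{\delta,0}+\psi^*_{\delta,1}+\psi^*_{\delta,2}$ as in \eqref{i:defpsipm}.
\begin{itemize}
\item The $\psi^*_{\delta,0}$ part satisfies \eqref{e:1stlb-cP}, so the previous argument gives $B_{1,\infty}(\delta)$ (cf.\ \eqref{p+circ}).
\item For $\ell=1,2$, use the uniform expansion \eqref{i:exp-fP+}. This holds pointwise on $\supp\varphi^+_\delta$, where $\cD\sim\delta$, independently of the rectangles. The main term is $\lesssim\lambda^{-1/2}(\delta\mu)^{-1/4}$, because $|\partial_t^2\Phi_\ell(x,y,0)|\sim1$ and $|\psi^*_{\delta,\ell}|\le1$.
\item The error satisfies $E_\ell=O(\lambda^{-3/2}\delta^{-7/4}\mu^{5/4})$. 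Its ratio to the main term is $\lambda^{-1}\delta^{-3/2}\mu^{3/2}\lesssim1$, since $\delta\ge\dels\sim\lambda^{-2/3}\mu$.
\end{itemize}

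\emph{Bottom scale \eqref{1to0s}.} Use $|\partial_t^3\cP|\sim1$ on $\supp\psi_\vartriangle$, established before \eqref{e:ptbd-fP*}, together with the third-derivative van der Corput lemma. This bounds the kernel by $\lambda^{-1/3}\mu^{-1/2}$, exactly as in \eqref{e:ptbd-fP*}. With $\dels\sim\lambda^{-2/3}\mu$,
\[
B_{1,\infty}(\dels)=\lambda^{-\frac12}(\lambda^{-\frac23}\mu)^{-\frac14}\mu^{-\frac14}=\lambda^{-\frac13}\mu^{-\frac12},
\]
which matches.

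\emph{Main obstacle.} The only delicate point is the $\kappa=+$ case. The stationary phase expansion \eqref{i:exp-fP+} must be applied uniformly in $(x,y)\in\supp\varphi^+_\delta$, rather than only on a single rectangle $2\fr\times2\fr'$. The hypotheses verified before \eqref{i:exp-fP+} depend only on $\cD(x,y)\sim\delta$ and $t\in I_\mu$, so they carry over unchanged. Everything else is bookkeeping of exponents.
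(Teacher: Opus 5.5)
Your proof is correct and uses the same ingredients as the paper. These are the pointwise $t$-integral bounds: van der Corput with \eqref{e:1stlb-cP} for $\kappa=-,\circ$ and for the $\psi^*_{\delta,0}$ part, the uniform stationary-phase expansion \eqref{i:exp-fP+} for $\ell=1,2$, and the cubic van der Corput bound \eqref{e:ptbd-fP*} at the bottom scale. The only differences are cosmetic: the paper bounds each localized piece $\fP^\kappa_{\delta,\fr,\fs}$ and sums using bounded overlap of supports, and it absorbs $\lambda^{-1}\delta^{-1}\mu^{1/2}$ into $B_{1,\infty}(\delta)$ via $\lambda\delta^{3/2}\mu^{-3/2}\gtrsim1$, whereas you bound the whole kernel at once and take a geometric mean with the trivial bound; both are valid, provided you read your ``$0\le\varphi\le1$'' as $0\le\varphi\lesssim1$, since $\wt\chi_{\cB},\wt\chi_{\cB'}$ are only assumed adapted.
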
 
       
      The $L^1$--$L^\infty$ estimates, which  basically rely on oscillatory integral estimates, are relatively simpler once we have appropriate decompositions.  The key estimates are the following $L^2$ estimates, for which we heavily make use of orthogonality among the operators $\{ \fP_{\delta, \fr,\fr'}^\kappa\}_{\fr\times\fr'\in \fR_{\delta}^\kappa}$,  $\{ \fP_{\dels, \fr,\fr'}\}_{\fr\times\fr'\in \fR_{\dels}^\circ}$ (see 
        \eqref{pdk}, \eqref{pds}). 
        
        \begin{prop} \label{prop:main-components2-2}
       Under the hypotheses and notation of Proposition~\ref{main-p}, let $\mu,\mu'$ satisfy \eqref{mm} and $\dels\le\delta<\delt$. Set 
       \[
       B_2(\delta)
       :=\lambda^{-1}\delta^{\frac{1}{4}}(\mu')^{-\frac{1}{2}}
       \min(\delta^{\frac{1}{4}},(\mu')^{\frac{1}{2}}).
       \]
       Then, we have 
            \begin{align}
      \label{2to2}      \|\fP_{\lambda,\delta}^\kappa\|_{2\to 2}
            &\lesssim
            B_2(\delta),  
      \quad  \kappa\in \{+,-,\circ\}, \\[3pt]            
       \label{2to2s}   \|\fP_{\lambda, \dels}\|_{2\to 2}
       &\lesssim
       B_2(\dels). 
        \end{align} 
\end{prop}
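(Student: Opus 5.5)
The plan is to apply the Cotlar--Stein lemma (Lemma~\ref{lem:cotlarstein}) to the refined decomposition
\[
\fP^\kappa_{\lambda,\delta}=\sum_{\fr\times\fr'\in\fR^\kappa_\delta}\sum_{\fs\in\mathfrak S(\fr')}\fP^\kappa_{\delta,\fr,\fs},
\]
obtained from \eqref{pdk} and \eqref{pdts}, and similarly \eqref{pds}, \eqref{pdts'} for $\fP_{\lambda,\dels}$. The diagonal input is Proposition~\ref{prop:L2est}: $\|\fP^\kappa_{\delta,\fr,\fs}\|_{2\to2}\lesssim D(\delta):=\lambda^{-1}\delta^{1/2}(\mu\mu')^{-1/4}$. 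At $\delta=\dels$ we use \eqref{e:fP-ast}, which equals $D(\dels)$ since $\dels\sim\lambda^{-2/3}\mu$. The target is
\[
B_2(\delta)=D(\delta)\cdot \min\Big\{\Big(\frac{\mu}{\mu'}\Big)^{1/2},\ \frac{(\mu\mu')^{1/2}}{\delta^{1/2}}\Big\}^{1/2},
\]
and one checks that this equals $\lambda^{-1}\delta^{1/4}(\mu')^{-1/2}\min(\delta^{1/4},(\mu')^{1/2})$. So we must show that each Cotlar--Stein row sum exceeds the diagonal by at most the square root of
\[
K:=\min\{(\mu/\mu')^{1/2},\,(\mu\mu')^{1/2}\delta^{-1/2}\}.
\]
Here $K$ is the number of directions $\nu\in\Theta$ (spacing $\epz\delta^{1/2}\mu^{-1}$) lying in an arc of length $\min\{\delta^{1/2}(\mu\mu')^{-1/2},(\mu'/\mu)^{1/2}\}$. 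The second constraint comes from Corollary~\ref{rem:sizefab}.

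\emph{The $T^*T$ sums.} Fix $(\fr_\zc,\fs_\zc)$ and sum $\|(\fP_{\delta,\fr_\zc,\fs_\zc}^\kappa)^*\fP_{\delta,\fr,\fs}^\kappa\|^{1/2}$ over all pieces. The kernel of this product vanishes unless $2\fr\cap2\fr_\zc\neq\emptyset$.
\begin{itemize}
\item If $|\nu'-\nu'_\zc|\gg\delta^{1/2}\mu^{-1}$, Proposition~\ref{prop:alortho-1}\,i) applies. Its effective decay parameter is $\lambda\delta\mu'^{-1/2}\cdot\delta^{1/2}\mu^{-1}=\Lambda(\mu/\mu')^{1/2}\ge1$, where $\Lambda=\lambda\delta^{3/2}\mu^{-3/2}\gtrsim1$. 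Moreover $B(\delta)^{1/2}\le D(\delta)$ up to a power of $\Lambda^{-1}$. Summing over the $\delta^{1/2}\mu^{-1}$-separated $\nu'$ therefore gives $O(D)$.
\item If $|\nu'-\nu'_\zc|\lesssim\delta^{1/2}\mu^{-1}$, only $O(1)$ directions $\nu'$ occur. Lemma~\ref{lem:alp} and Lemma~\ref{lem:locfab} force $\nu\approx-\nu'_\sprp$, so $\nu$ is pinned to $O(1)$ values. The $x$-overlap condition together with Lemma~\ref{lem:cardinal-TT'} gives $O(1)$ pairs $\fr\times\fr'$.
\item For the subrectangles $\fs\in\mathfrak S(\fr')$, use Proposition~\ref{prop:alortho-2}. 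Its decay $(|\ell_1-\ell_2|\Lambda)^{-N}$ with $\Lambda\gtrsim1$ makes the $\ell$-sum converge, and again $B(\delta)^{1/2}\lesssim D(\delta)$.
\end{itemize}
Hence $M\lesssim D(\delta)$.

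\emph{The $TT^*$ sums, which carry the loss.} The kernel of $\fP_{\delta,\fr_\zc,\fs_\zc}^\kappa(\fP_{\delta,\fr,\fs}^\kappa)^*$ vanishes unless $2\fs\cap2\fs_\zc\ne\emptyset$.
\begin{itemize}
\item If $|\nu-\nu_\zc|\gg\delta^{1/2}(\mu\mu')^{-1/2}$, Proposition~\ref{prop:alortho-1}\,ii) applies. Its parameter $|\nu-\nu_\zc|\lambda\delta\mu^{-1}\mu'^{1/2}\gtrsim\Lambda$ gives decay, and summing over $\nu$ gives $O(D)$.
\item The remaining directions $\nu$ lie within $\delta^{1/2}(\mu\mu')^{-1/2}$ of $\nu_\zc$ and inside the arc $\mathcal A$ of Corollary~\ref{rem:sizefab}. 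There are at most $K$ of them.
\item For each such $\nu$, the $y$-overlap with $\fs_\zc$ and the relation $\nu'\approx$ (rotation of $\nu$) from Lemma~\ref{lem:alp} pin $\nu'$ to $O(1)$ choices. Lemma~\ref{lem:cardinal-TT'} then gives $O(1)$ rectangles $\fr\times\fr'$, and bounded overlap of the $\fs$'s gives $O(1)$ subrectangles.
\item We bound each of these products trivially by $D(\delta)^2$.
\end{itemize}
This gives $M_\ast\lesssim K\,D(\delta)$, and Lemma~\ref{lem:cotlarstein} yields $\|\fP^\kappa_{\lambda,\delta}\|_{2\to2}\lesssim (M M_\ast)^{1/2}\lesssim K^{1/2}D(\delta)=B_2(\delta)$. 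The bottom scale, \eqref{2to2s}, is identical, using Proposition~\ref{prop:alortho-1}\,iii) and \eqref{fpdel*}.

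\emph{Main obstacle.} The delicate step is the counting in the $TT^*$ sum. We must verify that $y$-overlap together with closeness of $\nu$ really confines $\nu'$ and $\fr'$ to $O(1)$ choices per $\nu$. This should follow from the near-perpendicularity $|\inp{\fa}{\fb}|\lesssim\delta\mu^{-3/2}\mu'^{-1/2}\ll\delta^{1/2}\mu^{-1}$ of Lemma~\ref{lem:alp}. The subtle point is that $\nu'$ is only determined up to $\delta^{1/2}(\mu\mu')^{-1/2}$ by $\nu$, which is coarser than the $\Theta'$-spacing. If that produces an extra factor of $K$ in $\nu'$ rather than $O(1)$, I would instead pair the $\nu$- and $\nu'$-counts through Proposition~\ref{prop:alortho-1}\,i), absorbing separated $\nu'$ via the $T^*T$ side. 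This would keep the total multiplicity at $K$.
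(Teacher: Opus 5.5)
Your scheme is the paper's: Cotlar--Stein on the $\fs$-refined pieces, with the $T^\ast T$ row sums bounded by the diagonal $D(\delta)$ via Proposition~\ref{prop:alortho-1}\,i) and Proposition~\ref{prop:alortho-2}, the $TT^\ast$ row sums carrying the direction count $K$, and $B_2(\delta)=K^{1/2}D(\delta)$. However, the near-diagonal block of your $T^\ast T$ sum is not justified as written.

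Restricting to $|\nu'-\nu'_\zc|\lesssim\delta^{1/2}\mu^{-1}$ leaves $O(1)$ pairs $\fr\times\fr'$ with $2\fr\cap 2\fr_\zc\neq\emptyset$. These include tiles adjacent to $\fr_\zc$ and directions $\nu'\neq\nu'_\zc$ adjacent to $\nu'_\zc$ in $\Theta'$, not only $\fr_\zc\times\fr'_\zc$. For such a pair neither off-diagonal estimate applies. Proposition~\ref{prop:alortho-1}\,i) needs $|\nu'-\nu'_\zc|\gg\delta^{1/2}\mu^{-1}$. Proposition~\ref{prop:alortho-2} only compares $\fP^\kappa_{\delta,\fr,\fs_\zc}$ with $\fP^\kappa_{\delta,\fr,\fs}$ for one and the same $\fr$, with $\fs_\zc,\fs$ in one and the same $\mathfrak S(\fr')$. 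With only the diagonal bound, summing over the $\sim(\mu/\mu')^{1/2}$ elements of $\mathfrak S(\fr')$ gives $M\lesssim(\mu/\mu')^{1/2}D(\delta)$. That is an extra factor $(\mu/\mu')^{1/4}$ in the $L^2$ bound, and $(\mu/\mu')^{3/20}$ after interpolation, which more than cancels the gain $(\mu'/\mu)^{1/20}$.

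The missing ingredient is the sparsification the paper performs before applying Lemma~\ref{lem:cotlarstein}. Split into finitely many subfamilies satisfying \eqref{nunu} and \eqref{kfk}, at a fixed loss. Inside one subfamily, $\theta(\fr)=\nu_\zc$ and $2\fr\cap2\fr_\zc\neq\emptyset$ force $\fr=\fr_\zc$. The arguments behind \eqref{2nd-claim'} and \eqref{1st-claim'}, together with \eqref{nunu} and \eqref{kfk}, then force $\fr'=\fr'_\zc$. So the near-diagonal block is exactly $\sum_{\fs\in\mathfrak S(\fr'_\zc)}\|(\fP^\kappa_{\delta,\fr_\zc,\fs_\zc})^\ast\fP^\kappa_{\delta,\fr_\zc,\fs}\|^{1/2}$, to which Proposition~\ref{prop:alortho-2} applies as stated. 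Every $\nu\neq\nu_\zc$ satisfies $|\nu'-\nu'_\zc|\sim|\nu-\nu_\zc|>K\delta^{1/2}\mu^{-1}$ and falls under Proposition~\ref{prop:alortho-1}\,i). The alternative would be to re-prove Proposition~\ref{prop:alortho-2} for neighbouring pairs.

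Two smaller points.

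First, in the $TT^\ast$ sum, the part $|\nu-\nu_\zc|\gg\delta^{1/2}(\mu\mu')^{-1/2}$ is not $O(D)$ when $\Lambda=\lambda\delta^{3/2}\mu^{-3/2}\sim1$. The first dyadic shell already contains $\sim(\mu/\mu')^{1/2}$ directions of $\Theta$, each with only $O(1)$ decay. This part is nonempty only when $\delta\lesssim(\mu')^2$, where $K\sim(\mu/\mu')^{1/2}$, so it is still $O(KD)$ and your bound $M_\ast\lesssim KD$ survives. This is the paper's estimate of $I\!I$.

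Second, the worry in your last paragraph is unfounded. Lemmas~\ref{lem:alp} and~\ref{lem:locfab} give $|\nu_\sprp+\nu'|\lesssim\delta^{1/2}\mu^{-1}$, not $\delta^{1/2}(\mu\mu')^{-1/2}$. So $\nu$ fixes $\nu'$ to within the $\Theta'$-spacing, and the per-direction count is $O(1)$, as in \eqref{e:card-rs2'}.
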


It should be noted that the bound \eqref{2to2s} corresponds to \eqref{2to2} with 
$\delta=\dels$.   
Once we have the estimates in  Propositions  \ref{prop:alortho-1}, \ref{prop:alortho-2},  and  \ref{prop:L2est}, the estimates for $\fP_{\lambda,\delta}^\kappa$ and $\fP_{\lambda,\dels}$ follow essentially the same line of argument.   
The two cases will be handled in the same manner. 

\medskip

Before proceeding, we prove the endpoint estimate \eqref{e:main-fP}, assuming that Proposition \ref{prop:main-components1-0} and Proposition \ref{prop:main-components2-2} hold.  
\begin{proof}[Deduction of Proposition \ref{main-p} from
Propositions \ref{prop:main-components1-0} and
\ref{prop:main-components2-2}]
Interpolation between \eqref{1to0} and \eqref{2to2} gives, for
$\kappa\in \{+,-,\circ\}$, 
\[ 
\|\fP_{\lambda, \delta}^\kappa\|_{q_e' \to q_e} \lesssim \begin{cases}
       \quad  \lambda^{-\frac{4}{5}} \delta^{\frac{1}{20}}\mu^{-\frac{1}{10}}, &  \   (\mu')^2\le \delta, \\
                \lambda^{-\frac{4}{5}}\delta^{\frac{1}{5}}(\mu')^{-\frac{3}{10}}\mu^{-\frac{1}{10}}, &   \  \delta \le (\mu')^2.
    \end{cases}
    \]
 After splitting  the sum $\sum_{ \dels\le \delta < \delt}=\sum_{(\mu')^2\le \delta}+\sum_{ \delta \le (\mu')^2}$, the triangle inequality and summation over dyadic numbers $\delta\in [\dels, \delt]$   gives 
	    \[ \sum_{ \dels\le \delta < \delt}  \|\fP_{\lambda,\delta}^\kappa\|_{q_e' \to q_e} \lesssim    \lambda^{-\frac{4}{5}}\biggl(\frac{\mu'}{\mu}\biggr)^{\frac{1}{20}}\]
    for $\kappa\in \{+,-,\circ\}$. 
 Similarly, interpolation  between \eqref{1to0s} and \eqref{2to2s}  yields 
\begin{align*}
    \|\fP_{\lambda, \dels}\|_{q_e' \to q_e}  \lesssim     
    \begin{cases}
       \quad  \lambda^{-\frac{4}{5}} \dels^{\frac{1}{20}}\mu^{-\frac{1}{10}}, &  \   (\mu')^2\le \dels, \\
                \lambda^{-\frac{4}{5}}\dels^{\frac{1}{5}}(\mu')^{-\frac{3}{10}}\mu^{-\frac{1}{10}}, &   \  \dels \le (\mu')^2.
    \end{cases}
 \end{align*}
In both cases $(\mu')^2\le \dels$  and $ \dels \le (\mu')^2$, $
    \|\fP_{\lambda, \dels}\|_{q_e' \to q_e} \lesssim  \lambda^{-4/5}(\mu'/\mu)^{1/20}.  
$
Indeed, the inequality in the former case follows since $\dels\sim \lambda^{-2/3}\mu$ and $\lambda^{-2/3}\lesssim \mu'$ (see \eqref{mm}).

 Thus,  recalling \eqref{i:decomp-main}, we combine  all these estimates  to get \eqref{e:main-fP}.  This completes the proof. 
\end{proof}

Combining  the results from Sections \ref{sec:alortho} and  \ref{sec:individual}, we prove Propositions \ref{prop:main-components1-0} and  \ref{prop:main-components2-2}. 
We begin by noting that 
\begin{align}
\label{d-pdk}
\fP_{\lambda,\delta}^\kappa&=\sum_{\fr\times\fr'\in \fR^\kappa_{\delta}} \sum_{\fs\in\mathfrak S(\fr')} \fP_{\delta,\fr,\fs}^\kappa,  \quad \kappa\in \{+,-,\circ\}
\\ 
\label{d-pds}
  \fP_{\lambda, \dels}&=\sum_{\fr\times\fr'\in \fR_{\dels}^\circ} \sum_{\fs\in\mathfrak S(\fr')} \fP_{\dels,\fr,\fs},
\end{align} 
which follow from  the decompositions \eqref{pdk}, \eqref{pds}, \eqref{pdts},  and \eqref{pdts'}. 
We first prove Proposition \ref{prop:main-components1-0}. 

 \begin{proof}[Proof of Proposition \ref{prop:main-components1-0}]  We have that, for $\dels\le \delta < \delt$ and $\kappa\in \{+,-,\circ\}$,
\begin{align}\label{e:bds-1infty}
    \|\fP^\kappa_{\delta,\fr,\fs}\|_{1\to \infty}\lesssim \lambda^{-\frac{1}{2}}\delta^{-\frac{1}{4}}\mu^{-\frac{1}{4}}
\end{align}
holds for every pair $(\fr,\fs)$ such that $\fr\times \fr'\in \fR^\kappa_\delta$ and $\fs\in\mathfrak S(\fr')$. 
Indeed, the estimate for $\kappa=\circ,-$ follows from \eqref{e:ptbd-fPcn}, while the estimate for $\kappa=+$ follows from \eqref{p+circ} and \eqref{i:exp-fP+},
since $\fP_{\delta,\fr,\fs}^+=\sum_{\ell=0}^2\fP_{\ell,\delta,\fr,\fs}^+$. 

Similarly, from \eqref{e:ptbd-fP*} we have  the bound 
\Be
\label{e:bds-10} \|\fP_{\dels,\fr,\fs}\|_{1\to \infty}\lesssim \lambda^{-\frac{1}{3}}\mu^{-\frac{1}{2}}
\Ee
for $\fr\times \fr'\in \fR_{\dels}^\circ$ and $\fs\in\mathfrak S(\fr')$.
Since the rectangles $\fr\times \fr'$ in $\fR^\kappa_\delta$ have uniformly bounded overlap, as do the rectangles $\fs\in\mathfrak S(\fr')$, we have
\begin{align*}
    \|\fP_{\lambda,\delta}^\kappa\|_{1\to \infty} &\le C\sup_{\fr\times\fr'\in \fR^\kappa_\delta,\ \fs\in \mathfrak S(\fr')} \|\fP^\kappa_{\delta,\fr,\fs}\|_{1\to \infty} \\
    \|\fP_{\lambda, \dels}\|_{1\to \infty}&\le C\sup_{\fr\times\fr'\in \fR_{\dels}^\circ,\ \fs\in \mathfrak S(\fr')}\|\fP_{\dels,\fr,\fs}\|_{1\to \infty}
\end{align*}
with a constant $C>0$. Combining this with \eqref{e:bds-1infty} and  \eqref{e:bds-10} yields the desired estimates. 
      \end{proof}

\subsection{Global \texorpdfstring{$L^2$}{L2} estimates: Proof of Proposition \ref{prop:main-components2-2}}\label{subsec:global-L2}
In order to prove the estimates \eqref{2to2} and \eqref{2to2s},  we need to exploit orthogonality between $ \fP_{\delta,\fr,\fs}^\kappa$ and $ \fP_{\dels,\fr,\fs}$ appearing in \eqref{d-pdk} and \eqref{d-pds}.  To this end, we combine the estimates in Sections \ref{sec:alortho} and \ref{sec:individual} using Lemma \ref{lem:cotlarstein}.

The proofs of \eqref{2to2} and \eqref{2to2s} are essentially the same. We first prove \eqref{2to2} and  make a brief remark about how to prove \eqref{2to2s}. 

\medskip

Before applying Proposition \ref{prop:alortho-1} to \eqref{d-pdk} and \eqref{d-pds}, we partition the direction nets and the associated tilings into finitely many subfamilies on which the required sparsity conditions hold. It is enough to prove the estimates on each subfamily, since the triangle inequality then recovers the full family with a harmless fixed loss. We may write 
\[  \fR^\kappa_\delta=\bigcup_{(\nu, \nu')\in \Theta\times\Theta'}       \{(\fr,\fr') \in \fR^\kappa_\delta :  \theta(\fr)=\nu,  \quad \theta(\fr')=\nu' \}.     \]  
Since $\Theta$ and $\Theta'$ are $\epz\delta^{1/2}\mu^{-1}$-separated, each can be partitioned into \(O_{K,\epz}(1)\) subfamilies that are $K\delta^{1/2}\mu^{-1}$-separated. Fixing one pair of subfamilies, we may assume
\Be 
\label{nunu} |\nu-\bar\nu|>K\delta^{\frac{1}{2}}\mu^{-1}, \qquad
|\nu'-\bar\nu'|>K\delta^{\frac{1}{2}}\mu^{-1} \Ee
for distinct $\nu,\bar\nu\in\Theta$ and distinct $\nu',\bar\nu'\in\Theta'$.  
Summing over the \(O_{K,\epz}(1)\) pairs of subfamilies causes only a harmless fixed loss.

Similarly, after partitioning $\cR_{\delta,\nu}$ and $\cR_{\delta,\nu'}'$ into finitely many subfamilies, we may assume
\Be 
\label{kfk}  K\fr \cap K\bar \fr=\emptyset,  \quad K\fr' \cap K\bar \fr'=\emptyset \Ee
for each $(\nu,\nu')$, whenever $\fr, \bar\fr \in \cR_{\delta,\nu}$ and
$\fr', \bar\fr' \in \cR_{\delta,\nu'}'$. Here 
$K\fr$, $K\bar \fr$, $K\fr'$, and $K\bar \fr'$ denote the dilates of $\fr$, $\bar\fr$, $\fr'$, and $\bar\fr'$, respectively, by a factor of $K$ about their centers.
This is easy to see. Indeed,  for all  $\nu$,   $\cR_{\delta,\nu}$ can be identified with $\cR_{\delta,\nu_\zc}$ via rotation.  
Clearly, $\cR_{\delta,\nu_\zc}$ can be partitioned into $O(K^2)$ subcollections $\cR$, each satisfying $K\fr \cap K\bar \fr=\emptyset$ for all distinct $\fr,\bar\fr\in \cR$. Transporting these subcollections 
to each $\cR_{\delta,\nu}$ via the corresponding rotation gives the desired subfamilies. One can repeat the same argument for $\cR_{\delta,\nu'}'$. 
This provides a partition of $\bigcup_{(\nu, \nu')\in \Theta\times\Theta'} \cR_{\delta,\nu}\times \cR_{\delta,\nu'}'$ into $O(K^4)$ subfamilies. 
Now, considering intersections of those subfamilies and $\mathfrak R^\kappa_\delta$, we may assume \eqref{kfk} at the expense of an $O(K^4)$ factor in the bound.

After fixing one such subfamily, we retain the notation $\Theta$, $\Theta'$, and $\mathfrak R^\kappa_\delta$; the conditions \eqref{nunu} and \eqref{kfk} continue to hold. Recalling \eqref{d-pdk}, we consider 
\[
\| \sum_{\fr\times\fr'\in \fR^\kappa_\delta} \sum_{\fs\in\mathfrak S(\fr')} \fP_{\delta,\fr,\fs}^\kappa\|_{2\to 2},   \quad \kappa\in \{+,-,\circ\}. \]

    Fix $\fr_\zc\times \fr_\zc' \in \fR^\kappa_\delta$ and $\fs_\zc\in \mathfrak S(\fr_\zc')$.  Let us set 
      \[\mathbf c_{\fr, \fs}=\big\|\fP_{\delta,\fr_\zc,\fs_\zc}^\kappa\big(\fP_{\delta,\fr,\fs}^\kappa\big)^*\big\|_{2\to 2}^{\frac{1}{2}}, \quad   \mathbf d_{\fr, \fs}=\big\|\big(\fP_{\delta,\fr_\zc,\fs_\zc}^\kappa\big)^*\fP_{\delta,\fr,\fs}^\kappa\big\|_{2\to 2}^{\frac{1}{2}}.\]  
  By Lemma \ref{lem:cotlarstein}, the desired estimate \eqref{2to2} follows, once we obtain the following estimates: 
    \begin{align}\label{e:sumPP*-1}
        \sum_{\fr\times \fr'\in \fR^\kappa_\delta}\sum_{\fs\in\mathfrak S(\fr')} \mathbf c_{\fr, \fs} &\lesssim \lambda^{-1} \mu^{\frac{1}{4}}(\mu')^{-\frac{3}{4}} \min(\delta^{\frac{1}{2}}, \mu'), 
   \quad      \kappa\in \{+,-,\circ\}, \\[3pt]
        \label{e:sumP*P-1}
        \sum_{\fr\times \fr'\in \fR^\kappa_\delta}\sum_{\fs\in\mathfrak S(\fr')}  \mathbf d_{\fr, \fs} &\lesssim   \lambda^{-1}\delta^{\frac{1}{2}}(\mu\mu')^{-\frac{1}{4}}
    \end{align}
     with the implicit constants independent of $(\fr_\zc,\fr_\zc',\fs_\zc)$.   
     Indeed, the geometric mean of the right-hand sides in \eqref{e:sumPP*-1} and \eqref{e:sumP*P-1} is
     \[
     \lambda^{-1}\delta^{\frac14}(\mu')^{-\frac12}
     \min\{\delta^{\frac14},(\mu')^{\frac12}\}=B_2(\delta).
     \]
     In what follows 
   we provide the proofs of \eqref{e:sumPP*-1} and \eqref{e:sumP*P-1}, separately. 
    
    \smallskip
     
  \subsection*{Proof of \texorpdfstring{\eqref{e:sumPP*-1}}{the first Cotlar--Stein sum}}
We prove \eqref{e:sumPP*-1} first.   
        For   $\nu'\in\Theta'$,  
we define a sub-collection $\fT(\fr'_\zc,\nu')\subset \mathfrak R^\kappa_\delta$ by 
 \begin{align}
 \label{ftf'}
    \fT(\fr'_\zc,\nu') = \{\fr\times \fr'\in \mathfrak R^\kappa_\delta:   2\fr'\cap 2\fr'_\zc\neq \emptyset, \  \theta(\fr')=\nu' \},
\end{align}
 Consequently,  it is clear that 
\[  \{\fr\times \fr'\in \mathfrak R^\kappa_\delta:   2\fr'\cap 2\fr'_\zc\neq \emptyset\}\subset \bigcup_{\nu'\in \Theta'}   \fT(\fr'_\zc,\nu').\] 

Note that  $\fP_{\delta,\fr_\zc,\fr'_\zc}^\kappa\big(\fP_{\delta,\fr,\fr'}^\kappa\big)^*=0 $  if  $2\fr'_\zc \cap 2\fr'=\emptyset$. 
In fact,     $\fP_{\delta,\fr_\zc,\fs_\zc}^\kappa\big(\fP_{\delta,\fr,\fs}^\kappa\big)^*=0 $  if  $2\fr'_\zc \cap 2\fr'=\emptyset$, 
whenever $\fs_\zc\in \mathfrak S(\fr_\zc')$ and $\fs \in \mathfrak S(\fr')$.  
Thus, it follows that 
\[            \sum_{\fr\times \fr'\in \fR^\kappa_\delta}\sum_{\fs\in\mathfrak S(\fr')} \mathbf c_{\fr, \fs} 
                  \le  \sum_{\nu'\in \Theta'} \sum_{\fr\times\fr'\in \fT (\fr_\zc',\nu')}\sum_{\fs\in\mathfrak S(\fr')} \mathbf c_{\fr, \fs}.\]
        Also note $\fP_{\delta,\fr_\zc,\fs_\zc}^\kappa\big(\fP_{\delta,\fr,\fs}^\kappa\big)^*=0$ if $2\fs_\zc\cap 2\fs=\emptyset$. Thus, 
        setting 
   \[  \mathfrak S(\fr', \fs_\zc ):=\{  \fs\in \mathfrak S(\fr'):   2\fs_\zc\cap\,2\fs\neq \emptyset\},\]   we may replace  $\mathfrak S(\fr')$ with $\mathfrak S(\fr', \fs_\zc )$ 
   and, moreover, $\fT (\fr_\zc',\nu')$ with 
   \[  \fT (\fr_\zc',\nu', \fs_\zc):=  \big\{\fr\times\fr'\in  \fT (\fr_\zc',\nu'): \text{$\exists\,\fs\in \mathfrak S(\fr')$ s.t. $2\fs_\zc\cap\,2\fs\neq \emptyset$}\big\}.\]  
   Therefore, we now have 
        \begin{align*}
            \sum_{\fr\times \fr'\in \fR^\kappa_\delta}\sum_{\fs\in\mathfrak S(\fr')} \mathbf c_{\fr, \fs} 
                  &\le  
                  \sum_{\nu'\in \Theta'} \sum_{\fr\times\fr'\in \fT (\fr_\zc',\nu', \fs_\zc)}\sum_{\fs\in\mathfrak S(\fr', \fs_\zc)} \mathbf c_{\fr, \fs}. 
                  \end{align*}
      
 To simplify the notation, we set   
    \[   \bar{\mathbf c}_{\fr, \fr'}=\sum_{\fs\in \mathfrak S(\fr', \fs_\zc )}  \mathbf c_{\fr, \fs},\qquad    
       \bar{\mathbf C}_{\nu'}= \sum_{\fr\times\fr'\in \fT (\fr_\zc',\nu', \fs_\zc)}  
     \bar{\mathbf c}_{\fr, \fr'}. 
    \]    
  and  
  \[   \Theta' (\nu'_\zc, B):= \{ \nu'\in \Theta' : \ | \nu'-\nu'_\zc| < B\delta^{\frac{1}{2}}(\mu\mu')^{-\frac{1}{2}}  \}.\]  
     Then,    we have 
       \begin{align*}
       \sum_{\fr\times \fr'\in \fR^\kappa_\delta}\sum_{\fs\in\mathfrak S(\fr')}  \mathbf c_{\fr, \fs} \le \sum_{\nu'\in \Theta'}  \bar{\mathbf C}_{\nu'}= I+ I\!I 
       \end{align*}
    with a sufficiently large constant $B>0$, 
    where 
    \[   I:=  \sum_{\nu'\in \Theta' (\nu'_\zc, B)}    \bar{\mathbf C}_{\nu'}, \qquad  I\!I:=\sum_{\nu'\not\in  \Theta' (\nu'_\zc, B)} \bar{\mathbf C}_{\nu'}.\] Note that $|\nu'-\nu'_\zc|\lesssim  (\mu'/\mu)^{1/2}$ by the definition of $\Theta'$. Thus, $I\!I$ vanishes when $\delta\gg (\mu')^2$. 
    
    To estimate $I$, we use Proposition \ref{prop:L2est}, which gives $\mathbf c_{\fr, \fs}\lesssim \lambda^{-1}\delta^{1/2}(\mu\mu')^{-1/4}$. 
    Consequently, we get
    \begin{align*}
     I \lesssim   \sum_{\nu'\in \Theta' (\nu'_\zc, B)} \sum_{\fr\times\fr'\in \fT (\fr_\zc',\nu', \fs_\zc)} \sum_{\fs\in \mathfrak S(\fr', \fs_\zc)} \lambda^{-1}\delta^{\frac{1}{2}}(\mu\mu')^{-\frac{1}{4}}.
    \end{align*}
    
 Recall $\fs_\zc\in \mathfrak S(\fr_\zc')$. Thus, $\fs_\zc$ and $\fs$ are rectangles contained in $2\fr_\zc'$ and $2\fr'$, respectively. Their dimensions are about $\delta^{1/2}\mu^{-1/2}(\mu')^{1/2}\times \delta\mu^{-1}$, whereas the dimensions of $2\fr_\zc'$ and $2\fr'$ are about $\delta^{1/2}\times \delta\mu^{-1}$. Since $\delta^{1/2}\mu^{-1/2}(\mu')^{1/2}\gg \delta\mu^{-1}$, 
it is clear that 
    \begin{align}\label{e:card-s2'}
         \#    \big\{\fs\in \mathfrak S(\fr'): 2\fs_\zc\cap\,2\fs\neq \emptyset\big\}\lesssim 1,  
    \end{align}
    for any $\fr'$.  Moreover, we have 
    \Be
    \label{e:card-rs2'}
      \#      \fT (\fr_\zc',\nu', \fs_\zc)  \lesssim 1 + \delta^{-\frac{1}{2}}(\mu\mu')^{\frac{1}{2}}|\nu'-\nu'_\zc|, 
    \Ee
 To show \eqref{e:card-rs2'}, we consider 
  \[\mathcal  T'(\nu', \fs_\zc)=\big\{ \fr': \theta(\fr')=\nu', \  \text{$\exists\,\fs\in \mathfrak S(\fr')$ s.t. $2\fs_\zc\cap\,2\fs\neq \emptyset$}\big\}.\]
 First note that 
 \[  \# \mathcal  T'(\nu', \fs_\zc) \lesssim 1 + \delta^{-\frac{1}{2}}(\mu\mu')^{\frac{1}{2}}|\nu'-\nu'_\zc|. \] 
 Indeed, we only need to count
the number of $\fr'\in \cR_{\delta, \nu'}'$ which intersect $2\fs_\zc$. The longer sides of $\fr'$ and $\fr_\zc'$ form an angle comparable to $|\nu'-\nu'_\zc|$, and $\fs_\zc\subset2\fr_\zc'$.    
The longer side   of   $\fs_\zc$  has length about $\delta^{1/2}\mu^{-1/2}(\mu')^{1/2}$. Hence,   the number of $\fr'\in \cR_{\delta, \nu'}'$ intersecting $2\fs_\zc$ is $\lesssim  \delta^{1/2}\mu^{-1/2}(\mu')^{1/2} |\nu'-\nu'_\zc|/(\delta\mu^{-1})$, provided that this number is bigger than $1$.  This shows the desired inequality. 
Now, let  $\fr\times\fr'\in  \fT (\fr_\zc',\nu',\fs_\zc)$, then $\fr'\in  \mathcal  T'(\nu', \fs_\zc)$. For a given $\fr'$,  since $\fr\times\fr'\in \mathfrak R_\delta^\kappa$, 
there are only $O(1)$ choices of $\fr$ by \eqref{sharp0} in Lemma \ref{lem:cardinal-TT'}. Combining these together, we therefore obtain the inequality \eqref{e:card-rs2'}.

Since $\ | \nu'-\nu'_\zc| < B\delta^{1/2}(\mu\mu')^{-1/2}$, using the two inequalities \eqref{e:card-s2'} and \eqref{e:card-rs2'}, 
we obtain 
    \begin{align}\label{e:est-sumI}
       I  \lesssim  \sum_{\nu'\in \Theta' (\nu'_\zc, B)} \lambda^{-1}\delta^{\frac{1}{2}}(\mu\mu')^{-\frac{1}{4}}
        \lesssim   \lambda^{-1} \mu^{\frac{1}{4}}(\mu')^{-\frac{3}{4}} \min(\delta^{\frac{1}{2}}, \mu').
       \end{align}
    Indeed, for the second inequality  note that the set $\Theta'$ has diameter $\lesssim (\mu'/\mu)^{1/2}$ and the points in $\Theta'$ are separated by $\sim \delta^{1/2}\mu^{-1} $.  Thus,  
       $
        \#\big\{\nu'\in \Theta': |\nu'-\nu'_\zc| < B\delta^{1/2}(\mu\mu')^{-1/2}\big\}\lesssim  \min(\delta^{-1/2}(\mu\mu')^{1/2}, (\mu/\mu')^{1/2}). 
       $

    Next, we estimate $I\!I$. For the purpose, we may assume  $\delta \lesssim  (\mu')^2$, since $I\!I = 0$ otherwise, as mentioned above.  
	   Since $\ | \nu'-\nu'_\zc| \ge  B\delta^{1/2}(\mu\mu')^{-1/2}$ for sufficiently large $B$,    we may apply  $ii)$ in Proposition \ref{prop:alortho-1}  to $\mathbf c_{\fr, \fs}$  to get  
    \begin{align*}
        I\!I \lesssim   \sum_{\nu'\not\in \Theta' (\nu'_\zc, B)} \sum_{\fr\times\fr'\in \fT (\fr_\zc',\nu', \fs_\zc)} \sum_{\fs\in \mathfrak S(\fr', \fs_\zc)}  \frac{\delta^2\mu^{-\frac{7}{4}}(\mu')^{-\frac{1}{4}}}{(|\nu_\zc - \nu |\lambda^{}\delta\mu^{-1}\smp)^{\frac{N}{2}}}. 
            \end{align*}
   By Lemmas \ref{lem:locfab} and \ref{lem:alp}, we have $|\nu_\sprp+\nu'|$, $|(\nu_\zc)_\sprp+\nu_\zc'|\lesssim \delta^{1/2}\mu^{-1}+\delta\mu^{-3/2}(\mu')^{-1/2}\lesssim\delta^{1/2}\mu^{-1}\ll\delta^{1/2}(\mu\mu')^{-1/2}$. 
	   Since $|\nu'-\nu'_\zc| \ge B\delta^{1/2}(\mu\mu')^{-1/2}$, it follows that $|\nu-\nu_\zc|= |\nu_\sprp-(\nu_\zc)_\sprp|\sim |\nu'-\nu'_\zc|$. Thus, combining the above estimate with \eqref{e:card-s2'} and \eqref{e:card-rs2'},   we have
    \begin{align*}
        I\!I &\lesssim \sum_{\substack{\nu'\in \Theta': |\nu'-\nu'_\zc| \ge B\delta^{\frac{1}{2}}(\mu\mu')^{-\frac{1}{2}}}} \frac{(\lambda\delta^{\frac{3}{2}}\mu^{-\frac{3}{2}})^{-\frac{N}{2}} \delta^2\mu^{-\frac{7}{4}}(\mu')^{-\frac{1}{4}} }{(\delta^{-\frac{1}{2}}(\mu\mu')^{\frac{1}{2}}|\nu_\zc'-\nu'|)^{\frac{N-2}{2}}}. 
    \end{align*}
 Fix \(N_0\ge5\). Since the elements of \(\Theta'\) are separated by
 \(\delta^{1/2}\mu^{-1}\), summation gives
 \[
 I\!I\lesssim
 \frac{\delta^2\mu^{-\frac74}(\mu')^{-\frac14}
 \bigl(\frac{\mu}{\mu'}\bigr)^{\frac12}}
 {(\lambda\delta^{\frac32}\mu^{-\frac32})^{\frac{N_0}{2}}}.
 \]
 Moreover, \(\delta\gtrsim\lambda^{-2/3}\mu\) implies
 \(\lambda\delta^{3/2}\mu^{-3/2}\gtrsim1\). Hence we may weaken the
 factor with exponent \(N_0/2\) to its first power, which gives
    \Be
    \label{e:est-sumI2}    I\!I\lesssim \lambda^{-1}\mu^{\frac{1}{4}}(\mu')^{-\frac{3}{4}} \min(\delta^{\frac{1}{2}}, \mu'). \Ee
    Combining  this and \eqref{e:est-sumI}, we obtain \eqref{e:sumPP*-1}.   \qed

\smallskip

 \subsection*{Proof of \texorpdfstring{\eqref{e:sumP*P-1}}{the second Cotlar--Stein sum}}  
    We now show the estimate  \eqref{e:sumP*P-1}.  As before, for $\nu\in\Theta$, we define 
\Be 
\label{ftf} 
\fT(\fr_\zc,\nu) = \{\fr\times \fr'\in \mathfrak R^\kappa_\delta:  2\fr\cap 2\fr_\zc\neq \emptyset, \  \theta(\fr)=\nu\}.
\Ee
Then, it follows that 
\[
 \{\fr\times \fr'\in \mathfrak R^\kappa_\delta:   2\fr\cap 2\fr_\zc\neq \emptyset\}\subset \bigcup_{\nu\in \Theta}   \fT(\fr_\zc,\nu).
\]

Note $\fs_\zc\in \mathfrak S(\fr_\zc')$ and $\fs \in \mathfrak S(\fr')$. Thus,    $(\fP_{\delta,\fr_\zc,\fs_\zc}^\kappa)^* \fP_{\delta,\fr,\fs}^\kappa=0 $  if  $2\fr_\zc \cap 2\fr=\emptyset$.
     Thus,   we have 
    \[  \sum_{\fr\times \fr'\in \fR^\kappa_\delta}\sum_{\fs\in\mathfrak S(\fr')} \mathbf d_{\fr, \fs} \le \sum_{\nu \in \Theta}\sum_{\fr\times\fr'\in \fT(\fr_\zc,\nu )} \sum_{\fs\in \mathfrak S(\fr')}\mathbf d_{\fr, \fs}=I\!I\!I + I\!V , \]
    where
    \begin{align*}
        I\!I\!I &= \sum_{\nu \in \Theta:\,\nu \neq \nu_\zc}\sum_{\fr\times\fr'\in \fT(\fr_\zc,\nu )} \sum_{\fs\in \mathfrak S(\fr')}\mathbf d_{\fr, \fs}, 
    \\ 
         I\!V &= \sum_{\fr\times\fr'\in \fT(\fr_\zc,\nu_\zc )} \sum_{\fs\in \mathfrak S(\fr')}\mathbf d_{\fr, \fs}.
         \end{align*}
  We first show the estimates for $I\!I\!I$. Since $|\nu'-\nu_\zc'|\sim|\nu-\nu_\zc|$, thanks to \eqref{nunu} with sufficiently large $K$, we  may  apply  $i)$ in Proposition \ref{prop:alortho-1} to  $\mathbf d_{\fr, \fs}$.
  Thus, noting that $ \#\mathfrak S(\fr')\lesssim (\mu/\mu')^{1/2}$, we obtain
  \[
  I\!I\!I
  \lesssim \sum_{\nu \in \Theta:\,\nu \neq \nu_\zc}
  \sum_{\fr\times\fr'\in \fT(\fr_\zc,\nu )}
  \frac{\delta^2\mu^{-\frac{5}{4}}(\mu')^{-\frac{3}{4}}}{(|\nu'-\nu'_\zc|\lambda\delta(\mu')^{-\frac{1}{2}})^{\frac{N}{2}}}.
  \]
   
   We now claim that 
  \[   \#   \fT(\fr_\zc,\nu ) \lesssim |\nu-\nu_\zc|\delta^{-\frac{1}{2}}\mu. \] 
  This can be shown by the same argument as before. Indeed, recall that $\fr, \fr_\zc$ are rectangles of dimensions about $\delta^{1/2}\mu^{1/2}(\mu')^{-1/2}\times \delta(\mu\mu')^{-1/2}$. The angle between their longer sides  is $|\nu-\nu_\zc|$.  Since $\fr$ is an element of the tiling $\mathcal R_{\delta, \nu}$ and 
	  $2\fr\cap 2\fr_\zc\neq \emptyset$, there are at most $O( |\nu-\nu_\zc| \delta^{1/2}\mu^{1/2}(\mu')^{-1/2}/(\delta(\mu\mu')^{-1/2}))$ rectangles $\fr$. 
  Once we fix such a $\fr$, there are only $O(1)$ choices of $\fr'$ such that $\fr\times\fr'  \in \fT(\fr_\zc,\nu )$. Indeed, since 
  $\fr\times\fr' \in \mathfrak R_\delta^\kappa$, this follows from \eqref{sharp0'} in 
  Lemma \ref{lem:cardinal-TT'}. 
 
  Then,  combining the inequalities  with   the inequality $|\nu_\zc-\nu |\sim |\nu_\zc'-\nu'|$,  
    we have 
   \[
  I\!I\!I
  \lesssim \sum_{\nu \in \Theta:\,\nu \neq \nu_\zc}
   \frac{(\lambda\delta^{\frac{3}{2}}\mu^{-1}(\mu')^{-\frac{1}{2}})^{-\frac{N}{2}}
   \delta^2\mu^{-\frac{5}{4}}(\mu')^{-\frac{3}{4}}}{(|\nu-\nu_\zc|\delta^{-\frac{1}{2}}\mu)^{\frac{N-2}{2}}}.
    \]
  Fix \(N_0\ge6\). Summing over the separated elements of \(\Theta\)
  gives
  \[
  I\!I\!I
  \lesssim
  (\lambda\delta^{\frac32}\mu^{-1}(\mu')^{-\frac12})^{-\frac{N_0}{2}}
  \delta^2\mu^{-\frac54}(\mu')^{-\frac34}.
  \]
  Since
  \(\lambda\delta^{3/2}\mu^{-1}(\mu')^{-1/2}\gtrsim1\), we may weaken
  the factor with exponent \(N_0/2\) to its first power, obtaining
  \Be
  \label{e:est-sumI3}
  I\!I\!I 
  \lesssim   \lambda^{-1}\delta^{\frac{1}{2}}(\mu\mu')^{-\frac{1}{4}}.  
  \Ee

   To estimate $I\!V$, we observe that 
	   $\#   \fT(\fr_\zc,\nu_\zc )\lesssim 1$.  Indeed, if $\fr\times\fr' \in \fT(\fr_\zc,\nu_\zc )$, then  $\theta(\fr)=\theta(\fr_\zc)=\nu_\zc$ and  
	   $2\fr\cap2\fr_\zc\neq \emptyset$. The separation condition \eqref{kfk} therefore forces $\fr=\fr_\zc$. Once such a $\fr$ is fixed, by using \eqref{sharp0'} in
Lemma \ref{lem:cardinal-TT'} as before, there are only $O(1)$ $\fr'$ such that $\fr\times\fr'\in\fT(\fr_\zc,\nu_\zc)$.  Moreover, the argument used to prove \eqref{2nd-claim'} and the separation condition \eqref{nunu} force $\theta(\fr')=\theta(\fr'_\zc)=\nu'_\zc$. The estimates established in the proof of \eqref{1st-claim'} then give $K\fr'\cap K\fr'_\zc\neq\emptyset$ when $K$ is chosen sufficiently large. Applying \eqref{kfk} once more yields $\fr'=\fr'_\zc$. Consequently, $
\fT(\fr_\zc,\nu_\zc)=\{\fr_\zc\times\fr'_\zc\}. $
  Therefore,  it remains to show  
 \[   I\!V = \sum_{\fs\in \mathfrak S(\fr'_\zc)}\mathbf d_{\fr_\zc, \fs} \lesssim \lambda^{-1}\delta^{\frac{1}{2}}(\mu\mu')^{-\frac{1}{4}} \]
 with $\fs_\zc\in \mathfrak S(\fr_\zc')$.     To this end, we split the sum 
 \[     I\!V =   \mathbf d_{\fr_\zc, \fs_\zc} + \sum_{\fs\in \mathfrak S(\fr'_\zc): \fs\neq \fs_\zc}\mathbf d_{\fr_\zc, \fs}. \]
Proposition  \ref{prop:L2est} gives $  \mathbf d_{\fr_\zc, \fs_\zc} \lesssim \lambda^{-1}\delta^{1/2}(\mu\mu')^{-1/4}$. 
 Now,  writing $\fs=\fs(\ell,\fr'_\zc)$ and  $\fs_\zc=\fs(\ell_\zc,\fr'_\zc)$,  we use Proposition \ref{prop:L2est} when $1\le|\ell-\ell_\zc|\lesssim 10$ and Proposition \ref{prop:alortho-2} when $|\ell-\ell_\zc|\gg 10$ to get
    \begin{align*}
 \sum_{\fs\in \mathfrak S(\fr'_\zc): \fs\neq \fs_\zc}\mathbf d_{\fr_\zc, \fs}
 &\lesssim
 \sum_{1\le|\ell-\ell_\zc|\lesssim 10}\lambda^{-1}\delta^{\frac{1}{2}}(\mu\mu')^{-\frac{1}{4}} +\sum_{|\ell-\ell_\zc|\gg 10}\frac{ \delta^2\mu^{-\frac{7}{4}}(\mu')^{-\frac{1}{4}} }{(|\ell-\ell_\zc|\lambda\delta^{\frac{3}{2}}\mu^{-\frac{3}{2}})^{\frac{N}{2}}}
\\
&\lesssim \lambda^{-1}\delta^{\frac{1}{2}}(\mu\mu')^{-\frac{1}{4}}.
    \end{align*}
  Therefore, combining this and \eqref{e:est-sumI3} gives \eqref{e:sumP*P-1}.    
   \qed
    
    \medskip
    
For the bottom-scale operators \(\fP_{\dels,\fr,\fs}\) appearing in
\eqref{d-pds}, part {\rm(iii)} of Proposition
\ref{prop:alortho-1} and \eqref{fpdel*} provide exactly the
off-diagonal estimates used above. Moreover,
\[
\lambda^{-\frac43}\mu^{\frac14}(\mu')^{-\frac14}
\sim
\lambda^{-1}\dels^{\frac12}(\mu\mu')^{-\frac14},
\]
so \eqref{e:fP-ast} supplies the same diagonal bound at
\(\delta=\dels\). Hence the analogues of \eqref{e:sumPP*-1} and
\eqref{e:sumP*P-1} hold with \(\dels\) in place of \(\delta\), and
the Cotlar--Stein lemma gives \eqref{2to2s}.

Propositions \ref{prop:main-components1-0} and
\ref{prop:main-components2-2} have now been proved; hence the
preceding deduction establishes Proposition \ref{main-p}. Combining
Proposition \ref{main-p} with \eqref{e:decompm-1st} and
\eqref{e:sumfElk} proves Theorem \ref{thm:core}. The reductions in
Section \ref{sec:prelim} then prove Theorem \ref{thm:decay}, and the
\(TT^\ast\) argument and dyadic annular summation recalled after
Theorem \ref{thm:decay} complete the proof of Theorem \ref{thm:main}.

 \section*{Acknowledgments} 
This work was supported by the National Research Foundation of Korea grants: G-LAMP RS-2024-00443714 and RS-2024-00339824 (Jeong); G-LAMP RS-2023-00301976 and  RS-2024-00342160 (Lee);  and RS-2026-25483775  (Ryu).
 \appendix 

\section{Proof of Lemma \ref{lem:mixedH}}
\label{app:mixed-hessian}

\begin{proof}
Write
\[
    \varepsilon_\ell:=(-1)^{\ell+1},\qquad
    R:=\sqrt{\cD(x,y)},\qquad
    \gamma_\ell:=\cos S_\ell(x,y),\qquad
    s_\ell:=\sin S_\ell(x,y),
\]
and suppress the dependence on $(x,y)$. By \eqref{cosS} and
\eqref{pxpy0}, we have
\[
    \partial_x \gamma_\ell=\frac{\varepsilon_\ell}{R}\mathbf v_\ell,
    \qquad
    \partial_x(s_\ell^{-1})
    =\frac{\varepsilon_\ell \gamma_\ell}{Rs_\ell^3}\mathbf v_\ell.
\]
Since \(\partial_t\cP(x,y,S_\ell)=0\), the chain rule and
the formula for \(\partial_y\cP\) give
\[
    \partial_y\phi_\ell
    =\partial_y\cP(x,y,S_\ell)
    =\frac{\mathbf v_\ell}{s_\ell}.
\]
Differentiating this identity with respect to $x$, we obtain
\begin{align*}
    \partial_x\partial_y^\intercal\phi_\ell
    &=
    -\frac{\mathbf I_2}{s_\ell}
    +
    \frac{\varepsilon_\ell}{Rs_\ell}
    \mathbf v_\ell y^\intercal
    +
    \frac{\varepsilon_\ell \gamma_\ell}{Rs_\ell^3}
    \mathbf v_\ell\mathbf v_\ell^\intercal
\\
    &=
    -\frac{\mathbf I_2}{s_\ell}
    +
    \frac{\varepsilon_\ell}{Rs_\ell^3}
    \mathbf v_\ell
    \bigl(s_\ell^2y+\gamma_\ell\mathbf v_\ell\bigr)^\intercal
\\
    &=
    -\frac{\mathbf I_2}{s_\ell}
    -
    \frac{\varepsilon_\ell}{Rs_\ell^3}
    \mathbf v_\ell\mathbf v_\ell'^\intercal,
\end{align*}
where, by the definition of $\mathbf v_\ell'$, 
$s_\ell^2y+\gamma_\ell\mathbf v_\ell
=y-\gamma_\ell x=-\mathbf v_\ell'$. Since
$-\varepsilon_\ell=(-1)^\ell$, this proves
\eqref{e:mixedH-unscaled}.

Directly from \eqref{cosS} and the definition of $\cD$, we also have
\begin{align*}
    |\mathbf v_\ell|^2
    &=(1-|y|^2)s_\ell^2,
&
    |\mathbf v_\ell'|^2
    &=(1-|x|^2)s_\ell^2,
&
    \inp{\mathbf v_\ell}{\mathbf v_\ell'}
    &=-\varepsilon_\ell R s_\ell^2.
\end{align*}
Applying \eqref{e:mixedH-unscaled} to $\mathbf v_\ell$ therefore gives
\begin{align*}
    \partial_x\partial_y^\intercal\phi_\ell\,
    \mathbf v_\ell
    &=
    -\frac{\mathbf v_\ell}{s_\ell}
    -
    \frac{\varepsilon_\ell}{Rs_\ell^3}
    \mathbf v_\ell
    \inp{\mathbf v_\ell'}{\mathbf v_\ell}
\\
    &=
    -\frac{\mathbf v_\ell}{s_\ell}
    +
    \frac{\mathbf v_\ell}{s_\ell}
    =0.
\end{align*}
This proves \eqref{e:mixedH-v}. Moreover, since
$\mathbf v_\ell^\perp
=\mathbf v_\ell'
-\inp{\mathbf v_\ell}{\mathbf v_\ell'}
|\mathbf v_\ell|^{-2}\mathbf v_\ell$, we have
\[
    \partial_x\partial_y^\intercal\phi_\ell\,
    \mathbf v_\ell^\perp
    =
    \partial_x\partial_y^\intercal\phi_\ell\,
    \mathbf v_\ell'.
\]
Using \eqref{e:mixedH-unscaled} once more,
\begin{align*}
    \partial_x\partial_y^\intercal\phi_\ell\,
    \mathbf v_\ell^\perp
    &=
    -\frac{\mathbf v_\ell'}{s_\ell}
    -
    \frac{\varepsilon_\ell|\mathbf v_\ell'|^2}{Rs_\ell^3}\mathbf v_\ell
\\
    &=
    -\frac{\varepsilon_\ell|\mathbf v_\ell'|^2}{Rs_\ell^3}
    \left(
        \mathbf v_\ell
        -
        \frac{\inp{\mathbf v_\ell}{\mathbf v_\ell'}}{|\mathbf v_\ell'|^2}\mathbf v_\ell'
    \right)
\\
    &=
    -\frac{\varepsilon_\ell|\mathbf v_\ell'|^2}{Rs_\ell^3}\mathbf v_\ell'^\perp.
\end{align*}
The identity
$|\mathbf v_\ell'|^2=(1-|x|^2)s_\ell^2$ now yields
\eqref{e:mixedH-vperp}. Since
\(\Phi_\ell=\delta^{-3/2}\mu^{3/2}\phi_\ell\) by \eqref{phiell},
the corresponding identities for \(\Phi_\ell\) follow by scaling.
\end{proof}

\end{document}